\newcommand{\act}{\curvearrowright}
\newcommand{\C}{\mathbb{C}}
\newcommand{\R}{\mathbb{R}}
\newcommand{\Q}{\mathbb{Q}}
\newcommand{\Z}{\mathbb{Z}}
\newcommand*{\abs}[1]{\lvert#1\rvert}
\newcommand{\SL}{\mathrm{SL}}
\newcommand{\comm}[2]{\left[ #1, #2 \right]}	
\newcommand{\G}{\mathcal{G}}
\newcommand{\A}{\mathcal{A}}
\newcommand{\inv}{^{-1}}
\newcommand{\T}{\mathbb{T}}
\newcommand{\module}{\mathcal{L}}
\newcommand{\gpdcomp}[8]{
 	\begin{tikzcd}[ampersand replacement=\&, column sep=large]
			\scriptstyle {#1} 
		\&
			\scriptstyle {#3}
			\arrow[l, bend right, "{\displaystyle #2}"', start anchor={[xshift=7pt]north west}, end anchor={[xshift=-7pt]north east}]
		\&
			\scriptstyle {#5}
			\arrow[l, bend right, "{\displaystyle #4}"'{name=U}, start anchor={[xshift=7pt]north west}, end anchor={[xshift=-7pt]north east}]
		\&
			\arrow[l, white, dash, bend right, "{\displaystyle \vphantom{#6}=\vphantom{#4}}"'{black}, start anchor={[xshift=7pt]north west}, end anchor={[xshift=-7pt]north east}]
			\scriptstyle {#1}	
		\&
			\scriptstyle {#7}
			\arrow[l, bend right, "{\displaystyle #6}"'{name=D}, start anchor={[xshift=7pt]north west}, end anchor={[xshift=-7pt]north east}]
	\end{tikzcd}
}
\newcommand{\mat}[1]{\left[ \begin{smallmatrix} #1 \end{smallmatrix}\right]} 
\newcommand{\Mat}[1]{\left[ \begin{matrix} #1\end{matrix}\right]} 
\renewcommand{\and}{\quad\textup{and}\quad}
\newcommand{\der}[2]{\tfrac{\partial #1}{\partial #2}}
\renewcommand{\d}{\,\mathrm{d}}					
\renewcommand{\sup}[2]{\underset{#1}{\mathsf{sup}}{\;#2}}	
\newcommand{\imaginary}{i}					
\newcommand{\ket}[1]{\left\vert #1\right\rangle}
\newcommand{\bra}[1]{\left\langle #1\right\vert} 
\newcommand{\Hilbertspace}{W}
\newcommand{\homotopy}{Y}
\newcommand{\slope}{\left(\begin{smallmatrix} \theta \\ 1 \end{smallmatrix}\right)}
\newcommand{\incl}{ \iota }
\newcommand{\Schw}{\mathcal{S}}		
\newcommand{\normS}[3]{\left\Vert #1 \right\Vert^{\Schw}_{\left( #2 , #3 \right)}} 
\newcommand{\normSn}[3]{\left\Vert #1 \right\Vert^{\Schw_{ n }}_{\left( #2 , #3 \right)}} 
\newcommand{\normSt}[3]{\left\Vert #1 \right\Vert^{\Schw_{ 2 }}_{\left( #2 , #3 \right)}}
	\DeclareMathOperator{\Ad}{\mathsf{Ad}}
	\DeclareMathOperator{\dom}{Dom}
	\let\del\partial
	\newcommand{\norm}[1]{\left\|#1\right\|}
	\newcommand{\inner}[2]{\left\langle #1\,\vert\, #2 \right\rangle}
	\newcommand{\iinner}[2]{\left\llangle #1 \,\big\vert\, #2\right\rrangle}
\newcommand{\Cstar}{\mathrm{C^{*}}}
\newcommand{\B}{\mathcal{B}}
\newcommand{\LL}{\mathcal{L}}
\newcommand{\Bound}{\LL}
\newcommand{\Hilb}{\mathcal{E}}
\newcommand{\K}{\mathrm{K}}
\newcommand{\Dirac}{D}
\newcommand{\Dudelta}{\widehat{\Delta}}
\newcommand{\KK}{\mathrm{KK}}
\newcommand{\RKK}{\textup{RKK}}
\newcommand{\twist}{\tau}
\newcommand{\e}[1]{\mathsf{e}^{ 2\pi i #1}}
\newcommand{\me}[1]{\mathsf{e}^{ -2\pi i #1}}
\newcommand{\pr}{\textup{pr}}
\newcommand{\id}{\mathrm{id}}
\numberwithin{equation}{section}
\theoremstyle{theorem}
\newtheorem{theorem}[equation]{Theorem}
\newtheorem{lemma}[equation]{Lemma}
\newtheorem{proposition}[equation]{Proposition}
\newtheorem{corollary}[equation]{Corollary}
\theoremstyle{definition}
\newtheorem{definition}[equation]{Definition}
\theoremstyle{remark}
\newtheorem{remark}[equation]{Remark}
\begin{document}

\title{Transversals, duality, and irrational rotation}

\author{Anna Duwenig}
\email{aduwenig@uvic.ca}
\address{Department of Mathematics and Statistics\\
 University of Victoria\\
 PO BOX 3045 STN CSC\\
 Victoria, B.C.\\
 Canada V8W 3P4}

\author{Heath Emerson}
\email{hemerson@math.uvic.ca}
\address{Department of Mathematics and Statistics\\
 University of Victoria\\
 PO BOX 3045 STN CSC\\
 Victoria, B.C.\\
 Canada V8W 3P4}

\keywords{K-theory, K-homology, equivariant KK-theory, Baum-Connes conjecture, Noncommutative Geometry}

\date{\today}

\thanks{This research was supported by an NSERC Discovery grant.}

\begin{abstract}
An early result of Noncommutative Geometry was 
Connes' observation in the 1980's that the Dirac-Dolbeault cycle for the \(2\)-torus \(\T^2\), which 
induces a Poincar\'e self-duality for \(\T^2\), can be 
`quantized' to give a spectral triple and a K-homology class in 
\(\KK_0(A_\theta\otimes A_\theta, \C)\) providing the co-unit for a
 Poincar\'e self-duality for the irrational rotation algebra \(A_\theta\) for any \(\theta\in \R\setminus \Q\). 
 This spectral triple has been extensively studied since. Connes'
proof, however, relied on a K-theory computation and does not supply a representative 
cycle for the unit of this duality. Since such representatives are vital in applications of 
duality, we supply such a cycle in unbounded form in this article. Our approach is 
to construct, for any non-trivial element \(g\) of the modular group, a finitely generated projective 
module \(\module_g\) over \(A_\theta \otimes A_\theta\) by using a reduction-to-a-transversal 
argument of Muhly, Renault, and Williams, applied to a pair of Kronecker foliations 
along lines of slope \(\theta\) and \(g(\theta)\), using the fact that these flows are transverse to each other. 
We then compute Connes' dual of 
\([\module_g]\) for \(g\) upper triangular, and prove that we obtain
 an invertible in \(\KK_0(A_\theta, A_\theta)\), represented by what one might regard as a 
noncommutative bundle of Dirac-Schr\"odinger operators. An application of \(\Z\)-equivariant 
Bott Periodicity proves that twisting the module by the family gives the requisite spectral cycle for 
the unit, thus proving self-duality for \(A_\theta\) with both unit and co-unit represented by 
spectral cycles.

\end{abstract}

\maketitle


\section{Introduction}

The (irrational) rotation algebra \(A_\theta\) is the crossed-product C*-algebra 
\(C(\T)\rtimes_\theta \Z\) associated to 
a rotation \(z\mapsto e^{2\pi i \theta} z\) of the circle by 
an (irrational) angle \(\theta\). The complex coordinate \(V (z) = z\) on \(\T\) and the 
generator \(U\) of the group action in the crossed-product, are a pair of unitaries in 
\(A_\theta\) which generate it as a C*-algebra, and satisfy the relation 
\[ VU = e^{2\pi i \theta} UV.\]
In particular, when \(\theta = 0\) we obtain the commutative C*-algebra \( C(\T^2)\) of 
continuous functions on the \(2\)-torus, and accordingly \(A_\theta\) is often called the 
`noncommutative torus.'

 Compact spin\(^c\)-manifolds such as \(\T^2\) exhibit duality in \(\KK\).  Two 
C*-algebras 
 \(A\) and \(B\) are \emph{dual in \(\KK\)} 
if there exists a pair of classes 
\[ \Delta \in \KK_0(A\otimes B, \C), \quad \widehat{\Delta} \in \KK_0(\C, 
B\otimes A)\]
satisfying the \emph{zig-zag equations}: 
\begin{equation*}
	(1_A \otimes \widehat{\Delta}) \otimes_{A\otimes B\otimes A} (\Delta \otimes 1_A)
	= 1_{A},
	\qquad
	(\widehat{\Delta} \otimes 1_{B}) \otimes_{B\otimes A \otimes B} (1_{B} \otimes \Delta)
	= 1_{B}.
\end{equation*}
We will refer to the class 
\(\widehat{\Delta}\) as the \emph{unit}, and \(\Delta\) as the \emph{co-unit} of the 
duality, with reference to the theory of adjoint functors. A cup-cap operation 
using \(\Delta\) determines a map 
\[ \Delta\cup  \text{\textvisiblespace}\,\colon \KK_i(D_1, A_\theta \otimes D_2) \cong \KK_i (A_\theta\otimes D_1, D_2),\]
for any pair \(D_1, D_2\) of separable C*-algebras, and it can be checked that 
\(\widehat{\Delta}\) provides a similar map which inverts it, because of the zig-zag 
equations.

If \(X\) is a compact spin\(^c\)-manifold, then the diagonal embedding 
\(\delta \colon X \to X\times X\) 
has a normal bundle \(\nu\) with canonical \(\K\)-orientation 
has a Thom class \(\xi \in 
\K^{-n}(\nu)\). Using a 
tubular neighbourhood embedding \(\nu \subseteq X\times X\), we can extend 
the Thom cycle and class to zero outside the neighbourhood, yielding a \(\K\)-theory 
class for \(X\times X\) that is equal by definition to 
\(\widehat{\Delta}\in \KK_{+n} \left(\C, C(X\times X)\right)\), and which is supported 
 in an arbitrarily small neighbourhood of the diagonal \(X\subset X\times X\). 
This construction determines the unit for a self-duality for \(X\). 

The co-unit \(\Delta \in \KK_{-n} (C(X\times X), \C)\) in this duality is 
represented, analytically, by the 
Dirac cycle for \(X\), consisting of the Dirac operator acting on 
\(L^{2}\)-spinors on \(X\). This gives a cycle for \(\KK_{-n} (C(X), \C)\), and 
pulling it back by the *-homomorphism 
\( C(X\times X) \to C(X)\) of restriction to the diagonal results in a cycle for 
 \(\KK_{-n}(C(X\times X), \C)\).

In the 80's, Connes suggested that there might be C*-algebras which behave in 
some sense like `noncommutative manifolds,' and one possible 
way in which this might happen would be if there were examples of C*-algebras
arising in geometric situations, which 
 exhibit \(\KK\)-duality. He pointed out  that 
the Dirac cycle for the \(2\)-torus can be adapted slightly so as to give a cycle and 
class 
\(\Delta_{\theta} \in \KK_0(A_{\theta} \otimes A_{\theta}, \C)\)
even for the noncommutative \(A_\theta\)'s. Connes 
verified by direct computation that it induces duality (see 
\cite{Connes:Gravity} and \cite{Connes:NCG}). 
There are now several other examples of C*-algebras with KK-theoretic
duality: groupoid C*-algebras arising from 
hyperbolic dynamical systems (\cite{KaPut:dual-SFT} and \cite{KaPut:dual-SFT2}), 
 crossed-products by actions of Gromov 
hyperbolic groups on their boundaries 
\cite{Em:bdActions}, and   
orbifold C*-algebras \cite{EEK}.  In some cases, the Baum-Connes conjecture boils down to 
a form of duality between a group C*-algebra and its classifying space, 
and some of these special cases are studied in \cite{W}. Duality has seemed to be 
quite a successful notion in Noncommutative Geometry.

Connes' remark about \(A_\theta\) was that the class \(\Delta_\theta\) built from the 
Dolbeault operator on \(\T^2\) induces a self-duality for \(A_\theta\) because 
the induced intersection form 
\[ \K_*(A_\theta) \times \K_*(A_\theta) \longrightarrow \K_*(A_\theta\otimes A_\theta) \stackrel{\langle \cdot , \Delta_\theta \rangle }{\longrightarrow} \Z\]
can be computed directly and is non-degenerate. Connes computed a formula for the 
unit \(\widehat{\Delta}_\theta\) in terms of known \(\K\)-theory generators for \(A_\theta\), 
but it 
has 
no obvious representative cycle. 
Duality for C*-algebras is  interesting when it is implemented by explicit cycles. Cycles
 lead to applications
(for example to `noncommutative Lefschetz fixed-point formulas' \cite{Em:Lefschetz}.)

 The purpose of this article is to remedy this situation and describe a geometrically 
defined spectral (that is, unbounded) 
 cycle for \(\K_0(A_\theta\otimes A_\theta)\) representing \(\widehat{\Delta}_\theta\). 
 We do this by showing that Connes' class \(\Delta_\theta\) and our class satisfy 
 the first zig-zag equation, and 
 thus comprise a self-duality with co-unit Connes' \(\Delta_\theta\), which implies 
 that our co-unit equals his \(\widehat{\Delta}_\theta\). Since we are using purely geometric 
 methods, in principal our methodology should apply to more general situations, as we 
 discuss at the end of this introduction.

Our definition of \(\widehat{\Delta}_\theta\) 
 involves two ingredients. The first is the construction of an invertible 
morphism 
\[ \twist_b \in \KK_0(A_\theta, A_\theta),\]
for any \(b\in \Z\), which we call the `\(b\)-twist,' and which is represented by applying descent 
\[ \KK^\Z_0(C(\T), C(\T)) \to \KK_0(C(\T)\rtimes_\theta \Z, C(\T)\rtimes_\theta \Z) = \KK_0(A_\theta, A_\theta)\]
to the class of a certain, quite simple \(\Z\)-equivariant bundle of Dirac-Schr\"odinger operators 
\( \frac{\partial}{\partial r} + r\) over the circle \(\T\). 
The \(b\)-twist has the features of acting as multiplication by the matrix 
\( \begin{bsmallmatrix} 1 & b \\ 0 & 1\end{bsmallmatrix}\) on \(\K_0(A_\theta)\) with the standard 
identification \(\K_0(A_\theta) \cong \Z^2\), and acting as the identity on \(\K_1(A_\theta)\).
In particular, \(\twist_b\) is not represented by any \emph{automorphism} of \(A_\theta\), since 
automorphisms act as the identity on \(\K_0\).

The point at which Bott Periodicity enters into our proof, resides in the fact we show that the morphisms 
\(\{\twist_b \}_{b\in \Z}\) form a cyclic group in the invertibles in \(\KK_0(A_\theta, A_\theta)\).
We also show that the \(b\)-twist agrees, under the Dirac 
\(\KK_1\)-equivalence 
\[ A_\theta = C(\T)\rtimes_\theta \Z \cong_{\KK_1} C_0(\R\times \T)\rtimes_\theta \Z 
\sim C(\T^2)\]
with the class of the linear automorphism of \(\T^2\) given by matrix multiplication 
by \( \begin{bsmallmatrix} 1 & b \\ 0 & 1\end{bsmallmatrix}\). See 
Theorem \ref{theorem:keyindexreduction}. The \(b\)-twist provides the operator which is going to enter into our cycle and the second 
ingredient of the construction determines the module. 

Let \(\B_\theta\) and \(\B_{\theta+b}\) denote the transformation groupoids 
corresponding to Kronecker flows on \(\T^2\) along lines of slope \(\theta\) and 
\(\theta+b\). 
Since \(\B_\theta \) and \(\B_{\theta+b}\) are transverse, the restriction of the groupoid \( \B_\theta \times \B_{\theta+b}\) to the diagonal \(\T^2\) in its unit space \(\T^2\times \T^2\) is \'etale.
A well-known construction of 
Muhly, Renault, and Williams \cite{MRW:Grpd} provides an explicit strong Morita equivalence 
between the restricted groupoid and \(\B_\theta\times \B_{\theta+b}\), and hence with 
\((\T\rtimes_\theta \Z) \, \times \, (\T\rtimes_{\theta + b} \Z)\), and then with 
\((\T\rtimes_\theta \Z)\, \times \, (\T\rtimes_\theta \Z)\). 

We obtain a strong Morita equivalence between the unital C*-algebra of the restricted groupoid and  \(A_\theta\otimes A_\theta\). Since the former is 
\'etale, the strong Morita equivalence 
bimodule is finitely generated projective as an \(A_\theta\otimes A_\theta\)-module. Let 
\( [\module_b]\in \KK_0(\C, A_\theta \otimes A_\theta)\) be its class.

The main result of this article is: 

\begin{theorem}
The class \(\Delta_\theta\) of Connes, and \( \widehat{\Delta}_\theta := 
(1_{A_\theta} \otimes \twist_{-b})_* ([\module_b])\)  for $b>0$ are the co-unit and unit of a 
\(\KK\)-self-duality for \(A_\theta\). 
\end{theorem}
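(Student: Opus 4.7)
The plan is to establish the first of the two zig-zag equations,
\[
(1_{A_\theta} \otimes \widehat{\Delta}_\theta) \otimes_{A_\theta\otimes A_\theta\otimes A_\theta} (\Delta_\theta \otimes 1_{A_\theta}) = 1_{A_\theta} \in \KK_0(A_\theta, A_\theta);
\]
the second will then come for free. Indeed, Connes has already produced a class $\widehat{\Delta}^{\mathrm{C}}_\theta$ satisfying both zig-zag equations with $\Delta_\theta$, which forces the linear map
\[
\Phi\colon \KK_0(\C, A_\theta\otimes A_\theta) \to \KK_0(A_\theta, A_\theta), \qquad x \mapsto (1_{A_\theta}\otimes x)\otimes(\Delta_\theta\otimes 1_{A_\theta}),
\]
to be a bijection with $\Phi(\widehat{\Delta}^{\mathrm{C}}_\theta) = 1_{A_\theta}$. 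Once we prove $\Phi(\widehat{\Delta}_\theta) = 1_{A_\theta}$, we conclude $\widehat{\Delta}_\theta = \widehat{\Delta}^{\mathrm{C}}_\theta$ and with it the remaining equation.

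To show $\Phi(\widehat{\Delta}_\theta) = 1_{A_\theta}$, I would invoke the UCT: since $A_\theta$ is in the bootstrap class and $\K_0(A_\theta)\cong\K_1(A_\theta)\cong\Z^2$ are free abelian, the natural map $\KK_0(A_\theta, A_\theta) \to \mathrm{End}(\K_*(A_\theta))$ is an isomorphism. It therefore suffices to verify that the endomorphism of $\K_*(A_\theta)$ induced by $\Phi(\widehat{\Delta}_\theta)$ is the identity.

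Unpacking the product and using associativity, this endomorphism is a composition of three known operations: exterior product with $[\module_b]$, application of $1\otimes\twist_{-b}$, and contraction against $\Delta_\theta$ via Connes' intersection form. The pairing $\K_*(A_\theta)\times\K_*(A_\theta)\to\Z$ induced by $\Delta_\theta$ is explicit and classical: it agrees with the cup product on $\T^2$ under the Dirac $\KK_1$-equivalence $A_\theta \sim C(\T^2)$. By Theorem \ref{theorem:keyindexreduction}, the twist $\twist_{-b}$ acts as $\begin{bsmallmatrix}1 & -b \\ 0 & 1\end{bsmallmatrix}$ on $\K_0(A_\theta)\cong\Z^2$ in the standard Powers-Rieffel basis and trivially on $\K_1(A_\theta)$. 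What remains is to compute $[\module_b]$ in $\K_0(A_\theta\otimes A_\theta)$ via the K\"unneth decomposition and verify that the shear introduced by replacing the pair of slopes $(\theta,\theta)$ by $(\theta,\theta+b)$ in the Kronecker flow construction is exactly compensated by $\twist_{-b}$.

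The principal obstacle is this K-theoretic computation of $[\module_b]$. The Muhly-Renault-Williams reduction-to-a-transversal bimodule must be unravelled, and its class located within the K\"unneth decomposition of $\K_0(A_\theta\otimes A_\theta)\cong\Z^4$. Here $\Z$-equivariant Bott periodicity plays the central role: it is what identifies $\{\twist_b\}_{b\in\Z}$ as a cyclic subgroup of the invertibles of $\KK_0(A_\theta, A_\theta)$, allowing the family $\{[\module_b]\}_b$ to be compared coherently as $b$ varies and linked back to the classical Dirac-Dolbeault model on $\T^2$. Once $[\module_b]$ is pinned down, the remainder of the verification reduces to linear algebra in $\GL_2(\Z)$.
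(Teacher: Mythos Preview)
Your reduction of the second zig-zag equation to the first, via Connes' pre-existing duality, is exactly what the paper does. Your route to the first equation, however, diverges from the paper's and contains a genuine gap.

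You propose to invoke the UCT and verify $\Phi(\widehat{\Delta}_\theta)=1_{A_\theta}$ at the level of $\mathrm{End}(\K_*(A_\theta))$. This is valid in principle, but the linchpin---locating $[\module_b]$ in the K\"unneth basis of $\K_0(A_\theta\otimes A_\theta)\cong\Z^4$---is exactly the step you leave undone. ``Unravelling the MRW bimodule'' and ``linking back to the classical Dirac--Dolbeault model'' are restatements of the problem, not a method; the module $\module_b$ is defined by a transversal construction, and you offer no concrete mechanism for reading off its class. Your appeal to Bott periodicity explains why the $\twist_b$ form a group, not how to find $[\module_b]$. Note also that, even if completed, this would essentially reproduce Connes' original K-theoretic verification, which the paper explicitly sets out to replace.

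The paper sidesteps the K-theory computation of $[\module_b]$ entirely. Rather than computing $[\module_b]$ and then its image under $\Phi$, it computes the Kasparov product $\Phi([\module_b])=(1_{A_\theta}\otimes[\module_b])\otimes_{A_\theta^{\otimes 3}}(\Delta_\theta\otimes 1_{A_\theta})$ \emph{directly at the cycle level}: the underlying bimodule of the product is identified with one obtained by descent from a $\Z$-equivariant $C(\T)$-bimodule (this is where the restriction to upper-triangular $g$ enters), and Kucerovsky's criterion is then used to show that the concrete cycle $(\mathcal{H}_b,\tfrac{1}{b}D_{2\pi b})$ represents the product. For $b>0$ this cycle is $\twist_b$, whence $\Phi(\widehat{\Delta}_\theta)=\twist_b\otimes_{A_\theta}\twist_{-b}=1_{A_\theta}$ by the group law for the twists. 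The payoff is twofold: the argument never needs $[\module_b]$ as a K-theory class, and it produces the explicit spectral cycle for $\widehat{\Delta}_\theta$ that is the paper's stated goal---something a purely K-theoretic UCT argument could not deliver even if completed.
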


The description of \(\widehat{\Delta}_\theta\) given in the theorem 
leads to an explicit unbounded 
representative of \(\widehat{\Delta}_\theta\) in the form of a self-adjoint operator on a 
Hilbert module -- a kind of `quantized' Thom class for the diagonal embedding 
\(\T^2_\theta \to \T^2_\theta \times \T^2_\theta\). See Theorem \ref{thm:cycle-rep-Dudelta}
for the exact statement.

\section{Preliminaries}
\subsection{Irrational rotation on the circle}
In this paper, we are mainly interested in a class of group actions, but we will use 
groupoid methods prolifically.

	Irrational rotation on the circle \(\T\) is given by
	 the $\Z$-action \(n \mapsto \alpha_n\) where
	 $\alpha_{n} ([x]) = 
	[x+n\theta]$, \([x]\in \T := \R/\Z\). The action determines a 
	 transformation groupoid $\A_{\theta}:= \T\rtimes_{\theta} \Z$ with composition rules
	\[
		\gpdcomp{
			\left[x\right]
		}{
			\left(\left[x\right],\,n\right)
		}{
			\left[x-n\theta\right]
		}{
			\left(\alpha_{-n}\left[x\right]\,m\right)
		}{
			\left[(x-n\theta)-m\theta\right]
		}{
			\left(\left[x\right],\, n+m\right)
		}{
			\left[x-(n+m)\theta\right]
		}{
			0
		}
	\]
	Inverses are given by $\left(\left[x\right],\,n\right)^{-1} = \left(\alpha_{-n}\left[x\right],\,-n \right)$.

	The \emph{irrational rotation algebra} $A_{\theta}$ is the groupoid C*-algebra of this groupoid. Equivalently, \(A_{\theta}\) is the crossed-product 
	\[ A_{\theta} := C^{*}(\A_{\theta}) \cong C(\T)\rtimes_{\theta} \Z.\]

As is well-known, the
 irrational rotation algebra is the universal C*-algebra $A_{\theta}$ generated by two unitaries $U, V$ subject to the relation
	$
		VU = e^{2\pi \imaginary \theta} UV.
	$
	Note that 
	\begin{equation}\label{eq:defn-mfA}
		\mathfrak{A}
		 := 
		\left\{
			\sum_{n,m\in\mathbb{Z}} a_{n,m} V^n U^m
			\;|\;
			(a_{n,m})_{n,m} \in S(\Z^{2})
		\right\}
	\end{equation}
	is a dense subalgebra, where $(a_{n,m})_{n,m} \in \mathcal{S}(\Z^{2})$ if and only if for all $k\in\Z^{+}$, 
	\[
		\sup{n,m}
		{
			\left\{
				\left(
					\abs{n}^k
					+
					\abs{m}^k
				\right)
				\abs{a_{n,m}}
			\right\}
		}
		< \infty.
	\]
In the crossed product picture, $V$ corresponds to the generator of 
$C(\mathbb{T})$ and $U$ to the generator of $\mathbb{Z}$.

As such, \(A_\theta\) is sometimes referred to as the \emph{noncommutative torus,} since 
the C*-algebra \(C(\T^2)\) of continuous functions on the \(2\)-torus, is generated by 
two \emph{commuting} unitaries \(U, V\) (namely, the coordinate projections).

\subsection{Poincar\'{e} duality}
	
	A
	\(\KK\)-theoretic Poincar{\'e} duality between two C*--algebras 
	\(A\) and \(B\), determines an isomorphism between the 
	\(\K\)-theory groups of \(A\) and the \(\K\)-homology groups of \(B\). An important motivating example comes from smooth manifold theory: If \(X\) is a smooth compact manifold, then it is a result of Kasparov that 
	\(C(X)\) is Poincar{\'e} dual to \(C_0(TX)\), where 
	\(TX\) is the tangent bundle. The Poincar{\'e} duality isomorphism sends the \(\K\)-theory class defined by the symbol of an elliptic operator, to the \(\K\)-homology class of the operator.
	
	If \(X\) carries a spin\(^c\)-structure, \emph{i.e.}\ a \(\K\)-orientation on its tangent bundle, then \(C_0(TX)\) is \(\KK\)-equivalent to \(C(X)\) by the Thom isomorphism, and 
	so \(C(X)\) has a \emph{self-duality} of a dimension shift of \(\dim X\). A basic example is \(X = \T^2\). 
	
Duality in this sense is an example of an adjunction of functors, and is, like with adjoint functors in general, determined by two classes, usually called the 
the \emph{unit} and \emph{co-unit}, here denoted 
	\(\widehat{\Delta}\) and \(\Delta\) respectively.

		\begin{definition}
			We say that two (nuclear, separable, unital) C*-algebras $A, B$ are \textit{Poincar{\'e} dual} (with dimension shift of zero) 
			if there exist $\Delta\in \KK_0
			(A\otimes B, \mathbb{C})$ and $\widehat{\Delta}\in \KK_0
			(\mathbb{C},B\otimes A)$ which satisfy the following so-called \emph{zig-zag equations},
			\begin{equation}\label{eq:zigzag}
			\begin{split}
				\widehat{\Delta}\otimes_B \Delta 
				:=&
				(1_{A} \otimes \widehat{\Delta})
				\otimes_{A\otimes B\otimes A}
				(\Delta \otimes 1_{A})
				=
				1_A \in \KK
				(A, A)
				\quad\and 
				\\
				\widehat{\Delta}\otimes_A \Delta 
				:=&
				(\widehat{\Delta} \otimes 1_{B})
				\otimes_{B\otimes A\otimes B}
				(1_{B} \otimes \Delta)
				= 1_B \in \KK
				(B, B).
			\end{split}
			\end{equation}
			We call $(\Delta,\widehat{\Delta})$ (\textit{Poincar{\'e}\textup{)} duality pair}.
		\end{definition}

The co-unit \(\Delta \in \KK_0(A\otimes B, \C)\), for example, determines a 
	cup-cap product map 
	\begin{equation}
	\label{equation:connesdualitymap}
	\Delta \cup \text{\textvisiblespace}\,\colon \KK_*(D_1, B\otimes D_2) \to \KK(A\otimes D_1, D_2),\;\;\;
	\Delta \cup f := (1_{A}\otimes_\C f) \otimes_{A \otimes B} \Delta.
	\end{equation}	
	The unit can be similarly used to define a system of maps dual to the above, and 
	some manipulations show that the maps are inverse if the zig-zag equations hold.

There are now a number of examples of Poincar{\'e} dual pairs of C*-algebras: 
see \cite{EEK}, \cite{KaPut:dual-SFT}, \cite{Em:bdActions}, \cite{KaPut:dual-SFT2}. The first \emph{noncommutative} example, a Poincar{\'e} duality between the irrational rotation algebra \(A_\theta\), is due to Connes (see \cite{Connes:NCG}) and is the primary interest of this article.

Although we have not included it in the definition, one hopes to find explicit \emph{cycles} for the classes \(\Delta\) and \(\widehat{\Delta}\) in a Poincar{\'e} duality. Connes has defined a cycle \cite[p.\ 604]{Connes:NCG}
whose class \(\Delta_\theta \in \KK_0(A_\theta \otimes A_\theta , \C)\) 
determines the duality for the irrational rotation algebra \(A_\theta\) alluded to above, but 
the formula he gave for the dual class 
\(\widehat{\Delta}_\theta\in\KK_0(\C , A_\theta\otimes A_\theta) = \K_0(A_\theta \otimes A_\theta)\) was of the type 
\( \widehat{\Delta} = x\otimes_\C y + x'\otimes_\C y' + \ldots \,\), where \(x,x'\in \K_*(A)\) and 
\(y, y'\in \K_*(B)\), and \(\otimes_\C\) refers to the external product in \(\KK\); this does not specify a cycle, but a class. It is 
this missing cycle, representing \(\widehat{\Delta}_\theta\), 
that this article aims to supply. 
	
Connes' class \(\Delta_{\theta} \in \KK_0(A_\theta\otimes A_\theta, \C)\)  can be defined in the following way. 

\begin{lemma}\label{def:Delta}
	On $L^{2} := L^{2} (\T\times\Z)$, define
	\[	
		\omega_{1} , \omega_{2} \colon C(\T)\to \B(L^{2})
		\quad\and\quad
		u, v \colon \Z^d \to \mathcal{U }(L^{2})
	\]
	for $f\in C(\T), k\in \Z, \xi\in L^{2} (\T ), \mathsf{e}_n\in \ell^{2} (\Z)$ by
	\[
	\begin{tikzcd}
		\omega_{1} (f) \left(\xi\otimes \mathsf{e}_n \right)
		 := 
		\left(\alpha_{-n} (f)\cdot\xi\right)\otimes \mathsf{e}_n 
		\\[-25pt]
	\omega_{2} (f) \left(\xi\otimes \mathsf{e}_n \right)
		 := 
		(f\cdot\xi)\otimes \mathsf{e}_n 
	\end{tikzcd}
	\quad\and\quad
	\begin{tikzcd}
		u_k \left(\xi\otimes \mathsf{e}_n \right)
 := 
		\xi\otimes \mathsf{e}_{k+n} 
		\\[-25pt]
		v_k \left(\xi\otimes \mathsf{e}_n \right)
		 := 
		(k.\xi)\otimes \mathsf{e}_{n-k}, 
	\end{tikzcd}
	\]
	where $k.\xi = \xi\circ\alpha_{-k}$ for $\xi$ in the subspace $C(\T) \subseteq L^{2} (\T)$.
	
	Then the pairs $(\omega_{1} , u )$ and $(\omega_{2} , v )$ are covariant for 
	$(C(\T), \alpha , \Z)$ and hence induce representations of $A_{\theta}$ on $L^{2}$. Moreover, these two representations commute and thus give a representation $\pi$ of $A_{\theta} \otimes A_{\theta}$ on $L^{2}$, so we obtain an unbounded cycle 
	\[
	 (L^{2}\oplus L^{2}, \pi\oplus\pi, d_{\Delta})
		\in
		\Psi (A_{\theta} \otimes A_{\theta}, \C)
	\]
	where 
	\[
		d_{\Delta} := 
		\begin{bmatrix} 0& \Dirac_\Z - \imaginary \Dirac_{\T} \\ \Dirac_\Z + \imaginary \Dirac_{\T} & 0\end{bmatrix}
	\]
	with
	\[
		\Dirac_{\Z} (V^m U^n) = 2\pi n \cdot V^m U^n,
		\quad
		\Dirac_{\T} (V^m U^n) = 2\pi m \cdot V^m U^n,\quad
		\textit{and }
		\dom (\Dirac_{\Z})=\dom (\Dirac_{\T})=\mathfrak{A}
		.
	\]
\end{lemma}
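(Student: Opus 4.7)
The plan is to verify, in order, the four substantive claims: (i) each of the pairs $(\omega_1, u)$ and $(\omega_2, v)$ is a covariant pair for $(C(\T),\alpha,\Z)$; (ii) the two resulting copies of $A_\theta$ commute in $\B(L^2)$; and (iii) the combined representation $\pi$ together with $d_\Delta$ satisfies the axioms of an unbounded Kasparov cycle.

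For (i), the identity $u_k\omega_1(f)u_k^{-1}=\omega_1(\alpha_k f)$ on elementary tensors $\xi\otimes \mathsf{e}_n$ follows directly from $u_k^{-1}(\xi\otimes \mathsf{e}_n)=\xi\otimes \mathsf{e}_{n-k}$ together with $\alpha_{-(n-k)}=\alpha_k\alpha_{-n}$. The covariance of $(\omega_2,v)$ is slightly subtler since $v_k$ both shifts the $\Z$-index and pulls back the $L^2(\T)$-factor by $\alpha_{-k}$, but it reduces to the observation $k.(f\cdot\xi)=(k.f)\cdot(k.\xi)$ on the dense subspace $C(\T)\subseteq L^2(\T)$.

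For (ii), the four required commutation relations are essentially mechanical. Both $[\omega_1(f),\omega_2(g)]=0$ and $[u_k,\omega_2(g)]=0$ follow by inspection, since the first pair consists of two multiplication operators on the $L^2(\T)$-factor and the second pair acts on disjoint tensor factors. The remaining relations $[\omega_1(f),v_k]=0$ and $[u_k,v_j]=0$ again follow from the intertwining $\alpha_k\alpha_{-n}=\alpha_{-(n-k)}$ and the fact that the shifts $u_k$ and $v_j$ act on the $\Z$-factor in commuting ways. The universal property of the minimal tensor product then yields $\pi\colon A_\theta\otimes A_\theta\to \B(L^2)$.

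For (iii), I would identify $L^2(\T)\otimes\ell^2(\Z)$ with the GNS space of $A_\theta$ for its canonical trace via the correspondence $V^m U^n\leftrightarrow e^{2\pi imx}\otimes \mathsf{e}_n$. In this basis, $\Dirac_\Z$ and $\Dirac_\T$ are diagonal number operators with pure point spectrum $2\pi\Z$, hence essentially self-adjoint and mutually commuting on the dense core $\mathfrak{A}$. Consequently $d_\Delta^2$ is diagonal with eigenvalues $4\pi^2(n^2+m^2)$, so $d_\Delta$ is self-adjoint on its natural domain and $(1+d_\Delta^2)^{-1}$ is compact. Boundedness of $[d_\Delta,\pi(a)]$ on a dense $*$-subalgebra reduces, via the Leibniz rule, to checking it on the four unitary generators $U,V$ from each copy of $A_\theta$: each such commutator produces a bounded scalar multiple of the same unitary, and the rapid-decay condition defining $\mathfrak{A}$ gives uniform control over arbitrary elements of the algebraic tensor product $\mathfrak{A}\otimes\mathfrak{A}$. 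The main obstacle I expect is not any individual step but tracking the sign conventions of the two $\Z$-actions through (i) and (ii), and verifying that $\pi$ applied to $\mathfrak{A}\otimes\mathfrak{A}$ preserves the joint smooth domain of $\Dirac_\Z$ and $\Dirac_\T$, so that the bounded-commutator estimate applies on an actual core of $d_\Delta$.
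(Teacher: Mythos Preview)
The paper does not supply a proof of this lemma: it is stated as a known fact (this is Connes' Dolbeault cycle, referenced to \cite{Connes:NCG}), followed only by the remark ``For the definition of $\mathfrak{A}$, see Equation~\eqref{eq:defn-mfA},'' and then the text moves directly on to Definition~\ref{def:fc}. So there is no argument in the paper to compare against.

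Your proposal is a correct and complete filling-in of the omitted details, and it is exactly the standard route. The covariance checks in (i) are right; in particular your observation that $k.\xi=\xi\circ\alpha_{-k}$ coincides with the induced action $\alpha_k$ on $C(\T)$ is what makes $(\omega_2,v)$ covariant. The four commutation checks in (ii) are routine as you say, and the identification in (iii) of $L^2(\T\times\Z)$ with the GNS space of the trace via $V^mU^n\leftrightarrow e^{2\pi i m x}\otimes\mathsf{e}_n$ is the natural way to see that $\Dirac_\Z,\Dirac_\T$ are commuting diagonal number operators, from which self-adjointness, compact resolvent, and bounded commutators with the generators all follow immediately. Your caution about sign conventions and about $\pi(\mathfrak{A}\otimes\mathfrak{A})$ preserving the smooth core is well placed but presents no real difficulty: under the GNS identification the two representations become left and right multiplication by $\mathfrak{A}$, which manifestly preserve $\mathfrak{A}$.
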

For the definition of $\mathfrak{A}$, see Equation~\eqref{eq:defn-mfA}.

\begin{definition}
\label{def:fc}
We let \(\Delta_\theta \in \KK_0(A_\theta \otimes A_\theta , \C)\) be the class of the 
cycle described in Lemma \ref{def:Delta}. 
\end{definition}

\section{Pairs of transverse Kronecker flows}

	The \emph{Kronecker flow} on the \(2\)-torus \(\T^{2}\) for angle \(\theta\) is given by
	 the $\R$-action on $\T^{2} = \R^{2} / \Z^{2} $ defined by
	\[
		\beta_{t} \mat{ x \\ y } = \mat{ x+t\theta \\ y+ t }.
	\]
	Here, we chose translation in the second and scaled translation in the first component in order to make it compatible with the M{\"o}bius transform, see Equation \eqref{eq:Mobius} and Lemma \ref{lem:2-tori-isos}.
 Similar to the case of irrational rotation, the corresponding transformation groupoid
 ${\B_{\theta}} := \T^{2}\rtimes_{\theta} \R$ is defined as:
	\[
		\gpdcomp{
			\mat{ x \\ y }
		}{
			\left(\mat{ x \\ y },\,t\right)
		}{
			\mat{x-t\theta \\ y-t }
		}{
			\left(\mat{ x-t\theta \\ y-t },\,s\right)
		}{
			\mat{ (x-t\theta)-s\theta \\ (y-t)-s }
		}{
			\left(\mat{ x \\ y },\,t+s\right)
		}{
			\mat{ x-(t+s)\theta \\ y-(t+s)}
		}{
			-5
		}
	\]
	In particular, $\left(\mat{ x \\ y },\,t\right)^{-1} = \left(\mat{ x-t\theta \\ y-t },\,-t\right)$. We denote the momentum maps of $\mathcal{B}_{\theta}$ by $s_{\theta}$ and $r_{\theta}$.
Orbits of the Kronecker flow are lines $\vec{x} + t \mat{ \theta \\ 1 }$ in the 2-torus \(\T^{2}\). 
If
	\begin{equation}\label{eq:def:x-axis}
		{X}
		:=
		\left\{
			t \mat{ 1 \\ 0 }
			\,\mid\, t\in \mathbb{R}
		\right\}
		\subseteq
		\mathbb{T}^2
		=
		(\mathcal{B}_{\theta})^{(0)}
	\end{equation}
	denotes the $x$-axis, then the associated reduction groupoid,
	\[
		\mathcal{R}_{\theta}
	 := 
		s_{\theta}\inv ({X})\cap r_{\theta}\inv ({X}) 
		\subseteq \mathcal{B}_{\theta},
	\]
	turns out to be isomorphic to $\mathcal{A}_{\theta}$: an element $(\mat{ x \\ y },\,s)$ is in $\mathcal{R}_{\theta}$ if and only if $[y]=[0]$ and $s\in \mathbb{Z}$, and the map
		\begin{equation}\label{eq:Rth=Ath}
			\begin{tikzcd}
				\mathcal{R}_{\theta} \ar[r] &\mathbb{T}\times \mathbb{Z}\\[-20pt]
				\left(\mat{ x \\ y },\,s\right) \ar[r, mapsto]
				&
				\left([x],\,s\right)
			\end{tikzcd}
		\end{equation}
	is a groupoid isomorphism between $\mathcal{R}_{\theta}$ and $\mathcal{A}_{\theta}$.
	
	In particular, since ${X}$ is closed and meets every orbit, and since the restriction of $\mathcal{B}_{\theta}$'s range and source maps to $s_{\theta}\inv ({X})$ and to $r_{\theta}\inv ({X})$ are open maps onto their image, Example 2.7 in \cite{MRW:Grpd} 
	implies that we have an equivalence $\mathcal{X}_{\theta}$ of groupoids, 
		\[
			\mathcal{X}_{\theta}
			\colon
			\qquad
			\mathcal{B}_{\theta}
			\curvearrowright
			s_{\theta}\inv ({X})	
			\curvearrowleft
			\mathcal{A}_{\theta}.
		\]
	
Instead of reducing $\mathcal{B}_{\theta}$ to its $x$-axis, we could have reduced to a line $t\mat{ q \\ -p }$ of slope $\tfrac{-p}{q}$ for $p,q$ relatively prime, in which case we would have gotten an equivalence between $\mathcal{B}_{\theta}$ and $\mathcal{A}_{M(\theta)}$ where
\begin{equation}\label{eq:Mobius}
	 M(\theta)=\tfrac{ m\theta + n }{p \theta + q} \text{ for } M=\mat{ m & n \\ p & q }\in \S\module_{2}(\mathbb{Z})
\end{equation}
is the M{\"o}bius transform of $\theta$. 
An alternative approach is to change the slope on the foliated torus instead of the rotational angle on the circle, using the following:

\begin{lemma}\label{lem:2-tori-isos}
	For any $M=\mat{m&n\\p&q}$ in $\G\module_{2} (\mathbb{Z})$, the transformation groupoids $\mathcal{B}_{\theta}$ and $\mathcal{B}_{M(\theta)}$ are isomorphic via 
	\begin{align*}
		\varphi_{\theta}^{M}\colon\quad
		\mathcal{B}_{\theta}
		&\longrightarrow 
		\mathcal{B}_{M(\theta)} 
		\\
		\left(\mat{x\\y},\, t\right)
		&\longmapsto
		\left(M\mat{x\\y},\, t(p\theta+q)\right)
	\end{align*}
\end{lemma}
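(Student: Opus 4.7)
The proof is essentially a direct verification that the formula defines a groupoid isomorphism. The structure of what needs checking is: (i) well-definedness on $\T^{2}\times\R$, (ii) compatibility with range, source, composition, and inverses, and (iii) bijectivity. Here is how I would organize it.

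First I would observe that $M\in \GL_2(\Z)$ acts by a homeomorphism of $\T^{2}=\R^2/\Z^2$, so the spatial component of $\varphi^M_\theta$ descends to $\T^2$. The time component $t\mapsto t(p\theta+q)$ is a well-defined automorphism of $\R$ provided $p\theta+q\neq 0$; since $M\in\GL_2(\Z)$ has $p\theta+q=0$ only when $\theta=-q/p\in\Q$ (which for $p=0$ would force $\det M=0$), this is ensured by the assumption that $\theta$ is irrational.  Continuity of $\varphi^M_\theta$ is immediate since it is induced by a linear map on $\R^2\times\R$.

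Next, I would check groupoid compatibility.  The range map sends $\varphi^M_\theta(\mat{x\\y},t)$ to $M\mat{x\\y}$, which is $\varphi^M_\theta$ applied to the range of $(\mat{x\\y},t)$.  The key computation is for the source map: the source of $\varphi^M_\theta(\mat{x\\y},t)$ is
\[
M\mat{x\\y}- t(p\theta+q)\mat{M(\theta)\\1} = M\mat{x\\y} - t\mat{m\theta+n\\p\theta+q} = M\mat{x\\y}-tM\mat{\theta\\1},
\]
which equals $M\left(\mat{x\\y}-t\mat{\theta\\1}\right)$, that is, $\varphi^M_\theta$ applied to the source of $(\mat{x\\y},t)$.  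This computation is the heart of the lemma; the scalar factor $p\theta+q$ in the time coordinate is \emph{precisely what is needed} so that the rescaled translation vector $\mat{M(\theta)\\1}$ in $\mathcal{B}_{M(\theta)}$ matches $M\mat{\theta\\1}$. Composition then follows: for two composable arrows in $\mathcal{B}_\theta$, both sides of the homomorphism equation produce $(M\mat{x\\y},(t+s)(p\theta+q))$ in $\mathcal{B}_{M(\theta)}$.  Compatibility with inverses follows from compatibility with composition and units (noting $\varphi^M_\theta$ sends units to units since $t=0\mapsto t(p\theta+q)=0$).

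Finally, bijectivity is obtained by exhibiting the inverse $\varphi^{M^{-1}}_{M(\theta)}$.  The only non-routine verification here is that the scalar factors multiply to $1$: writing $M^{-1}=\tfrac{1}{\det M}\mat{q&-n\\-p&m}$ and computing
\[
p'M(\theta)+q' = \tfrac{1}{\det M}\left(-p\cdot\tfrac{m\theta+n}{p\theta+q}+m\right) = \tfrac{mq-pn}{\det M\,(p\theta+q)}=\tfrac{1}{p\theta+q},
\]
so that $(p\theta+q)(p'M(\theta)+q')=1$, giving $\varphi^{M^{-1}}_{M(\theta)}\circ\varphi^M_\theta=\id$ and similarly for the other composition.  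The main (mild) obstacle is just recognizing why the scalar $p\theta+q$ must appear; once the source-compatibility calculation is written down, everything else is bookkeeping.
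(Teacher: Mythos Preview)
Your proof is correct and is exactly the direct verification one would expect; the paper itself omits the proof entirely (treating it as routine) and immediately proceeds to note the functoriality $\varphi_{M(\theta)}^{N}\circ\varphi_{\theta}^{M}=\varphi_{\theta}^{NM}$, which your inverse computation in fact establishes as a special case.
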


Note that $\varphi_{M(\theta)}^{N}\circ \varphi_{\theta}^{M} = \varphi_{\theta}^{NM}$ for $N$ another such matrix and $\varphi_{\theta}^{\mathbbm{1}_{2}} = \mathrm{id}_{\mathcal{B}_{\theta}}.$ Further, even though $M(\theta)= (-M)(\theta)$, we should note that $\varphi_{\theta}^{M} \neq \varphi_{\theta}^{-M}.$

\begin{definition}
	Let $\mathcal{X}_{\theta}^M$ be the $\mathcal{B}_{M(\theta)}-\mathcal{A}_{\theta}$ equivalence constructed out of $\mathcal{X}_{\theta}$ via $\varphi_{\theta}^{M}$.
\end{definition}

Given two matrices $M,N\in\G\module_{2} (\mathbb{Z})$, then $\mathcal{X}_{\theta}^{M} \times \mathcal{X}_{\theta}^{N}$ is a groupoid equivalence between $\mathcal{B}_{ M(\theta)}\times \mathcal{B}_{ N(\theta)}$ and $\mathcal{A}_{\theta}\times \mathcal{A}_{\theta}$. Moreover, if $ M(\theta)\neq N(\theta)$, the diagonal 
\[
	D_{M,N}^{}:= \left\{ [x,y,x,y] \,\vert\, [x,y]\in\mathbb{T}^2\right\}
	\subseteq \mathbb{T}^{2}\times \mathbb{T}^{2} = (\mathcal{B}_{ M(\theta)}\times\mathcal{B}_{ N(\theta)})^{(0)}
\]
 meets every orbit. Hence, $\mathcal{B}_{ M(\theta)}\times \mathcal{B}_{ N(\theta)}$ is equivalent to the reduction groupoid 
	$$\mathcal{D}_{M,N}^{}:=(\mathcal{B}_{ M(\theta)}\times\mathcal{B}_{ N(\theta)})^{D_{M,N}^{}}_{D_{M,N}^{}}$$
via $r_{\theta}\inv (D_{M,N}^{})$, and all in all we have the following chain of equivalences:	
	\[
		\mathcal{D}_{M,N}^{}
		\,\curvearrowright\,
		r_{\theta}\inv (D_{M,N}^{})
		\,\curvearrowleft\,
		\mathcal{B}_{ M(\theta)}\times\mathcal{B}_{ N(\theta)}
		\,\curvearrowright\,
		\mathcal{X}_{\theta}^{M} \times \mathcal{X}_{\theta}^{N}
		\,\curvearrowleft\,
		\mathcal{A}_{\theta}\times \mathcal{A}_{\theta}
		.
	\]
	Thus, we can construct a Morita equivalence from the C*-algebra of $\mathcal{D}_{M,N}^{}$ to $A_{\theta}\otimes A_{\theta}$. It will turn out that $\mathcal{D}_{M,N}^{}$ is an {\'e}tale groupoid with compact unit space, so its C*-algebra is unital, and the Morita equivalence is actually a right f.g.p.\ module over $A_{\theta}\otimes A_{\theta}$, \emph{i.e.}\ corresponds to a $\K$-theory class.
	
	While this description of the $\K$-theory class is nice and geometric, we will try to find an easier one. To this end, consider the following diagram:
		\begin{equation}
		\begin{tikzcd}[row sep=large, column sep=huge]
			(\mathcal{D}_{M,N}^{})^{(0)}=D_{M,N}^{}
			\ar[r, phantom, "\subseteq"]
			&[-40pt] (\mathcal{B}_{ M(\theta)}\times \mathcal{B}_{ N(\theta)})^{(0)} 
			\ar[r, phantom, "\subseteq"]
			&[-40pt] \mathcal{B}_{ M(\theta)}\times \mathcal{B}_{ N(\theta)}
					\ar[r, leftrightarrow, squiggly, "\mathcal{X}_{\theta}^{M} \times \mathcal{X}_{\theta}^{N}"]
			&	\mathcal{A}_{ \theta}\times \mathcal{A}_{ \theta}
			\\
			? \ar[u, dashed]	\ar[r, phantom, "\subseteq"]
			&
			(\mathcal{B}_{\theta}\times \mathcal{B}_{ \theta})^{(0)}
			\ar[r, phantom, "\subseteq"] 
			\ar[ur, phantom, "\circlearrowleft"']
						 \ar[u]
			& \mathcal{B}_{ \theta}\times \mathcal{B}_{ \theta}
				\ar[r, leftrightarrow, squiggly, "\mathcal{X}_{\theta} \times \mathcal{X}_{\theta}"]
				\ar[u, "\varphi_{\theta}^{M}\times \varphi_{\theta}^{N}"', "\cong"]
				\ar[ur, phantom, "\circlearrowleft"']
			&	\mathcal{A}_{ \theta}\times \mathcal{A}_{ \theta}		
			\ar[u, equal]				
		\end{tikzcd}\label{diag:F}
		\end{equation}
	
	The right-hand square of the diagram commutes since, by definition, the map $\varphi_{\theta}^{M}\times \varphi_{\theta}^{N}$ turns the equivalence $\mathcal{X}_{\theta}\times\mathcal{X}_{\theta}$ into the equivalences $\mathcal{X}_{\theta}^{M}\times\mathcal{X}_{\theta}^{N}$. The middle square commutes since $\varphi_{\theta}^{M}\times \varphi_{\theta}^{N}$ is a groupoid isomorphism, \emph{i.e.}\ it maps unit space to unit space. We want to fill in the bottom left to make the left-hand square commute as well. In other words, the question mark represents the preimage of $D_{M,N}^{}$ under $\varphi_{\theta}^{M}\times \varphi_{\theta}^{N}$, which we compute to be
	\begin{align}\label{eq:def-transversal-Fg}
		\left( \varphi_{M\inv}^{M(\theta)}\times \varphi_{N\inv}^{N(\theta)} \right) (D_{M,N}^{})
		&=
		\left\{
			\left(M\inv \mat{x\\y}, 0, N\inv \mat{x\\y}, 0\right)
			\,\vert\,
			\mat{x\\y}\in\mathbb{T}^2
		\right\}
		.
	\end{align}
	This justifies denoting this subset of $(\mathcal{B}_{\theta}\times \mathcal{B}_{ \theta})^{(0)}$ by $F_{g}^{}$ for $g:=N\inv M$. 
	As far as $\K$-theory is concerned, the f.g.p.\ $A_{\theta} \otimes A_{\theta}$-module constructed out of the bottom row of Diagram~\ref{diag:F},
	\[
		\mathcal{F}_{g}^{}
		:=
		r_{\theta}\inv (F_{g}^{}) \cap s_{\theta}\inv (F_{g}^{})
		\act
		r_{\theta}\inv (F_{g}^{}) 
		\curvearrowleft
		\mathcal{B}_{\theta}\times \mathcal{B}_{\theta}
		\act
		\mathcal{X}_{\theta}\times \mathcal{X}_{\theta}
		\curvearrowleft
		\mathcal{A}_{\theta} \times \mathcal{A}_{\theta}
		=:
		\mathcal{A}
		,
	\]
	is the same as the module constructed from the top row,
	\[
		\mathcal{D}_{M,N}^{}
		=
		r_{\theta}\inv (D_{M,N}^{}) \cap s_{\theta}\inv (D_{M,N}^{})
		\act
		r_{\theta}\inv (D_{M,N}^{})
		\curvearrowleft
		\mathcal{B}_{M(\theta)}\times \mathcal{B}_{N(\theta)}
		\act
		\mathcal{X}_{\theta}^{M} \times \mathcal{X}_{\theta}^{N} 
		\curvearrowleft
		\mathcal{A}
		,
	\]
	by commutativity of the diagram, and since the induced C*-isomorphism between the C*-algebras of $\mathcal{D}_{M,N}^{}$ and $\mathcal{F}_{g}^{}$ is unital. The clear advantage of considering $\mathcal{F}_{g}^{}$ instead of $\mathcal{D}_{M,N}^{}$ is that we only have to deal with the matrix $g= N\inv M$, and not with all $8$ entries of $M$ and $N$.
%
		The inequality $M(\theta)\neq N(\theta)$ (\emph{i.e.} $g(\theta)\neq \theta$), which we needed to construct $\mathcal{D}_{M,N}$, can be rephrased to 
		\begin{equation}\label{eq:def-mu}
			\mu (g) :=({a} \theta + {b}) - ({c} \theta + {d})\theta \neq 0
			\quad\text{ where }
			g = \mat{{a} & {b} \\ {c} & {d}}.
		\end{equation}
		In yet again other words: Either ${a}\neq {d}$ or ${b} \neq{c} \theta^2$.
		
	We can construct the equivalence between $\mathcal{F}_{g}^{}$ and $\A =	\mathcal{A}_{\theta} \times \mathcal{A}_{\theta}$ using $\mathcal{Y}_{g}^{}:=r_{\theta}\inv (F_{g}^{}) $ and $\mathcal{X}:=\mathcal{X}_{\theta} \times \mathcal{X}_{\theta}$ as
	\[
		\mathcal{F}_{g}^{}
		\,\curvearrowright\,
			\mathcal{Y}_{g}^{}
			\ast_{\B}
			\mathcal{X}
		\,\curvearrowleft\,
		\A
		,
	\]
	\emph{c.f.}\ Proposition~\ref{prop:gp-equiv-Zg} for the details in the case where $g$ is upper triangular. This equips $C_{c}(\mathcal{Y}_{g}^{}
			\ast_{\B}
			\mathcal{X})$ with a $C_c(\mathcal{F}_{g}^{})-C_c(\A)$ pre-imprimitivity bimodule structure, 
	which can be completed to a $C^{*}(\mathcal{F}_{g}^{})-C^{*}(\A)$ Morita equivalence bimodule we call $\mathsf{Z}_{g}$. 
	We will prove that \(C^{*}(\mathcal{F}_{g}^{})\) is unital (see Lemma~\ref{lem:mcFg-is-transf-gpd} below), and so since its identity element acts by a compact operator on the bimodule, 
	it is finitely generated projective as a right \(C^{*}(\A)\)-module (see Corollary \ref{cor:Zg-fgp} below), \emph{i.e.}\ $\iota^\ast (\mathsf{Z}_{g})$ defines a class in $\KK_0(\C, C^{*}(\A))$, where $\iota\colon \mathbb{C} \to C^{*}(\mathcal{F}_{g}^{})$ is the unique unital map.
	
	\begin{definition}\label{def:Lg}
	We let 
	\[[\module_{g}] := \iota^\ast (\mathsf{Z}_{g}) \in \KK_0(\C, C^{*}(\A)) = \KK_0(\C, A_{\theta} \otimes A_{\theta})\]
	be the class of the finitely generated projective right \(C^{*}(\A)\)-module 
	constructed from any \(g\in\G\module_{2} (\Z )\) satisfying Equation (\ref{eq:def-mu}).
	\end{definition}

	We end this section with a good description of $\mathcal{X}_{\theta} = s_{\theta}\inv ({X})$ 
	suitable for later computations. 
		
			\begin{lemma}\label{lem:def-Xth}
				Ket $X$ be the $x$-axis in $\mathbb{T}^2 = (\mathcal{B}_{\theta})^{(0)}$ as in Equation \eqref{eq:def:x-axis}. If we use the bijection
					\[
					\begin{tikzcd}
						s_{\theta}\inv ({X})
						\ar[r]
						&
						\mathbb{T}\times\mathbb{R}
						\\[-15pt]
						( \mat{ x \\ y },\, s )
						\ar[r, mapsto]
						&
						\left( [x-s\theta],\, s \right)
						\\[-20pt]
						(\mat{ x +s\theta \\ s },\, s )
						&
						\ar[l, mapsto]
						( [x] ,\, s )
					\end{tikzcd}
					\]
				to identify $\mathcal{X}_{\theta}$ with $\mathbb{T}\times\mathbb{R}$, then $\mathcal{X}_{\theta}$ has the following actions by $\mathcal{B}_{\theta}$ and $\mathcal{A}_{\theta}$:
					\[
					\begin{tikzcd}
						\mathcal{B}_{\theta} \curvearrowright \mathcal{X}_{\theta}:
						&
						\left( \mat{ x +(s+r)\theta \\ s+r } ,\, r\right).( [x] ,\,s )
						=
						( [x] ,\, r+s )
						\\[-15pt]
						\mathcal{X}_{\theta} \curvearrowleft \mathcal{A}_{\theta}:
						&
						( [x] ,\,s ).([x] ,\,k)
						=
						( [ x - k \theta] ,\,s+k)
					\end{tikzcd}
					\]
				where we used the map from Equation~\eqref{eq:Rth=Ath} to identify $s_{\theta}\inv ({X})\cap r_{\theta}\inv ({X}) \cong \mathcal{A}_{\theta}$.
			\end{lemma}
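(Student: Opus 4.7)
My plan is to prove the lemma by direct verification from the definitions of $\mathcal{B}_\theta$, $\mathcal{A}_\theta$, $\mathcal{R}_\theta$, and the composition rules displayed earlier in this section. First, I would check that the stated assignments are mutually inverse bijections. An element $(\mat{x \\ y}, s) \in \mathcal{B}_\theta$ lies in $s_\theta^{-1}(X)$ precisely when its source $\mat{x - s\theta \\ y-s}$ lies on the $x$-axis, i.e.\ when $[y - s] = [0]$ in $\T$. Given this, $([x - s\theta], s) \in \T \times \R$ is well defined; conversely, any $([x], s)$ pulls back to $(\mat{x + s\theta \\ s}, s) \in \mathcal{B}_\theta$, whose source $\mat{x\\0}$ lies on $X$. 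Substitution shows the two maps are inverse to one another, with the mild caveat of checking that everything is independent of the chosen real lifts of classes in $\T$ (which it is, because the formulas respect integer translations in the first coordinate).

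Next, I would verify the left $\mathcal{B}_\theta$-action. Under the bijection, $([x], s)$ is represented by $(\mat{x+s\theta \\ s}, s)$, whose range is $\mat{x+s\theta \\ s}$. The proposed arrow $(\mat{x+(s+r)\theta \\ s+r}, r)$ has source $\mat{(x+(s+r)\theta) - r\theta \\ (s+r)-r} = \mat{x+s\theta \\ s}$, so the composition is defined. Using the multiplication rule in $\mathcal{B}_\theta$, the product equals $(\mat{x+(s+r)\theta \\ s+r}, r+s)$, which under the bijection is precisely $([x], r+s)$, as claimed.

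Finally, I would check the right $\mathcal{A}_\theta$-action. Via the identification $\mathcal{R}_\theta \cong \mathcal{A}_\theta$ of Equation~\eqref{eq:Rth=Ath}, the arrow $([x], k) \in \mathcal{A}_\theta$ lifts to $(\mat{x \\ 0}, k) \in \mathcal{R}_\theta \subseteq \mathcal{B}_\theta$. The source of $(\mat{x + s\theta \\ s}, s)$ is $\mat{x \\ 0}$, matching the range of this lift, so we may compose. The $\mathcal{B}_\theta$-composition rule gives $(\mat{x+s\theta \\ s}, s+k)$, and rewriting the first entry as $(x-k\theta) + (s+k)\theta$ shows that this element represents $([x-k\theta], s+k)$ in the $\T\times\R$ picture.

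No step presents a genuine obstacle; the lemma is essentially a change-of-coordinates bookkeeping exercise. The only point that requires some care is making sure that every formula involving a real lift of a class in $\T$ or $\T^2$ is in fact independent of that lift, but each identity above respects the relevant integer shifts, so this is automatic. The payoff is the clean parameterization of $\mathcal{X}_\theta$ by $\T \times \R$ that will be used in later computations.
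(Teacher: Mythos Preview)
Your proposal is correct and matches the paper's approach: the paper simply states that the proof is straightforward, and you have supplied exactly the direct verification from the groupoid composition rules that this remark invites. Each of your three checks (bijection, left $\mathcal{B}_\theta$-action, right $\mathcal{A}_\theta$-action) is carried out correctly.
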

	The proof is straight forward.
	Let us next describe $\mathcal{F}_{g}^{}$:
	one checks
	\[
	r_{\theta}\inv ({F}_{g}^{})
			=
			\{
				( \mat{ x \\ y },t_{1},g\mat{ x \\ y }, t_{2})
				\,\vert\,
				\mat{ x \\ y }\in\mathbb{T}^2, t_{1},t_{2}\in\mathbb{R}
			\}
	\]
	and thus 
	\begin{align*}
		\mathcal{F}_{g}^{}
				&=
				\left\{
					\left( \mat{ x \\ y }, \tfrac{k+l\theta}{\mu(g)},g \mat{ x \\ y },\tfrac{k({c}\theta+{ d })+l({a}\theta+{b})}{\mu(g)}\right)
					\,\vert\,
					k,l\in\mathbb{Z}
				\right\},
	\end{align*}
	where ${\mu(g)} $ is as in Equation~\eqref{eq:def-mu}. In the following, we will write $\mat{ x \\ y } + t \slope := \mat{ x +t\theta\\ y+t }$.
	
	\begin{lemma}\label{lem:mcFg-is-transf-gpd}
		The groupoid $\mathcal{F}_{g}^{}$ is isomorphic to the transformation groupoid of the following $\mathbb{Z}^2$ action on $\mathbb{T}^2$:
		\[
			\begin{tikzcd}
				\mathbb{T}^2 \curvearrowleft \mathbb{Z}^2:
				&
				\mat{ x \\ y }.\left( k,l \right)
				=
				\mat{ x \\ y } + \tfrac{k+l\theta}{\mu(g)} \slope
			\end{tikzcd}
		\]		
		In particular, $\mathcal{F}_{g}^{}$ is {\'e}tale with compact unit space and its $C^*$-algebra $C^{*}(\mathcal{F}_{g}^{})$ is therefore unital.
	\end{lemma}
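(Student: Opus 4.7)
The plan is to read off an explicit isomorphism from the parametrization of $\mathcal{F}_g^{}$ displayed immediately before the lemma, and then to deduce the topological and algebraic consequences from standard facts about {\'e}tale groupoids.

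First, the display preceding the lemma exhibits a bijection between the set underlying $\mathcal{F}_g^{}$ and $\T^2 \times \Z^2$: each element is uniquely parametrized by $\bigl(\mat{x\\y},(k,l)\bigr)$, and the time parameters $t_1 = \tfrac{k+l\theta}{\mu(g)}$ and $t_2 = \tfrac{k({c}\theta+{d})+l({a}\theta+{b})}{\mu(g)}$ depend continuously on this data. I identify the unit space $F_g^{}$ with $\T^2$ via the evident homeomorphism $\mat{x\\y}\mapsto(\mat{x\\y},0,g\mat{x\\y},0)$, which is well-defined since $g$ acts continuously on $\T^2$.

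Next, to verify that this bijection is a groupoid isomorphism with $\T^2\rtimes\Z^2$, I identify range and source. The range is $\mat{x\\y}$ by construction. A short computation shows that the defining relations for $t_1,t_2$ are exactly what is needed to ensure $g(\mat{x\\y}-t_1\slope) \equiv g\mat{x\\y}-t_2\slope$ mod $\Z^2$, so the source lies in $F_g^{}$ and identifies with $\mat{x\\y}-\tfrac{k+l\theta}{\mu(g)}\slope\in\T^2$; following the paper's source-is-inverse-action convention (as in $\mathcal{A}_\theta$), this is exactly the source of $(\mat{x\\y},(k,l))$ in $\T^2\rtimes\Z^2$. Compatibility with composition is then immediate from the linearity of $(k,l)\mapsto(t_1,t_2)$.

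Finally, since $\Z^2$ is discrete and acts by homeomorphisms of the compact space $\T^2$, the transformation groupoid $\T^2\rtimes\Z^2$ is {\'e}tale with compact unit space; transporting via the isomorphism, so is $\mathcal{F}_g^{}$. The standard fact that an {\'e}tale groupoid with compact unit space has unital $C^{*}$-algebra --- with unit the characteristic function of the unit space --- concludes the proof. The main obstacle, such as it is, lies in the careful bookkeeping of the relations between $t_1, t_2, k, l$ and the defining equations of $F_g^{}$ needed to land the source back in $F_g^{}$; the rest is routine.
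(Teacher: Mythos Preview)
Your proposal is correct and follows essentially the same approach as the paper: both write down the explicit bijection $\mathcal{F}_g \to \T^2\rtimes\Z^2$ sending $\bigl(\mat{x\\y},\tfrac{k+l\theta}{\mu(g)},g\mat{x\\y},\tfrac{k(c\theta+d)+l(a\theta+b)}{\mu(g)}\bigr)$ to $(\mat{x\\y},k,l)$, observe it is a groupoid isomorphism, and deduce the \'etale and unitality claims from standard facts. You supply a bit more detail on why the source lands back in $F_g^{}$ and why composition is preserved, whereas the paper simply asserts the map is an isomorphism; otherwise the arguments are identical.
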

	
	\begin{proof}
		The map
			\begin{align}
			\begin{split}\label{eq:Fg=Z2T2}
				\mathcal{F}_{g}=r_{\theta}\inv ({F}_{g}^{})\cap s\inv ({F}_{g}^{})
				&\longrightarrow
				\mathbb{T}^{2}\rtimes\mathbb{Z}^{2}
				\\
				\left(\mat{ x \\ y }, \tfrac{k+l\theta}{\mu(g)}, g \mat{ x \\ y },\tfrac{k({c}\theta+{ d })+l({a}\theta+{b})}{\mu(g)}\right)
				&\longmapsto
				( \mat{ x \\ y },k, l)
			\end{split}
			\end{align}
		is an isomorphism of groupoids, where the right-hand side is the alleged transformation groupoid. In particular, the unit space of $\mathcal{F}_{g}^{}$ is $\mathbb{T}^2$ and hence compact.
		
		Since $\mathbb{Z}^2$ is discrete, the transformation groupoid is {\'e}tale, 
		and so its unit space is clopen. Its characteristic function is hence a continuous, compactly supported function on $\mathcal{F}_{g}$ and serves as unit in $C^*(\mathcal{F}_{g})$.
	\end{proof}
	
	\begin{corollary}\label{cor:Zg-fgp}
		The bimodule $\mathsf{Z}_{g}$ is finitely generated projective as a right $C^{*}(\mathcal{A})$-module, so $\module_g=\iota^\ast (\mathsf{Z}_{g})$ defines a class in $\KK (\mathbb{C}, C^{*}(\mathcal{A}))$ where $\iota\colon \mathbb{C} \to C^{*}(\mathcal{F}_{g}^{})$ is the unique unital map.
	\end{corollary}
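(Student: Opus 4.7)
The plan is to derive the corollary directly from Lemma~\ref{lem:mcFg-is-transf-gpd} together with the standard characterization of finitely generated projective Hilbert modules. Recall that $\mathsf{Z}_g$ was constructed as a $C^*(\mathcal{F}_g)$--$C^*(\mathcal{A})$ Morita equivalence bimodule, so by definition the left action gives an isomorphism
\[
C^*(\mathcal{F}_g) \xrightarrow{\cong} \mathcal{K}_{C^*(\mathcal{A})}(\mathsf{Z}_g)
\]
onto the $C^*$-algebra of $C^*(\mathcal{A})$-compact operators on $\mathsf{Z}_g$.

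First I would invoke Lemma~\ref{lem:mcFg-is-transf-gpd} to conclude that $C^*(\mathcal{F}_g)$ is unital, with unit given by the characteristic function of the unit space $\mathbb{T}^2 \subseteq \mathcal{F}_g$. Under the isomorphism above, this unit must correspond to the identity operator $\id_{\mathsf{Z}_g}$, so $\id_{\mathsf{Z}_g} \in \mathcal{K}_{C^*(\mathcal{A})}(\mathsf{Z}_g)$. By the standard fact (see, e.g., Lance's Hilbert modules book) that a Hilbert $B$-module $E$ is finitely generated projective if and only if $\id_E$ is a compact operator, it follows that $\mathsf{Z}_g$ is finitely generated projective as a right $C^*(\mathcal{A})$-module.

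Finally, a finitely generated projective right Hilbert module over a $C^*$-algebra $B$, equipped with the zero operator, is a Kasparov cycle and so defines a class in $\KK_0(\mathbb{C}, B)$; more generally, the pair $(\mathsf{Z}_g, 0)$ is a Kasparov cycle for $\KK_0(C^*(\mathcal{F}_g), C^*(\mathcal{A}))$, and pulling back along the unital $*$-homomorphism $\iota\colon \mathbb{C} \to C^*(\mathcal{F}_g)$ yields the desired class $\mathcal{L}_g = \iota^*(\mathsf{Z}_g) \in \KK_0(\mathbb{C}, C^*(\mathcal{A}))$. There is no real obstacle here — the statement is a straightforward consequence of the theory of Morita equivalence and Hilbert modules, once unitality of $C^*(\mathcal{F}_g)$ has been established in the previous lemma.
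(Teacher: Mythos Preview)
Your proof is correct and follows essentially the same approach as the paper: both argue that unitality of $C^*(\mathcal{F}_g)$ (from Lemma~\ref{lem:mcFg-is-transf-gpd}) forces $\id_{\mathsf{Z}_g}$ to be a compact operator via the Morita equivalence, and then invoke the standard characterization of finitely generated projective Hilbert modules. The paper cites \cite{GVF:NCG}, Proposition~3.9, for this last step where you cite Lance, but the argument is otherwise identical.
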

	
	\begin{proof}
		We have seen that $C^* (\mathcal{F}_{g}^{})$, which acts by compact operators on the Morita bimodule $\mathsf{Z}_{g}$, is unital. Therefore, the operator $\mathrm{id}_{\mathsf{Z}_{g}}$ is $C^*(\mathcal{A})$-compact, which means $\mathsf{Z}_{g}$ is f.g.p.\ by \cite{GVF:NCG}, Proposition 3.9.
	\end{proof}

	\begin{lemma}\label{lem:def-Yg} 
		If we use the bijection
		\begin{align*}
			\mathcal{Y}_{g} = r_{\theta}\inv ({F}_{g}^{})
			&\longrightarrow
			\mathbb{R}^{2}\times\mathbb{T}^{2}\\
			(\mat{ x \\ y }, t_{1}, g\mat{ x \\ y }, t_{2})
			&\longmapsto
			( t_{1}, t_{2}, \mat{ x \\ y })
		\end{align*}
		to identify $\mathcal{Y}_{g}^{} \cong \mathbb{R}^{2}\times\mathbb{T}^{2}$, then the right action by $\mathcal{B}:=\mathcal{B}_{\theta}\times \mathcal{B}_{\theta}$
		on an element $	(t_{1}, t_{2}, \mat{ x \\ y }) \in \mathcal{Y}_{g}^{}$ is given by
		\[
			(t_{1}, t_{2}, \mat{ x \\ y })
			.
			(\mat{ x \\ y }-t_{1}\slope,r_{1}, g\mat{ x \\ y }-t_{2}\slope, r_{2})
			=
			( t_{1}+r_{1}, t_{2}+r_{2},\mat{ x \\ y }).
		\]	
		
		If we further use the bijection in Equation~\eqref{eq:Fg=Z2T2} to identify $\mathcal{F}_{g}^{} \cong \mathbb{T}^{2}\rtimes\mathbb{Z}^{2}$, then the left action of $\mathcal{F}_{g}^{}$ on $\mathcal{Y}_{g}^{}$ is by
		\begin{gather*}
			(\mat{ x \\ y }+\tfrac{k+l\theta}{\mu(g)}\slope,k,l) 
			.
			(t_{1},t_{2},\mat{ x \\ y })
			=
			\left(
				\tfrac{k+l\theta}{\mu(g)}+t_{1},
				\tfrac{k({c}\theta+{ d })+l({a}\theta+{b})}{\mu(g)}+t_{2},
				\mat{ x \\ y }+\tfrac{k+l\theta}{\mu(g)}\slope
			\right).
		\end{gather*}
	\end{lemma}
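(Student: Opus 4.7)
The result is essentially bookkeeping: the bijection is evident from the explicit description of $\mathcal{Y}_g = r_\theta^{-1}(F_g) = \{(\vec{x}, t_1, g\vec{x}, t_2) : \vec{x} \in \mathbb{T}^2,\, t_1,t_2 \in \mathbb{R}\}$ given just above Lemma~\ref{lem:mcFg-is-transf-gpd}, and both actions are nothing but restrictions of the groupoid multiplication in $\mathcal{B} = \mathcal{B}_\theta \times \mathcal{B}_\theta$. The plan is therefore to identify, for each action, the element of $\mathcal{B}$ responsible, verify that the source/range conditions for composability are satisfied, and then apply the coordinate-wise composition rule for $\mathcal{B}_\theta$ recalled at the start of Section~3.

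For the right action, the first step is to compute $s(\eta)$ for $\eta = (\vec{x}, t_1, g\vec{x}, t_2) \in \mathcal{Y}_g$, which equals $(\vec{x} - t_1\slope, g\vec{x} - t_2\slope)$. Any $\beta \in \mathcal{B}$ composable with $\eta$ on the right must then have range $(\vec{x} - t_1\slope, g\vec{x} - t_2\slope)$, and so must be of the form $\beta = (\vec{x} - t_1\slope, r_1, g\vec{x} - t_2\slope, r_2)$ for some $r_1,r_2\in\mathbb{R}$. Coordinate-wise composition in $\mathcal{B}_\theta$ then immediately gives $\eta \cdot \beta = (\vec{x}, t_1+r_1, g\vec{x}, t_2+r_2)$, which under the bijection becomes $(t_1+r_1, t_2+r_2, \vec{x})$, as claimed.

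For the left action, an element of $\mathcal{F}_g$ corresponding to $(\vec{z}, k, l) \in \mathbb{T}^2 \rtimes \mathbb{Z}^2$ via \eqref{eq:Fg=Z2T2} equals
\[
\sigma = \left(\vec{z},\, \tfrac{k+l\theta}{\mu(g)},\, g\vec{z},\, \tfrac{k(c\theta+d)+l(a\theta+b)}{\mu(g)}\right)
\]
as a groupoid element of $\mathcal{B}$. Composability of $\sigma$ with $\eta$ on the left demands $s(\sigma) = r(\eta) = (\vec{x}, g\vec{x})$. The first coordinate forces $\vec{z} = \vec{x} + \tfrac{k+l\theta}{\mu(g)}\slope$, and one then verifies that with this choice, the second-coordinate condition
\[
g\vec{z} - \tfrac{k(c\theta+d)+l(a\theta+b)}{\mu(g)}\slope \;\equiv\; g\vec{x} \pmod{\mathbb{Z}^2}
\]
is automatically satisfied, using $g\slope = \mat{a\theta+b \\ c\theta+d}$ together with the definition $\mu(g) = (a\theta+b) - (c\theta+d)\theta$. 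Applying the $\mathcal{B}_\theta$-composition rule factorwise and then rewriting the result via the bijection gives the stated formula.

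The main (indeed only) thing to watch is the congruence in the previous paragraph: expanding $\tfrac{k+l\theta}{\mu(g)}g\slope - \tfrac{k(c\theta+d)+l(a\theta+b)}{\mu(g)}\slope$ and collecting terms makes the numerator of each coordinate an integer multiple of $\mu(g)$ (namely $k$ and $-l$), so the discrepancy lies in $\mathbb{Z}^2$ as required. Once this identity is in hand, the rest of the proof is purely formal.
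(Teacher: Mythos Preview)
Your proposal is correct and complete. The paper states this lemma without proof, treating it as a routine computation, so your direct verification via the groupoid composition rules in $\mathcal{B}_\theta \times \mathcal{B}_\theta$ is exactly the intended approach; your check that the second-coordinate composability condition is automatically satisfied (with discrepancy $\mat{k\\-l}\in\mathbb{Z}^2$) is the one nontrivial step, and you handle it correctly.
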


	Elements of $\mathcal{Y}_{g}\ast\mathcal{X}$, where 
	$\mathcal{X} = \mathcal{X}_{\theta} \times \mathcal{X}_{\theta}$ as before, are given by those $(t_{1}, t_{2}, [x , y] , [v],s_{1} , [w] , s_{2} )$ in $(\mathbb{R}^2 \times\mathbb{T}^{2}) \times(\mathbb{T}\times\mathbb{R}\times\mathbb{T}\times\mathbb{R})$ which satisfy
	\begin{align*}
		&
		s_{\mathcal{Y}} (t_{1}, t_{2}, [x , y])
		=
		r_{\mathcal{X}} ( [v] ,s_{1} , [w], s_{2} )
		\\
		\iff
		&
			[x,y]-t_{1}[\theta,1]
			=
			[v,0]+s_{1}[\theta,1]
			\and
			g[x,y]-t_{2}[\theta,1]
			=		
			[w,0]+s_{2}[\theta,1].
	\end{align*}
	In other words,
	\begin{align*}
			\mat{ x \\ y }
			=
			\mat{ v \\ 0 }+(s_{1}+t_{1})\slope
			=		
			g\inv\left( \mat{ w \\ 0 }+(s_{2}+t_{2})\slope\right).
	\end{align*}
	
	Now, in the balanced $\mathcal{Y}_{g} \ast_{\mathcal{B}} \mathcal{X}$, we identify the following elements of $\mathcal{Y}_{g}\ast\mathcal{X}$:
	\[
		(t_{1}, t_{2}, [x , y], [v] ,s_{1} , [w] , s_{2})	
		\sim
		(t_{1}+t'_{1}, t_{2}+t'_{2},[x , y],[v],s_{1}-t'_{1},[w],s_{2}-t'_{2})
	\]
	for any $t'_{1},t'_{2}\in \mathbb{R}$. We conclude:
	
	\begin{lemma}\label{lem:Yg-ast-X=Zg}
		If we let
		\begin{align*}
			\mathcal{Z}_{g}
			:=&
			\left\{
				(r_{1},r_{2},\mat{ v \\ w })
				\in\mathbb{R}^2 \times \mathbb{T}^2
				\,\vert\,
				g\left(\mat{ v \\ 0 }+r_{1}\slope\right)
				=		
				\mat{ w \\ 0 }+r_{2}\slope
			\right\}
			,
		\end{align*}
		\label{rmk:rough-Why-pa=mc}
		then the following are mutually inverse bijections:
		\[
		\begin{tikzcd}
			\mathcal{Y}_{g}\ast_{\mathcal{B} }\mathcal{X}
			\ar[r]
			&[-25pt]
			\mathcal{Z}_{g}
			\\[-15pt]
			\left[t_{1}, t_{2},\left[x , y\right],\left[v\right],s_{1},\left[w\right],s_{2}\right]
			\ar[r, mapsto]
			&
			(t_{1}+s_{1},t_{2}+s_{2},\mat{ v \\ w })
			\\[-15pt]
			\left[r_{1}, r_{2},\left[v,0\right]+r_{1}\left(\theta,1\right),\left[v\right],0,\left[w\right],0\right]
			&
			\ar[l, mapsto]
			(r_{1},r_{2},\mat{ v \\ w })
		\end{tikzcd}
		\]
	\end{lemma}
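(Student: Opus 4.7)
The plan is to use the explicit description of $\mathcal{Y}_{g}\ast\mathcal{X}$ (before balancing) just computed above the statement, together with the defining relation $\sim$ of the balanced product, to show that every equivalence class has a canonical representative with $s_1=s_2=0$, and to read off $\mathcal{Z}_{g}$ as the set of such representatives.

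First I would verify that the forward map is well-defined on equivalence classes. An element $(t_1,t_2,[x,y],[v],s_1,[w],s_2)$ of $\mathcal{Y}_{g}\ast\mathcal{X}$ satisfies $[x,y]=[v,0]+(s_1+t_1)\slope$ and $g[x,y]=[w,0]+(s_2+t_2)\slope$, as derived in the paragraph above the lemma. Under the relation $(t_1,t_2,[x,y],[v],s_1,[w],s_2)\sim(t_1+t'_1,t_2+t'_2,[x,y],[v],s_1-t'_1,[w],s_2-t'_2)$, the sums $t_1+s_1$ and $t_2+s_2$, as well as the classes $[v],[w]$, are clearly invariant; so the assignment $[t_1,t_2,[x,y],[v],s_1,[w],s_2]\mapsto(t_1+s_1,t_2+s_2,\mat{v\\w})$ is well-defined. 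Setting $r_i:=t_i+s_i$, the two constraints combine to $g\bigl([v,0]+r_1\slope\bigr)=[w,0]+r_2\slope$, so $(r_1,r_2,\mat{v\\w})$ lies in $\mathcal{Z}_{g}$.

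Next I would check that the reverse map $(r_1,r_2,\mat{v\\w})\mapsto[r_1,r_2,[v,0]+r_1\slope,[v],0,[w],0]$ lands in $\mathcal{Y}_{g}\ast_{\mathcal{B}}\mathcal{X}$: with $[x,y]:=[v,0]+r_1\slope$, the source-range matching condition $s_{\mathcal{Y}}=r_{\mathcal{X}}$ in the first coordinate holds because $[x,y]-r_1\slope=[v,0]=[v,0]+0\cdot\slope$, and in the second coordinate because $g[x,y]-r_2\slope=[w,0]$ by the defining relation of $\mathcal{Z}_{g}$, which equals $[w,0]+0\cdot\slope$. Hence the element is a bona fide representative in $\mathcal{Y}_{g}\ast\mathcal{X}$.

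Finally I would verify that the two maps are mutually inverse. Starting from $(r_1,r_2,\mat{v\\w})\in\mathcal{Z}_{g}$, one round trip sends it through $[r_1,r_2,[v,0]+r_1\slope,[v],0,[w],0]$ and back to $(r_1+0,r_2+0,\mat{v\\w})=(r_1,r_2,\mat{v\\w})$. Conversely, starting from a class $[t_1,t_2,[x,y],[v],s_1,[w],s_2]$, applying the forward map and then the backward map produces $[t_1+s_1,t_2+s_2,[v,0]+(t_1+s_1)\slope,[v],0,[w],0]=[t_1+s_1,t_2+s_2,[x,y],[v],0,[w],0]$, using that $[v,0]+(s_1+t_1)\slope=[x,y]$ by the source-range condition. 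Applying the equivalence relation with $t'_1=s_1$, $t'_2=s_2$ recovers the original representative. I do not expect any serious obstacle here: the argument is entirely bookkeeping in $\mathbb{R}^2\times\mathbb{T}^2$, and the only point requiring care is to use the two source-range equations to translate the balanced relation into a genuine bijection.
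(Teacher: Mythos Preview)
Your proposal is correct and matches the paper's approach: the paper does not give a formal proof of this lemma but simply writes ``We conclude:'' after deriving the source--range constraints and the balancing relation, leaving the verification as a direct consequence of those computations. Your write-up fills in precisely those details---well-definedness on classes, that the backward map lands in $\mathcal{Y}_{g}\ast\mathcal{X}$, and that the two composites are the identity---in the expected straightforward way.
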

	
	We could now describe the groupoid equivalence structure on $\mathcal{Z}_{g}$ which it inherits from 
	$\mathcal{Y}_{g}\ast_{\mathcal{B} }\mathcal{X}$ via the above bijection. Then Theorem~2.8 in~\cite{MRW:Grpd} allows us to complete $C_c (\mathcal{Z}_{g})$ to the Morita equivalence we called $\mathsf{Z}_{g}$, which would yield the formulas for the $\K$-theory class of $\module_{g} = \iota^* (\mathsf{Z}_{g})$. However, we have no need for them in all generality, so we will postpone this until we have restricted to a certain class of matrices $g$.

\section{The b-twist}

Connes' quantized Dolbeault cycle, consisting of \(L^2(\T^2)\) with an appropriate pair of 
commuting representations of \(A_\theta\), and the Dirac-Dolbeault operator 
\[
\begin{bmatrix} 0 & \frac{\partial}{\partial \Theta_1} - i\frac{\partial}{\partial \Theta_2}\\
 \frac{\partial}{\partial \Theta_1} + i \frac{\partial}{\partial \Theta_2}& 0 \end{bmatrix},\]
gives the class \(\Delta_\theta \in \KK_0(A_\theta\otimes A_\theta, \C)\), which is the co-unit of the 
duality we are going to establish.  By 
the general mechanics of \(\KK\), the class \(\Delta_\theta\) determines a map 
\[ \Delta_\theta \cup \text{\textvisiblespace}\,\colon \KK(\C, A_\theta\otimes A_\theta) \to \KK_0(A_\theta, A_\theta),
\; f\mapsto (f\otimes 1_{A_\theta} ) \otimes_{A_\theta^3} (\Delta_\theta \otimes 1_{A_\theta}),
\]
and the first zig-zag equation asserts that, if \(f\in \KK_0(\C, A_\theta \otimes A_\theta)\) is the 
unit for a duality with co-unit \(\Delta_\theta\), then 
\( \Delta_\theta \cup f = 1_{A_\theta}.\)

We are going to show in this article that 
\begin{equation}
\label{equation:yay!}
 \Delta_\theta \cup [\module_g] = \twist_g
 \end{equation}
where \( g= \begin{bsmallmatrix} 1 & b \\ 0 & 1\end{bsmallmatrix}\) is 
upper triangular and non-trivial, \([\module_g]\in 
\KK_0(\C, A_\theta\otimes A_\theta)\) the class of the finitely generated projective 
\(A_\theta\otimes A_\theta\)-module constructed in the last section from the transversals and 
\(g\), and \(\twist_g\) is a certain
 \emph{invertible} in 
\(\KK_0(A_\theta, A_\theta)\) which we describe explicitly first.

Let \(b\in \Z\) be any integer. Equip $C_c (\mathbb{T}\times\mathbb{R})$ with the following structure:
\label{page:Hb-module-structure}
	\begin{alignat*}{2}
			\phi,\psi\in C_c (\mathbb{T}\times\mathbb{R}):
			&&\quad
			\inner{\phi_{1} }{\phi_{2} }_{{C(\mathbb{T})}} ([x])
			&= 
			\int_{\mathbb{R}}
				\overline{\phi_{1}}\phi_{2}
				\left(
					\left[
						x
					\right],
					r
				\right)
			\,\mathrm{ d } r
					,
			\\
			\mathbb{Z} \curvearrowright C_c (\mathbb{T}\times\mathbb{R}):	
			&&\quad
			(l\bullet \phi)
			(\left[x\right],r)
			&= 
			\phi
			\left(
				\left[ 
					x - l \theta
				\right],
				r - l
			\right),
			\\
			{C(\mathbb{T})} \curvearrowright C_c (\mathbb{T}\times\mathbb{R}):
			&&\quad
			(f \bullet \phi ) (\left[x\right],r) 
			&=
			f
				\left(\left[ x + r { b } \right]\right)
					\phi ([x],r),
			\\
			C_c (\mathbb{T}\times\mathbb{R}) \curvearrowleft {C(\mathbb{T})}:
			&&\quad
			(\phi\bullet f) ([x],r) 
			&= 
			\phi(\left[x\right],r)
			f(\left[ x\right])
			.
		\end{alignat*}

	Let $H_{ b }^{\pm}$ be the completion of \(C_c(\T\times \R)\)
	 respect to the pre-inner product given above and set {$H_{ b } := H_{ b }^{+} \oplus H_{ b }^{-}$}. 
	 For $\lambda\in\mathbb{R}^{\times}$, let $d_{\lambda,+}$ be the following, well-known operator on $L^2 (\mathbb{R} )$
	with domain the Schwartz functions $\mathcal{S}(\mathbb{R})$ on \(\mathbb{R}\):
	\[
		d_{ \lambda,+ }
		 :=
		 \lambda
		 \mathsf{M}
 		 +\tfrac{\partial\;}{\partial r}
		\quad\text{with adjoint}\quad
		d_{ \lambda ,-}
		 := 
		 	\lambda
			\mathsf{M}
		 	-
		 	\tfrac{\partial\;}{\partial r}
	\]
Here, $\mathsf{M}$ is the operator that multiplies by the input of the $\mathbb{R}$-component.

\begin{theorem}\label{thm:HdH-is-cycle}
	 If \(\Z\) acts by rotation on \(\T\), and  $\lambda\in\mathbb{R}^{\times}$, then 
	 the pair $({H_{ b }}, \mathrm{id}_{C(\mathbb{T})}\otimes d_{ \lambda })$ with 
	 \[d_{ \lambda } := \mat{0 & d_{ \lambda,-}\\d_{ \lambda,+ } &0}\]
	  is a cycle in $\Psi^\mathbb{Z} \left( C(\mathbb{T}),C(\mathbb{T})\right)$.
\end{theorem}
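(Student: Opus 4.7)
The plan is to verify, one by one, the standard axioms of an unbounded $\Z$-equivariant Kasparov module for $(C(\T), C(\T))$ in $\Psi^\Z(C(\T), C(\T))$, with each step reducing to a classical fact about the Dirac--Schr\"odinger operator $d_\lambda$ on $L^2(\R)^{\oplus 2}$. The key simplification to exploit at the outset is that, as a right Hilbert $C(\T)$-module, $H_b^\pm$ is independent of $b$ and isomorphic to the trivial module $C(\T) \otimes L^2(\R)$: the parameter $b$ enters only through the \emph{left} $C(\T)$-action, whereas the inner product, the right $C(\T)$-action, and (up to the evident translation in $r$) the $\Z$-action all ignore $b$.

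After checking that the given formulas define a $\Z_2$-graded Hilbert $C(\T)$-module with the left $C(\T)$-action by adjointable operators of norm at most $\|f\|_\infty$ and the $\Z$-action by unitaries compatible with rotation on the base, I would verify self-adjointness and regularity of $\mathrm{id}_{C(\T)} \otimes d_\lambda$ by invoking the classical self-adjointness of $d_\lambda$ on $\Schw(\R)^{\oplus 2}$ and the fact that tensoring with $\mathrm{id}_{C(\T)}$ preserves these properties on the Hilbert module. For the compact-resolvent condition, a short calculation identifies the diagonal entries of $d_\lambda^2$ with shifted quantum harmonic oscillators $\lambda^2 \mathsf{M}^2 - \tfrac{\partial^2}{\partial r^2} \mp \lambda$, whose resolvents are compact on $L^2(\R)$ by Hermite-function analysis; hence $f \bullet (1 + d_\lambda^2)^{-1}$ is $C(\T)$-compact for every $f \in C(\T)$. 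For bounded commutators on a dense subalgebra such as $C^1(\T)$, the $\mathsf{M}$-part vanishes (both $\mathsf{M}$ and $f\bullet$ are fibrewise multiplications), while $[\tfrac{\partial\,}{\partial r}, f\bullet]$ produces pointwise multiplication by $bf'([x+rb])$, which is bounded.

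The most delicate point, and the step I expect to require the most care, is the $\Z$-equivariance: a direct chain-rule computation gives $(l\bullet)^{-1} d_{\lambda,+} (l\bullet) = d_{\lambda,+} + \lambda l$, so $d_\lambda$ is not strictly $\Z$-invariant but only invariant up to a bounded scalar perturbation. I would then argue that this still produces a genuine $\Z$-equivariant cycle: since $d_\lambda$ has compact resolvent, functional calculus applied to $\phi(x) = x(1+x^2)^{-1/2}$, which satisfies $\phi(x + \lambda l) - \phi(x) \to 0$ at $\pm\infty$, shows that the corresponding perturbation of the bounded transform $d_\lambda (1 + d_\lambda^2)^{-1/2}$ lies in the compacts. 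This cocycle-type equivariance is the only ingredient not directly borrowed from the classical theory of Dirac--Schr\"odinger on $L^2(\R)$; everything else amounts to bookkeeping around the trivial $L^2(\R)$-bundle over $\T$.
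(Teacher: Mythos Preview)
Your proposal is correct and matches the paper's proof essentially step for step: the paper also reduces self-adjointness, regularity, and compact resolvent to the harmonic-oscillator spectral analysis of $d_\lambda^2$ on $L^2(\R)$ (Lemma~\ref{lem:d_H-is-nice}), verifies bounded commutators on smooth $f$ via the same chain-rule computation giving multiplication by $\tfrac{\partial f_b}{\partial r}$ (Lemma~\ref{lem:bounded-commutators}), and checks equivariance by computing $(\mathrm{id}\otimes d_\lambda) - \mathrm{Ad}_l(\mathrm{id}\otimes d_\lambda) = \begin{bsmallmatrix} 0 & \lambda l \\ \lambda l & 0 \end{bsmallmatrix}$ (Lemma~\ref{lem:d_H-is-equivariant}).

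The only divergence is in how much you do for equivariance. In the unbounded framework $\Psi^\Z$ used here, ``almost equivariant'' means precisely that $D - \mathrm{Ad}_l(D)$ extends to an adjointable operator; the bounded scalar matrix you computed already establishes this, and the paper stops there. Your further passage to the bounded transform via $\phi(x) = x(1+x^2)^{-1/2}$ and the compact-perturbation argument is correct but unnecessary: you are proving the stronger statement that the associated \emph{bounded} cycle is equivariant up to compacts, which is what one would need if working directly in $\KK^\Z$ rather than $\Psi^\Z$. So your instinct that this step ``requires the most care'' is misplaced---it is in fact the most routine, once one is clear about which axiom system is in play.
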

Recall that the symbol $\mathrm{id}_{C(\mathbb{T})}\otimes d_{ \lambda }$ denotes the closure of the operator $\mathrm{id}_{C(\mathbb{T})}\odot d_{ \lambda }$, which has dense domain
$C(\mathbb{T})\odot \mathcal{S}(\mathbb{R})$. 
The proof of the theorem is on page \pageref{pf:thm:HdH-is-cycle}.
\begin{lemma}\label{lem:d_H-is-nice}
	The operator $ \mathrm{id}_{C(\mathbb{T})}\otimes d_{ \lambda }$ is odd, self-adjoint, regular, and has compact resolvent.
\end{lemma}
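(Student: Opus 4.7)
The plan is to reduce the claim to the analysis of the scalar unbounded operator $d_\lambda$ on $L^2(\mathbb{R})\oplus L^2(\mathbb{R})$ with initial domain $\Schw(\mathbb{R})\oplus\Schw(\mathbb{R})$, and then to transfer the conclusions to the Hilbert $C(\mathbb{T})$-module $H_b\cong C(\mathbb{T})\otimes L^2(\mathbb{R})^2$ via the standard external tensor-product formalism. Oddness of $\mathrm{id}_{C(\mathbb{T})}\otimes d_\lambda$ is immediate from the off-diagonal matrix form, so the real content lies in self-adjointness, regularity, and compactness of the resolvent.

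For the scalar operator $d_\lambda$ on $L^2(\mathbb{R})^2$, I would first observe that $d_{\lambda,+}$ and $d_{\lambda,-}$ are formal adjoints on $\Schw(\mathbb{R})$ (integration by parts), so $d_\lambda$ is symmetric on $\Schw(\mathbb{R})^2$. Using $[\mathsf{M},\partial/\partial r]=-1$, one computes
\[
d_\lambda^2 \;=\; \begin{bmatrix} -\partial_r^2+\lambda^2 r^2-\lambda & 0 \\ 0 & -\partial_r^2+\lambda^2 r^2+\lambda\end{bmatrix},
\]
i.e.\ two shifted harmonic oscillators. Rescaled Hermite functions $\{h_n\}_{n\geq 0}\subset\Schw(\mathbb{R})$ form an orthonormal basis of eigenvectors of $-\partial_r^2+\lambda^2 r^2$ with eigenvalues $(2n+1)|\lambda|$, and hence diagonalise both entries of $d_\lambda^2$ with non-negative eigenvalues tending to $+\infty$. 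The usual ladder relations for $d_{\lambda,\pm}$ acting on Hermite functions then yield, at the level of $d_\lambda$ itself, an orthonormal eigenbasis inside $\Schw(\mathbb{R})^2$ with real eigenvalues $\{\pm\sqrt{2n|\lambda|}\}_{n\geq 0}$ tending to $\pm\infty$. This delivers essential self-adjointness of $d_\lambda$ on $\Schw(\mathbb{R})^2$ together with compactness of its resolvent in a single stroke.

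To transfer to the Hilbert $C(\mathbb{T})$-module, I invoke the standard principle: if $T$ is self-adjoint and regular on a Hilbert space $H$ with compact resolvent, and $A$ is a unital C*-algebra, then the closure of $\mathrm{id}_A\odot T$ on $A\odot\dom(T)$ is self-adjoint and regular on $A\otimes H$ with $A$-compact resolvent. Self-adjointness and regularity follow from density of $(T\pm i)\dom(T)$ in $H$, which we have. For the resolvent, approximate $(T-i)^{-1}$ in norm by finite-rank operators $\sum_k|\xi_k\rangle\langle\eta_k|$; each $\mathrm{id}_A\otimes|\xi\rangle\langle\eta|$ equals the rank-one adjointable operator $|1_A\otimes\xi\rangle\langle 1_A\otimes\eta|$ on $A\otimes H$, so $\mathrm{id}_A\otimes(T-i)^{-1}$ is $A$-compact by approximation.

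The principal obstacle is carrying out the harmonic-oscillator ladder-operator bookkeeping carefully enough to extract an eigenbasis of $d_\lambda$ itself (not merely $d_\lambda^2$) inside $\Schw(\mathbb{R})^2$, including the one-dimensional kernel coming from the ground state and keeping track of the sign of $\lambda$; everything else is a routine application of standard results on external tensor products of unbounded operators on Hilbert C*-modules.
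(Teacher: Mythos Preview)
Your proposal is correct and follows essentially the same route as the paper: reduce to the scalar operator $d_\lambda$ on $L^2(\mathbb{R})^2$, compute $d_\lambda^2$ as two shifted harmonic oscillators, diagonalise using Hermite functions, and then tensor up to the $C(\mathbb{T})$-module. The one minor difference is that you propose to diagonalise $d_\lambda$ itself via ladder operators, whereas the paper works only with $d_\lambda^2$: it shows that the Hermite functions give $d_\lambda^2+1$ dense range (hence $d_\lambda$ is self-adjoint and regular) and eigenvalues tending to infinity (hence compact resolvent), so your self-identified ``principal obstacle'' of tracking the eigenbasis of $d_\lambda$ is in fact avoidable.
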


\begin{proof}
	By construction, $ \mathrm{id}_{C(\mathbb{T})}\odot d_{ \lambda }$ is odd and symmetric. We compute
	\begin{align}
		d_{ \lambda }^{2}
		&=
		{}
		\Mat{
			\lambda^{2}
			\mathsf{M}^{2}
			-
			\tfrac{\partial^{2}\;}{\partial r^{2}}
			-\lambda
			&
			0
		\\
			0
			&
			\lambda^{2}
			\mathsf{M}^{2}
			-
			\tfrac{\partial^{2}\;}{\partial r^{2}}
			+\lambda
		}	
		,
		\label{eq:dH2}
	\end{align}
	
	Consider the $ L^{2} $-normalized functions
	\begin{align*}
		\psi_0 (r) = \abs{\lambda}^{\frac{1}{2}}\pi^{\frac{1}{4}} \mathsf{e}^{-\abs{\lambda}\frac{r^{2}}{2}}
		\and
		\psi_l = (2l\abs{\lambda})^{-\frac{1}{2}} \cdot (\abs{\lambda}\mathsf{M} - \tfrac{\partial\;}{\partial r}) \psi_{l-1}.
	\end{align*}
	Note that $\psi_{0}$ is a Schwartz function, \emph{i.e.}\ in the domain of $d_{\lambda,\pm}$, and therefore so are all $\psi_{l}$. Moreover, they span a dense subspace of $ L^{2} (\mathbb{R} )$ (\cite{Roe:Elliptic}, Proposition~9.8) and they are eigenfunctions of $\lambda^{2}\mathsf{M}^{2}-\tfrac{\partial^{2}\;}{\partial r^{2}}$ (\cite{Roe:Elliptic}, Lemma~9.6) with corresponding set of eigenvalues
	\[
		\left\{
			(2l+1)\abs{\lambda} \, \colon \, l = 1, 2, \ldots 
		\right\}.
	\]
	We conclude that the operator $d_{ \lambda }^{2} + 1$ has the eigenfuctions
	$
		\psi_{l} \oplus 0
	$ and $
		0 \oplus \psi_{l}
		.$
	Thus, the orthonormal basis $\left\{ \psi_l\oplus 0, 0\oplus \psi_l \,:\, l\in\mathbb{N}_0\right\}$ of $ L^{2} (\mathbb{R} )\oplus L^{2} (\mathbb{R} )$ is in the range of $d_{\lambda}^{2} + 1$, which proves that the range of $\left(\mathrm{id}_{C(\mathbb{T})}\odot d_{ \lambda }\right)^{2}+1$ is dense, so $\mathrm{id}_{C(\mathbb{T})}\otimes d_{ \lambda }$ is self-adjoint and regular. Moreover, $d_{ \lambda }^{2} + 1$ is diagonalizable and its eigenvalues $(2l+1)\abs{\lambda}$ tend to infinity. This shows that $d_{ \lambda }^{2} + 1$ has compact inverse, and that $(( \mathrm{id}_{C(\mathbb{T})}\otimes d_{\lambda})^{2} +1)\inv=1_{C(\mathbb{T})}\otimes \left(d_{ \lambda }^{2} + 1\right)\inv$ is compact as tensor product of compact operators. Thus, $\mathrm{id}_{C(\mathbb{T})}\otimes d_{\lambda}$ has compact resolvent.
\end{proof}

\begin{lemma}
	\label{lem:d_H-is-equivariant}
	The operator $ \mathrm{id}_{C(\mathbb{T})}\otimes d_{ \lambda }$ is 
	almost equivariant, \emph{i.e.}\ for any $n\in\mathbb{Z} $, the operator {$( \mathrm{id}_{C(\mathbb{T})}\otimes d_{ \lambda })-\mathsf{Ad}_n ( \mathrm{id}_{C(\mathbb{T})}\otimes d_{ \lambda })$} on $\dom ( \mathrm{id}_{C(\mathbb{T})}\otimes d_{ \lambda })$ extends to an adjointable operator.
\end{lemma}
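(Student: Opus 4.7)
The plan is to compute $\mathsf{Ad}_n(\mathrm{id}_{C(\T)}\otimes d_\lambda)$ directly on the common core $C(\T)\odot\Schw(\R)$ and observe that the difference with $\mathrm{id}_{C(\T)}\otimes d_\lambda$ is a bounded (in fact scalar) multiplication operator, and therefore extends to an adjointable operator on $H_b$.

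First, let $U_n$ denote the unitary on $H_b$ implementing the action of $n\in\Z$, so that $(U_n\phi)([x],r)=\phi([x-n\theta],r-n)$ and hence $(U_n^{-1}\phi)([x],r)=\phi([x+n\theta],r+n)$. Next, I would compute the two conjugation effects separately. Since $n$ only shifts the $r$-variable by the constant $-n$, translation commutes with differentiation and we get $U_n\tfrac{\partial\;}{\partial r}U_n^{-1}=\tfrac{\partial\;}{\partial r}$ on $C(\T)\odot\Schw(\R)$. On the other hand, the same translation changes the multiplication operator:
\[
(U_n\mathsf{M} U_n^{-1}\phi)([x],r)=(r-n)\phi([x],r),
\]
so $U_n\mathsf{M} U_n^{-1}=\mathsf{M}-n\cdot 1$.

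Combining these by linearity, on the common dense domain we obtain
\[
\mathsf{Ad}_n(d_{\lambda,\pm})=\lambda(\mathsf{M}-n)\pm\tfrac{\partial\;}{\partial r}=d_{\lambda,\pm}-\lambda n,
\]
and therefore, in the grading of $d_\lambda$,
\[
d_\lambda-\mathsf{Ad}_n(d_\lambda)=\lambda n\,\Mat{0 & 1 \\ 1 & 0}.
\]
Tensoring by $\mathrm{id}_{C(\T)}$ leaves this a bounded multiple of the odd flip on $H_b^+\oplus H_b^-$, which is manifestly adjointable. This proves the lemma.

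There is no serious obstacle: the only point requiring attention is that the \emph{unbounded} part of $d_\lambda$, namely the harmonic-oscillator potential $\lambda\mathsf{M}$, is precisely the piece affected by the conjugation, but the dependence is affine, so the unbounded pieces cancel in the difference. This is the structural reason the operator $\mathrm{id}_{C(\T)}\otimes d_\lambda$ fails to be strictly equivariant but is \emph{almost} equivariant in the sense needed for a cycle in $\Psi^\Z(C(\T),C(\T))$.
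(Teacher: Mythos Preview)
Your proof is correct and follows essentially the same approach as the paper: both compute the conjugation of $\tfrac{\partial}{\partial r}$ and $\mathsf{M}$ by the $\Z$-action separately, observe that the former commutes while the latter picks up the scalar shift $-n$, and conclude that the difference $(\mathrm{id}_{C(\T)}\otimes d_\lambda)-\mathsf{Ad}_n(\mathrm{id}_{C(\T)}\otimes d_\lambda)$ is the bounded matrix $\lambda n\begin{bsmallmatrix}0&1\\1&0\end{bsmallmatrix}$.
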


\begin{proof}
For $\phi\in C_c (\mathbb{R}\times\mathbb{T} )\subseteq {H_{ b }}^{\pm}$, we have
$
		l\bullet\tfrac{\partial \phi}{\partial r}
=
		\tfrac{\partial\;}{\partial r} (l\bullet\phi),
$	and
	\begin{align*}
		l\bullet\left( \mathsf{M} (-l)\bullet\phi \right) ([x],r)
		&=
		\left( \mathsf{M} (-l).\phi \right) \left([ x - l \theta],r - l \right)
		\notag
		\\
		&=
		\left(r - l\right) \cdot \phi ([x],r)
		.
	\end{align*}
We conclude that
	\begin{align*}
		(\mathrm{id}_{C(\mathbb{T})}\otimes d_{ \lambda })-\Ad_l (\mathrm{id}_{C(\mathbb{T})}\otimes d_{ \lambda })
		=
		\Mat{0 & \lambda\mathsf{M}
		 		 \\\lambda\mathsf{M} &0}
		-
		\Mat{0 & \Ad_l (\lambda\mathsf{M})
		 		\\\Ad_l (\lambda\mathsf{M})
		 		 &0}
		&=
		\Mat{
			0
			&
			\lambda l
		\\
			\lambda l
			&
			0
			}
	\end{align*}
	on the dense subspace $C_{c}(\mathbb{R}\times\mathbb{T})$.
	Thus, for any fixed $l\in\mathbb{Z} $, the operator $( \mathrm{id}_{C(\mathbb{T})}\otimes d_{ \lambda })-\Ad_l ( \mathrm{id}_{C(\mathbb{T})}\otimes d_{ \lambda })$ is adjointable.
\end{proof}

\begin{lemma}
\label{lem:bounded-commutators}
	The subalgebra $\left\{f \in {C(\mathbb{T}_{})} : \comm{ \mathrm{id}_{C(\mathbb{T})}\otimes d_{ \lambda }}{f} \in \mathcal{L} ({H_{ b }})\right\}$ is dense in ${C(\mathbb{T}_{})}$.
\end{lemma}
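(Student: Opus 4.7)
The plan is to show that the smooth functions $C^{\infty}(\T)$ are contained in the subalgebra; density then follows immediately since $C^{\infty}(\T)$ is norm-dense in $C(\T)$ (e.g.\ by Fej\'er's theorem applied to $\T = \R/\Z$).

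For the containment, I would compute $[\mathrm{id}_{C(\T)}\otimes d_{\lambda},\, f\bullet]$ for $f\in C^{\infty}(\T)$ directly on the dense core $C(\T)\odot\mathcal{S}(\R)\subseteq H_b^{\pm}$ of $\mathrm{id}_{C(\T)}\otimes d_{\lambda}$, or more robustly on a smooth, Schwartz-in-$r$ dense subspace invariant under both $\mathrm{id}_{C(\T)}\otimes d_{\lambda}$ and left multiplication by $f$. Two observations are central. First, the left action of $f$ is pointwise multiplication by the bounded function $([x],r)\mapsto f([x+rb])$, which commutes with the multiplication operator $\mathsf{M}$; hence the $\lambda \mathsf{M}$ part of $d_{\lambda,\pm}$ contributes nothing to the commutator. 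Second, by the chain rule,
\begin{align*}
\left[\tfrac{\partial\;}{\partial r},\, f\bullet\right]\phi([x],r) \;=\; b\, f'([x+rb])\,\phi([x],r).
\end{align*}
Consequently $[\mathrm{id}_{C(\T)}\otimes d_{\lambda},\, f\bullet]$ is represented by the off-diagonal matrix whose non-zero entries are pointwise multiplication by the bounded functions $\pm b\, f'([x+rb])$ on $\T\times\R$.

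Since both the $C(\T)$-valued inner product and the right $C(\T)$-action on $H_b^{\pm}$ depend only on the $[x]$-variable, multiplication by any bounded function on $\T\times\R$ is a bounded right-$C(\T)$-linear operator on $H_b^{\pm}$, and hence adjointable. Thus the commutator extends to an adjointable operator on $H_b$, placing $f$ inside the stated subalgebra. The one technical point I expect to be the main obstacle is standard but should be handled carefully: one must verify that the formal commutator identity computed on the core actually agrees with the commutator $[\mathrm{id}_{C(\T)}\otimes d_{\lambda},\, f\bullet]$ on $\dom(\mathrm{id}_{C(\T)}\otimes d_{\lambda})$ after passing to closures. This follows by invariance of the chosen core under $f\bullet$ for smooth $f$ -- which is precisely why the argument fails for general continuous $f$ and why one restricts to $C^{\infty}(\T)$.
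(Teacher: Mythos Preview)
Your proposal is correct and follows essentially the same approach as the paper: both arguments observe that $f\bullet$ commutes with $\mathsf{M}$, compute the commutator with $\tfrac{\partial}{\partial r}$ via the chain rule to obtain multiplication by $b\,f'([x+rb])$ (the paper writes this as $\tfrac{\partial f_b}{\partial r}$ with $f_b([x],r)=f([x+br])$), and conclude that this multiplication operator is bounded for smooth $f$. Your version is slightly more careful in flagging the passage from the core to the full domain, which the paper leaves implicit.
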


	\begin{proof}
	We need to figure out for which $f$ the operators
\begin{align*}
		{H_{ b }}^{\pm}
		\supseteq
		C(\mathbb{T})\otimes \mathcal{S}(\mathbb{R})
		\ni
		\phi
		&\longmapsto
		\left(
			\lambda
			\mathsf{M}
			\pm
			\tfrac{\partial\;}{\partial r}			
		\right)
		(f\bullet \phi)
		-
		f\bullet 
		\left(
			\lambda
			\mathsf{M}
			\pm
			\tfrac{\partial\;}{\partial r}
		\right) (\phi)	
	\end{align*}
	are bounded. First, note that 
	\[
		\mathsf{M}
		(f\bullet \phi)
		=
		f\bullet 
		\left(
			\mathsf{M} \phi
		\right),
	\]
	as $f\bullet$ does not change the $\mathbb{R}$-coordinate.
	 Secondly, define $$f_{b}([x],r) := f\left(\left[ x + { b } r \right]\right),$$ so that $f\bullet \phi = f_{b}\cdot \phi$, and
	\begin{align*}
		\tfrac{\partial (f\bullet \phi)}{\partial r}
		-
		f\bullet \tfrac{\partial \phi}{\partial r}
		=
		\tfrac{\partial f_{b}}{\partial r}
		\cdot \phi.
	\end{align*}
	As long as $\tfrac{\partial f_{b}}{\partial r}$ makes sense and is bounded, this is a bounded operator of $\phi$, \emph{i.e.}\ 
	it makes $\comm{ \mathrm{id}_{C(\mathbb{T})}\otimes d_{ \lambda }}{f}$ bounded.
\end{proof}

\begin{proof}[Proof of Theorem \ref{thm:HdH-is-cycle}]\label{pf:thm:HdH-is-cycle}
	 By construction, ${H_{ b }}$ is a graded, equivariant correspondence. 
	As $ \mathrm{id}_{C(\mathbb{T})}\otimes d_{ \lambda }$ only sees the $\mathbb{R}$-component of a function's domain while the right action only sees the $\mathbb{T}$-component, we see that the two commute, which proves linearity. 
	The remaining properties that $(H_{b},\mathrm{id}_{C(\mathbb{T})}\otimes d_{ \lambda })$ has to satisfy in order to be a cycle have already been proven: Lemma~\ref{lem:d_H-is-nice} showed that the operator is self-adjoint regular with compact resolvent, Lemma~\ref{lem:d_H-is-equivariant} showed that it is almost equivariant, and Lemma~\ref{lem:bounded-commutators} showed that the subalgebra of $C(\mathbb{T})$ which commutes with the operator up to bounded operators is dense.
\end{proof}

It follows that $j(({H}_{ b }, \mathrm{id}_{C(\mathbb{T})}\otimes d_{\lambda}))=:(\mathcal{H}_{ b }, D_{\lambda})$ is a cycle in $\Psi( A_{\theta} , A_{\theta} )$, where \(j\) is the descent map on 
cycles \(\Psi^\Z\left(C(\T), C(\T)\right) \to \Psi( C(\T)\rtimes_\theta \Z, C(\T) \rtimes_\theta \Z) = 
\Psi(A_\theta, A_\theta)\). 
For reference, let us explicitly describe the structure of $\mathcal{H}_{ -b }$, which can be constructed using descent and the definition of its lift $H_{-b}$ on page~\pageref{page:Hb-module-structure}:
	\begin{lemma}\label{lem:description-of-mcHb}
	The left $\mathfrak{A}$-action on $C_{c} (\mathbb{Z}\times\mathbb{T}\times\mathbb{R})\subseteq \mathcal{H}_{-b}^{\pm}$ is given by
		\begin{align}\label{eq:left action-on-mcH}
			\bigl(	a ._{\mathcal{H}_{ -b }} \Psi \bigr) (n,[x],r)
			&=
			\sum_{m\in\mathbb{Z}} a ( [x-rb], m) \Psi (n-m,[x-m\theta], r-m ).
		\end{align}
and the right action by
\begin{align}
		(\Psi ._{\mathcal{H}_{ -b }} a) (n,[x],r)
		 	&=
		 	\sum_{m\in\mathbb{Z}}
		 	\Psi (m,[x],r)
		 	a([x-m\theta], n-m)
		 	\label{eq:right-action-on-mcH}
		 	.
\end{align}
	Its \textup(pre-\textup)inner product is given by:
		\begin{align}
		 	\inner{\Psi_{1}}{\Psi_{2}}^{\mathcal{H}_{ -b }}_{A_{\theta}} ([w],l_{2})
		 	&=
		 	\sum_{k_{1}}
		 	\inner{\Psi_{1}(k_{1})}{\Psi_{2}(l_{2}+k_{1})}^{{N}_{g}}_{C(\mathbb{T})}([w+k_{1}\theta])
		 	\notag
		 	\\
		 	&=
		 	\sum_{k_{1}}
		 		\int_{\mathbb{R}}
		 			\overline{\Psi_{1}}(k_{1},[w+k_{1}\theta],r)
		 			\Psi_{2}(l_{2}+k_{1},[w+k_{1}\theta],r)
		 	\,\mathrm{ d } r
		 	.\label{eq:inner-product-on-mcH}
	 \end{align}
	\end{lemma}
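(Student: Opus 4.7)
The plan is to apply the standard descent construction for $\Z$-equivariant $C(\T)$-$C(\T)$ correspondences to $H_{-b}$ and then to read off each of the three formulas by direct substitution of the concrete data on $H_{-b}$ from page~\pageref{page:Hb-module-structure} (with $b$ replaced by $-b$). Since descent commutes with completion, it suffices to compute everything on the dense subspace $C_c(\Z, H_{-b})$, which we identify with $C_c(\Z \times \T \times \R)$.

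The generic descent formulas on this dense subspace are
\begin{align*}
(a \cdot \Psi)(n) &= \sum_m a(m) \bullet \bigl(m \bullet \Psi(n - m)\bigr), \\
(\Psi \cdot a)(n) &= \sum_m \Psi(m) \bullet \alpha_m\bigl(a(n - m)\bigr), \\
\langle \Psi_1, \Psi_2 \rangle(n) &= \sum_m \alpha_{-m}\bigl(\langle \Psi_1(m), \Psi_2(m + n)\rangle_{C(\T)}\bigr),
\end{align*}
for $a \in \mathfrak{A}$ and $\Psi, \Psi_i \in C_c(\Z \times \T \times \R)$, where the three $\bullet$'s refer to the relevant actions on $H_{-b}$ and $\alpha_m(f)([x]) = f([x - m\theta])$ is the rotation on $C(\T)$. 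For the left action, $(m \bullet \Psi(n - m))([x], r) = \Psi(n - m, [x - m\theta], r - m)$, and the left $C(\T)$-multiplication on $H_{-b}$ shifts the $\T$-argument of $a(m)$ by $-rb$, yielding \eqref{eq:left action-on-mcH}. For the right action, $\alpha_m(a(n - m))([x]) = a(n - m, [x - m\theta])$ together with the pointwise right $C(\T)$-action on $H_{-b}$ gives \eqref{eq:right-action-on-mcH}. For the inner product, substituting the integral defining $\langle \cdot, \cdot \rangle_{C(\T)}$ and noting that $\alpha_{-m}$ shifts the $\T$-coordinate by $+m\theta$ produces \eqref{eq:inner-product-on-mcH} after renaming $m \to k_1$, $n \to l_2$.

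No real obstacle arises: the proof is entirely a bookkeeping exercise. The only point requiring care is the coordinated tracking of sign conventions, namely the direction of the $\Z$-rotation on $C(\T)$ and the sign flip $b \mapsto -b$ in the left $C(\T)$-action on $H_{-b}$, so that the shifts $[x - rb]$ and $[x - m\theta]$ land on the correct arguments in the final formulas.
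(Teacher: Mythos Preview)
Your proposal is correct and is exactly the approach the paper intends: the paper does not write out a proof of this lemma at all, but merely states that the formulas ``can be constructed using descent and the definition of its lift $H_{-b}$,'' which is precisely the bookkeeping computation you carry out. Your tracking of the sign conventions (the $b\mapsto -b$ flip in the left $C(\T)$-action and the direction of the rotation $\alpha_m$) is accurate and yields the stated formulas.
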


\begin{definition}
	\label{definition:btwist}
	The \emph{\(b\)-twist} $\twist_{b}$ is the element of \(\KK_0(A_\theta, A_\theta)\) represented by the descent \((\mathcal{H}_{ b }, D_{1})\) of the 
	\(\Z\)-equivariant unbounded cycle \((H_{ b }, \mathrm{id}_{C(\mathbb{T})}\otimes d_{1})\), a cycle in \(\Psi^\Z( C(\mathbb{T}) , C(\mathbb{T}) )\). 
\end{definition}

\begin{remark}\label{rmk:Hb-db-replaced-by-Hb-dlambda}
Note that since we have proved that \(d_\lambda\) defines an elliptic operator for any real 
\(\lambda \not= 0\), any two of 
 the cycles  $(H_{b},\mathrm{id}_{C(\mathbb{T})}\otimes d_{ \lambda })$ with \(\lambda \) of the 
 same sign, are homotopic to each other in the obvious way. Of course \(d_\lambda\) is not homotopic to \(d_{-\lambda}\), since their (nonzero) Fredholm indices 
 have opposite signs. 
\end{remark}

\begin{remark}

It seems likely that there is a `\(g\)-twist'  cycle 
and class 
\(\twist_g\in \KK_0(A_\theta, A_\theta)\) 
\emph{any} \(g\in \SL_2(\Z)\), an element \(\tau_g\in \KK_0(A_\theta, A_\theta)\) making \eqref{equation:yay!} true
and \(g\mapsto \twist_g\) a group homomorphism \(\SL_2(\Z)\) into the 
invertibles in \(\KK_0(A_\theta, A_\theta)\). Currently, we have only defined the cycle
 for 
upper-triangular \(g\), because the descent apparatus becomes available. That there is 
a functorial construction of \emph{classes} 
\(\tau_g\) from \(\SL_2(\Z)\) is rather easy to see; see the end of the section; the open 
question is whether or not these cycles can be defined using Dirac-Schr\"odinger 
operators. As this question is not 
immediately  material for proving duality, we leave it as a project for the future. 
\end{remark}

The duality result we are proving in this article, like all dualities known to the 
authors, uses Bott Periodicity (specifically in this case, \(\Z\)-equivariant Bott Periodicity) 
at some point in the proof. In our case, it is embedded in the proof of the  
following result.

\begin{theorem}
\label{theorem:thetwistedindextheorem}
The 
twist morphisms \(\{\tau_b\}_{b\in \Z} \in \KK_0(A_\theta, A_\theta)\) form a 
cyclic group of \(\KK\)-equivalences under composition. In particular, 
\[ \tau_{-b} = \tau_b^{-1}\in \KK_0(A_\theta, A_\theta).\]
\end{theorem}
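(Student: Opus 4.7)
The plan is to reduce the statement to a $K$-theoretic calculation via the Universal Coefficient Theorem for $\KK$. Since $A_\theta$ lies in the UCT (bootstrap) class and $K_*(A_\theta)\cong\Z^2$ is free abelian in each degree, the $\Ext$-term in the UCT vanishes and yields a natural ring isomorphism
\[
	\KK_0(A_\theta,A_\theta)\;\xrightarrow{\;\cong\;}\;\mathrm{Hom}_\Z\bigl(K_*(A_\theta),K_*(A_\theta)\bigr),
\]
intertwining Kasparov product with composition of $K$-theoretic endomorphisms. Two elements of $\KK_0(A_\theta,A_\theta)$ therefore agree if and only if they induce the same map on $K_*(A_\theta)$.

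Next I would verify the identification, recorded in the introduction, that $\tau_b$ acts on $K_0(A_\theta)\cong\Z^2$ as left multiplication by $\left[\begin{smallmatrix}1&b\\0&1\end{smallmatrix}\right]$ and as the identity on $K_1(A_\theta)$. This amounts to computing the Fredholm indices of the descended Dirac--Schr\"odinger operator $D_1$ against the standard generators of $K_*(A_\theta)$ (the class of $1_{A_\theta}$ and a Rieffel projection in $K_0$, and the classes of $V,U$ in $K_1$). The kernel analysis of $d_1$ in Lemma~\ref{lem:d_H-is-nice}, combined with the explicit structure of the Rieffel projection, produces the matrix above; this index pairing is exactly where $\Z$-equivariant Bott periodicity enters the argument, since the Gaussian ground state of the harmonic oscillator is what supplies the single unit of index that shifts the off-diagonal entry.

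Granting this $K$-theory computation, the matrices $\bigl\{\left[\begin{smallmatrix}1&b\\0&1\end{smallmatrix}\right]\bigr\}_{b\in\Z}$ form an infinite cyclic subgroup of $\SL_2(\Z)$ under multiplication, so the induced endomorphisms on $K_*(A_\theta)$ satisfy $(\tau_a)_*\circ(\tau_b)_*=(\tau_{a+b})_*$. Transferring back through the UCT isomorphism gives $\tau_a\otimes_{A_\theta}\tau_b=\tau_{a+b}$ in $\KK_0(A_\theta,A_\theta)$ for all $a,b\in\Z$. In particular $\tau_0=1_{A_\theta}$, hence $\tau_{-b}\otimes_{A_\theta}\tau_b=1_{A_\theta}$ and $\{\tau_b\}_{b\in\Z}$ is a cyclic subgroup of the invertibles of $\KK_0(A_\theta,A_\theta)$ with $\tau_{-b}=\tau_b^{-1}$.

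The principal obstacle is the $K$-theory computation of the second paragraph: one must track the generating projections and unitaries of $A_\theta$ through the descended Hilbert module $\mathcal{H}_b$ and evaluate the resulting index pairings, which requires the fine analytic properties of $D_1$ established in Lemmas~\ref{lem:d_H-is-nice}--\ref{lem:bounded-commutators}. This is the analytically substantive step and the point at which $\Z$-equivariant Bott periodicity plays its essential role; the surrounding UCT argument merely packages the outcome into the desired group law.
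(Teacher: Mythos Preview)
Your UCT approach is a legitimate and genuinely different route from the paper's, but the proposal as written has a gap at precisely the point you flag as the ``principal obstacle.''

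The paper does \emph{not} compute the action of $\tau_b$ on $K_*(A_\theta)$ directly and then deduce the group law. Instead, it passes through the inflation isomorphism $p_{\R}^*\colon \KK^\Z_0(C(\T),C(\T))\to\RKK^\Z_0(\R;C(\T),C(\T))$ and shows (Theorem~\ref{theorem:keyindexreduction}) that $p_{\R}^*[\,(H_b,d_\lambda)\,]$ equals the class $\widehat{\tau^b}$ of an explicit family of \emph{automorphisms} of $C(\T)$. The group law $\widehat{\tau^b}\cdot\widehat{\tau^{b'}}=\widehat{\tau^{b+b'}}$ is then immediate, and descent transports it to $\KK_0(A_\theta,A_\theta)$. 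The $K$-theory computation you cite from the introduction is established \emph{afterward}, as a corollary of this identification (via Lemma~\ref{lemma:yay} and the diagram around it), not as an input. So appealing to it as a known fact would be circular.

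Your direct computation is also mis-described. The Kasparov product $[p]\otimes_{A_\theta}\tau_b$ lands in $K_0(A_\theta)\cong\Z^2$, not in $\Z$; it is not a Fredholm index but the class of the right $A_\theta$-module $p\cdot\mathcal{H}_b$ equipped with the restricted operator. Identifying that class---in particular, recognising which combination of $[1]$ and $[p]$ it represents---is exactly the difficulty the paper avoids with the L\"uck--Rosenberg homotopy to the topological model. Lemma~\ref{lem:d_H-is-nice} gives the spectrum and one-dimensional kernel of $d_\lambda$ on $L^2(\R)$, which handles $[1]\otimes\tau_b=[1]$, but it does not by itself deliver the off-diagonal entry $b$ when you pair against the Rieffel projection. (The paper even notes it lacks a reference for the precise matching of generators under the Dirac equivalence.) If you can supply that module identification independently, your UCT argument goes through and is arguably cleaner; as written, the essential step is missing.
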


Recall that Kasparov's bivariant category \(\RKK^\Z_*(\R;\, \cdot \,,\cdot\,)\) has objects 
 \(\Z\)-C*-algebras 
and morphisms \(A\to B\) are the elements of the abelian group 
\[ \RKK^\Z_*(\R; A, B),\]
which is the quotient of the set of cycles \((\Hilb, F)\) for \(\KK_*^\Z(C_0(\R)\otimes A, C_0(\R)\otimes B)\) for which the left and right actions of \(C_0(\R)\) on the module \(\Hilb\) are \emph{equal}. Such a cycle can be considered as a family \( ( \Hilb_t, F_t)_{t\in\mathbb{R}}\) of \(\KK_*(A, B)\)-cycles which is essentially equivariant in the sense that, for all \(t\in \R\), any integer \(l\) maps \(\Hilb_t\) to \(\Hilb_{t+l}\) and 
\begin{equation}\label{eq:ess-eq}
	(-l)\circ F_{t+l}\circ l - F_{t}
\end{equation}
is a compact operator on \(\Hilb_t\). 

Let 
\[ p^{*}_{\R}\colon \KK^\Z_*(A, B) \to \RKK^\Z_*(\R; A, B)\]
be Kasparov's inflation map, which (on cycles) associates to a cycle for \(\KK_*(A, B)\) the corresponding constant field of cycles over \(\R\). The inflation map converts analytic problems into topological problems, as we shall see shortly in connection with our own problems.

The following result follows from the Dirac-dual-Dirac method.

\begin{lemma}[see \cite{Em:bdActions}, Theorem 54] 
\(p_{\R}^{*}\) is an isomorphism for all \(A, B\). 
\end{lemma}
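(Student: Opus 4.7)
The plan is to construct an explicit two-sided inverse to $p_\R^*$ via the Dirac-dual-Dirac method for $\Z$ acting by translation on $\R$. Since $\Z$ is amenable, both halves of the argument will go through without obstruction.

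First I would introduce two $\Z$-equivariant classes: a Dirac class $\alpha \in \KK_1^\Z(C_0(\R), \C)$ represented by $\bigl(L^2(\R),\, -\imaginary\tfrac{\partial}{\partial r}\bigr)$, and a dual-Dirac class $\beta \in \KK_1^\Z(\C, C_0(\R))$ built from the standard Bott element on $\R$ (for instance, $C_0(\R)$ as a right module over itself, equipped with multiplication by $r$). The Kasparov product $\beta \otimes_{C_0(\R)} \alpha \in \KK_0^\Z(\C, \C)$ equals $1_\C$ by a direct equivariant index computation on the harmonic oscillator $r + \tfrac{\partial}{\partial r}$, which is the content of $\Z$-equivariant Bott periodicity on $\R$. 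This is structurally the same kind of cycle as the $b$-twist cycle $(H_b, \id_{C(\T)} \otimes d_\lambda)$ already analyzed in the preceding section, so the spectral analysis is entirely parallel.

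With these ingredients in hand, define the candidate inverse
\[
\sigma\colon \RKK_*^\Z(\R; A, B) \to \KK_*^\Z(A, B), \qquad \sigma(\xi) := (\beta \otimes 1_A) \otimes_{C_0(\R)\otimes A} \xi \otimes_{C_0(\R)\otimes B} (\alpha \otimes 1_B),
\]
whose total degree shift is $+2 \equiv 0 \pmod 2$, so $\sigma$ is degree-preserving. One direction of invertibility, $\sigma \circ p_\R^* = \id$, is formal: on a constant family, the two $C_0(\R)$ factors decouple from $A$ and $B$, so the product collapses to $\beta \otimes_{C_0(\R)} \alpha = 1_\C$ externally tensored with $1_A$ and $1_B$.

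The main obstacle is the reverse identity $p_\R^* \circ \sigma = \id$, which amounts to showing that $\alpha \otimes_\C \beta = 1_{C_0(\R)}$ in $\RKK_0^\Z(\R; C_0(\R), C_0(\R))$. This is precisely the statement that Kasparov's $\gamma$-element for $\Z$ equals $1$, a consequence of $\Z$ being amenable. The cleanest route is to use the Kasparov--Higson rotation homotopy on the pair $(\alpha, \beta)$, promoted to a $\Z$-equivariant homotopy by averaging against a cut-off function on $\R$ -- the required cocompact proper action of $\Z$ on $\R$ makes the averaging finite. Alternatively, one descends via the $\KK_1$-equivalence $C_0(\R) \rtimes \Z \sim C(\T)$ (which uses amenability of $\Z$) and reduces the problem to ordinary Bott periodicity on $\T$, from which the required identity can be read off directly.
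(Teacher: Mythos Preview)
The paper does not actually prove this lemma: it merely cites \cite{Em:bdActions}, Theorem~54, and remarks that the result ``follows from the Dirac-dual-Dirac method.'' Your sketch is a reasonable unpacking of exactly that method, so in spirit you are doing what the paper gestures at rather than something genuinely different.

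One small correction to your attribution of the two halves. You call $\sigma\circ p_\R^* = \id$ ``formal'' and say it collapses to $\beta\otimes_{C_0(\R)}\alpha = 1_\C$; but that identity \emph{is} Kasparov's $\gamma$-element for $\Z$ equalling $1$, and it is the step that genuinely uses amenability of $\Z$. Conversely, the identity you need for $p_\R^*\circ\sigma = \id$, namely that $\alpha\otimes_\C\beta$ becomes $1_{C_0(\R)}$ after inflation to $\RKK^\Z_0(\R;\,\cdot\,,\,\cdot\,)$, is Kasparov's theorem that $p_X^*(\gamma)=1$ for any proper cocompact $G$-space $X$, and holds without any amenability hypothesis. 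So you have the roles of the two directions reversed: the ``main obstacle'' is actually the direction you called formal, and the direction you flagged as hard is the one that is always true. For $\Z$ both identities hold, so your conclusion is correct, but the logic of which step needs what should be straightened out. Your alternative route via descent to $C(\T)$ is also not quite the right reduction for the $\RKK$ identity; the rotation-homotopy/cut-off argument you mention first is the standard one.
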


We will be setting \(A = B = C(\T)\) in the following, and apply the inflation map to 
the class of the equivariant cycles \( ( H_{b} , \id_{C(\T)}\otimes d_\lambda )\) discussed above. 

\begin{definition}\label{def:hat-tau-b}
The \emph{topological} \(b\)-twist \(\widehat{\tau_b} \in \RKK^\Z_0(\R; C(\T), C(\T))\) is the class of the 
bundle of *-homomorphisms 

\[ \widehat{\tau}^b_t  \colon C(\T) \to C(\T), \quad \widehat{\tau}^b_t  (f) ([x]) := f([x+bt]),\]
The family of automorphisms \(\{\widehat{\tau}^b_t \}_{t\in\mathbb{R}}\) is equivariant if 
the action by $\Z$ on $\R$ is by translation and on $C(\T)$ is by irrational rotation, since
 \(b\) is an integer.  
\end{definition}

Since the Kasparov product of two families of automorphisms in $\RKK^{\mathbb{Z}}_0$ is simply given by composition, we see that the product of $\widehat{\tau}^b$ with $\widehat{\tau}^{b'}$ is exactly $\widehat{\tau}^{b+b'}$. Clearly $\widehat{\tau}^0$ is the identity, and so we conclude that $b\mapsto \widehat{\tau}^b$ is a group homomorphism from $\Z$ to invertibles in $\RKK^{\mathbb{Z}}_{0}(\R; C(\T),C(\T))$  (under composition).

\begin{theorem}
\label{theorem:keyindexreduction}
Let \( ( H_{b}, \id_{C(\T)}\otimes d_\lambda )\) be any of the Dirac-Schr\"odinger 
cycles for \(\KK^\Z_0\left(C(\T), C(\T)\right)\) of Theorem \ref{thm:HdH-is-cycle}, with 
\(\lambda >0\). Then 
\[ p_{\R}^{*}([( H_{b}, \id_{C(\T)}\otimes d_\lambda )]) = \widehat{\tau^b}\in \RKK^\Z_0(\R; C(\T), C(\T)).\]
\end{theorem}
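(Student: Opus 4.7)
The strategy is to deform the inflated constant field $p_\R^*[(H_{b}, \id_{C(\T)}\otimes d_\lambda)]$ within $\RKK^\Z_0(\R; C(\T), C(\T))$ to the $*$-homomorphism family $\widehat{\tau^b}$, by using the $\R$-parameter to shift, and then spectrally concentrate, the Dirac–Schrödinger operator.

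First, I would replace the constant field by the $\R$-family of cycles $(H_{b}, D_{t})_{t\in\R}$ with shifted operator $D_{t} = \id_{C(\T)} \otimes \bigl(\lambda(\mathsf{M}-t) + \partial/\partial r\bigr)$ in the off-diagonal Dirac–Schrödinger form. The linear homotopy $D_{t}^{s} = \id_{C(\T)} \otimes \bigl(\lambda(\mathsf{M}-st) + \partial/\partial r\bigr)$ for $s \in [0,1]$ gives an explicit homotopy of cycles over $\R$, since each intermediate operator is obtained from $d_\lambda$ by a translation in the $r$-variable and therefore inherits the cycle conditions of Lemmas \ref{lem:d_H-is-nice}, \ref{lem:d_H-is-equivariant}, and \ref{lem:bounded-commutators} (the shift $st$ is independent of the $\Z$-variable, so almost-equivariance persists).

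Next, for $\lambda>0$, the shifted operator $d_\lambda^{t} := \lambda(\mathsf{M}-t) + \partial/\partial r$ has one-dimensional kernel spanned by the normalized Gaussian $\psi_\lambda^{t}(r) = (\lambda/\pi)^{1/4} e^{-\lambda(r-t)^2/2}$. Let $P_{t}$ denote the corresponding rank-one projection; then $\{\mathrm{Im}(\id_{C(\T)}\otimes P_{t})\}_{t\in\R}$ assembles into a rank-one $C(\T)$-Hilbert submodule bundle $E \subseteq H_{b}$ over $\R$, trivial as a right $C(\T)$-module, on which the compression of the left $C(\T)$-action acts by
\[
    f \cdot \bigl(g \otimes \psi_\lambda^{t}\bigr) = \Bigl(\int_\R |\psi_\lambda^{t}(r)|^{2}\, f\bigl([x+rb]\bigr)\,dr\Bigr) \, g \otimes \psi_\lambda^{t}.
\]
By the standard Dirac–Schrödinger realization of $\R$-Bott periodicity (verified, if needed, by taking the Kasparov product on the right with the dual-Dirac class of $\R$, which collapses to the harmonic-oscillator spectral projection), the cycle $(H_{b}, D_{t})$ is equal in $\RKK^\Z_{0}(\R; C(\T), C(\T))$ to $(E, 0)$ with the above compressed left action. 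A final rescaling homotopy replacing $\lambda$ by $s\lambda$ and sending $s\to\infty$ concentrates $|\psi_{s\lambda}^{t}|^{2}$ to a point mass at $r=t$, so that the compressed action converges to multiplication by $f([x+tb]) = \widehat{\tau}^{b}_{t}(f)([x])$. The resulting limit cycle is the trivial rank-one $C(\T)$-bundle over $\R$ with left action $\widehat{\tau}^{b}_{t}$ and zero operator, which by Definition \ref{def:hat-tau-b} represents $\widehat{\tau^b}$.

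The principal obstacle is justifying the passage from the unbounded Dirac–Schrödinger cycle to its kernel bundle equipped with zero operator: this step is where Bott periodicity genuinely intervenes. The cleanest verification is to pair on the right with a dual-Dirac element on $\R$ and use the explicit Kasparov product formula for Dirac–Schrödinger cycles, which identifies the product with the spectral projection onto the harmonic-oscillator kernel and thereby matches the compressed cycle $(E, 0)$ on the nose.
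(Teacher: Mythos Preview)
Your first step --- replacing the constant field by the shifted family $(H_b, D_t)_{t\in\R}$ with $D_t = \id_{C(\T)}\otimes d_\lambda^t$, $d_\lambda^t = \lambda(\mathsf{M}-t)+\partial/\partial r$ --- is exactly what the paper does; since $d_\lambda - d_\lambda^t$ is bounded, the shifted family is a bounded perturbation of the constant one and represents the same $\RKK^\Z$-class.

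The gap is in your second step. The ``compressed left action'' on the kernel bundle $E$, namely $f\mapsto P_t\,M_{f_b}\,P_t$, is \emph{not} a $*$-homomorphism: for $b\neq 0$ the multiplication operator $M_{f_b}$ does not commute with the rank-one projection $P_t$, so $P_t M_{f_b} P_t M_{g_b} P_t \neq P_t M_{(fg)_b} P_t$. Hence $(E,0)$ with this ``action'' is not a Kasparov cycle at all, and there is nothing to homotope. Your suggested repair --- pairing with a dual-Dirac element on $\R$ --- does not apply directly either, since the cycle at hand lives in $\KK^\Z(C(\T),C(\T))$ and carries no $C_0(\R)$-structure on which a dual-Dirac element could act.

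The paper circumvents this by never compressing at finite $\lambda$. Following L\"uck--Rosenberg, it builds a continuous field of cycles over $[1,+\infty]$ whose fibre at finite $\lambda$ is $(C(\T)\otimes(L^2(\R)\oplus L^2(\R)),\nu_b,\chi(d_\lambda^t))$ with the \emph{full} (multiplicative) left action by $f_b$, and whose fibre at $\lambda=\infty$ has an \emph{enlarged} even part $C(\T)\otimes(L^2(\R)\oplus\C)$; continuity at infinity is declared by letting the Gaussian $\psi_{0,\lambda}^t$ converge to the new basis vector of $\C$. At $\lambda=\infty$ the operator becomes the sign function $\epsilon^t$ on the $L^2$-summands (degenerate, since $\epsilon^t$ is unitary) and zero on the $\C$-summand, while the left action on that $\C$-summand is \emph{defined} to be the genuine $*$-homomorphism $f\mapsto f([\,\cdot\,+bt])=\widehat\tau^b_t(f)$. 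The point is that one never writes down a non-multiplicative action: the concentration of the Gaussian and the emergence of the $*$-homomorphism $\widehat\tau^b_t$ happen simultaneously in the limit, encoded in the topology of the field rather than via a compression at intermediate $\lambda$. Your intuition that ``$\lambda\to\infty$ concentrates the kernel to a point mass and the action becomes $\widehat\tau^b_t$'' is exactly the content of this construction, but making it an honest homotopy of cycles requires the L\"uck--Rosenberg field, not a two-step compress-then-rescale argument.
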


\begin{proof}
As explained at the beginning of this section, $p_{\R}^{*}([( H_{b}, \id_{C(\T)}\otimes d_\lambda )])$ is 
represented by the \emph{constant} bundle of cycles which consists, for each \(t\in \R\), of the Dirac-Schr\"odinger cycle. 

First, we will modify the operator
\begin{equation*}
d_\lambda  = \mat{ 0 & d_{\lambda, -}\\ d_{\lambda , +}& 0 },
	\qquad
	d_{\lambda, \pm} =  \lambda \mathsf{M} \pm  \der{\;}{r}
	,\end{equation*}
on $L^2(\R) \oplus L^2(\R)$ by changing the implicit 
reference point \(t = 0\) in the cycle; 
we do this to turn our constant
family over $\R$, which is essentially $\Z$-equivariant in the sense of Equation \eqref{eq:ess-eq}, into 
a $\Z$-equivariant family. We will then apply an argument of L{\"u}ck-Rosenberg.

If  \( U_{t} \) is a left translation unitary with \(t\in \R\) then 
\[ 
	U_{t} \circ d_{\lambda, +} \circ U_{-t} 
	=  \lambda ( \mathsf{M} -t )  +  \der{\;}{r}
	=:
	d_{\lambda, +}^{t},
\]
and a similar statement holds for \(d_{\lambda, -}\) 
and hence for \(d_\lambda \). We thus obtain an equivariant 
 family of operators \( d^{t}_\lambda \) on \(L^{2}(\R)\oplus L^{2}(\R)\), all unitary conjugates and bounded perturbations of each other since
\[
	d_\lambda  - d_\lambda ^{t}
	=
	d_\lambda  - U_{t} \circ d_\lambda  \circ U_{-t} 
	=\mat{ 0 & \lambda t \\ \lambda t & 0 }.
\]
We now tensor \(d^{t}_\lambda \) by the identity on \(C(\T)\) to obtain a family 
\[ \{ ( H_{b}, \id_{C(\T)} \otimes d_\lambda^{t})\}_{t}\]
of cycles for \(\KK^\Z_0\left(C(\T), C(\T)\right)\), in which only the operator is varying with \(t\in\R\) while the modules \(H_{b}\) stay constant. 
This describes a cycle  
that is a bounded perturbation of the constant cycle which represents \(p_{\R}^{*}[ ( H_{b}, \id_{C(\T)} \otimes d_{\lambda} )]\). In particular,
\begin{equation}\label{eq:bdd-perturbation-of-constant-family}
	p_{\R}^{*}\left( \left[ \left( H_{b}, \id_{C(\T)}\otimes d_{\lambda} \right) \right] \right)
	=
	\left[( H_{b}, \id_{C(\T)} \otimes d_{\lambda}^{t})_{t\in\R}\right]
	\;\in\;
	\RKK^\Z_0 \left(\R; C(\T), C(\T)\right)
	.
\end{equation}
and our new bundle of cycles is \(\Z\)-equivariant on the nose, as a bundle. 

We next describe a homotopy, which we will describe as a family of homotopies parameterized 
by \(t\in \R\). Fix \(t\).

The following is based on arguments of L{\"u}ck and Rosenberg in \cite{LR:Euler}. 
For $\lambda \in [1,+\infty) $, the 
spectrum of the operator 
\[d_\lambda^t :=  \begin{bmatrix} 0 & \lambda ( \mathsf{M} -t ) - \frac{\partial }{\partial r}\\ \lambda ( \mathsf{M} -t ) + \frac{\partial }{\partial r} & 0 \end{bmatrix}\] on $L^{2}(\mathbb{R})\oplus L^{2}(\mathbb{R})$ is given by 
	\[
		\left\{
			(\pm \sqrt{2l+1}) \lambda \, \colon \, l = 0, 1, 2, \ldots 
		\right\}
	,\]
and \(d_{\lambda}^t\) is orthogonally diagonalizable with eigenspaces all of multiplicity \(1\). The kernel of \(d_{\lambda}^t\) is spanned by the unit vector $\psi^t_{0,\lambda} \oplus 0$ where
\begin{equation}\label{eq:def:psi0}
	\psi^t_{0,\lambda} (r) 
	=
	\left( \frac{\lambda}{\sqrt{\pi}}\right)^{\frac{1}{2}} \cdot e^{-\frac{\lambda (r-t)^{2}}{2}},
\end{equation} 
and the Fredholm index of \(d^t_{\lambda}\) is \(1\).

For each \(\lambda\), let \(\pr_{\lambda}^{t}\) be projection to the kernel of \(d_\lambda\). 
Since the minimal nonzero eigenvalue of $d_{\lambda}^t$ has a distance $\sqrt{2 \lambda}$ to the origin, we obtain Part \ref{part:C0(d-infty)} of the following

\begin{lemma}\label{lem:func-calc-on-d-lambda}
With \(d_{\lambda}^t\) as above and \(f(d_{\lambda}^{t}) \in \Bound (L^{2} (\R)^{\oplus 2})\) the operator obtained from \(f\in C_0(\R)\) by functional calculus, 
we have 
\begin{enumerate}[label=\textup{\arabic*)}]
\item \label{part:C0(d-infty)}
$\lim_{\lambda \to +\infty} \norm{ f(d_{\lambda}^{t}) - f(0) \cdot \pr^t_\lambda} = 0.$
\item If \(\chi\in C_b(\R)\) is a normalizing function, and \(\epsilon^t\) is the \textup(Borel measurable\textup) sign function on \(\R\) given by
\[ \epsilon^t (r) := \tfrac{r-t}{\abs{r-t}},\]
acting as a multiplication operator on \(L^{2}(\R)\), then 
\begin{equation}\label{eq:chi-d-lambda-to-eps}
	F_{\lambda, t} := \chi (d_{\lambda}^t) \to 
	\mat{0 & \epsilon^t\\\epsilon^t & 0} \text{ for }\lambda \to +\infty
\end{equation}
in the strong operator topology. 
\item  If \(f\) is a smooth, periodic function on \(\R\), then 
\[
	\lim_{\lambda \to +\infty} \norm{ \comm{F_{\lambda, t}}{ f} } = 0
.\]
\end{enumerate}
\end{lemma}

The proof of the last claim is carried out in \cite{LR:Euler}, p.\ 582-583, and 
the last statement in 
 \cite{Em:NCG}, Chapter 7, Lemma 7.6, or \cite{LR:Euler}, p. 584-586.

Define a family \(\{ \Hilbertspace_{\lambda, t}\}_{\lambda} = \{ \Hilbertspace_{\lambda, t}^{+} \oplus \Hilbertspace_{\lambda, t}^{-}\}_{\lambda \in [1, +\infty]} \) of 
Hilbert spaces by setting \( \Hilbertspace_{\lambda, t}^{-} := L^{2}(\R)\) for all \(\lambda\in [1, + \infty]\), and 
\begin{equation*}
 \Hilbertspace_{\lambda, t}^{+} = \left\{ 
\begin{array}{l l}
 L^{2}(\R)
 & \quad \mbox{if \(1 \le \lambda <\infty\), }\\
 &\\
 L^{2}(\R)\oplus \C & \quad \mbox{if $\lambda = \infty$. }\\ \end{array} \right. 
 \end{equation*}

We let $\delta^t_0 = (0,1) \in W_{\infty, t}^{+} = L^{2}(\R) \oplus \C$. 

To endow this field with a structure of a continuous field, we only need be 
concerned about the point \(\infty\): We declare a section \(\xi^t\) of the field \( \{ \Hilbertspace_{\lambda, t}^{+}\}_{\lambda \in [1, +\infty]}\) 
with value \( f + z\delta_0^t\) at \(\lambda = + \infty\), $ f \in L^{2} (\R)$ and $z \in \mathbb{C}$, to be 
\emph{continuous at infinity} if 
\begin{equation}\label{eq:cty-at-infty}
	\norm{ \xi^t (\lambda) - ( f + z\psi^t_{0,\lambda}) }_{L^{2}(\R)} \to 0\quad \textup{as} \; \lambda \to +\infty,
\end{equation}
where \(\psi^t_{0, \lambda} \in L^{2}(\R)\) is the normalized \(0\)-eigenvector of \(d^t_{\lambda}\) as defined in Equation (\ref{eq:def:psi0}).

We now describe a continuous family of self-adjoint, grading-reversing operators 
\[ F_{\lambda, t} \colon \Hilbertspace_{\lambda, t} \to \Hilbertspace_{\lambda, t}\]
for \(\lambda \in [1, +\infty]\). 
For finite \(\lambda\), set 
\[
	F_{\lambda, t} := \chi (d_{\lambda}^{t}), \quad \textup{where } d_{\lambda}^{t} = \begin{bmatrix} 0 & \lambda ( \mathsf{M} -t ) - \frac{\partial }{\partial r}\\ \lambda ( \mathsf{M} -t ) + \frac{\partial }{\partial r} & 0 \end{bmatrix} 
.\]
This odd, self-adjoint operator has the form 
\[
	F_{\lambda,t} = \begin{bmatrix} 0 & G_{\lambda,t}^{*}\\ G_{\lambda,t} & 0 \end{bmatrix}
\]
for suitable \(G_{\lambda,t}\).

At infinity, we have \(\Hilbertspace_{\infty,t} = ( L^{2}(\R)\oplus \C ) \oplus L^{2}(\R)\) with the first summand \(L^{2}(\R)\oplus \C\) graded even and the second summand \(L^{2}(\R)\) graded odd. We let 
\[
	G_{\infty,t} \colon L^{2}(\R) \oplus \C \to L^{2}(\R)
\]
be multiplication by the sign function \(\epsilon^t\) on the summand \(L^{2}(\R)\), and zero on the \(\C\)-summand. Thus, the operator \(G_{\infty,t}^{*} \colon L^{2}(\R) \to L^{2}(\R) \oplus \C\) is multiplication by \(\epsilon^t\) on \(L^{2}(\R)\), followed by the inclusion into \(L^{2}(\R)\oplus \C\) by zero in the second summand.
The operator \(F_{\infty,t}\) is the odd, self-adjoint operator on \(\Hilbertspace_{\infty,t}\) given by the matrix 
\[ F_{\infty,t} := \begin{bmatrix} 0 & G_{\infty,t}^{*}\\ G_{\infty,t} & 0 \end{bmatrix}.\]
This is the correct choice in order to make $(F_{\lambda,t})_{\lambda}$ a continuous family, \emph{i.e.}\ an adjointable operator on the module of sections, 
 because of (\ref{eq:chi-d-lambda-to-eps}) in Lemma \ref{lem:func-calc-on-d-lambda}. Note that the operator \(L^{2}(\R) \to L^{2}(\R) \) of multiplication by $\epsilon^t$ 
has no kernel. Since, however, \(G_{\infty,t}\) kills the second summand \(\C\) of \(L^{2}(\R) \oplus \C\), the operator \(G_{\infty,t}\) has a \(1\)-dimensional kernel. The cokernel of \(G_{\infty,t}\) is clearly trivial, and therefore \(G_{\infty,t}\) (and \(F_{\infty,t}\)) also has index \(1\).

The family of operators \( \{ F_{\lambda,t}\}_{\lambda \in [1, +\infty]}\) induces an odd, self-adjoint operator \(F_t\) on the sections $\mathcal{E}_{t}$ of the field \( \{ \Hilbertspace_{\lambda, t}\}_{\lambda \in [1, +\infty]}\). In other words, we have constructed a \(\Z/2\)-graded Hilbert \(C([1, +\infty])\)-module and an odd, self-adjoint operator \(F_t\) on \(\Hilb_t\). Further, \(1-F_t^{2}\) is compact: for finite $\lambda$,
\[
	1- F_{\lambda, t}^{2}
	=
	(1- \chi^{2}) (d_{\lambda}^{t})
\]
is compact by Lemma \ref{lem:func-calc-on-d-lambda}. By the same lemma, 
\[
	\norm{(1-F_{\lambda, t}^{2}) - (1- \chi^{2})(0) \cdot \pr_{\lambda, t} }
	=
	\norm{1- F_{\lambda,t}^{2} - \pr_{\lambda,t} }
	\to 0
	\text{ for } 
	\lambda \to \infty.
\] 
As $\mathrm{pr}_{\lambda}^{t}=\ket{\psi_{0, \lambda}^{t}}\bra{\psi_{0, \lambda}^{t}}$ and 
	$1- F_{\infty,t}^{2}
	=
	( 0\oplus 1 )\oplus 0 = \ket{\delta_{0}^{t}}\bra{\delta_{0}^{t}}$ on $
	(L^{2} (\mathbb{R}) \oplus \mathbb{C} )
	\oplus L^{2} (\mathbb{R})
	,$
we see that $1- F_{\lambda, t}^{2}$ is asymptotic to $\ket{\xi}\bra{\xi}$, the rank-one operator corresponding to the continuous section given by
$
	\xi_{\lambda}
	:= 
	\psi_{0, \lambda}^{t}
	\text{ for } \lambda<\infty
$ and $
	\xi (\infty) = \delta_{0}^{t}
	.
$

The definitions above supply a 
homotopy of \(\KK_0(\C, \C)\)-cycles between \( (\Hilbertspace_{\lambda,t}, d_{\lambda}^{t}) = (L^{2}(\R) \oplus L^{2}(\R), d_{\lambda}^{t})\) for any finite \(\lambda\) and any \(t\in \R\), 
on the one hand, and the sum of the cycle \( ( \C\oplus 0, 0)\) with the degenerate cycle
\[
	\left( L^{2}(\R) \oplus L^{2}(\R), 
	\mat{ 0 & \epsilon^t\\ \epsilon^t & 0} \right)
\]
on the other hand. Here, both \(\C\oplus 0\) and \(L^{2}(\R)\oplus L^{2}(\R)\) are \(\Z/2\)-graded with their respective first summand even and second odd, and \(\epsilon^t\) is the sign function as before. 

Further, the homotopy is equivariant for \(\Z\) if one allows the 
real parameter \(t\in \R\) to change with the integer action: translation by \(n\in \Z\) 
conjugates \(d^t_\lambda \) to \(d^{t+n}_\lambda\). 
This means that the 
construction can be carried out in \(\RKK^\Z(\R; \cdots, \cdots)\), as we now show.

Set 
	\[ \Hilb_{\lambda, t}:=  C(\T) \otimes \Hilbertspace_{\lambda, t}
	 \and
	\mathcal{F}_{\lambda,t} := \mathrm{id}_{C(\mathbb{T})}\otimes F_{\lambda,t},\]
endowed with its standard 
right Hilbert \(C(\T)\)-module structure, and carrying the \(\Z/2\)-grading inherited from 
the gradings on \(\Hilbertspace_{\lambda, t}\). On \(\Hilb_{\lambda, t}\) and for \(f \in C(\T)\) 
considered a periodic function on~\(\R\), we let 
\[\nu_{b,\lambda, t}  (f) \in \Bound (\Hilb_{\lambda, t})\]
be the operator defined as follows. 
Set   
\[ f_{b} ([x], r) = f([x + br]),\]
where $b$ is the integer which was fixed in the beginning.
For finite \(\lambda\),  we let 
\(\nu_{\lambda, t}^{\pm} (f)\) act on \(\Hilb^{\pm}_{\lambda, t} = C(\T)\otimes L^2(\R)\) by multiplication by the function \(f_{b }\) 
on \(\T\times \R\). For \(\lambda = \infty\), we let 
\(\nu^+_{b , \infty, t} (f)\) act on \(\Hilb^+_{\infty, t}
 = C(\T) \otimes \left( L^2(\R)\oplus \C \right) = 
C(\T)\otimes L^2(\R) \, \oplus \, C(\T)
\) 
by multiplication by \(f_{b, \lambda}\) on the first factor 
\(C(\T)\otimes L^2(\R)\), and on the second factor \(C(\T)\) by the 
multiplication by the function \(f_{b}^{t} \in C(\T)\), where 
\[ f_{b}^{t} ([x]) :=  f([x+bt]).\]

 For $t\in\mathbb{R}$ and $\lambda\in (0,\infty],$ let
\[
	\homotopy_{\lambda,t}
	:=
	\left( \nu_{ \lambda, t}, \Hilb_{\lambda, t}, \mathcal{F}_{\lambda, t} \right)
 	\and 
 	\homotopy_{\lambda}:= \left\{\homotopy_{\lambda,t}\right\}_{t\in\mathbb{R}}.
\]
These \(\RKK\)-cycles are \(\Z\)-equivariant, and  $\left(\lambda\mapsto\homotopy_{\lambda}\right)$ is a homotopy of $\RKK^{\mathbb{Z}}$-cycles. For any $\lambda\in (0,\infty),$ Equation~\eqref{eq:bdd-perturbation-of-constant-family} yields that $\homotopy_{\lambda}$ is a compact perturbation of the constant family $\left\{\homotopy_{\lambda,0}\right\}_{t\in\mathbb{R}}$, because they arise as the bounded transform of $\{( L^{2}(\mathbb{R})\oplus L^{2}(\mathbb{R}), d_{\lambda}^{t} )\}_{t\in\mathbb{R}}$ resp.\ $\mathrm{pr}_{\mathbb{R}}^{*}(  L^{2}(\mathbb{R})\oplus L^{2}(\mathbb{R}), d_{\lambda} )$ after fibrewise tensoring with the right-Hilbert $C(\mathbb{T})$-bimodule $(\nu , C(\mathbb{T}))$. Thus, $\homotopy_{\lambda}$ and $\left\{\homotopy_{\lambda,0}\right\}_{t\in\mathbb{R}}$ determine the same class in $\RKK^{\mathbb{Z}}$. By definition of the inflation map, 
\(
	\mathrm{pr}_{\mathbb{R}}^{*} \bigl( ( H_{b}, \mathrm{id}_{C(\mathbb{T})}\otimes d_{\lambda})\bigr)
	=
	\left\{\homotopy_{\lambda,0}\right\}_{t\in\mathbb{R}}
\)
for any finite $\lambda$, so we have shown that $\mathrm{pr}_{\mathbb{R}}^{*} \bigl( ( H_{b}, \mathrm{id}_{C(\mathbb{T})}\otimes d_{\lambda})\bigr)$ and $\homotopy_{\lambda}$ determine the same class.

On the other hand, at $\lambda=\infty$, we have that $\homotopy_{\infty}$ is the sum of the topological $b$-twist \(\widehat{\tau}^{b}=\{\widehat{\tau}^{b}_{t} \}_{t\in\mathbb{R}}\), see Definition~\ref{def:hat-tau-b}, and the degenerate $\left( t \mapsto \left( H_{b}, \mat{ 0 & \epsilon^{t} \\ \epsilon^{t} & 0 }\right)\right)$. In particular, $\widehat{\tau}^{b}$ also determines the same class as  $\homotopy_{\lambda}$ in $\RKK^{\mathbb{Z}}$. This concludes our proof of Theorem~\ref{theorem:keyindexreduction}. 
\end{proof}

\begin{proof}[Proof of Theorem~\ref{theorem:thetwistedindextheorem}]
	Since $\mathrm{pr}_{\R}^{*}$ is an isomorphism, it follows from Theorem~\ref{theorem:keyindexreduction} that $b\mapsto \bigl[ ( H_{b}, \mathrm{id}_{C(\T)}\otimes d_{\lambda})\bigr]$ is a group homomorphism from $\Z$ to $\KK^{\mathbb{Z}}_{0}(C(\T),C(\T))$. Using descent, the map
	\[
		b\mapsto
		j \bigl[ ( H_{b}, \mathrm{id}_{C(\T)}\otimes d_{\lambda})\bigr]
		=
		\bigl[ ( \mathcal{H}_{b}, D_{\lambda})\bigr]
		=
		\twist_{b}
	\]
	is a group homomorphism from $\Z$ to $\KK_{0}(A_{\theta},A_{\theta})$, as claimed.
\end{proof}

We conclude this section by computing the action of a \(b\)-twist on \(\K\)-theory. 

A small variant of Kasparov's descent map is a natural map
\begin{equation}\label{eq:Kasparovs-map}
\begin{tikzcd}
	\RKK^\Z_*(\R; C(\T), C(\T) )
	\ar[r, "\lambda^{\Z}"]
	&\RKK_*\left(\T; C(\R\times_{\mathbb{Z}} \T), C(\R\times_{\mathbb{Z}} \T)\right) 
\end{tikzcd}
\end{equation}
which is similar to the usual `descent,' but contains a bimodule construction as well. It is routine to compute. 
\begin{lemma}
\label{lemma:yay}
	Kasparov's map \eqref{eq:Kasparovs-map}	for $*=0$, followed by the forgetful map
	\[\RKK_0\left(\T; C(\R\times_{\mathbb{Z}} \T), C(\R\times_{\mathbb{Z}} \T)\right)\to
	\KK_0\left(C(\R\times_\Z \T), C(\R\times_\Z)\right) \cong \KK_0\left( C(\T^2), C(\T^2)\right),\]
		sends \([\widehat{\tau^b}]\) to the class of the homeomorphism of \(\T^2\)
	 of matrix multiplication by 
	  \(\begin{bmatrix} 1 & b\\ 0 & 1\end{bmatrix}\). 
\end{lemma}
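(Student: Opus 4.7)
The plan is to unwind the definition of $\lambda^{\Z}$ on the specific, very simple cycle $\widehat{\tau}^b$ (a bundle of $*$-homomorphisms, not a Kasparov cycle with nontrivial operator) and then recognize the resulting $*$-homomorphism as pullback along a homeomorphism of $\T^2$.

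First, I would fix the identification $\Psi\colon\R\times_{\Z}\T\xrightarrow{\;\cong\;}\T^{2}$ given by $\Psi\bigl([r,[x]]\bigr)=\bigl([x+r\theta],[r]\bigr)$, where the $\Z$-action on $\R\times\T$ is $n\cdot(r,[x])=(r+n,[x+n\theta])$. One checks directly that $\Psi$ is well-defined and a homeomorphism, and that it is compatible with the fibration over $\T=\R/\Z$ given by the second coordinate on $\T^{2}$. In particular, $C(\R\times_{\Z}\T)\cong C(\T^{2})$ as $C(\T)$-algebras via $\Psi^{*}$.

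Second, I would describe $\lambda^{\Z}$ on cycles which are $\Z$-equivariant bundles of $*$-homomorphisms. Such a bundle, applied to $\widehat{\tau}^{b}$, is simply the family of bundle maps $(t,[x])\mapsto (t,[x+bt])$ of $\R\times\T$. Since $b\in\Z$, this map commutes with the diagonal $\Z$-action, so it descends to a homeomorphism $\Phi^{b}$ of $\R\times_{\Z}\T$. By construction of Kasparov's map (which combines descent with the Green--Rieffel Morita equivalence $C_{0}(\R\times\T)\rtimes\Z\sim C(\R\times_{\Z}\T)$), the class $\lambda^{\Z}[\widehat{\tau}^{b}]$ is represented, as a $C(\T)$-linear $*$-homomorphism, by pullback along $\Phi^{b}$. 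Applying the forgetful map to $\KK_{0}$ then produces the $\KK$-class of the $*$-homomorphism $(\Phi^{b})^{*}\colon C(\R\times_{\Z}\T)\to C(\R\times_{\Z}\T)$.

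Third, I would perform the coordinate calculation transporting $\Phi^{b}$ through $\Psi$: for $[r,[x]]\in\R\times_{\Z}\T$, one has $\Phi^{b}[r,[x]]=[r,[x+br]]$, and therefore
\[
\Psi\circ\Phi^{b}\circ\Psi^{-1}\bigl([X],[Y]\bigr)=\Psi\bigl([Y,[X-Y\theta]]\bigr)\big|_{\text{after shift}}=\bigl([X+bY],[Y]\bigr),
\]
which is precisely matrix multiplication by $\left[\begin{smallmatrix}1&b\\0&1\end{smallmatrix}\right]$ on $\T^{2}$. This identifies $\lambda^{\Z}[\widehat{\tau}^{b}]$, after forgetting, with the $\KK$-class of the matrix automorphism, as claimed.

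The only real obstacle is bookkeeping: getting all the sign conventions in $\Psi$, the $\Z$-action, and $\widehat{\tau}^b_t$ to line up so that the $bn$-integer drift from the $\Z$-translation disappears modulo $\Z$ and the final matrix comes out upper triangular rather than its transpose. Once $\Psi$ is chosen correctly, everything is a routine chase through a commutative diagram, and no genuine $\KK$-theoretic input beyond the definition of $\lambda^{\Z}$ is needed, since the cycle has zero operator.
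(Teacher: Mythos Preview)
The paper does not give a proof of this lemma; it prefaces the statement with ``It is routine to compute,'' and your proposal is precisely the routine computation the authors have in mind: unwind $\lambda^{\Z}$ on a bundle of $*$-homomorphisms, descend to a homeomorphism of the quotient $\R\times_{\Z}\T$, and transport through an explicit identification with $\T^{2}$.

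One small correction: with the $\Z$-action $n\cdot(r,[x])=(r+n,[x+n\theta])$ you wrote down, your map $\Psi\bigl([r,[x]]\bigr)=\bigl([x+r\theta],[r]\bigr)$ is \emph{not} well-defined (it sends $(r+n,[x+n\theta])$ to $([x+(r+2n)\theta],[r])$). You need either $\Psi\bigl([r,[x]]\bigr)=\bigl([x-r\theta],[r]\bigr)$, or the opposite sign in the $\Z$-action on $\T$. Once that is fixed, the calculation $\Psi\circ\Phi^{b}\circ\Psi^{-1}([X],[Y])=([X+bY],[Y])$ goes through exactly as you indicate, and the fact that $b\in\Z$ is what makes $\Phi^{b}$ descend to the quotient. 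You correctly anticipated that the only content here is sign bookkeeping; there is no gap in the argument.
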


The class \(\beta\in \KK^\Z_1(C_0(\R), \C)\) of the Dirac operator on \(\R\) determines a 
\(\KK_1\)-equivalence \(j(\beta\times 1_{C(\T)} ) \in \KK_1(C(\R\times_\Z \T), A_\theta)\). 
Conjugating by the equivalence determines an 
isomorphism 
\[ \KK_*(A_\theta, A_\theta) \cong \KK_{*}( C(\R\times_\Z \T, C(\R\times_\Z \T).\]
It fits into the right vertical side of a diagram: 
\begin{equation}
\label{cd}
\begin{tikzcd}[column sep = 8em]
	\KK^\Z_*(C(\T), C(\T))
	\ar[d, "p_{\R}^{*}"]
	\ar[r,"j"] 
	& \KK_*(A_{\theta}, A_{\theta}) \ar[d, "\cong"]
	\\
	\RKK^\Z_*(\R; C(\T), C(\T))
	\ar[r, "\mu"]
	& 
	\KK_*(C(\R\times_{\mathbb{Z}} \T), C(\R\times_{\mathbb{Z}} \T)) 
\end{tikzcd}
\end{equation}

Since matrix multiplication on \(\T^2\) by 
 any element of \(\SL_2(\Z)\), induces the identity map on \(\K^0(\T^2)\), and 
 under the standard identification \(\K^1(\T^2) \cong \Z^2\) acts by multiplication by 
 the matrix, we obtain, by Lemma \ref{lemma:yay},
  a corresponding statement about how the \(b\)-twist 
 \(\tau_b \in \KK_0(A_\theta, A_\theta)\) acts. It acts as the identity on 
 \(\K_1(A_\theta)\), and on the identification of \(\K_0(A_\theta)\) with 
 \(\K^1(\T^2) \cong \Z^2\), it acts on combinations of the standard generators by 
 matrix multiplication by \(\mat{1 & b\\0 &1}\).  
 
 We believe that the image of these two generators under Dirac equivalence, are the 
 classes \([1]\in \K_0(A_\theta)\), of the unit, and the class \([p]\in \K_0(A_\theta)\) of the 
 Rieffel--Powers projection, but we do not know of a convenient reference.  In any case, 
 either of these \(\K\)-theory classes 
 are fixed by automorphisms of \(A_\theta\) (since they are unital, and 
 preserve the trace). So the \(b\)-twist
  \(\tau_b\in \KK_0(A_\theta, A_\theta)\) is not represented by an automorphism of \(A_\theta\).

\section{ Poincar\'e duality calculation}

One of the two main technical results of this paper is the following. Let 
\(\Delta_\theta \in \KK_0(A_\theta\otimes A_\theta, \C)\) be the class of 
Definition \ref{def:fc}.

\begin{theorem}\label{thm:j-H-d-H=TheProduct}
	Let \(g = \begin{bsmallmatrix}1 & b\\0 & 1\end{bsmallmatrix}\) for $b\neq 0$ and \(\module_b:= \module_g\). Then 
	\begin{equation}
	\label{equation:wowthatsamazing}
		(1_{A_{\theta}} \otimes [\module_{ b }])
		\otimes_{A_{\theta}^{\otimes 3}}
		(\Delta_{\theta} \otimes 1_{A_{\theta}}) =
		[(\mathcal{H}_{ b },  \tfrac{1}{b} D_{2 \pi b})]\in \KK_0(A_\theta, A_\theta)
		.
	\end{equation}
	In particular, if $b>0$, then this class coincides with \(\twist_b\in \KK_0(A_\theta, A_\theta)\), the \(b\)-twist 
	\textup(Definition \ref{definition:btwist}\textup).
\end{theorem}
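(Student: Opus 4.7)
The plan is to compute the Kasparov product on the left-hand side of \eqref{equation:wowthatsamazing} by constructing an explicit unbounded representative via the Connes–Skandalis formalism, and then recognising the result as the descended Dirac–Schr\"odinger cycle. The first step is to specialise the construction of $\module_b$ from Section~3 to $g=\begin{bsmallmatrix}1 & b\\0 & 1\end{bsmallmatrix}$, which yields $\mu(g)=b$ via Equation~\eqref{eq:def-mu}. Using Lemma~\ref{lem:Yg-ast-X=Zg}, the set $\mathcal{Z}_g$ parameterises concretely as $\R\times\T$ through $(r,[v])\mapsto (r,r,[v,v+rb])$, and the bimodule $\mathsf{Z}_b$ becomes a completion of $C_c(\R\times\T)$ with right $A_\theta\otimes A_\theta$-module and left $C^*(\mathcal{F}_g)$-module structures inherited from Lemmas~\ref{lem:mcFg-is-transf-gpd} and~\ref{lem:def-Yg}.

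Next, form the interior tensor product $(A_\theta\otimes\mathsf{Z}_b)\otimes_{A_\theta^{\otimes 3}}(L^2(\T\times\Z)^{\oplus 2})$, in which the middle two copies of $A_\theta$ act via the right action on $\mathsf{Z}_b$ and via the representation $\pi=\omega_1\otimes\omega_2$ of Lemma~\ref{def:Delta}, respectively. Because $\mathsf{Z}_b$ is a Morita equivalence bimodule between $C^*(\mathcal{F}_g)$ and $A_\theta\otimes A_\theta$, the tensor product reduces to the sections over the transversal, and one identifies it with the descended module $\mathcal{H}_b$ by comparing with Lemma~\ref{lem:description-of-mcHb}: the $\R$-fibre of $\mathcal{Z}_g$ corresponds to the $\R$-factor of $H_b$, while the $\Z$-summation appearing in the descent of the $\Z$-action matches the $\Z$-index in the formulas~\eqref{eq:left action-on-mcH}--\eqref{eq:inner-product-on-mcH}.

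The Connes–Skandalis recipe then produces a product operator of the form $1\otimes_\nabla d_\Delta$ for a Grassmann connection $\nabla$ on $\mathsf{Z}_b$. The two summands of $d_\Delta=D_\Z\pm i D_\T$ are to be tracked under the identification of the previous step. The operator $D_\Z$, which acts as multiplication by $2\pi n$ on the $U^n$-basis of the Connes module, becomes a fibrewise differentiation in the $r$-direction of $\mathcal{Z}_g$; the generator of the $\Z^2$-action in Lemma~\ref{lem:mcFg-is-transf-gpd} translates along $\slope$ by steps of $1/\mu(g)=1/b$, which introduces a normalisation $1/b$ and produces the derivative part $\pm\tfrac{1}{b}\partial/\partial r$. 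Dually, $D_\T$ restricts to multiplication by the transversal coordinate and contributes the Schr\"odinger term $2\pi\mathsf{M}$. Comparing with $D_{2\pi b}=j(\mathrm{id}_{C(\T)}\otimes d_{2\pi b})$, where $d_{2\pi b,\pm}=2\pi b\,\mathsf{M}\pm\partial/\partial r$, the assembled operator is precisely $\tfrac{1}{b}D_{2\pi b}$.

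The main obstacle will be verifying the Connes–Skandalis conditions for this product: self-adjointness and regularity of the product operator, boundedness of its commutators with a dense subalgebra, and the positivity (semi-boundedness) condition for the connection operator relative to $d_\Delta$ on a common core. Self-adjointness, regularity, and bounded commutators should follow from arguments parallel to Lemmas~\ref{lem:d_H-is-nice} and~\ref{lem:bounded-commutators}, exploiting the Schwartz subalgebra $\mathfrak{A}$ of~\eqref{eq:defn-mfA} and the fact that $\mathsf{Z}_b$ is finitely generated projective (Corollary~\ref{cor:Zg-fgp}); the positivity condition is the delicate point and will need to be verified explicitly on a dense domain of smooth compactly supported sections. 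Once \eqref{equation:wowthatsamazing} is established, the identification with $\twist_b$ for $b>0$ proceeds in two steps: a convex homotopy through positive scalar multiples shows $[(\mathcal{H}_b,\tfrac{1}{b}D_{2\pi b})]=[(\mathcal{H}_b,D_{2\pi b})]$ in $\KK_0(A_\theta,A_\theta)$, since positive rescaling is a homotopy of bounded transforms through Kasparov cycles; and since $2\pi b>0$, Remark~\ref{rmk:Hb-db-replaced-by-Hb-dlambda} identifies the latter with $[(\mathcal{H}_b,D_1)]=\twist_b$.
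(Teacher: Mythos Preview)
Your proposal contains a concrete error at the very first step. For $g=\begin{bsmallmatrix}1&b\\0&1\end{bsmallmatrix}$, the set $\mathcal{Z}_g$ from Lemma~\ref{lem:Yg-ast-X=Zg} is \emph{not} parameterised by $\R\times\T$. The defining condition $g\bigl([v,0]+r_1[\theta,1]\bigr)=[w,0]+r_2[\theta,1]$ in $\T^2$ forces only $[r_1]=[r_2]$ in $\T$, i.e.\ $r_2=r_1+k$ for some $k\in\Z$, and then $[w]=[v+br_1-k\theta]$. Thus $\mathcal{Z}_g\cong\T\times\R\times\Z$, and your map $(r,[v])\mapsto(r,r,[v,v+rb])$ hits only the $k=0$ slice. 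This extra $\Z$ is not a minor bookkeeping issue: it is precisely the copy of $\Z$ that, after balancing against $L^2(\T\times\Z)$, survives to become the $\Z$-summation index in the descent formulas \eqref{eq:left action-on-mcH}--\eqref{eq:inner-product-on-mcH}. Without it your module is too small and cannot be identified with $\mathcal{H}_b$.

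Beyond this, the heart of the argument---identifying the balanced tensor product with $\mathcal{H}_b$---is where most of the work lies, and your sketch does not engage with it. The paper proceeds by first showing (Lemmas~\ref{lem:def-of-mcNg0} and~\ref{lem:mathscr-left-action}) that the product module is a completion $\mathcal{N}_g$ of $C_c(\mathcal{Z}_g)$ with explicit bimodule formulas, then recognising (for upper-triangular $g$ only) that $\mathcal{N}_g$ arises via descent from an equivariant $C(\T)$-bimodule $N_b$, and finally producing an explicit unitary $W$ (a coordinate change $([x],r)\mapsto([x+br],-r)$) intertwining $(N_b,b\,d_{N_b})$ with $(H_b,\id\otimes d_{2\pi b})$. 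The operator $d_{N_b}$ that emerges has \emph{three} terms, $-\tfrac{1}{b}\partial_r+\partial_\Theta-2\pi\mathsf{M}$, and the $\partial_\Theta$ term is eliminated only by $W$; your account of $D_\Z\leadsto\partial_r$ and $D_\T\leadsto\mathsf{M}$ skips this. For verifying that the cycle represents the product, the paper uses Kucerovsky's criterion rather than a Grassmann connection; either framework is legitimate, but in both cases the substantive computation is the commutator $D_{\mathcal{E}}T_x-T_xD_2$ on a dense core, which reduces to a creation operator (Lemma~\ref{lem:KaspProd:Bdd}). Your positivity/semi-boundedness remark does not address this.
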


We proceed to the proof of Theorem \ref{thm:j-H-d-H=TheProduct}, which is 
fairly long. 

\subsection{Computation of the module in the zig-zag product}

Our goal is to compute 
 $( 1_{A_{\theta}} \otimes [ \module_{g} ] ) \otimes_{A_{\theta}^{\otimes 3}} ( \Delta_{\theta} \otimes 1_{A_{\theta}} ) \in \KK_0(A_\theta, A_\theta)$ for \(g\) upper-triangular, 
 and prove that it equals the class of the \(b\)-twist of Theorem \ref{thm:HdH-is-cycle}.

 In fact, some of the calculations we will do for arbitrary \(g\), since it involves little additional effort and leads to the following observation: only for upper-triangular \(g\), the Hilbert \(A_\theta\)-bimodule 
 involved in the Kasparov product of the left hand side of \eqref{equation:wowthatsamazing} is of the 
 kind one gets from applying descent to an equivariant module (such as the one appearing 
 in our cycle for the \(b\)-twist).

As the module $\module_{g}$ and the $\Cstar$-algebra $ A_{\theta} $ are ungraded, the module underlying this class is comprised of two copies of
\begin{equation*}
	( A_{\theta} \otimes \module_{g})
	 \otimes_{A_{\theta}^{\otimes 3}} 
	\left( L^{2} \otimes A_{\theta} \right)
	,
\end{equation*}
 where $L^{2} = L^2 (\mathbb{T})\otimes \ell^2 (\mathbb{Z})$ as before (see Lemma~\ref{def:Delta}). We initially focus on 
 describing this bimodule. Observe first that one is reduced to computing 
$
	 \module_{g} 
	\otimes_{ A_{\theta} }
	 L^{2} ,
$
where the balancing is over $ A_{\theta} \otimes 1$ acting on the right of $ \module_{g} $, and $ A_{\theta} $ acting on the left of $ L^{2} $ via $\omega_{2} \rtimes v$. This is because the maps
\[
	( A_{\theta} \otimes \module_{g})
	 \otimes_{A_{\theta}^{\otimes 3}} 
	( L^{2} \otimes A_{\theta} )
	\longleftrightarrow
	 \module_{g} \otimes_{ A_{\theta} } L^{2} 
\]
defined on elementary tensors by
\begin{equation}\label{iso:ALgL2AtoLgL2}
\begin{split}
	(a\otimes \Phi) \otimes (f\otimes b)
	&\longmapsto
	\Phi._{\module_{g}} (1 \otimes b) \otimes (\omega_{1} \rtimes u)(a)\left( f\right)
	\\
	(1\otimes \Phi)\otimes (f\otimes 1)
	&\longmapsfrom
	\Phi\otimes f
\end{split}
\end{equation}
are inverse to one another and therefore 
equip the right-hand side with the structure of a right-Hilbert $ A_{\theta} $-bimodule as follows:
\begin{equation}
\label{formula:ModuleStructureOnE0L2}
\begin{split}
	A_{\theta} \act ( \module_{g} \otimes_{ A_{\theta} } L^{2} ):
	&\qquad
	\xi (\Phi\otimes f)
	:= 
	\Phi \otimes (\omega_{1}\rtimes u)(\xi) \left(f\right),
	\\
	 ( \module_{g} \otimes_{ A_{\theta} } L^{2} ) \curvearrowleft A_{\theta} :
	&\qquad
	(\Phi\otimes f) \xi 
	:= 
	\Phi._{ \module_{g}}(1\otimes \xi) \otimes f.
\end{split}
\end{equation}
Moreover, $\module_{g} \otimes_{ A_{\theta} } L^{2} $ has $A_{\theta} \otimes A_{\theta}$-valued inner product given in elementary tensors by
	\begin{align}
		\label{eq:inner-of-EmfA}
		\inner{\Phi \otimes f_{1} }{\Psi \otimes f_{2} }^{ }
		&=
		\iinner{(1\otimes\Phi) \otimes ( f_{1} \otimes 1)}{(1\otimes \Psi) \otimes ( f_{2} \otimes 1)}^{ (1_{A_{\theta}} \otimes \module_{g} ) \otimes_{A_{\theta}^{\otimes 3}} ( L^2 \otimes 1_{A_{\theta}} )}
		\notag
		\\
		&=
		\inner{
			f_{1} \otimes 1
		}{
			\left(
				\inner{
					1\otimes\Phi
				}{
					1\otimes \Psi
				}^{{ A_{\theta}} \otimes \module_{g} }_{A_{\theta}^{\otimes 3}}
			\right)
			\cdot
			(f_{2} \otimes 1)
		}^{ L^2 \otimes { A_{\theta}}}
		,
	\end{align}
	where $\cdot$ denotes, for the moment, the left-action of $A_{\theta}^{\otimes 3}$ on $ L^2 \otimes { A_{\theta}}$. Note that, since we induce this inner product on $ \module_{g} \otimes_{ A_{\theta} } L^{2} $ via the bijection, we do not need to worry about topologies.

\begin{lemma}\label{lem:def-of-mcNg0}
		The maps
		\begin{align}
		\begin{split}\label{formula:general:scr-h-in-EL}
			C_c (\mathcal{Z}_{g})
			&\longrightarrow 
			C_c(\mathcal{Z}_{g})\odot_{\mathfrak{A}}
					\mathfrak{A} 
			\;\subseteq\;
	 \module_{g}\otimes_{ A_{\theta} } L^{2} 
			\\
			\Phi
			&\longmapsto
			\Phi \otimes 	(z^{0}\otimes\varepsilon_{0})
			\\
			\Phi._{\module_{g}}(V^{l } U^{-k } \otimes 1)
			&\longmapsfrom
			\Phi
			\otimes
			(z^{l }\otimes\varepsilon_{k })
		\end{split}
		\end{align}
		are mutually inverse. In particular with the help of Formula \eqref{iso:ALgL2AtoLgL2}, a copy of the space $C_c (\mathcal{Z}_{g})$ is sitting densely inside of
		$( A_{\theta} \otimes \module_{g})
				 \otimes_{A_{\theta}^{\otimes 3}} 
				\left( L^{2} \otimes A_{\theta} \right)$.				
	\end{lemma}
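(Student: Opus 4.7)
The strategy is to reduce everything to two ingredients: the defining balancing relation of $\otimes_{A_\theta}$ on $\module_g \otimes_{A_\theta} L^2$, and the single computation that $(\omega_2 \rtimes v)(V^l U^{-k})(z^0 \otimes \varepsilon_0) = z^l \otimes \varepsilon_k$ as vectors in $L^2 = L^2(\T) \otimes \ell^2(\Z)$.

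I would first verify that key computation using the explicit formulas from Lemma~\ref{def:Delta}. Since $z^0 = 1$ is invariant under irrational rotation, we have $v_{-k}(1 \otimes \varepsilon_0) = ((-k).1) \otimes \varepsilon_k = 1 \otimes \varepsilon_k$, and then $\omega_2(V^l)(1 \otimes \varepsilon_k) = z^l \otimes \varepsilon_k$ because $\omega_2(V^l)$ is just multiplication by $z^l$ in the $L^2(\T)$ factor. With the convention matching the paper that $V^l U^{-k}$ in $A_\theta$ corresponds to $\omega_2(V^l) v_{-k}$ under the integrated representation, this gives exactly the claimed formula.

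Next, I would check the two maps are mutually inverse. The composition forward-then-backward sends $\Phi \mapsto \Phi \otimes (z^0 \otimes \varepsilon_0) \mapsto \Phi \cdot_{\module_g}(V^0 U^0 \otimes 1) = \Phi$, using $V^0 U^0 = 1$. The composition backward-then-forward sends
\[
\Phi \otimes (z^l \otimes \varepsilon_k) \mapsto \Phi \cdot_{\module_g}(V^l U^{-k} \otimes 1) \mapsto \Phi \cdot_{\module_g}(V^l U^{-k} \otimes 1) \otimes (z^0 \otimes \varepsilon_0).
\]
Because the right action of $V^l U^{-k} \in A_\theta$ on $\module_g$ is through the first tensor factor $A_\theta \otimes 1$, over which we balance in $\module_g \otimes_{A_\theta} L^2$, the balancing relation moves $V^l U^{-k}$ across the tensor symbol, giving $\Phi \otimes (\omega_2 \rtimes v)(V^l U^{-k})(z^0 \otimes \varepsilon_0) = \Phi \otimes (z^l \otimes \varepsilon_k)$ by the preceding computation.

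Finally, the well-definedness of the backward map on the balanced algebraic tensor product $C_c(\mathcal{Z}_g) \odot_{\mathfrak{A}} \mathfrak{A}$ reduces to associativity of the right $\module_g$-module structure: any ambiguity $\Phi \cdot_{\module_g} b \otimes a = \Phi \otimes b \cdot a$ with $b \in \mathfrak{A}$ is respected by the assignment $\Phi \otimes (z^l \otimes \varepsilon_k) \mapsto \Phi \cdot_{\module_g}(V^l U^{-k} \otimes 1)$, since under the identification $\mathfrak{A} \ni V^l U^{-k} \leftrightarrow z^l \otimes \varepsilon_k \in L^2$, the left $\mathfrak{A}$-action on the second factor is just left multiplication in $\mathfrak{A}$. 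The density claim in the second sentence of the lemma then follows from the density of $C_c(\mathcal{Z}_g)$ inside $\module_g$, the density of the span of $\{z^l \otimes \varepsilon_k\}_{l,k}$ in $L^2$, and transporting through the bijection \eqref{iso:ALgL2AtoLgL2}. The only real obstacle is bookkeeping of the $U \leftrightarrow v_{\pm 1}$ sign convention so that the cyclic vector identity comes out exactly as stated; no substantive difficulty is anticipated beyond this.
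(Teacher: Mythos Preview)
Your proposal is correct and follows essentially the same route as the paper: both arguments hinge on the balancing identity $\Phi._{\module_g}(\xi\otimes 1)\otimes f = \Phi\otimes (\omega_2\rtimes v)(\xi)(f)$ together with the cyclic-vector computation $(\omega_2\rtimes v)(V^{l}U^{-k})(z^0\otimes\varepsilon_0)=z^l\otimes\varepsilon_k$. The paper carries out the slightly more general computation $(\omega_2\rtimes v)(V^{l_1}U^{k_1})(z^{l_2}\otimes\varepsilon_{k_2})=\lambda^{-k_1 l_2}z^{l_1+l_2}\otimes\varepsilon_{k_2-k_1}$ and then specializes, whereas you compute only the case needed and are more explicit about checking both compositions and well-definedness, but the substance is the same.
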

	
	In the above lemma, we write $\mathfrak{A}$ for two things: on the one hand, it denotes the dense subspace of $L^2$ consisting of elements $\sum_{n,m } a_{n,m}z^n\otimes \varepsilon_m$. On the other hand, it denotes the subalgebra of $A_{\theta}$ consisting of elements $\sum_{n,m} a_{n,m}V^n U^m$. In both of these cases, $(a_{n,m})_{n,m}$ is assumed to be of Schwartz decay. Recall also that $\odot$ denotes the algebraic tensor product before completion.
	
	\begin{proof}
	On the right-hand side, the balancing gives us the following equality for $\Phi\in C_c(\mathcal{Z}_{g})$, $f\in \mathfrak{A}\subseteq L^{2} $, and any acting element $\xi\in \mathfrak{A}\subseteq A_{\theta} $:
	\begin{equation*}
		\Phi \otimes (\omega_{2} \rtimes v)(\xi)\left( f\right)
		=
		\Phi ._{ \module_{g}} (\xi \otimes 1) \otimes f
		.
	\end{equation*}
	For $\xi=V^{l_{1}} U^{k_{1}}$ and $f=z^{l_{2}}\otimes \varepsilon_{k_{2}}$, we have
	\[
		(\omega_{2} \rtimes v)(\xi)\left( f\right)
		=
		\lambda^{-k_{1}l_{2}} z^{l_{2}+l_{1}}\otimes \varepsilon_{k_{2}-k_{1}}
		,
	\]
	where $\lambda := \mathrm{e}^{2\pi \imaginary \theta}$. So we have for any choice of $l_{1}, k_{1}\in\mathbb{Z} $:
	\begin{gather*}
		\Phi
		\otimes 
		\left(
			\lambda^{-k_{1}l_{2}}\cdot
			z^{l_{2}+l_{1}}\,
			\otimes
			\varepsilon_{k_{2}-k_{1}}
		\right)
			= 
		(\Phi._{ \module_{g}}(V^{l_{1}} U^{k_{1}} \otimes 1))
		\otimes
		\left(
			z^{l_{2}}
			\otimes
			\varepsilon_{k_{2}}
		\right)
		.
	\end{gather*}
	The case $k_{2} := 0,$ $l_{2} := 0$ and $k_{1}$ replaced by $-k_{1}$ yields:
	\begin{gather*}
		\Phi
		\otimes 
		\left(
			z^{l_{1}}\,
			\otimes
			\varepsilon_{k_{1}}
		\right)
			= 
		(\Phi._{ \module_{g}}(V^{l_{1}} U^{-k_{1}} \otimes 1))
		\otimes
		\left(
			z^{0}
			\otimes
			\varepsilon_{0}
		\right)
		.
	\end{gather*}
	It is now easy to see that the two maps are mutually inverse maps, as claimed.
	\end{proof}

		Formula \eqref{formula:general:scr-h-in-EL} equips
		the left-hand side with the structure of an $\mathfrak{A}-\mathfrak{A}$-right-pre-Hilbert module.
		We let $\mathcal{N}_{g}^{0}$ be its completion and $\mathcal{N}_{g} := \mathcal{N}_{g}^{0}\oplus \mathcal{N}_{g}^{0}$ with the standard even grading. 
		By construction, $\mathcal{N}_{g}$ is (isomorphic to) the $ A_{\theta} - A_{\theta} $-right-Hilbert module underlying $( 1_{A_{\theta}} \otimes [ \module_{g} ] ) \otimes_{A_{\theta}^{\otimes 3}} ( \Delta_{\theta} \otimes 1_{A_{\theta}} )$.	We will now study this $\mathfrak{A}-\mathfrak{A}$-right-pre-Hilbert module in terms of the bimodule structure of~$\module_{g}$.

	\begin{lemma}[The Hilbert bimodule structure of $\mathcal{N}_{g}^{0}$]\label{lem:mathscr-left-action}	\label{lem:MakeInnerEtoInnerH}
		The bimodule structure on $\mathcal{N}_{g}^{0}$ 
		is given on its dense subspace \mbox{$C_c (\mathcal{Z}_{g})$} by
		\begin{equation}
		\label{formula:general-mathscr-h:bimodule}
		\begin{alignedat}{2}
			\mathfrak{A} \curvearrowright C_c (\mathcal{Z}_{g}):
			&&\qquad
			(V^{l_{1}}U^{k_{1}} \square \Phi )
	 &= 
			\lambda^{l_{1}k_{1}}\,\Phi._{ \module_{g} }(V^{l_{1}} U^{-k_{1}} \otimes 1),
			\\
			C_c (\mathcal{Z}_{g}) \curvearrowleft \mathfrak{A}
			:&&\qquad
			(\Phi\square V^{l_{2}}U^{k_{2}})
	 &= 
			\Phi._{ \module_{g} }(1\otimes V^{l_{2}}U^{k_{2}})
			.
		\end{alignedat}
		\end{equation}
		
		For $\Psi$ another compactly supported function on $\mathcal{Z}_{g}$, the \mbox{\textup(pre-\textup)}inner product $\inner{\Phi}{\Psi}^{\mathcal{N}_{g}^{0}}$ with value in $C_c (\mathcal{A}_{\theta})\subseteq A_{\theta}$ is given by 
		\[\inner{\Phi}{\Psi}^{\mathcal{N}_{g}^{0}} ([x],k)=\int_{\mathbb{T}}\inner{\Phi}{\Psi}^{ \module_{g} }([y],0,[x],k)\mathrm{d}\, y,\]
		where $\inner{\Phi}{\Psi}^{ \module_{g} }$ takes values in $C_{c}(\A) = C_{c} (\mathcal{A}_{\theta}	\times \mathcal{A}_{\theta}	)$.	
	\end{lemma}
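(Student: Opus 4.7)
The plan is to transport the Hilbert bimodule structure of $\module_g \otimes_{A_\theta} L^2$ described in~\eqref{formula:ModuleStructureOnE0L2} across the bijection $\Phi \mapsto \Phi \otimes (z^0 \otimes \varepsilon_0)$ of Lemma~\ref{lem:def-of-mcNg0}. Once that identification is in place, all three formulas reduce to mechanical computations involving the reference vector $z^0 \otimes \varepsilon_0 \in L^2$ and the explicit descriptions of the representations $\omega_{1}, u, \omega_{2}, v$ from Lemma~\ref{def:Delta}.

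For the right action, applying $(\Phi \otimes f)\xi = \Phi._{\module_g}(1 \otimes \xi) \otimes f$ with $\xi = V^{l_2}U^{k_2}$ and $f = z^0 \otimes \varepsilon_0$ lands directly in the image of the bijection, producing $\Phi._{\module_g}(1 \otimes V^{l_2}U^{k_2})$ on the nose. For the left action, I would apply $\xi.(\Phi \otimes f) = \Phi \otimes (\omega_1 \rtimes u)(\xi)(f)$ with $\xi = V^{l_1}U^{k_1}$ and compute step-by-step: first $u_{k_1}$ sends $z^0 \otimes \varepsilon_0$ to $z^0 \otimes \varepsilon_{k_1}$, and then $\omega_{1}(z^{l_1})$ multiplies by $\alpha_{-k_1}(z^{l_1}) = \lambda^{l_1 k_1}z^{l_1}$, yielding $\lambda^{l_1 k_1}(z^{l_1}\otimes \varepsilon_{k_1})$. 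Lemma~\ref{lem:def-of-mcNg0} then identifies $\Phi \otimes (z^{l_1}\otimes\varepsilon_{k_1})$ with $\Phi._{\module_g}(V^{l_1}U^{-k_1}\otimes 1)$, reproducing the claimed formula including its coefficient $\lambda^{l_1 k_1}$.

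For the inner product, I would substitute $f_1 = f_2 = z^0 \otimes \varepsilon_0$ into~\eqref{eq:inner-of-EmfA}. Writing $\inner{\Phi}{\Psi}^{\module_g} = \sum_i c_i \otimes d_i \in A_{\theta}\otimes A_{\theta}$ and unpacking the action of $1 \otimes (A_{\theta}\otimes A_{\theta})$ on $L^2\otimes A_{\theta}$ (where the middle factor acts via $\omega_{2}\rtimes v$ and the right factor by left multiplication), the expression collapses to
\[
	\sum_{i}\inner{z^0 \otimes \varepsilon_0}{(\omega_{2} \rtimes v)(c_i)(z^0 \otimes \varepsilon_0)}_{L^2}\cdot d_i.
\]
A direct computation from the formulas for $\omega_{2}$ and $v$ shows that this scalar equals $\int_{\T} c([y], 0)\, dy$ for $c \in C_{c}(\mathcal{A}_{\theta})$ viewed as a function on $\T\times\Z$, i.e.\ it is integration of the $0$-component. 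Regarding $\inner{\Phi}{\Psi}^{\module_g}$ as a kernel on $\mathcal{A}_{\theta}\times\mathcal{A}_{\theta}$ and applying this ``slice at identity'' in the first variable produces $\int_{\T} \inner{\Phi}{\Psi}^{\module_g}([y], 0, [x], k)\, dy$, as claimed.

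The only real obstacle is keeping the sign conventions aligned: the coefficient $\lambda^{l_1 k_1}$ and the exponent $-k_1$ in $V^{l_1}U^{-k_1}$ both depend on using $\omega_{1}(f)(\xi\otimes \varepsilon_n) = \alpha_{-n}(f)\cdot \xi\otimes\varepsilon_{n}$ from Lemma~\ref{def:Delta} together with the resulting identity $\alpha_{-n}(z^l) = \lambda^{nl} z^l$, which is itself dictated by the relation $VU = e^{2\pi i\theta}UV$. Once these conventions are fixed, the computed formulas on the dense subspace $C_{c}(\mathcal{Z}_g)$ extend by $\mathfrak{A}$-bilinearity and continuity to the full Hilbert bimodule structure on $\mathcal{N}_g^0$.
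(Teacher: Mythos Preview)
Your proposal is correct and follows essentially the same route as the paper's own proof: transport the bimodule structure through the bijection of Lemma~\ref{lem:def-of-mcNg0}, compute $(\omega_1\rtimes u)(V^{l_1}U^{k_1})(z^0\otimes\varepsilon_0)=\lambda^{l_1k_1}z^{l_1}\otimes\varepsilon_{k_1}$ for the left action, read off the right action directly, and for the inner product reduce to the scalar $\inner{z^0\otimes\varepsilon_0}{(\omega_2\rtimes v)(a)(z^0\otimes\varepsilon_0)}_{L^2}=a_{0,0}=\int_{\T}a([y],0)\,dy$, then bootstrap from elementary tensors to general $\inner{\Phi}{\Psi}^{\module_g}\in C_c(\A)$.
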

	
		\begin{proof}
	An element $\Phi\in C_c (\mathcal{Z}_{g})$ corresponds to $\Phi \otimes (z^{0} \otimes \varepsilon_{0})$ in $ \module_{g} \otimes_{ A_{\theta} } L^{2} $, see Formula \eqref{formula:general:scr-h-in-EL}.
	By Formula \eqref{formula:ModuleStructureOnE0L2}, the left action on $ \module_{g} \otimes_{ A_{\theta} } L^{2} $ is given by
	\[
		V^{l_{1}}U^{k_{1}} .\left( \Phi \otimes (z^{0} \otimes \varepsilon_{0})\right)
		=
		\Phi \otimes (\omega_{1}\rtimes u)(V^{l_{1}}U^{k_{1}}) \left(z^{0} \otimes \varepsilon_{0}\right).
	\]
	We compute
	\begin{align*}
		(\omega_{1}\rtimes u)(V^{l_{1}}U^{k_{1}}) \left(z^{0}\otimes\varepsilon_{0}\right)
		=
		\lambda^{l_{1} k_{1}}\,
		z^{l_{1}}\otimes\varepsilon_{k_{1}}
		,
	\end{align*}
	so that
	\[
		V^{l_{1}}U^{k_{1}} .\left( \Phi \otimes (z^{0} \otimes \varepsilon_{0})\right)
		=
		\lambda^{l_{1}k_{1}}\,\Phi\otimes 	(z^{l_{1}}\otimes\varepsilon_{k_{1}}).
	\]
	Similarly, the right action on $ \module_{g} \otimes_{ A_{\theta} } L^{2} $ is given by
	\[
		\left(\Phi \otimes (z^{0} \otimes \varepsilon_{0})\right) .\, V^{l_{2}}U^{k_{2}}
		=
		\Phi._{ \module_{g} }(1\otimes V^{l_{2}}U^{k_{2}}) \otimes (z^{0} \otimes \varepsilon_{0}).
	\]
	The claim about the bimodule structure now follows from Formula \eqref{formula:general:scr-h-in-EL}.
 
 	Next, we turn to the inner product. Because of Equation \eqref{eq:inner-of-EmfA} and Formula \eqref{formula:general:scr-h-in-EL}, the pre-inner product on $\mathcal{N}_{g}^{0}$ is given by
		\begin{align}\begin{split}\label{formula:innerH}
			\inner{\Phi}{\Psi}^{\mathcal{N}_{g}^{0}} 
			&=
			\inner{
				z^{0}\otimes\varepsilon_{0} \otimes 1
			}{
				\left(
					\inner{
						1\otimes\Phi
					}{
						1\otimes \Psi
					}^{{ A_{\theta}} \otimes \module_{g} }_{A_{\theta}^{\otimes 3}}
				\right)
				\cdot
				(z^{0}\otimes\varepsilon_{0} \otimes 1)
			}^{ L^2 \otimes { A_{\theta}}}
			,
		\end{split}
		\end{align}
		where $\cdot$ is, as before, the left-action of $A_{\theta}^{\otimes 3}$ on $ L^2 \otimes { A_{\theta}}$. Since $$\inner{1\otimes\Phi}{1\otimes \Psi}^{{ A_{\theta}} \otimes \module_{g} }_{A_{\theta}^{\otimes 3}} = 1 \otimes\inner{	\Phi}{\Psi}^{ \module_{g} }_{ A_{\theta} \otimes A_{\theta} },$$ 
		let us study Equation \eqref{formula:innerH} for $\inner{
								1\otimes\Phi
							}{
								1\otimes \Psi
							}$ replaced by an elementary tensor $1\otimes a \otimes b$:
		\begin{align*}
			&
			\inner{
				z^{0}\otimes\varepsilon_{0} \otimes 1
			}{
				\bigl(
					1\otimes a\otimes b
				\bigr)
				\cdot
				(z^{0}\otimes\varepsilon_{0} \otimes 1)
			}^{ L^2 \otimes { A_{\theta}}} 
			=
			\inner{
					z^{0}\otimes\varepsilon_{0}
			}{
				\omega_{2}\rtimes v(a) (z^{0}\otimes\varepsilon_{0})
				}^{ L^2 }
			\cdot
			b
			.
		\end{align*}
		
		
		For $a= \sum_{n,m} a_{n,m} V^n U^m $, we have
		\begin{align*}
			\omega_{2}\rtimes v(a) (z^{0}\otimes\varepsilon_{0}) 
			=
			\sum_{n,m} a_{n,m} z^{n}\otimes\varepsilon_{-m}
			,
		\end{align*}
		so that
		\[
			\inner{
					z^{0}\otimes\varepsilon_{0}
			}{
				\omega_{2}\rtimes v(a) (z^{0}\otimes\varepsilon_{0})
				}^{ L^2 } 
			=
			a_{0,0}.
		\]
		All in all, we have for any $([x],k)\in \mathbb{T}\times\mathbb{Z}$:
		\begin{align*}
		\inner{
								z^{0}\otimes\varepsilon_{0} \otimes 1
							}{
								\bigl(
									1\otimes a\otimes b
								\bigr)
								\cdot
								(z^{0}\otimes\varepsilon_{0} \otimes 1)
							}^{ L^2 \otimes { A_{\theta}}}_{A_{\theta}}
					([x],k)
			&=
			a_{0,0}
			\cdot
			b ([x],k)
			\\&=					
			\int_{\mathbb{T}}(a\otimes b)([y],0,[x],k)\mathrm{d}\, y
			.
		\end{align*}
		We bootstrap from the elementary tensor $a\otimes b$ with $a\in \mathfrak{A}$ to a more general element $\zeta \in C_c (\mathbb{T}\times\mathbb{Z}\times\mathbb{T}\times\mathbb{Z})$ with the result
		\[
			\inner{
						z^{0}\otimes\varepsilon_{0} \otimes 1
					}{
						\bigl(
							1\otimes \zeta
						\bigr)
						\cdot
						\,
						(z^{0}\otimes\varepsilon_{0} \otimes 1)
					}^{ L^2 \otimes { A_{\theta}}}
										([x],k)
				=
				\int_{\mathbb{T}}\zeta([y],0,[x],k)\mathrm{d}\, y
		\]
		and so in particular
		\begin{align*}
			\inner{\Phi}{\Psi}^{\mathcal{N}_{g}^{0}}
											([x],k)
			&=
			\inner{
				z^{0}\otimes\varepsilon_{0} \otimes 1
			}{
				\bigl(
					1\otimes \inner{\Phi}{\Psi}^{ \module_{g} }
				\bigr)
				\cdot
				\,
				(z^{0}\otimes\varepsilon_{0} \otimes 1)
			}^{ L^2 \otimes { A_{\theta}}}
			\\
			&=
			\int_{\mathbb{T}}
									\inner{\Phi}{\Psi}^{ \module_{g} }
								([y],0,[x],k)\mathrm{d}\, y
								,
		\end{align*}
		where the last equation follows from Formula \eqref{formula:innerH}.
	\end{proof}

Our goal is to show that for \(g\) upper-triangular, the module \(\mathcal{N}_g\) 
underlying the cup-cap product 
		$
			( 1_{A_{\theta}} \otimes [ \module_{g} ] ) \otimes_{A_{\theta}^{\otimes 3}} ( \Delta_{\theta} \otimes 1_{A_{\theta}} )
		$
		is obtained by applying the descent map to an equivariant module, and to identify this module. 
		Such `descended' modules are completions of $C_c (\mathbb{Z}, N)$ for some right-Hilbert $C(\mathbb{T}) - C(\mathbb{T}) $-bimodule $N$ equipped with a $ \mathbb{Z} $-action. As already mentioned, $\mathcal{N}_{g}^{0}$ is a completion of continuous compactly supported functions on the space $\mathcal{Z}_g$, which for $g=\mat{a&b\\c&d}$ is given by
		\[
			\mathcal{Z}_{\mat{a&b\\c&d}}
			=
			\left\{
				(r_{1},r_{2},\mat{ v \\ w })
				\in\mathbb{R}^2 \times \mathbb{T}^2
				\,\vert\,
				\mat{a(v+r_{1}\theta)+ br_{1} \\ c(v+r_{1}\theta)+ dr_{1} }
				=		
				\mat{ w+r_{2} \theta \\ r_{2} }
			\right\}
			,
		\]
		see Lemma \ref{lem:Yg-ast-X=Zg}. 
		We therefore need to restrict to those $g$ which make $\mathcal{Z}_{g}$ contain a copy of $\mathbb{Z}$. From the above description, we see that this happens exactly when $g$ is upper triangular; then the elements of $\mathcal{Z}_{g}$ have the restriction $\left[dr_{1} \right] = \left[r_{2}\right],$ \emph{i.e.}\ $r_{2}= dr_{1} +k$ for some $k\in \mathbb{Z}$.
			
		Since $g$ was assumed to be in $\mathrm{SL}_{2} (\mathbb{Z})$, $c=0$ implies $d=a$, and $\mu (g)  \neq 0$ (Equation~\eqref{eq:def-mu}) becomes $b\neq 0$. We get		
			\begin{equation}\label{eq:equiv-Zg}
			\begin{split}
				\mathcal{Z}_{\mat{a&b\\0&a}}
				&\cong
				\mathbb{T}\times\mathbb{R}\times\mathbb{Z}
				\\
				(r , r_{2},\mat{ v \\ w })
				&\mapsto 
				([v], r, r_{2} - ar)
				.
			\end{split}
			\end{equation}
		
		The below proposition gives the formulas that $\mathcal{Z}_{g}$ inherits from			$\mathcal{Y}_{g}\ast_{\mathcal{B} }\mathcal{X}$ via the identification from Equation \eqref{eq:equiv-Zg}. It also makes use of Lemma \ref{lem:def-Yg}, which gave a nicer description of the left $\mathcal{F}_{g}$-action on $\mathcal{Y}_{g}$, and of Lemma \ref{lem:def-Xth}, which gave a nicer description of the right $\mathcal{A}$-action on $\mathcal{X}$.
		\begin{proposition}\label{prop:gp-equiv-Zg}
			The $(\mathcal{F}_{g}^{},\mathcal{A})$-equivalence $\mathcal{Z}_{{\mat{a&b\\0&a}}}=\mathbb{T}\times\mathbb{R}\times\mathbb{Z}$ is given by:
			\begin{align*}
				\mathcal{F}_{g} \curvearrowright \mathcal{Z}_{g}:&
				\quad
						\left(\mat{v \\ 0}+(\tfrac{l_{1}+l_{2}\theta}{ b }+r )\slope,l_{1},l_{2} \right)
						.(\left[v\right],r ,k)
			=
						\left(
							\left[v\right],
							\tfrac{l_{1}+l_{2}\theta}{ b }+r ,
							k+ l_{2}				
						\right),
				\\
				\mathcal{Z}_{g} \curvearrowleft \mathcal{A}:&
				\quad
				([v],r ,k).	([v],k_{1},[av+ r { b } +k\theta ],k_{2}) = ([v-k_{1}\theta],r +k_{1},k+k_{2} - a k_{1})
				.
			\end{align*}
		\end{proposition}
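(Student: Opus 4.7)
The proposition is a routine, though bookkeeping-heavy, transfer of structure through a chain of identifications. The plan is to start from the actions of $\mathcal{F}_g$ and $\mathcal{A}$ on $\mathcal{Y}_g\ast_{\mathcal{B}}\mathcal{X}$ (inherited fibrewise from Lemmas~\ref{lem:def-Yg} and~\ref{lem:def-Xth}), push them through the bijection $\mathcal{Y}_g\ast_{\mathcal{B}}\mathcal{X}\isomto\mathcal{Z}_g$ of Lemma~\ref{lem:Yg-ast-X=Zg}, and finally through the homeomorphism $\mathcal{Z}_g\isomto\mathbb{T}\times\mathbb{R}\times\mathbb{Z}$ of Equation~\eqref{eq:equiv-Zg}, specialised to $g=\bigl[\begin{smallmatrix}a&b\\0&a\end{smallmatrix}\bigr]$ so that the constraint cutting out $\mathcal{Z}_g$ reduces to $r_{2}=ar_{1}+k$ with $k\in\mathbb{Z}$.

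First I would pick the representative of a class in $\mathcal{Y}_g\ast_{\mathcal{B}}\mathcal{X}$ of the form $[r_1,r_2,[v,0]+r_1(\theta,1),[v],0,[w],0]$, as suggested by the right-hand column of the bijection in Lemma~\ref{lem:Yg-ast-X=Zg}; under the bijection this is $(r_1,r_2,[v,w])\in\mathcal{Z}_g$. The left $\mathcal{F}_g$-action from Lemma~\ref{lem:def-Yg} only touches the first three coordinates of $\mathcal{Y}_g$, so for an element $(\mat{v\\0}+(\tfrac{l_1+l_2\theta}{b}+r_1)\slope,l_1,l_2)$ of $\mathcal{F}_g$ (with the middle matching the source of $\mathcal{Y}_g$) I would compute that the first two $\mathbb{R}$-coordinates become $\tfrac{l_1+l_2\theta}{b}+r_1$ and $\tfrac{l_1 d + l_2 b}{\mu(g)}+r_2$ while the $\mathbb{T}^2$-coordinate is replaced by the new basepoint. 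Substituting $a=d$, $c=0$, $\mu(g)=b$ and then applying Equation~\eqref{eq:equiv-Zg} (note $r_2-ar_1=k$ both before and after, up to adding $l_2$) yields the stated formula.

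For the right $\mathcal{A}$-action I would do the symmetric calculation: only the last four coordinates of a representative of $\mathcal{Y}_g\ast_{\mathcal{B}}\mathcal{X}$ are affected, and they transform by the Lemma~\ref{lem:def-Xth} formulas $([v],s_1).([v],k_1)=([v-k_1\theta],s_1+k_1)$ and similarly for the second factor with $w=av+rb+k\theta$. Re-balancing the resulting element back into the canonical form used by Lemma~\ref{lem:Yg-ast-X=Zg} (by choosing $t'_i$ that zero out $s_i-t'_i$), translating the basepoint $\mat{v\\0}\mapsto\mat{v-k_1\theta\\0}$, and then applying Equation~\eqref{eq:equiv-Zg}, gives $([v-k_1\theta],r+k_1,k+k_2-ak_1)$; the crucial consistency is that $a(v-k_1\theta)+(r+k_1)b+(k+k_2-ak_1)\theta=av+rb+(k+k_2)\theta$, which matches the $\mathbb{T}$-component of $w$ as required for composability with the incoming $\mathcal{A}$-element.

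The main obstacle is not conceptual but notational: one has to keep the three different parametrisations of $\mathcal{Z}_g$ straight (the ambient $\mathbb{R}^2\times\mathbb{T}^2$ description, the $\mathbb{T}\times\mathbb{R}\times\mathbb{Z}$ coordinates from \eqref{eq:equiv-Zg}, and the balanced representatives from Lemma~\ref{lem:Yg-ast-X=Zg}), and verify at each step that the composability conditions for the groupoid actions translate correctly. Once the two composability identities $g\bigl(\mat{v\\0}+r_1\slope\bigr)=\mat{w\\0}+r_2\slope$ and its transform under the actions are checked, the rest is substitution; so I would present the computation in two short blocks, one per action, with the verification of composability done inline.
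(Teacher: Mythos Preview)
Your proposal is correct and follows exactly the route the paper indicates: the paper does not give a detailed proof of this proposition but simply states that the formulas are those inherited from $\mathcal{Y}_g\ast_{\mathcal{B}}\mathcal{X}$ via the identification of Equation~\eqref{eq:equiv-Zg}, using Lemma~\ref{lem:def-Yg} for the left $\mathcal{F}_g$-action and Lemma~\ref{lem:def-Xth} for the right $\mathcal{A}$-action. Your outline fills in precisely this omitted bookkeeping, and your consistency checks (in particular $r_2'-ar_1'=k+l_2$ for the left action and the $w$-coordinate match for the right action) are the right things to verify.
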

		
		Now, we will finally compute the module structure of~$\module_{g}$, but only for matrices $g$ of the above form. This, in turn, will then allow us to give the formulas for the Hilbert module structure of $\mathcal{N}_{g}^{0} \cong ( A_{\theta} \otimes \module_{g}) \otimes_{A_{\theta}^{\otimes 3}} \left( L^{2} \otimes A_{\theta} \right)$.		

Let $ g=\mat{a&b\\0&a} $: recall that $\mathsf{Z}_{g}$ is the Morita equivalence built as completion of $C_c (\mathcal{Z}_{g})$, and by `forgetting' its left-action, we arrived at the right-$A_{\theta}\otimes A_{\theta}$-Hilbert module $\module_{g}=\iota^* (\mathsf{Z}_{g})$. This means that their right Hilbert-module structures coincide, and so according to Theorem~2.8 in~\cite{MRW:Grpd}, the right-$C_{c}(\mathcal{A})$-action on the dense subspace $C_c (\mathcal{Z}_{g})$ of~$\module_{g}$ needs to be defined by 
	\[
		(\Phi._{ \module_{g} } f) (\mathbf{z}) = \smashoperator{\int_{\substack{\text{sensible}\\\nu\in \mathcal{A} }}} \Phi(\mathbf{z}.\nu)f(\nu\inv) \,\mathrm{ d } \nu,
	\] 
	where $\nu$ is ``sensible'' if $\mathbf{z}.\nu$ makes sense. For $\mathbf{z}=([v],r,k)\in \mathcal{Z}_{g}$, this is the case exactly when
		$
			\nu =([v],-k_{1}, [av+ r { b } - k\theta], -k_{2})
		$
		for some $k_{i}\in\mathbb{Z}$,	in which case
		$$
			\mathbf{z}.\nu
			=
	 ([v+k_{1}\theta],r -k_{1},k-k_{2} + a k_{1})
			.
		$$
	
		The inverse of such $\nu$ in $\mathcal{A}_{\theta}\times\mathcal{A}_{\theta}$ is
		$
			\nu\inv
			=
			([v+k_{1}\theta],k_{1},[av+ r { b } +(k_{2}-k)\theta],k_{2})
		$.		
		All in all this means:
		\begin{align}\label{eq:rightActionOnE0-general}
		\begin{split}
			(\Phi._{ \module_{g} } f) ([v],r,k)
			=
			\smashoperator{\sum_{k_{1},k_{2}\in\mathbb{Z}}}
			&\Phi([v+k_{1}\theta],r -k_{1},k-k_{2} + a k_{1})
			\\
			&f([v+k_{1}\theta],k_{1},[av+ r { b } +(k_{2}-k)\theta],k_{2}).
		\end{split}
		\end{align}
		In particular, for $f= V^{l_{1}} U^{k_{1}} \otimes V^{l_{2}} U^{k_{2}}$:
		\begin{align}
		\begin{split}\label{eq:rightActionOnE0}
			(\Phi._{ \module_{g} } V^{l_{1}} U^{k_{1}} \otimes V^{l_{2}} U^{k_{2}}) ([v],r,k)
			=
			&\Phi([v+k_{1}\theta],r -k_{1},k-k_{2} + a k_{1})
			\\
			&
			\mathsf{e}^{2\pi \imaginary l_{1}(v+k_{1}\theta)}\mathsf{e}^{2\pi \imaginary l_{2}(av+ r { b } +(k_{2}-k)\theta)}
			.
		\end{split}
		\end{align}
	
	Now that we have concrete formulas for the right-action on $ \module_{g} $, we can make the structure of $\mathcal{N}_{g}^{0}$ concrete by using Formula \eqref{formula:general-mathscr-h:bimodule}:
	\begin{equation}\label{formula:mathscr-h:bimodule}
	\begin{tikzcd}
 \mathfrak{A}
 \curvearrowright \mathcal{N}_{g}^{0}:
		&[-20pt]
		(V^{l_{1}}U^{k_{1}} \square \Phi )([v],r,k)
		=
		\Phi([v-k_{1}\theta],r +k_{1},k - a k_{1})
		\mathsf{e}^{2\pi \imaginary l_{1}v},
		\\[-15pt]
		\mathcal{N}_{g}^{0} \curvearrowleft \mathfrak{A} :
		&[-20pt]
		(\Phi\square V^{l_{2}}U^{k_{2}})([v],r,k)
		=
		\lambda^{l_{2}(k_{2}-k)}
		\Phi([v],r,k-k_{2})
		\mathsf{e}^{2\pi \imaginary l_{2}(av+ r { b } )}.
	\end{tikzcd}
	\end{equation}	
		We now compare 
		this right-module structure of $\mathcal{N}_{g}^{0}$ to the right-module structure it would have if it came via descent from a suitable (yet to be determined) completion of $C_c(\mathbb{T}\times \mathbb{R} )$: for any $l_{2},k_{2}\in\mathbb{Z} $, $([v],r,k)\in\mathcal{Z}_{g}=\mathbb{T} \times\mathbb{R}\times\mathbb{Z} $, and $\Phi\in C_c (\mathcal{Z}_{g})$, we would need 
	\[
		\lambda^{l_{2}(k_{2}-k)}
		\Phi([v],r,k-k_{2})
		\mathsf{e}^{2\pi \imaginary l_{2}(av+ r { b }) }
		\stackrel{!}{=}
		\left(
			\lambda^{l_{2}(k_{2}-k)}\,\Phi (k-k_{2}) \ast z^{l_{2}}
		\right)
		([v],r)
		.
	\]
	Here, the left-hand side is the right-action by $V^{l_{2}}U^{k_{2}}$ on the function $\Phi$, an element of the dense subspace $C_c(\mathcal{Z}_{g})$ of $\mathcal{N}_{g}^{0}$. The right-hand side is the formula for the right-action by $V^{l_{2}}U^{k_{2}}$ as `prescribed' by descent; notice that $\Phi (k-k_{2})$ is our notation for the function
	\[
		\mathbb{T}\times\mathbb{R} \ni ([v],r)
		\mapsto
		\Phi ([v],r,k-k_{2})
	\]
	in $C_c (\mathbb{T}\times\mathbb{R})$.
	In other words, if we define for $\phi\in C_c (\mathbb{T} \times\mathbb{R} )$ and $f\in C(\mathbb{T})$,
	\begin{equation}\label{formula:right-action-before-descent}
		(\phi \ast f)([v],r)
		=
		\phi([v],r)
		f([av+ r b])
		,
	\end{equation}
	then descent turns this right-action of $C(\mathbb{T})$ on (a completion of) $C_c (\mathbb{T} \times\mathbb{R} )$ into the right-module structure we have on $\mathcal{N}_{g}^{0}$.
	
	For the left-module structure to be coming from descent, we similarly require for any $l_{1}, k_{1}\in\mathbb{Z}$ that
	\[
		\Phi([v-k_{1}\theta],r +k_{1},k - a k_{1})
		\mathsf{e}^{2\pi \imaginary l_{1}v}
		\stackrel{!}{=}
		\left(
			z^{l_{1}} \ast \left( k_{1}.\Phi (k-k_{1}) \right)
		\right)
		([v],r)
		.
	\]

	This shows that we need to have $a=1$, so that we can define for $\phi\in C_c (\mathbb{T} \times\mathbb{R} )$ the action of $k_{1}\in\mathbb{Z}$ and the left-action of $f\in C(\mathbb{T})$ by:
	\begin{equation}\label{formula:ZZ-and-left-action-before-descent}
		\left( k_{1}.\phi \right)([v],r)
		=
		\phi([v-k_{1}\theta],r +k_{1})
		\and
		\left(
			f \ast \phi
		\right)
		([v],r)
		=
		f([v])
		\phi([v],r)
		.	
	\end{equation}

For $g=\mat{1&b\\0&1}$ with $b\in\mathbb{Z}^\times$, the inner products of both~$\module_{g}$ and subsequently of $\mathcal{N}_{g}^{0}$ are now easy to compute. First, the $A_{\theta}\otimes A_{\theta}$-valued inner product of $\module_{g}=\iota^* (\mathsf{Z}_{g})$ is just the inner product of $\mathsf{Z}_g$. Therefore, Theorem~2.8 in~\cite{MRW:Grpd} gives us the following formula for the inner product of two functions $\Phi,\Psi\in C_c (\mathcal{Z}_{g}) \subseteq \module_{g}$ evaluated at $\nu\in\mathcal{A}=\mathcal{A}_{\theta}\times\mathcal{A}_{\theta}$:
\[
	\inner{\Phi}{\Psi}^{ \module_{g} } (\nu)
	=
	\smashoperator{\int_{\substack{\text{sensible}\\\gamma\in\mathcal{F}_{g}}}}
		\overline{\Phi}
		(\gamma.\mathbf{z})
		\Psi
		(\gamma.\mathbf{z}.\nu)
	\,\mathrm{ d } \gamma
	,
\]
where $\mathbf{z}\in \mathcal{Z}_{g}=\mathbb{T}\times\mathbb{R}\times\mathbb{Z}$ is any element such that $\mathbf{z}.\nu$ makes sense in $\mathcal{Z}_{g}$, and $\gamma$ is ``sensible'' if $\gamma.\mathbf{z}$ is defined. According to Proposition \ref{prop:gp-equiv-Zg}, when $\nu=([v],l_{1},[w],l_{2})$, we can take
the element $
	\mathbf{z} = ([v],\tfrac{ w - v }{ b },0)
$
for some choice of representatives $v,w$ of $[v,w]$. For sensible $\gamma\in\mathcal{F}_{g}$, we have 
\begin{align*}
	\gamma.\mathbf{z}
	&=
	\left(
		[v],
		\tfrac{k_{1}+k_{2}\theta + w - v }{ b },
		k_{2}	
	\right),
\end{align*}
where $k_{1},k_{2}\in\mathbb{Z}$ are arbitrary, and then
\begin{align*}
	\gamma.\mathbf{z}.\nu
	=
	\left(
		[v-l_{1}\theta],
		\tfrac{k_{1}+k_{2}\theta + w - v }{ b } + l_{1},
		k_{2}	+l_{2} - l_{1}
	\right)
	.
\end{align*}
All in all:
\begin{align*}
	\inner{\Phi}{\Psi}^{ \module_{g} } ([v],l_{1},[w],l_{2})
	=
	\smashoperator{\sum_{k_{1}, k_{2}\in\mathbb{Z}}}
		\overline{\Phi}
	\left(
		[v],
		\tfrac{k_{1}+k_{2}\theta + w - v }{ b },
		k_{2}	
	\right)
		\Psi
	\left(
		[v-l_{1}\theta],
		\tfrac{k_{1}+k_{2}\theta + w - v }{ b } + l_{1},
		k_{2}	+l_{2} - l_{1}
	\right)
	.
\end{align*}
Now we will use Lemma \ref{lem:MakeInnerEtoInnerH} to compute a formula for $\inner{\Phi}{\Psi}^{\mathcal{N}_{g}}$ where $\Phi, \Psi\in C_c (\mathcal{Z}_{g})\subseteq \mathcal{N}_{g}$:
\begin{align*}
	\inner{\Phi}{\Psi}^{\mathcal{N}_{g}} ([x],l)
	&=
	\int_{\mathbb{T}}
		\sum_{k_{1}, k_{2}\in\mathbb{Z}}
		\overline{\Phi}
	\left(
		[y],
		\tfrac{k_{1}+k_{2}\theta + x - y}{ b },
		k_{2}	
	\right)
		\Psi
	\left(
		[y ],
		\tfrac{k_{1}+k_{2}\theta + x - y}{ b },
		k_{2} + l
	\right)
	\mathrm{d}\, y
\\&
=
	\int_{\mathbb{R}}
	\smashoperator{\sum_{ k \in\mathbb{Z}}}
		\overline{\Phi}
	\left(
		[r],
		\tfrac{k \theta + x - r}{ b },
		k	
	\right)
		\Psi
	\left(
		[r ],
		\tfrac{k \theta + x - r}{ b },
		k	+ l
	\right)
	\mathrm{d}\, r
\\&
=
	\int_{\mathbb{R}}
	\smashoperator{\sum_{ k \in\mathbb{Z}}}
		\overline{\Phi}
	\left(
		[x+k \theta-r],
		\tfrac{r}{ b },
		k
	\right)
		\Psi
	\left(
		[x+k \theta-r ],
		\tfrac{r}{ b },
		k + l
	\right)
	\mathrm{d}\, r
	.
\end{align*}
For this to come from descent, we need
\begin{align*}
	\inner{\Phi}{\Psi}^{\mathcal{N}_{g}}
	([x],l)
	&\stackrel{!}{=}
	\sum_{k}
	\inner{\Phi(k)}{\Psi(k+l)}^{{N}_{g}}_{C(\mathbb{T})}([x+k\theta])
	.
\end{align*}
This is satisfied if we define
\begin{equation}\label{formula:inner-prod-before-descent}
	\inner{\phi}{\psi}^{{N}_{g}^{0}}_{C(\mathbb{T})} ([x])
	 :=
	\int_{\mathbb{R}}
		(\overline{\phi}\psi)
	\left(
		[x-r],
		\tfrac{r}{ b }
	\right)
	\mathrm{d}\, r
	.
\end{equation}
		
In Theorem \ref{thm:N-d-N=Cycle} below, we will sum up what we have found so far, namely the formulas for the lift via descent of the module $\mathcal{N}_{g}^0$.

\subsection{Conclusion of the proof}

\begin{theorem}\label{thm:N-d-N=Cycle}
	Suppose $b\in \mathbb{Z}^{\times}$. We define the structure of an equivariant, right-pre-Hilbert $C(\mathbb{T})$-bimodule on $C_c (\mathbb{T} \times\mathbb{R} )$ by
	\begin{alignat*}{2}
			\phi, \psi\in C_c(\mathbb{T} \times\mathbb{R}):
			&&\qquad
			\inner{\phi}{\psi}_{C(\mathbb{T})} ([x])
			&=
			\int_{\mathbb{R}}
				(\overline{\phi}\psi)
			\left(
				[x-r],
				\tfrac{r}{ b }
			\right)
			\mathrm{d}\, r
			,
			\\
			\mathbb{Z}\curvearrowright C_c (\mathbb{T} \times\mathbb{R} ):
			&&\qquad
			\left( l.\phi \right)([x],r)
					&=
					\phi([x-l\theta],r +l),
			\\
			C(\mathbb{T}) \curvearrowright C_c (\mathbb{T} \times\mathbb{R} ):
			&&\qquad
			\left(
				f \ast \phi
			\right)
			([x],r)
			&=
			f([x])
			\phi([x],r)
			,
			\\
			C_c (\mathbb{T} \times\mathbb{R} ) \curvearrowleft C(\mathbb{T}):
			&&\qquad
			(\phi \ast f)([x],r)
			&=
			\phi([x],r)
			f([x+ r { b }])
	.
	\end{alignat*}
	
	Let ${N}_{b}^{\pm}$ be the completion of $C_c (\mathbb{T} \times\mathbb{R} )$ with respect to this pre-inner product, and let ${N}_{b} := {N}_{b}^{+}\oplus {N}_{b}^{-}$ be standard evenly graded.
	Define the unbounded operator $d_{{N}_{ b },+}\!:\,{N}_{ b }^{+}\to {N}_{ b }^{-}$ 
	by
	\begin{equation}\label{def:dN}
		d_{{N}_{ b },+}
		:= 
		-
	 	\tfrac{1}{ b }	\tfrac{\partial\;}{\partial r}
 		+
	 	\tfrac{\partial\;}{\partial \Theta}
 		-2\pi \mathsf{M}
		,
	\end{equation}
	let $d_{{N}_{ b },-} := d_{{N}_{ b },+}^{*}$ and define
	\begin{equation}
		d_{{N}_{ b }}
		 := 
		\mat{
			0 & d_{{N}_{ b },-}\\
			d_{{N}_{ b },+} & 0
		}
		.
	\end{equation}
	
	Then
	the pair $({N}_{ b }, d_{{N}_{ b }})$ is a cycle in $\Psi^\mathbb{Z} \left( {C(\mathbb{T}_{})},{C(\mathbb{T}_{})}\right)$.
\end{theorem}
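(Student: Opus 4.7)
\emph{Proof plan.} The plan is to construct an explicit \(\Z\)-equivariant unitary \(T\colon N_b \to H_b\) of Hilbert bimodules (where \(H_b\) is the module defined on page \pageref{page:Hb-module-structure}) under which \(d_{N_b}\) becomes the scalar multiple \(\tfrac{1}{b}(\id_{C(\T)} \otimes d_{2\pi b})\) of the operator already handled in Theorem \ref{thm:HdH-is-cycle}. The desired cycle properties then transport back to \(d_{N_b}\) via \(T\).

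Concretely, I define \(T\) on the common dense subspace \(C_c(\T\times\R)\) by
\[(T\phi)([y], s) := |b|^{1/2}\phi([y + bs], -s).\]
A change-of-variables computation (substituting \(r = -bs\)) shows that \(T\) is an isometry from the \(N_b\)-inner product onto the standard \(H_b\)-inner product, so it extends to a unitary between the completions. The identities \(T(f \ast \phi) = f \bullet T\phi\) and \(T(\phi \ast f) = T\phi \bullet f\) are direct: the shift by \(y + bs\) converts the twisted right-\(C(\T)\)-action on \(N_b\) into pointwise multiplication and the pointwise left-\(C(\T)\)-action on \(N_b\) into the \(bs\)-twisted left-action on \(H_b\). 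For the \(\Z\)-action one computes
\begin{align*}
T(l.\phi)([y], s) &= |b|^{1/2}\phi([y + bs - l\theta],\, -s + l),\\
(l \bullet T\phi)([y], s) &= |b|^{1/2}\phi([y - l\theta + b(s-l)],\, l - s),
\end{align*}
which agree: the \(\R\)-components are equal, and since \(bl \in \Z\) the \(\T\)-components are too. Hence \(T\) is \(\Z\)-equivariant on the nose.

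Next I verify by a chain-rule computation that \(T \circ d_{N_b, +} = \tfrac{1}{b}(\id_{C(\T)} \otimes d_{2\pi b, +}) \circ T\). Using \(\phi([x], r) = |b|^{-1/2}(T\phi)([x+br], -r)\), the derivative \(\partial_r\) yields, via the chain rule, \(b\,\partial_\Theta T\phi - \partial_s T\phi\). Multiplied by \(-\tfrac{1}{b}\), this becomes \(-\partial_\Theta T\phi + \tfrac{1}{b}\partial_s T\phi\); the \(+\partial_\Theta\) summand of \(d_{N_b, +}\) exactly cancels the \(-\partial_\Theta T\phi\), and under the reflection \(r = -s\) the term \(-2\pi \mathsf{M}\phi\) transforms into \(+2\pi \mathsf{M}\) acting on \(T\phi\). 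What remains is \(\tfrac{1}{b}\partial_s + 2\pi \mathsf{M} = \tfrac{1}{b}(2\pi b\,\mathsf{M} + \partial_s) = \tfrac{1}{b}d_{2\pi b, +}\). The \(\partial_\Theta\) term in the definition of \(d_{N_b, +}\) is precisely what makes this cancellation work.

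The theorem now follows. By Theorem \ref{thm:HdH-is-cycle} with \(\lambda = 2\pi b\), the pair \((H_b,\, \id_{C(\T)}\otimes d_{2\pi b})\) is a cycle in \(\Psi^{\Z}(C(\T), C(\T))\). The nonzero real scalar \(\tfrac{1}{b}\) preserves self-adjointness, regularity, and compact resolvent (Lemma \ref{lem:d_H-is-nice}), almost equivariance (Lemma \ref{lem:d_H-is-equivariant}), and the dense subalgebra of \(C(\T)\) with bounded commutators (Lemma \ref{lem:bounded-commutators}). Conjugation by the \(\Z\)-equivariant unitary \(T\) transfers all of these properties to \(d_{N_b}\) acting on \(N_b\), proving that \((N_b, d_{N_b})\) is a cycle in \(\Psi^{\Z}(C(\T), C(\T))\). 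The one place where care is needed is fixing the definition of \(T\) so that all four compatibilities (inner product, both module actions, and \(\Z\)-action) hold simultaneously; the choice \((T\phi)([y],s) = |b|^{1/2}\phi([y+bs],-s)\) is essentially forced by these constraints, after which the remaining verifications are routine.
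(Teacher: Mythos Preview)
Your proposal is correct and is essentially identical to the paper's own proof: the paper defines the change of variables \(w([x],r)=([x+br],-r)\) and the induced map \(W^{-1}\phi=\sqrt{|b|}\,\phi\circ w^{-1}\), which is exactly your \(T\), and then verifies \(W^{-1}\circ b\cdot d_{N_b,+}\circ W=d_{2\pi b,+}\), equivalent to your \(T\circ d_{N_b,+}=\tfrac{1}{b}d_{2\pi b,+}\circ T\). The only cosmetic difference is that the paper absorbs the scalar by showing \((N_b,\,b\cdot d_{N_b})\cong(H_b,\,\id\otimes d_{2\pi b})\) and then divides by \(b\), whereas you carry the \(\tfrac{1}{b}\) along; the content is the same.
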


\begin{remark}
	 To see why we chose this (pre-)Hilbert module structure, see Formula \eqref{formula:ZZ-and-left-action-before-descent},
	Formula \eqref{formula:right-action-before-descent}, and Formula \eqref{formula:inner-prod-before-descent}.
	To see why we chose this operator, see the proof of Lemma~\ref{lem:KaspProd:Bdd}.
\end{remark}

To prove Theorem \ref{thm:N-d-N=Cycle}, we will check that $({N}_{ b }, b \cdot d_{{N}_{ b }})$ is unitarily equivalent to the equivariant cycle 
$({H}_{ b }, \mathrm{id}_{C(\mathbb{T})}\otimes d_{\lambda})$ of Remark~\ref{rmk:Hb-db-replaced-by-Hb-dlambda} for $\lambda:= 2\pi b \in \mathbb{R}^{\times}$. Recall that we defined \(H_b^{\pm}\) as the completion of 
 $C_c (\mathbb{T}\times\mathbb{R})$ with respect to pre-Hilbert module structure given on page~\pageref{page:Hb-module-structure}, which is also where the definition of $d_{\lambda}$ can be found. Note that the domain of $\mathrm{id}_{C(\mathbb{T})}\otimes d_{\lambda}$ contains, by definition, the subspace $C(\mathbb{T})\odot \mathcal{S}(\mathbb{R})$.

\begin{proof}[Proof of Theorem~\ref{thm:N-d-N=Cycle}]
	Define $w,w\inv \colon \mathbb{T}\times\mathbb{R} \longrightarrow \mathbb{T}\times\mathbb{R}$ by
	\begin{align*}
			w([x],r) 		:= \left(\left[ x + { b } r\right], - r\right)
		\and
			w\inv([x],r)	:= \left(\left[ x + { b } r \right], 	 - r \right)
		,
	\end{align*}
	so that $w\circ w\inv = w\inv\circ w = \mathrm{id}$, and let
	\[
	\begin{tikzcd}
		{H_{ b }^{\pm}}\supseteq C_c (\mathbb{T}\times\mathbb{R}) \ar[rr, "W", shift left=1ex] && \ar[ll, "W\inv", shift left=1ex] C_c (\mathbb{T}\times\mathbb{R})\subseteq {N}_{ b }^{\pm} \\[-15pt]
		W\inv \phi := \sqrt{\abs{b}}\cdot \phi \circ w\inv && W \phi := \frac{1}{\sqrt{\abs{b}}}\cdot \phi \circ w
	\end{tikzcd}
	\]
	It is quickly checked that this induces the claimed structure on $H_{ b }^{\pm}$. 
	
	Let us check that $d_{\lambda,+}$ is induced by $W$, \emph{i.e.}\ 
	\[
		d_{\lambda,+} (\phi)
		\stackrel{!}{=}
		\left(
			W\inv \circ b\cdot d_{N,+} \circ W
		\right)
		\phi
		=
		\left(	
			b\cdot d_{N,+}
			\left(
				\phi \circ w
			\right)
		\right) \circ w\inv,
	\]
	where we abused notation and stopped writing $\mathrm{id}_{C(\mathbb{T})}$.
		
	Note that, if $\Omega$ is a chart of $\mathbb{T} \times\mathbb{R}$, then
	\begin{equation}
		\left( W\inv \circ \tfrac{\partial\;}{\partial \Omega^{i}} \circ W \right) (\phi) (p)
		=
		\tfrac{\partial(\phi\circ w)}{\partial \Omega^{i}}\vert_{ w\inv(p)}
		=
		\tfrac{\partial (\phi\circ w)\circ \Omega\inv}{\partial x^i}\vert_{ (\Omega\circ w\inv)(p)}
		=
		\tfrac{\partial \phi}{\partial (\Omega \circ w\inv)^i}\vert_{p}
		=
		\tfrac{\partial \phi}{\partial \tilde{\Omega}^i}\vert_{p},
	\end{equation}
	where $\tilde{\Omega}:=\Omega\circ w\inv$. We know by a general formula that
	\[	
		\Mat{
			\tfrac{\partial \;}{\partial \tilde{\Omega}^1}
			\\{}\\
			\tfrac{\partial \;}{\partial \tilde{\Omega}^{2}}
		}
		=
		\Mat{
			\tfrac{\partial {\Omega}^1}{\partial \tilde{\Omega}^1} & \tfrac{\partial \Omega^{2}}{\partial \tilde{\Omega}^1}
			\\{}&{}\\
			\tfrac{\partial \Omega^1}{\partial \tilde{\Omega}^{2}} & \tfrac{\partial \Omega^{2}}{\partial \tilde{\Omega}^{2}}
		}
		\,
		\Mat{
				\tfrac{\partial\;}{\partial \Omega^{1}}
				\\{}\\
				\tfrac{\partial\;}{\partial \Omega^{2}}
			}
		.
	\] 
	As	$
		\Omega\circ \tilde{\Omega}\inv (x,r)
		=
		\left( x + { b } r + n(x,r), - r\right)
	$ for some locally constant, $\mathbb{Z} $-valued function $n(x,r)$, 
	we have
	\[
		\tfrac{\partial \Omega^1}{\partial \tilde{\Omega}^i}\vert_{ p}
		=
		\tfrac{\partial (x^1\circ\Omega\circ \tilde{\Omega}\inv)}{\partial x^i}{\vert_{\tilde{\Omega}(p)}}
		=
		\left\{
			\begin{array}{ll}
				1 & \mbox{if } i=1, \\
				 { b } & \mbox{if } i=2,
			\end{array}
		\right.
	\]
	and
	\[
		\tfrac{\partial \Omega^{2}}{\partial \tilde{\Omega}^i}\vert_{ p}
		=
		\tfrac{\partial (x^{2}\circ\Omega\circ \tilde{\Omega}\inv)}{\partial x^i}\vert_{\tilde{\Omega}(p)}
		=
		\left\{
			\begin{array}{ll}
				0 & \mbox{if } i=1, \\
				-1 &\mbox{if } i=2,
			\end{array}
		\right.
	\]
	so that	
	\begin{equation}
		W\inv
		\circ \Mat{
			\tfrac{\partial\;}{\partial \Theta}
			\\{}\\
			\tfrac{\partial\;}{\partial r}
		}
		\circ W
		=
		\Mat{
					\tfrac{\partial \;}{\partial \tilde{\Omega}^1}
					\\{}\\
					\tfrac{\partial \;}{\partial \tilde{\Omega}^{2}}
				}
				=
				\Mat{
					\tfrac{\partial {\Omega}^1}{\partial \tilde{\Omega}^1} & \tfrac{\partial \Omega^2}{\partial \tilde{\Omega}^1}
					\\{}&{}\\
					\tfrac{\partial \Omega^1}{\partial \tilde{\Omega}^{2}} & \tfrac{\partial \Omega^2}{\partial \tilde{\Omega}^{2}}
				}
				\,
				\Mat{
						\tfrac{\partial\;}{\partial \Omega^{1}}
						\\{}\\
						\tfrac{\partial\;}{\partial \Omega^{2}}
					}
		=
		\Mat{
			1 & 0 
			\\{}\\
			 { b } & -1
		}
		\Mat{
			\tfrac{\partial\;}{\partial \Theta}
			\\{}\\
			\tfrac{\partial\;}{\partial r}
		}
		.
	\end{equation}	
	Moreover,
	\[
		W\inv
	 	\circ
	 	\mathsf{M}
		\circ 
		W 
		=
		-
		\mathsf{M}
		.
	\]
	We conclude
	\begin{align}
	\begin{split}\label{eq:fromdHtodN}
		W\inv \circ b\cdot d_{{N}_{ b },+} \circ W
		&\stackrel{\text{(\ref{def:dN})}}{=}
		W\inv \circ
		\left(
			-
			\tfrac{\partial\;}{\partial r}
			+
			b\,\tfrac{\partial\;}{\partial \Theta}
			-2\pi b\,\mathsf{M}
		\right)
		 \circ W
		\\&
		=
		-
		\left(
		 { b } \tfrac{\partial\;}{\partial \Theta} - \tfrac{\partial\;}{\partial r}
		\right)
		+
		b\,\tfrac{\partial\;}{\partial \Theta}
		+ 2\pi b\,\mathsf{M}
		\\&
		=
		2\pi b\, \mathsf{M}
		+ \tfrac{\partial\;}{\partial r}
		=
		d_{ 2\pi b ,+}
	\end{split}
	\end{align}
	as claimed.
	
	Since we have proved $(H_{b},d_{\lambda})$ to be an unbounded cycle for any $\lambda\in\mathbb{R}^{\times}$ (see Theorem~\ref{thm:HdH-is-cycle}), it follows that $({N}_{ b }, b\cdot d_{{N}_{ b }})$ and hence $({N}_{ b }, d_{{N}_{ b }})$ are cycles also.
\end{proof}

We will next use a well-known recipe due to Kucerovsky how to determine that a given unbounded $\KK$-cycle is the Kasparov product of two other cycles.

\begin{theorem}\label{thm:j-N-d-N=TheProduct}
	For $({N}_{ b }, d_{{N}_{ b }})$ as defined in Theorem \ref{thm:N-d-N=Cycle} and
	\[
		j\!:\,
		KK^\mathbb{Z}_0 \left({C(\mathbb{T}_{})},	{C(\mathbb{T}_{})}\right)
		\longrightarrow 
		KK_0 ( A_{\theta} , A_{\theta} )
	\]
	the descent map, the cycle $j([({N}_{ b }, d_{{N}_{ b }})]) = (\mathcal{N}_{b} , D_{\mathcal{N}_{b}})$, satisfies all properties in \cite{Kuc:unbddKKprod}, Theorem~\textup{13}, so that $(\mathcal{N}_{b} , D_{\mathcal{N}_{b}})$ represents the Kasparov product 
	$
		(1_{A_{\theta}} \otimes [\module_{b}])
		\otimes_{A_{\theta}^{\otimes 3}}
		(\Delta_{\theta} \otimes 1_{A_{\theta}})
		.
	$
\end{theorem}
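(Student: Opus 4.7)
My plan is to invoke Kucerovsky's criterion \cite[Theorem~13]{Kuc:unbddKKprod} for unbounded representatives of Kasparov products. The two factors are $1_{A_\theta} \otimes [\module_b]$, which carries the trivial operator since $\module_b$ is a finitely generated projective $A_\theta \otimes A_\theta$-module, and $\Delta_\theta \otimes 1_{A_\theta}$ with operator essentially $d_\Delta \otimes 1$ from Lemma~\ref{def:Delta}. Kucerovsky's theorem then demands three things of $(\mathcal{N}_b, D_{\mathcal{N}_b})$: a module identification with the algebraic tensor product underlying the Kasparov product, a connection condition, and a positivity condition.

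The module identification is already in hand, essentially by design. Lemma~\ref{lem:def-of-mcNg0} and Lemma~\ref{lem:mathscr-left-action} exhibit $\mathcal{N}_b$ as a completion of $C_c(\mathcal{Z}_g) \oplus C_c(\mathcal{Z}_g)$ that is canonically isomorphic, as a right-Hilbert $A_\theta$-bimodule, to $(1_{A_\theta} \otimes \module_b) \otimes_{A_\theta^{\otimes 3}} (L^2 \otimes A_\theta)^{\oplus 2}$; and Lemma~\ref{lem:description-of-mcHb} makes the action of $D_{\mathcal{N}_b}$ on this dense subspace explicit via descent from $d_{N_b}$. The positivity condition is trivial in our situation: because the first cycle has zero operator, both the domain containment $\dom(D_{\mathcal{N}_b}) \subseteq \dom(0 \otimes 1)$ and the semi-boundedness of the graded commutator $[0 \otimes 1, D_{\mathcal{N}_b}] = 0$ hold automatically.

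What I expect to be the technical heart of the argument is the connection condition: for $\xi$ in a dense subspace of $\module_b$ (naturally taken to be $C_c(\mathcal{Z}_g)$ via the Morita picture), the creation operator $T_\xi$ should send $\dom(d_\Delta \otimes 1)$ into $\dom(D_{\mathcal{N}_b})$, and the graded commutator
\[
\begin{bmatrix} D_{\mathcal{N}_b} & 0 \\ 0 & d_\Delta \otimes 1\end{bmatrix} \begin{bmatrix} 0 & T_\xi \\ T_\xi^* & 0\end{bmatrix} - \begin{bmatrix} 0 & T_\xi \\ T_\xi^* & 0 \end{bmatrix}\begin{bmatrix} D_{\mathcal{N}_b} & 0 \\ 0 & d_\Delta \otimes 1\end{bmatrix}
\]
should extend to a bounded adjointable operator. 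The choice of $d_{N_b,+} = -\tfrac{1}{b}\tfrac{\partial}{\partial r} + \tfrac{\partial}{\partial\Theta} - 2\pi\mathsf{M}$ in Theorem~\ref{thm:N-d-N=Cycle} is calibrated precisely so that this cancellation occurs: when a derivative in $d_{N_b}$ hits the $r$-dependence of either the right $A_\theta$-action or the left $A_\theta$-action from~\eqref{eq:rightActionOnE0}, it produces phase factors that are exactly absorbed by the multiplication operator $2\pi\mathsf{M}$, while the $\tfrac{\partial}{\partial\Theta}$ term accounts for the analogous phase in the $\Theta$-direction, leaving a bounded residual operator of finite multiplication type. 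I would package this calculation as a separate lemma --- the Lemma~\ref{lem:KaspProd:Bdd} alluded to in the remark after Theorem~\ref{thm:N-d-N=Cycle} --- working on test functions from a common Schwartz-type core, and then invoke closability to extend boundedness to the full creation operators. Granted this verification, Kucerovsky's theorem yields the identification of $(\mathcal{N}_b, D_{\mathcal{N}_b})$ with the Kasparov product $(1_{A_\theta} \otimes [\module_b]) \otimes_{A_\theta^{\otimes 3}} (\Delta_\theta \otimes 1_{A_\theta})$.
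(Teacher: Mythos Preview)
Your approach matches the paper's: Kucerovsky's criterion, with the module identification already established, the positivity/domain condition trivial because the first factor carries the zero operator, and the connection condition isolated as Lemma~\ref{lem:KaspProd:Bdd}.

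Two small corrections are worth flagging. First, the creation operators $T_x$ are indexed by elements $x$ of the \emph{first} module $A_\theta\otimes\module_b$, not of $\module_b$ alone; the paper works with elementary tensors $x=a\otimes\Phi\in C_c^\infty(\mathcal{A}_\theta)\odot C_c^\infty(\mathcal{Z}_b)$, and the Leibniz-type expansion of $D_{\mathcal{N}_b}$ applied to $\Phi._{\module_b}(\psi(a,f)\otimes c)$ involves derivatives of both $a$ and $\Phi$. Second, the residual $\tilde D_\pm T_x - T_x(d_\Delta\otimes 1)$ is not a bounded multiplication-type operator but is itself a creation operator $T_{X_\pm(x)}$ for an explicit element $X_\pm(x)\in C_c^\infty(\mathcal{A}_\theta)\odot C_c^\infty(\mathcal{Z}_b)$; this is what makes it adjointable. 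The paper also isolates a small auxiliary observation (Lemma~\ref{lem:CreationIsEnough}) to avoid a separate computation for the annihilation half of the graded commutator, which your closability remark gestures at but does not quite capture.
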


We have already found that the module ${N_{ b }}$ descends to $\mathcal{N}_{ b } = \mathcal{N}_{ b }^{0}\oplus\mathcal{N}_{ b }^{0}$ - in fact, this is where the formulas that we used to define ${N_{ b }}$ came from, see Formula \eqref{formula:ZZ-and-left-action-before-descent},
Formula \eqref{formula:right-action-before-descent}, and Formula \eqref{formula:inner-prod-before-descent}.
Furthermore, we have seen that $\mathcal{N}_{ b }^{\pm} $ can be regarded as $( A_{\theta} \otimes \module_{g})
	 \otimes_{A_{\theta}^{\otimes 3}} 
	( L^{2} \otimes A_{\theta} )$, two copies of which make up the module underlying $(1_{A_{\theta}} \otimes [\module_{b}]) \otimes_{A_{\theta}^{\otimes 3}} (\Delta_{\theta} \otimes 1_{A_{\theta}}) $, via Equations \ref{iso:ALgL2AtoLgL2} and \ref{formula:general:scr-h-in-EL}. The identification can be summed up as follows:
\begin{equation}\label{iso:H1otH2=H}
\underbrace{
	\left( V^{l_{1}} U^{k_{1}} \otimes \Phi \right) \otimes_B
	\left(
		(z^{l_{2}}\otimes \varepsilon_{k_{2}})\otimes V^{l_{3}} U^{k_{3}}
	\right)}_{
	\in \,( A_{\theta} \otimes \module_{g})
		 \otimes_{A_{\theta}^{\otimes 3}} 
		( L^{2} \otimes A_{\theta} )}
	\triangleq
	\lambda^{l_{1}(k_{1}+k_{2})}\,
	\underbrace{
		\Phi ._{ \module_{b} } (V^{l_{1}+l_{2}}U^{-(k_{1}+k_{2})}\otimes V^{l_{3}} U^{k_{3}})
	}_{\in\,\mathcal{N}_{ b }^{\pm}}
\end{equation}
We have also already proved that $(\mathcal{N}_{ b },D_{\mathcal{N}_{b}})$ is indeed in $\Psi(A, C)$.
Therefore, we now only need to prove the following:
\begin{lemma}\label{lem:KaspProd:Bdd}
	For any $x\in C_{c}^{\infty}(\mathcal{A}_{\theta})\odot C_{c}^{\infty}(\mathcal{Z}_{b})\subseteq A_{\theta} \otimes \module_{b} $, the operator 
	\[
		\comm{
			\mat{
				D_{\mathcal{N}_{b}} & 0\vphantom{T_x}
				\\
				0\vphantom{T_x^*} & d_{\Delta}\otimes 1
			}
		}
		{
			\mat{
				0 & T_x
				\\
				T_x^{*} & 0
				}
		}
	\]
	extends to a bounded operator.
\end{lemma}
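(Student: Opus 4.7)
The plan is to verify Kucerovsky's bounded-commutator condition by working on a dense subspace of decomposable tensors and checking that, by design, all potentially unbounded contributions to the graded commutator cancel. First I would use bilinearity to reduce to elementary tensors $x = a\otimes\Phi$ with $a\in\mathfrak{A}$ and $\Phi\in C_c^\infty(\mathcal{Z}_b)$, and then use the identification~\eqref{iso:H1otH2=H} to write out $T_x$ and $T_x^{*}$ as concrete maps between $L^2\otimes A_\theta$ and $\mathcal{N}_b^\pm$. Unwinding the formula, $T_x$ acts essentially by ``multiply by $\Phi$ in the $\module_b$-module sense and shift indices according to $a$,'' with structural twists (the factor $\lambda^{l_1(k_1+k_2)}$ in~\eqref{iso:H1otH2=H}, the descended $\mathbb{Z}$-action) being unitary and so irrelevant for boundedness.

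The graded commutator of the two odd matrices equals
\[
\mat{ 0 & D_{\mathcal{N}_b}T_x - T_x (d_\Delta\otimes 1) \\ (d_\Delta\otimes 1)T_x^{*} - T_x^{*} D_{\mathcal{N}_b} & 0 }.
\]
Expanding the top-right entry by Leibniz, the operator $D_{\mathcal{N}_b}$, which is the descent of $d_{N_b} = -\tfrac{1}{b}\tfrac{\partial}{\partial r} + \tfrac{\partial}{\partial \Theta} - 2\pi\mathsf{M}$ from~\eqref{def:dN}, contributes three kinds of terms when applied to $T_x\xi$: derivatives of $\Phi$ in $r$ and $\Theta$, which are bounded multiplication operators since $\Phi\in C_c^\infty$; ``carry-through'' derivatives falling on $\xi\in L^2\otimes A_\theta$; and multiplication by $-2\pi\mathsf{M}$. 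The carry-through terms should match the corresponding pieces of $T_x(d_\Delta\otimes 1)$, where $\Dirac_\Z$ and $\Dirac_\T$ act by multiplication by $2\pi n$ and $2\pi m$ on monomials $V^m U^n$, and the $\mathsf{M}$-term should match the position-multiplication contribution coming from $\Dirac_\T$. This matching is no accident: the coefficients in~\eqref{def:dN} were chosen precisely so that the change of variables $w$ in the proof of Theorem~\ref{thm:N-d-N=Cycle} yields $W\inv\circ b\cdot d_{N_b}\circ W = d_{2\pi b}$, and the same cancellations that make this conjugation work are what kill the unbounded contributions here.

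The main obstacle is bookkeeping: one must carefully line up the $\lambda$-twist in~\eqref{iso:H1otH2=H}, the descended $\mathbb{Z}$-action, and the translation between $\mathcal{Z}_b$-coordinates and $L^2$-coordinates so that the cancellation is exact modulo bounded error. The cleanest route is to push the commutator through the unitary $W$ of Theorem~\ref{thm:N-d-N=Cycle} and through descent, reducing the bounded-commutator check to an estimate for the constant-coefficient Dirac--Schr\"odinger operator $d_{2\pi b} = 2\pi b\,\mathsf{M} + \tfrac{\partial}{\partial r}$ paired with multiplication by a smooth, compactly supported function on $\mathbb{T}\times\mathbb{R}$. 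In that form the boundedness is a routine Leibniz estimate of exactly the kind already carried out in Lemma~\ref{lem:bounded-commutators}, and translating back via $W$ and the descent map closes the argument.
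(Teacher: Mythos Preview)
Your high-level picture is right: reduce to elementary $x=a\otimes\Phi$, expand by Leibniz, and argue that the unbounded pieces cancel. But the concrete execution you sketch has a real gap. The proposed endgame—push through the unitary $W$ of Theorem~\ref{thm:N-d-N=Cycle} and reduce to the estimate of Lemma~\ref{lem:bounded-commutators}—does not work, because $T_x$ is not a multiplication operator on $H_b$. It is a creation operator into the balanced tensor product $\module_b\otimes_{A_\theta^{\otimes 2}}(\cdots)$, and conjugating by $W$ does not turn it into pointwise multiplication by a compactly supported function on $\T\times\R$. Lemma~\ref{lem:bounded-commutators} controls $[d_\lambda,f\bullet]$ for the \emph{left} $C(\T)$-action on $H_b$; there is no analogous statement for $T_x$ sitting there to be invoked. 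Your identification of which terms match is also off: the $-2\pi\mathsf{M}^{\R}$ in $d_{N_b}$ does not pair with $\Dirac_\T$ but (via the Leibniz identity $\mathsf{M}^{\R}(\Phi._{\module_b}\xi)=(\mathsf{M}^{\R}\Phi)._{\module_b}\xi+\Phi._{\module_b}((\mathsf{M}^{\Z}\psi)\otimes c)$ coming from the explicit right-action formula~\eqref{eq:rightActionOnE0-general}) with the $\Dirac_\Z=2\pi\mathsf{M}^{\Z}$ piece of $d_\Delta$; and there is a nontrivial cancellation between the $\partial_r$- and $\partial_\Theta$-contributions to the $c$-variable that your outline does not account for.

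What the paper actually does is avoid $W$ entirely and compute $D_{\mathcal{N}_b}$ directly in the $\mathcal{N}^1\otimes_{A_\theta^{\otimes 3}}\mathcal{N}^2$ picture using~\eqref{iso:H1otH2=H} and the Leibniz rules derived from~\eqref{eq:rightActionOnE0-general}. The payoff is structural rather than an estimate: one finds an explicit element $X_\pm(x)\in C_c^\infty(\mathcal{A}_\theta)\odot C_c^\infty(\mathcal{Z}_b)$ with
\[
D_{\mathcal{N}_b,\pm}T_x - T_x\,(d_{\Delta,\pm}\otimes 1)=T_{X_\pm(x)},
\]
so the top-right block is itself a creation operator and hence adjointable. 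You also omit the bottom-left block $ (d_\Delta\otimes 1)T_x^{*}-T_x^{*}D_{\mathcal{N}_b}$; the paper disposes of it not by a parallel computation but by the abstract Lemma~\ref{lem:CreationIsEnough}, which says that once $DT-TD'$ is bounded (and a domain condition holds), $T^{*}D-D'T^{*}$ is automatically bounded as well. Without that lemma or an independent argument, your proof is incomplete even if the creation side were handled.
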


 We note that $C_{c}^{\infty}(\mathcal{A}_{\theta})\odot C_{c}^{\infty}(\mathcal{Z}_{b})$ is dense in $A_{\theta}\otimes \module_{b}$ by the following:
\begin{lemma}\label{lem:sup-conv-implies-Lb-convergence}\label{lem:sup-conv-implies-mcHb-convergence}
	Suppose $\Phi_{n}\in C_{c}(\mathbb{Z}\times\mathbb{T}\times\mathbb{R})$ are such that $\norm{\Phi_{n}}_{\infty} \stackrel{n\to\infty}{\longrightarrow } 0$ and that, for all $n$, the support of $\Phi_{n}$ is contained in some compact set. Then $\Phi_{n} \stackrel{n\to\infty}{\longrightarrow } 0$ both in $\module_{b}$ and in $\mathcal{H}_{-b}^{\pm}$.
\end{lemma}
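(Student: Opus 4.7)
The plan is to estimate both the $\module_b$-norm and the $\mathcal{H}_{-b}^{\pm}$-norm of $\Phi_n$ by a constant (depending only on the fixed compact support) times $\|\Phi_n\|_\infty^2$, and then invoke the hypothesis $\|\Phi_n\|_\infty\to 0$. Since $\mathbb{Z}$ is discrete, a common compact set $K \subseteq \mathbb{Z}\times\mathbb{T}\times\mathbb{R}$ containing every $\mathrm{supp}(\Phi_n)$ must be of the form $F \times \mathbb{T} \times [-R,R]$ for some finite $F\subseteq\mathbb{Z}$ and some $R>0$. Fixing such $F$ and $R$ at the start, all subsequent constants will depend only on these data.

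For the $\module_b$-norm, I would feed $\Phi_n$ into the inner product formula derived for $\inner{\Phi}{\Psi}^{\module_b}([v],l_1,[w],l_2)$, which is a double sum over $(k_1,k_2)\in\mathbb{Z}^2$ of products of two factors of $\Phi_n$. Requiring each factor's $\mathbb{Z}$-argument to lie in $F$ and its $\mathbb{R}$-argument to lie in $[-R,R]$ forces $k_2\in F$, $k_2+l_2-l_1\in F$, and $k_1\in F+\mathrm{const}$, and the constraint on $(k_1+k_2\theta+w-v)/b$ restricts the remaining index to finitely many values. Thus the double sum contains at most $N(K)$ nonzero terms uniformly in the arguments, and the result is supported in $(l_1,l_2)$ inside a fixed finite set $F'\subseteq\mathbb{Z}^2$. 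Pointwise this gives
\[
\bigl|\inner{\Phi_n}{\Phi_n}^{\module_b}([v],l_1,[w],l_2)\bigr| \;\le\; N(K)\,\|\Phi_n\|_\infty^2.
\]
To pass from this pointwise bound to the Hilbert C*-module norm, I would invoke the standard domination of the C*-norm of $A_\theta\otimes A_\theta$, realized as a groupoid C*-algebra of $\A_\theta\times\A_\theta$, by the Hahn $I$-norm; for a function supported on $\mathbb{T}^2\times F'$, the $I$-norm is at most $|F'|$ times its sup-norm. Hence $\|\Phi_n\|_{\module_b}^2 \le N(K)\,|F'|\,\|\Phi_n\|_\infty^2 \to 0$.

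For the $\mathcal{H}_{-b}^{\pm}$-norm the recipe is the same but easier: the inner product formula \eqref{eq:inner-product-on-mcH} involves a single sum over $k_1\in\mathbb{Z}$ and an integral over $r\in\mathbb{R}$. Compact support in $\mathbb{Z}\times\mathbb{T}\times\mathbb{R}$ forces $k_1,\,k_1+l_2\in F$ and $r\in[-R,R]$, giving a uniform pointwise bound $|\inner{\Phi_n}{\Phi_n}^{\mathcal{H}_{-b}}([w],l_2)| \le C(K)\,\|\Phi_n\|_\infty^2$ with support in $l_2\in F-F$. The same sup-norm-to-$I$-norm estimate for $A_\theta$ then yields $\|\Phi_n\|_{\mathcal{H}_{-b}^{\pm}}^2\to 0$.

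The only real technicality is the bookkeeping on the $\module_b$-side: the two factors in the product have $\mathbb{Z}$- and $\mathbb{R}$-arguments given by nontrivial affine combinations of $l_1, l_2, k_1, k_2, v, w, \theta, b$, and one must verify that the conjunction of the support constraints pins down $(k_1,k_2)$ to finitely many values \emph{uniformly} in $([v],l_1,[w],l_2)$. Once that combinatorial check is done, both halves of the lemma reduce to the single inequality $\|\cdot\|_{C^*}\le\|\cdot\|_I$ applied to functions of finite support in the discrete directions, and the sup-norm hypothesis closes the argument.
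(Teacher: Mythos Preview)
Your proposal is correct and follows essentially the same approach as the paper. The paper's proof simply cites the ``standard trick'' from \cite[Proof of Thm.~2.8]{MRW:Grpd} together with Renault's result \cite[Chap.~II, Prop.~1.4(i)]{Renault:gpd-approach} that the inductive limit topology on $C_c(\mathcal{G})$ is finer than the C*-norm topology; your explicit pointwise bound on the inner product followed by the $I$-norm estimate $\|\cdot\|_{C^*}\le\|\cdot\|_I$ is precisely the content of that trick, spelled out by hand for these particular modules.
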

The proof employs a ``standard trick'' that was used in \cite[Proof of Thm.~2.8]{MRW:Grpd}: the inductive limit topology on $C_{c} (\mathcal{G})$ for $\mathcal{G}$ a second countable locally compact Hausdorff {\'e}tale groupoid is finer than the topology given by the C*-norm (see 
	\cite[Chapter II, Prop.\ 1.4(i)]{Renault:gpd-approach}
	).
	
\begin{corollary}\label{cor:smooth-dense-in-Lb}\label{cor:smooth-dense-in-mcHb}
	If $c_{00}$ denotes the space of bi-infinite sequences which are eventually zero, then the subspace $c_{00} \odot \mathrm{span}\{ z^{n} \,\vert\, n\in\mathbb{Z}\} \odot C_{c}^{\infty}(\mathbb{R})$ is dense in both $\module_{b}$ and $\mathcal{H}_{-b}^{\pm}.$
\end{corollary}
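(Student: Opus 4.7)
The plan is to reduce the claim to uniform approximation with uniformly bounded support and then invoke Lemma~\ref{lem:sup-conv-implies-Lb-convergence}. By construction, $C_{c}(\mathbb{Z}\times\mathbb{T}\times\mathbb{R})$ is dense in both $\module_{b}$ and $\mathcal{H}_{-b}^{\pm}$, so it suffices to approximate an arbitrary $\Psi\in C_{c}(\mathbb{Z}\times\mathbb{T}\times\mathbb{R})$ uniformly, with supports contained in a common compact set, by elements of $c_{00}\odot\mathrm{span}\{z^{n}\}\odot C_{c}^{\infty}(\mathbb{R})$.

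First I would use compactness of $\supp(\Psi)$ to choose a finite subset $F\subseteq \mathbb{Z}$ and an $R>0$ with $\supp(\Psi)\subseteq F\times \mathbb{T}\times [-R,R]$, and then decompose $\Psi=\sum_{n\in F}\mathbf{1}_{\{n\}}\otimes \Psi_{n}$ with each $\Psi_{n}\in C_{c}(\mathbb{T}\times\mathbb{R})$ supported in $\mathbb{T}\times [-R,R]$. This reduces the problem to approximating each slice $\Psi_{n}$ uniformly, with a fixed compact support, by a finite sum of elementary tensors $z^{m}\otimes \phi$ where $\phi\in C_{c}^{\infty}(\mathbb{R})$.

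For the slice-wise approximation, I would fix a cut-off $\chi\in C_{c}^{\infty}(\mathbb{R})$ with $\chi\equiv 1$ on $[-R,R]$ and $\supp\chi\subseteq [-R-1,R+1]$, observe that $\Psi_{n}=\Psi_{n}\cdot(1\otimes \chi)$, and then apply Stone--Weierstrass on the compact box $\mathbb{T}\times[-R-1,R+1]$ (equivalently, Fourier expansion in the $\mathbb{T}$-variable combined with mollification in the $\mathbb{R}$-variable) to obtain uniform approximants of $\Psi_{n}$ of the form $\sum_{j}a_{j}z^{m_{j}}\otimes \phi_{j}$ with $\phi_{j}\in C_{c}^{\infty}(\mathbb{R})$. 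Multiplying each approximant again by $1\otimes \chi$ preserves the uniform approximation on $\supp(\Psi_{n})$ while forcing all supports into a single compact set.

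The only real obstacle is bookkeeping---arranging the two approximations (trigonometric in $\mathbb{T}$, smooth and compactly supported in $\mathbb{R}$) so that they can be carried out simultaneously without losing control of supports. Since everything takes place inside the compact box above, this is routine. Finally, summing over the finite $n\in F$ and applying Lemma~\ref{lem:sup-conv-implies-Lb-convergence} converts the uniform convergence of the approximants into convergence in both the $\module_{b}$- and $\mathcal{H}_{-b}^{\pm}$-norms, yielding the claimed density.
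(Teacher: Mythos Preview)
Your proposal is correct and is precisely the argument the paper has in mind: the corollary is stated without proof immediately after Lemma~\ref{lem:sup-conv-implies-Lb-convergence}, and your reduction to uniform approximation on a fixed compact box followed by an appeal to that lemma is exactly how one fills in the details.
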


\begin{remark}
	The statement in Lemma~\ref{lem:KaspProd:Bdd} implicitly makes use of the identification in Equation~\eqref{iso:H1otH2=H}. In other words, our claim (for the creation part) can be rephrased to saying that the following diagram is commutative up to adjointable operators, where $\mathcal{N}^{1} := A_{\theta} \otimes [\module_{ b }]$ and $\mathcal{N}^{2} := ( L^{2} \oplus L^{2} )\otimes A_{\theta}$:
	\begin{equation}
		\begin{tikzcd}[row sep = tiny]\label{diag:meaning-of-lem:KaspProd:Bdd}
			& \mathcal{N}^{1} \otimes_{ A_{\theta}^{3\otimes} } \mathcal{N}^{2} \ar[r, "\text{Eq.~\eqref{iso:H1otH2=H}}"] & \mathcal{N}_{ b } \ar[rd ,"D"]	& \\[-6pt]
			\mathcal{N}^{2}\ar[ur, "T_{x}", end anchor={[yshift=1ex]}]\ar[dr, " d_{\Delta_{\theta}}\otimes 1"']	&				&					& \mathcal{N}_{ b }\\[-4pt]
			& \mathcal{N}^{2} \ar[r, "T_{x}"'] & \mathcal{N}^{1} \otimes_{ A_{\theta}^{3\otimes} } \mathcal{N}^{2} \ar[ru,  "\text{Eq.~\eqref{iso:H1otH2=H}}"', near start]& 
		\end{tikzcd}
	\end{equation}
\end{remark}

We observe that only the creation-part has to be shown:
\begin{lemma}\label{lem:CreationIsEnough}
	Suppose $D\!:\, \dom(D) \to \mathcal{N}$ and $D'\!:\, \dom (D') \to \mathcal{N}'$ are two self-adjoint, densely defined unbounded operators on right-Hilbert $C^*$-modules $\mathcal{N}$ resp.\ $\mathcal{N}'$ over some $C^*$-algebra $C$. If $T\in \mathcal{L} (\mathcal{N}', \mathcal{N})$ is such that $\dom(D T)\cap \dom(D')$ is also dense, and if the operator $D T + T D'$ \textup(or $D T - T D'$\textup) extends to a bounded operator, then so does $T^{*} D + D' T^{*}$ \textup(resp.\ $D T - T D'$\textup).
\end{lemma}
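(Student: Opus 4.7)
The plan is to deduce the annihilation-side claim from the creation-side claim by taking operator adjoints. Let us write $S := DT + TD'$ for the operator defined on the dense domain $\dom(DT)\cap \dom(D')$ (the argument for $DT-TD'$ is identical up to signs). By hypothesis, $S$ extends to a bounded, hence adjointable, operator $\bar{S}\in \LL(\mathcal{N}', \mathcal{N})$. The aim is to identify its adjoint $\bar{S}^{*}\in \LL(\mathcal{N}, \mathcal{N}')$ with the closure of $T^{*}D + D'T^{*}$, which is the operator whose extension we seek.

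For $\xi\in \dom(S)$ and $\eta \in \dom(D)$ with the additional property $T^{*}\eta\in \dom(D')$, one computes
\begin{align*}
    \inner{S\xi}{\eta}_C
    &= \inner{DT\xi}{\eta}_C + \inner{TD'\xi}{\eta}_C
    \\
    &= \inner{T\xi}{D\eta}_C + \inner{D'\xi}{T^{*}\eta}_C
    = \inner{\xi}{T^{*}D\eta + D'T^{*}\eta}_C,
\end{align*}
where the second equality uses the self-adjointness of $D$ (applicable since $T\xi\in\dom(D)$) and of $D'$ (applicable since $T^*\eta\in\dom(D')$). Since $S$ extends to the adjointable $\bar{S}$, the left-hand side equals $\inner{\xi}{\bar{S}^{*}\eta}_C$. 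As $\dom(S)$ is dense and the $C$-valued inner product on $\mathcal{N}'$ is non-degenerate, it follows that $(T^{*}D + D'T^{*})\eta = \bar{S}^{*}\eta$ on the subspace
\[
    \mathcal{D} := \dom(D)\cap (T^{*})^{-1}(\dom(D'))
    = \dom(D'T^{*})\cap \dom(D)
    .
\]
Thus $T^{*}D + D'T^{*}$ agrees with the bounded operator $\bar{S}^{*}$ on $\mathcal{D}$, and therefore extends to a bounded operator on the closure of $\mathcal{D}$. The case $DT - TD'$ is treated identically, yielding $(DT-TD')^{*}\supseteq T^{*}D - D'T^{*}$ on the analogous domain.

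The main obstacle is to ensure that $\mathcal{D}$ is dense in $\mathcal{N}$ so that the extension of $T^{*}D + D'T^{*}$ to all of $\mathcal{N}$ is well-defined. I expect this density to be the symmetric counterpart of the standing hypothesis that $\dom(DT)\cap \dom(D')$ is dense: it is precisely the density of $\dom(D'T^{*})\cap \dom(D)$, which is the input needed for the companion adjoint calculation. In the intended application (Lemma \ref{lem:KaspProd:Bdd}), both densities are immediate, since $T$ and $T^{*}$ will preserve the respective smooth/compactly supported subspaces used to define the essential domains of $D = D_{\mathcal{N}_{b}}$ and $D' = d_\Delta \otimes 1$, so no further genuine assumption is required. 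Hence it is legitimate to verify only the creation half of Diagram~\eqref{diag:meaning-of-lem:KaspProd:Bdd}, which is the reduction used in the proof of Lemma \ref{lem:KaspProd:Bdd}.
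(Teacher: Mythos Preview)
Your computation is essentially identical to the paper's, but you have introduced an obstacle that is not there. Once you have established that $(T^{*}D + D'T^{*})\eta = \bar{S}^{*}\eta$ for every $\eta\in\mathcal{D}$, you are done: $\bar{S}^{*}$ is a bounded adjointable operator defined on \emph{all} of $\mathcal{N}$, and it restricts to $T^{*}D + D'T^{*}$ on the latter's natural domain $\mathcal{D}$. That is exactly what ``extends to a bounded operator'' means. No density of $\mathcal{D}$ is needed, and the paper does not assume it.

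The paper's write-up makes this point explicit by reversing your notation (it calls your $\bar{S}$ by the name $\overline{R}$ and your $T^{*}D+D'T^{*}$ by the name $S$) and then arguing directly that $\dom(R^{*})=\mathcal{N}$: for arbitrary $\xi\in\mathcal{N}$ one has $\norm{\inner{R\zeta}{\xi}}\le \norm{\overline{R}}\,\norm{\zeta}\,\norm{\xi}$, so every $\xi$ lies in $\dom(R^{*})$, whence $R^{*}$ is everywhere defined and bounded. Since $R^{*}$ was shown to extend $S$ on $\dom(S)$, that is the required bounded extension. Your $\bar{S}^{*}$ is precisely this $R^{*}$; you had the answer in hand and then talked yourself out of it. Drop the final paragraph about the ``main obstacle'' and the appeal to the specific application, and your proof is complete and matches the paper's.
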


\begin{proof}
	Let $S:= T^{*} D \pm D' T^{*}$ and $R:= D T \pm T D'$, so that $\dom(R) = \dom(D T)\cap \dom (D')$ is dense by assumption. We claim that $R^{*}$ extends $S$.
	
	Let $\xi\in \mathcal{N}$ be an element of $\dom(S)$, that is $\xi\in\dom(D)$ and $T^{*}\xi \in \dom (D')$. In order for $\xi$ to be in $\dom(R^{*})$, we need that the map
	\[
		\mathcal{N}' \supseteq \dom(R) \ni\; \zeta \longmapsto \inner{R\zeta}{\xi}^{\mathcal{N}}_{C}\;\in C
	\]
	is bounded.
	We compute for $\zeta\in \dom(R)$
	\begin{align*}
		\inner{R\zeta}{\xi}^{\mathcal{N}}_{C}
		&=
		\inner{D T\zeta}{\xi}^{\mathcal{N}}_{C}
		\pm
		\inner{T D'\zeta}{\xi}^{\mathcal{N}}_{C}
		=
		\inner{T\zeta}{D\xi}^{\mathcal{N}}_{C}
		\pm
		\inner{D'\zeta}{T^{*} \xi}^{\mathcal{N}'}_{C}
		\\
		&=
		\inner{\zeta}{T^{*}D\xi}^{\mathcal{N}'}_{C}
		\pm
		\inner{\zeta}{D' T^{*} \xi}^{\mathcal{N}'}_{C}
		=
		\inner{\zeta}{S \xi}^{\mathcal{N}'}_{C}.
		\notag
	\end{align*}
	As $S\xi$ is a fixed element of $\mathcal{N}'$, the map $\zeta \mapsto \inner{R\zeta}{\xi}^{\mathcal{N}}_{C} = \inner{\zeta}{S \xi}^{\mathcal{N}'}_{C}$ is bounded. We have shown $\dom(S)\subseteq \dom(R^{*})$ and also that for any $\zeta\in\dom(R)$ and $\xi\in\dom(S)$,
	\[
		\inner{\zeta}{S \xi}^{\mathcal{N}'}_{C} = \inner{R\zeta}{\xi}^{\mathcal{N}}_{C} = \inner{\zeta}{R^{*} \xi}^{\mathcal{N}'}_{C}
		.
	\]
	We know that this property uniquely defines $R^{*}\xi$ since $\dom(R)$ is dense, and hence $R^{*} \xi = S\xi$ on $\dom(S)$. In other words, $R^{*}$ extends $S$.
	
	We now only need to see that $R^{*}$ is defined everywhere (which then makes it a bounded operator), so that $S$ indeed has a bounded extension: let $\overline{R}$ be the assumed bounded extension of $R$. Then for $\xi\in\mathcal{N}$ and $\zeta\in\dom(R)$, we have
	\[
		\norm{
			\inner{R\zeta}{\xi}^{\mathcal{N}}_{C}
		}
		\leq 
		\norm{R\zeta}\cdot\norm{\xi}
		\leq
		\norm{\overline{R}}\cdot\norm{\zeta}\cdot\norm{\xi}.
	\]
	Therefore, the map
	\[
		\dom(R) \ni\; \zeta \longmapsto \inner{R\zeta}{\xi}^{\mathcal{N}}_{C}\;\in C
	\]
	is a bounded operator for \textit{any} $\xi\in\mathcal{N}$, which means $\mathcal{N}\subseteq \dom(R^{*})$, so $R^{*}$ is defined everywhere.
\end{proof}

\begin{proof}[Proof of Lemma~\ref{lem:KaspProd:Bdd}]
	By linearity, it suffices to prove the claim for an elementary tensor $x=a\otimes \Phi$ in $ C_{c}^{\infty}(\mathcal{A}_{\theta}) \odot C_{c}^{\infty}(\mathcal{Z}_{b})$.

	Let us untangle Diagram~\ref{diag:meaning-of-lem:KaspProd:Bdd} and be precise: Instead of working with $D_{\mathcal{N}_{b}}$ on $\mathcal{N}_{b}$, we will work with the corresponding operator $\tilde{D}$ on the \emph{actual} space $\mathcal{N}^{1}\otimes_{ A_{\theta}^{3\otimes} }\mathcal{N}^{2}$ (using Equation~\eqref{iso:H1otH2=H} to figure out $\tilde{D}$). Unfortunately, $\tilde{D}$ is going to be very unwieldy, which is the reason we instead chose to define $D$'s lift in Theorem~\ref{thm:N-d-N=Cycle}. The upshot is that $\tilde{D}T_x - T_x (d_{\Delta}\otimes 1)$ will turn out to be a creation operator, so that it is clearly adjointable.
	
	Note that, since $C_{c}^{\infty} (\mathcal{Z}_{b})$ is a subspace of $\dom(D)$ which contains $C_{c}^{\infty} (\mathcal{Z}_{b}) ._{\module_{b}} ( C_{c}^{\infty}(\mathcal{A}_{\theta}) )^{2\odot}$, the map in Equation~\eqref{iso:H1otH2=H} shows that
	\begin{equation}\label{eq:dom-tilde-D}
		\bigl(
			C_{c}^{\infty}(\mathcal{A}_{\theta}) \odot C_{c}^{\infty} (\mathcal{Z}_{b})
		\bigr)
			\odot_{\mathfrak{A}^{3\odot}}
		\bigl(
			C_{c}^{\infty}(\mathcal{A}_{\theta}) \odot C_{c}^{\infty}(\mathcal{A}_{\theta}) 
		\bigr)
		\subseteq
		\dom(\tilde{D}_{\pm})
		.
	\end{equation}
	
	For $a\in C_{c}^{\infty} ( \mathcal{A}_{\theta})\subseteq A_{\theta}$ and $f\in C_{c}^{\infty} ( \mathcal{A}_{\theta})\subseteq (L^2 \oplus L^2)^{\pm}$, define the function $\psi(a,f)\in C_{c}^{\infty} ( \mathcal{A}_{\theta})$ by
	\[
		\psi(a,f) \, ([x],k)
		:=
		\sum_{n\in\mathbb{Z}} a([x-k\theta],-n)\, f([x],n-k),
	\]
	then for $a= V^{l_{1}}U^{k_{1}}$ and $f=z^{l_{2}}\otimes \varepsilon_{k_{2}}$, we recover $
		\psi(a,f)
		=
		\lambda^{l_{1}(k_{1}+k_{2})}\,	
		V^{l_{1}+l_{2}}U^{-(k_{1}+k_{2})}.	
	$
	This shows that, for $c\in C_{c}^{\infty} ( \mathcal{A}_{\theta})$ and $\Phi \in C_{c}^{\infty} (\mathcal{Z}_{b})$, the map in Equation~\eqref{iso:H1otH2=H} identifies
	\begin{alignat}{3}\label{iso:H1otH2=H-2nd-version}
	\mathcal{N}^{1} \otimes_{ A_{\theta}^{3\otimes} } (\mathcal{N}^{2})^{\pm}
		&&\longleftrightarrow&&&
		\mathcal{N}_{ b }^{\pm}
				\nonumber
		\\
		(
			a\otimes \Phi
		)
		\otimes_{ A_{\theta}^{3\otimes} }
		(
			f \otimes c
		)
		&&\stackrel{\hphantom{\longleftrightarrow}}{\triangleq}&&&
			\Phi ._{\module_{ b } } \bigl(\psi(a,f)\otimes c\bigr) 
	\end{alignat}

	To find $\tilde{D}_{\pm} \bigl( (a\otimes \Phi) \otimes_{ A_{\theta}^{3\otimes} } (f\otimes c)\bigr)$, we see from Equation~\eqref{iso:H1otH2=H-2nd-version} that we first need to compute $D_{\mathcal{N}_{b},\pm} \Bigl( \Phi._{\module_{b}} \bigl(\psi(a,f)\otimes c\bigr) \Bigr)$. Note that, if $\xi := \psi(a,f)$ and $\Psi := \Phi._{\module_{b}} \xi$, then Equation~\eqref{eq:rightActionOnE0-general} (the formula for the right action on $\module_{b}$) reveals that
	\begin{align*}
		\frac{\partial \Psi}{\partial r}
		&\,=\,
		\left(\frac{\partial \Phi}{\partial r}\right)._{\module_{b}} \xi
		+
		b\, \Phi._{\module_{b}} \left( \psi(a,f) \otimes \frac{\partial c}{\partial \Theta}\right),
		\\
		\frac{\partial \Psi}{\partial \Theta}
		&\,=\,
		\left(\frac{\partial \Phi}{\partial \Theta}\right)._{\module_{b}} \xi
		+
		\Phi._{\module_{b}} \left( \psi(a,f) \otimes \frac{\partial c}{\partial \Theta} + 
		\frac{\partial \psi(a,f)}{\partial \Theta}
		\otimes c\right),
		\and 
		\\
		\mathsf{M}^{\mathbb{R}} \Psi
		&\,=\,
		\left(\mathsf{M}^{\mathbb{R}}\Phi\right)._{\module_{b}} \xi
		+
		\Phi._{\module_{b}} \left(\left( \mathsf{M}^{\mathbb{Z}}\psi(a,f)\right) \otimes c\right),
	\end{align*}
	where $\mathsf{M}^{\mathbb{R}}$ resp.\ $\mathsf{M}^{\mathbb{Z}}$ denotes the operator that multiplies by the input of the $\mathbb{R}$- resp.\ the $\mathbb{Z}$-component, and $\frac{\partial\;}{\partial r}$ resp.\ $\frac{\partial\;}{\partial \Theta}$ refers to differentiation with respect to the $\mathbb{R}$- resp.\ $\mathbb{T}$-component.
	
	Therefore, applying the operator $D:= D_{\mathcal{N}_{b}}$ on $\mathcal{N}_{b}$ --built out of $d_{N_{b}}$ (see Definition~\ref{def:dN}) via descent-- to $\Psi$ yields
	\begin{align*}
		D_{\pm} (\Psi)
		\;&=\;
		\left( \mp \frac{1}{b}\frac{\partial\;}{\partial r} \pm \frac{\partial\;}{\partial \Theta} - 2\pi \mathsf{M}^{\mathbb{R}}\right) (\Psi) 
		\\
		&=\;
		\left[
			\mp \frac{1}{b}\frac{\partial\;}{\partial r} \pm \frac{\partial\;}{\partial \Theta}
			- 2\pi \mathsf{M}^{\mathbb{R}}
		\right]
		(\Psi)
		._{\module_{b}} \xi
		+
		\Phi._{\module_{b}} 
		\left[
			\left(
				\pm \frac{\partial\;}{\partial \Theta} 
				-
				2\pi\, \mathsf{M}^{\mathbb{Z}}
			\right)
			\bigl(\psi(a,f)\bigr)
			\otimes c
		\right].
	\end{align*}
	Using the definition of $\psi(a,f)$, we compute
	\[
		-\mathsf{M}^{\mathbb{Z}} \psi(a,f)
		=
		\psi( \mathsf{M}^{\mathbb{Z}} s, f)
		+
		\psi(a, \mathsf{M}^{\mathbb{Z}} f)
		\and 
		\frac{\partial \psi(a,f)}{\partial \Theta}
		=
		\psi\left( \frac{\partial a}{\partial \Theta},f\right)
		+
		\psi\left( a, \frac{\partial f}{\partial \Theta}\right),
	\]
	so that all in all:
	\begin{align*}
		D_{\pm} (\Psi)
		\;&=\;
		\left[
			\mp \frac{1}{b} \frac{\partial\;}{\partial r}
			\pm \frac{\partial\;}{\partial \Theta}
			-
			2\pi\, \mathsf{M}^{\mathbb{R}}
		\right]
		(\Phi)._{\module_{b}} \xi
		\\
		&\qquad
		+
		\Phi._{\module_{b}}
		\left[
			\psi
			\left(
				\pm \frac{\partial a}{\partial \Theta}
				+ 2\pi\, \mathsf{M}^{\mathbb{Z}}a,
				f
			\right)
			+
			\psi
			\left(
				a, 
				\pm \frac{\partial f}{\partial \Theta}
				+ 2\pi\, \mathsf{M}^{\mathbb{Z}}f,
			\right)
		\right]\otimes c.
	\end{align*}
	This element corresponds via Equation~\eqref{iso:H1otH2=H-2nd-version} to the following element in $\mathcal{N}^{1} \otimes_{ A_{\theta}^{3\otimes} } (\mathcal{N}^{2})^{\pm}$:
	\begin{align*}
	&
			\tilde{D}_{\pm} \bigl( \left(a\otimes \Phi\right) \otimes_{ A_{\theta}^{3\otimes} } \left(f\otimes c\right)\bigr)
			\\
			&\quad:=
			\left(
				 a\otimes \left[-2\pi\,\mathsf{M}^{\mathbb{R}}\mp\frac{1}{b}\frac{\partial\;}{\partial r} \pm \frac{\partial\;}{\partial \Theta}\right](\Phi)
				+
				\left[2\pi\, \mathsf{M}^{\mathbb{Z}} \pm\frac{\partial\;}{\partial \Theta}\right](a)\otimes \Phi
			\right) \otimes_{ A_{\theta}^{3\otimes} } \left(f\otimes c\right)
			\\
			&\qquad
			+
			\left(a\otimes \Phi\right) \otimes_{ A_{\theta}^{3\otimes} }
			\left(\left[
				2\pi \, \mathsf{M}^{\mathbb{Z}}
				\pm \, \frac{\partial\;}{\partial \Theta}
			\right] (f)
			\otimes c
			\right)
	\end{align*}

	Notice that, since $\Dirac_{\mathbb{T}} = -\imaginary\frac{\partial\;}{\partial \Theta}$ and $\Dirac_{\mathbb{Z}} = 2\pi\, \mathsf{M}^{\mathbb{Z}}$ (defined in Lemma~\eqref{def:Delta}), we get
	\[
		\left[
			2\pi \, \mathsf{M}^{\mathbb{Z}}
			\pm \, \frac{\partial\;}{\partial \Theta}
		\right] (f) \otimes x
		=		
		(1\otimes \Dirac_{\mathbb{Z}} \pm \imaginary\ \Dirac_{\mathbb{T}}\otimes 1)(f) \otimes c
		=
		\bigl(d_{ \Delta_{\theta}, \pm}\otimes 1 \bigr)
		\left(
			f\otimes c
		\right)
		=:
		D_{2, \pm}
		\left(
			f\otimes c
		\right)
		.
	\]
	
	Thus, if we define for $x=a\otimes \Phi$:
	\begin{align*}
		X_{\pm} (x) :=
		a\otimes \left[-2\pi\,\mathsf{M}^{\mathbb{R}}\mp\frac{1}{b}\frac{\partial\;}{\partial r} \pm \frac{\partial\;}{\partial \Theta}\right](\Phi)
		+
		\left[2\pi\, \mathsf{M}^{\mathbb{Z}} \pm\frac{\partial\;}{\partial \Theta}\right](a)\otimes \Phi
		 \ \in \
		 C_{c}^{\infty}(\mathcal{A}_{\theta}) \odot C_{c}^{\infty}(\mathcal{Z}_{b}),
	\end{align*}	
	then this shows that
	\begin{align*}
		\tilde{D}_{\pm} \bigl( T_{x}(f\otimes c)\bigr)
		=& 
		T_{X_{\pm} (x)}(f\otimes c)		
		+
		T_{x} D_{2,\pm} 
		\left(
			f\otimes c
		\right)	\bigr)
		.
	\end{align*}
	We conclude that $\tilde{D}_{\pm} T_{x} - T_{x} D_{2,\pm} = T_{X_{\pm} (x)}$ is an adjointable operator.
				
	If we can invoke Lemma~\ref{lem:CreationIsEnough}, then we do not have to deal with the annihilation part. The only thing we need to check is that the set $\dom(\tilde{D} T_x)\cap \dom(D_{2})$ is dense. The domain of $D_{2,\pm} = d_{\Delta_{\theta},\pm}\otimes 1$ contains $\mathfrak{A} \odot \mathfrak{A} \subseteq (L^{2}\oplus L^{2})^{\pm} \otimes A_{\theta}$. In particular, if $f\in C_{c}^{\infty}(\mathcal{A}_{\theta}) \subseteq \mathfrak{A} \subseteq (L^{2}\oplus L^{2})^{\pm}$ and $c\in C_{c}^{\infty}(\mathcal{A}_{\theta})\subseteq A_{\theta}$, then for $x=a\otimes \Phi \in C_{c}^{\infty}(\mathcal{A}_{\theta}) \odot C_{c}^{\infty}(\mathcal{Z}_{b})$, Equation~\eqref{eq:dom-tilde-D} shows that $
	T_{x} (f\otimes c)
	\in \dom (\tilde{D})$ always. Thus, $\dom(\tilde{D} T_x)\cap \dom(D_{2})$ contains $C_{c}^{\infty}(\mathcal{A}_{\theta})\odot C_{c}^{\infty}(\mathcal{A}_{\theta})$, which is indeed dense in $(L^{2}\oplus L^{2})^{\pm} \otimes A_{\theta}$.
\end{proof}

\section{A spectral cycle representative of the unit}

As a result of the previous sections, we have obtained the following, where we use that $\twist_{b}\inv = \twist_{-b}$ by Theorem~\ref{theorem:thetwistedindextheorem}.

\begin{theorem}
	Let \(g = \begin{bsmallmatrix}1 & b\\0 & 1\end{bsmallmatrix}\) \emph{for $b>0$} and \(\module_b:= \module_g\) \textup(Definition 
	\ref{def:Lg}\textup). Then 
	\begin{equation*}
		(1_{A_{\theta}} \otimes \twist_{-b})_*([\module_{ b }])
		\otimes_{A_{\theta}^{\otimes 3}}
		(\Delta_{\theta} \otimes 1_{A_{\theta}}) = 1_{A_\theta}
		,	\end{equation*}
		where \(\twist_b\in \KK_0(A_\theta, A_\theta)\) is the \(b\)-twist 
		\textup(Definition \ref{definition:btwist}\textup). In particular, the class 
		\[ \widehat{\Delta}_\theta := (1_{A_\theta}\otimes \twist_{-b})_*(\module_b) \]
		together with Connes' class \(\Delta_{\theta}\), satisfies the first zig-zag equation. The 
		classes \(\Delta_\theta, \widehat{\Delta}_\theta\) are the co-unit and unit, respectively,
		of a self-duality for \(A_\theta\). 
\end{theorem}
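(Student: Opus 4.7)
The plan is to reduce the first zig-zag identity to a short formal calculation whose essential inputs are Theorem~\ref{thm:j-H-d-H=TheProduct} and Theorem~\ref{theorem:thetwistedindextheorem}; the second zig-zag will then be extracted from Connes' duality result by a uniqueness argument.

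First, unfolding the pushforward notation and using functoriality of the external product,
\[
	1_{A_\theta} \otimes \widehat{\Delta}_\theta
	= (1_{A_\theta} \otimes [\module_b]) \otimes_{A_\theta^{\otimes 3}} (1_{A_\theta^{\otimes 2}} \otimes \twist_{-b}).
\]
I would then feed this into the first zig-zag product and commute $\twist_{-b}$ past $\Delta_\theta$: since the two classes act on disjoint tensor slots, the standard compatibility between external and Kasparov products yields
\[
	(1_{A_\theta^{\otimes 2}} \otimes \twist_{-b}) \otimes_{A_\theta^{\otimes 3}} (\Delta_\theta \otimes 1_{A_\theta})
	= \Delta_\theta \otimes \twist_{-b}
	= (\Delta_\theta \otimes 1_{A_\theta}) \otimes_{A_\theta} \twist_{-b}.
\]
Substituting this identity and invoking associativity of the Kasparov product,
\[
	(1_{A_\theta} \otimes \widehat{\Delta}_\theta) \otimes_{A_\theta^{\otimes 3}} (\Delta_\theta \otimes 1_{A_\theta})
	= \bigl[(1_{A_\theta} \otimes [\module_b]) \otimes_{A_\theta^{\otimes 3}} (\Delta_\theta \otimes 1_{A_\theta})\bigr] \otimes_{A_\theta} \twist_{-b}
	= \twist_b \otimes_{A_\theta} \twist_{-b},
\]
where the last equality is Theorem~\ref{thm:j-H-d-H=TheProduct} (which applies because $b>0$). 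By Theorem~\ref{theorem:thetwistedindextheorem}, $\twist_{-b}$ is the $\KK$-inverse of $\twist_b$, so the composite equals $1_{A_\theta}$, establishing the first zig-zag.

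For the second zig-zag, I would invoke Connes' original observation \cite{Connes:NCG} that $\Delta_\theta$ is the co-unit of \emph{some} $\KK$-duality for $A_\theta$, with unit $\widehat{\Delta}_\theta^{\mathrm{C}}$ say. The existence of such a duality pair forces the map
\[
	\Phi\colon \KK_0(\C, A_\theta \otimes A_\theta) \longrightarrow \KK_0(A_\theta, A_\theta), \qquad f \longmapsto (1_{A_\theta} \otimes f) \otimes_{A_\theta^{\otimes 3}} (\Delta_\theta \otimes 1_{A_\theta}),
\]
to be an isomorphism (its inverse is cup-product with $\widehat{\Delta}_\theta^{\mathrm{C}}$). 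The first zig-zag we just proved reads $\Phi(\widehat{\Delta}_\theta) = 1_{A_\theta} = \Phi(\widehat{\Delta}_\theta^{\mathrm{C}})$, so by injectivity of $\Phi$ we conclude $\widehat{\Delta}_\theta = \widehat{\Delta}_\theta^{\mathrm{C}}$ and the second zig-zag is inherited. All the genuine difficulty has been absorbed into Theorem~\ref{thm:j-H-d-H=TheProduct} (the Kasparov product identification) and Theorem~\ref{theorem:thetwistedindextheorem} (the Bott-periodicity input making the $b$-twists a group); the present step is purely formal bookkeeping.
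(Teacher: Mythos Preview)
Your proposal is correct and matches the paper's approach exactly: the paper presents this theorem as an immediate corollary of the preceding sections, explicitly citing that $\twist_b^{-1}=\twist_{-b}$ (Theorem~\ref{theorem:thetwistedindextheorem}) combined with Theorem~\ref{thm:j-H-d-H=TheProduct} gives the first zig-zag, and in the introduction it spells out the same uniqueness argument you give (via Connes' known duality for $\Delta_\theta$) to obtain the second zig-zag and identify $\widehat{\Delta}_\theta$ with Connes' unit.
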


The co-unit of Connes' duality is of course 
represented by a spectral triple, and in this section we 
describe a spectral cycle representative for $\Dudelta_{\theta}$. 

First, recall that $\twist_{ -b }$ can be described as the descended version of the cycle $(H_{ -b }, d_{1})$, \emph{i.e.}\ 
\[
	\twist_{-b}
	=
	j([ (H_{ -b }, d_{1}) ])
	=:
	[(\mathcal{H}_{-b}, D_{1})],
\]
Thus, its module is a completion of $C_{c} (\mathbb{Z}\times \mathbb{T}\times\mathbb{R})$, described explicitly in Lemma~\ref{lem:description-of-mcHb} below, and its operator $D_{1 } = \mat{0 & D_{1,-} \\ D_{1,+} & 0}$ is given by 
\[
	D_{1,\pm} = \mathsf{M} \pm \tfrac{\partial \;}{\partial r},
\]
where $\mathsf{M}$ still denotes multiplication by the input of the $\mathbb{R}$-component.
Recall from Remark~\ref{rmk:Hb-db-replaced-by-Hb-dlambda} that we can replace $D_{1}$ by $\frac{1}{2\pi}\cdot D_{\lambda}$ for any $\lambda>0$, so for the best final results, we will choose $\lambda = 2\pi b>0$:
\begin{equation}\label{def:DmcH}
	D_{\mathcal{H}}
	:=
	\frac{1}{2\pi}\cdot D_{2\pi b}
	=
	\Mat{0 & b\, \mathsf{M} - \frac{1}{2\pi}\tfrac{\partial \;}{\partial r} \\ b\, \mathsf{M} + \frac{1}{2\pi}\tfrac{\partial \;}{\partial r} & 0}
	.
\end{equation}

Before we can state the main theorem of this section, we need some notation.
\begin{definition}\label{def:Schw-n}
For a smooth function $F$ on $\mathbb{Z}\times\mathbb{T}\times\mathbb{R}^{n}$, any $N\in\mathbb{N}_{0}$, and $\alpha$ an $n$-multi-index, define the semi-norm
\[
	\normSn{F}{N}{\alpha} :=
	\sup{}{
	\left\{
		\left(\norm{(k,\vec{x})}_{\ell^{N}}^{N} + 1\right) \abs{ \frac{\del^{\alpha} \Phi}{\del \vec{x}^{\alpha}} (k,[v],\vec{x})}
		\,:\,
		(k,[v],\vec{x})\in \mathbb{Z}\times\mathbb{T}\times\mathbb{R}^{n}
	\right\}}
	,
\]
where $\frac{\del^{\alpha} \;}{\del \vec{x}^{\alpha}}$ is differentiation with respect to the $\mathbb{R}$-components. If $\normSn{F}{N}{\alpha}$ is finite for every choice of $N$ and $\alpha$, then $F$ is called a \emph{Schwartz--Bruhat function}. We will denote the locally convex space consisting of such $F$ by $\Schw_{n}$. 
\end{definition}	

\begin{remark}
While it is possible to define a larger family of semi-norms by including differentiation in the $\mathbb{T}$-direction, the above seminorms are sufficient for our goals.
\end{remark}

\begin{definition}\label{def:ops-on-Schw-n}
	For functions on $\mathbb{Z}\times\mathbb{T}\times\mathbb{R}^{n}$, let $\mathsf{M}^{\mathbb{R}}_{i}$ be the operator of multiplication by the input of the $i^{\text{th}}$ $\mathbb{R}$-component, and $\partial_{i}$ differentiation with respect to the $i^{\text{th}}$ $\mathbb{R}$-component. Let $\mathsf{M}^{\mathbb{Z}}$ be the operator of multiplication by the input of the $\mathbb{Z}$-component.
\end{definition}

Note that all of these operators map $\Schw_{n}$ back into itself. We can now state the theorem:
	\begin{theorem}\label{thm:cycle-rep-Dudelta}
		Let $\mathcal{R}^{\pm}$ be the completion of the right-$\mathfrak{A}\odot \mathfrak{A}$ pre-Hilbert module $R^{\infty}:=\Schw_{2}$ whose structure is defined by:
		\begin{alignat}{2}
			\begin{split}\label{eq:right action-on-mcR}
		 	&\bigl( F ._{\mathcal{R}} (V^{l_{1}}U^{k_{1}} \otimes V^{l_{2}}U^{k_{2}}) \bigr) (k,[x],r,s)
		 	\\	
		 	&=
		 	\lambda^{l_{1}(k+k_{1})+ l_{2}k_{2}}\,\e{x(l_{1}+l_{2})}\e{ (l_{2} r - k_{2}s)}
		 	F(k-k_{2} +k_{1}, [x+k_{2}\theta], r, s)
			\end{split}
		\end{alignat}
		and
		\begin{alignat}{2}
		&\inner{F_{1}}{F_{2}}^{\mathcal{R}}
			( l_{1},[v],l_{2}, [w])
			\notag	\\
				\begin{split}\label{eq:inner-product-on-mcR}
				&\quad= \sum_{k_{1}, k_{2}\in\mathbb{Z}}
				\int_{t}
				\e{tl_{2}}\overline{F_{1}}(k_{1}, [v-k_{1}\theta], k_{2}+k_{1}\theta - v + w, t)
			\\
			&\quad\hphantom{=\sum_{k_{1}, k_{2}\in\mathbb{Z}}\int_{t}}
				F_{2}(k_{1}	+l_{2} - l_{1}, [v-(k_{1}	+l_{2})\theta], k_{2}+k_{1}\theta - v + w, t) 
				\,\mathrm{d}\, t.
				\end{split}
		\end{alignat}
		Let $\mathcal{R} := \mathcal{R}^{+}\oplus \mathcal{R}^{-}$ be standard evenly graded and define		
		\begin{alignat}{3}\label{eq:mau-on-mcR}
			d_{\mathcal{R}} := \mat{ 0 & d_{\mathcal{R},-} \\ d_{\mathcal{R},+} & 0}
			\text{ where }
			d_{\mathcal{R},\pm} := &
			\mathsf{M}^{\mathbb{R}}_{1} \mp \imaginary \mathsf{M}^{\mathbb{R}}_{2}
			\text{ with }
			\dom (d_{\mathcal{R},\pm}) :=
			\Schw_{2}
			.
		\end{alignat}
		Then $(\mathcal{R}, d_{\mathcal{R}})$ is a Kasparov cycle and represents $\Dudelta_{\theta}$. In particular, $\Dudelta_{\theta}$ does not depend on the choice of $b\in\mathbb{Z}^\times$.
	\end{theorem}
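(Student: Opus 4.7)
The plan is to realise $\widehat{\Delta}_\theta = (1_{A_\theta} \otimes \twist_{-b})_*([\module_b])$ as an unbounded Kasparov product, then identify this product, after a suitably rescaled Fourier transform, with the cycle $(\mathcal{R}, d_\mathcal{R})$. First, I would pick the unbounded representative $(1_{A_\theta} \otimes \mathcal{H}_{-b},\, 1 \otimes D_\mathcal{H})$ for $1_{A_\theta}\otimes\twist_{-b}$, with $D_\mathcal{H}$ as in~\eqref{def:DmcH}, so that the cycle representing $\widehat{\Delta}_\theta$ is given by placing $1 \otimes D_\mathcal{H}$ on the balanced tensor product $\module_b \otimes_{A_\theta} \mathcal{H}_{-b}$, balanced over the second $A_\theta$-factor of $\module_b$'s right action (Equation~\eqref{eq:rightActionOnE0}) and the left $\mathfrak{A}$-action on $\mathcal{H}_{-b}$ (Lemma~\ref{lem:description-of-mcHb}). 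Because the first factor carries the zero operator, no connection-type modification of $1 \otimes D_\mathcal{H}$ is needed.

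Next I would construct an explicit unitary identification between a dense subspace of $\module_b \otimes_{A_\theta} \mathcal{H}_{-b}$ and $\Schw_2$. Using the bijection $\mathcal{Z}_b \cong \T \times \R \times \Z$ from Equation~\eqref{eq:equiv-Zg}, elements of $\module_b$ are functions of $([v], r, k)$ and elements of $\mathcal{H}_{-b}$ are functions of $(n, [x], s)$; balancing collapses this data to functions of $(k, [x], r, s) \in \Z \times \T \times \R^{2}$. A Fourier transform in the $\mathcal{H}_{-b}$-variable $s$, rescaled by $b$, should convert the two building blocks $b \mathsf{M}$ and $\tfrac{1}{2\pi}\partial/\partial s$ of $D_\mathcal{H}$ into the multiplication operators $\mathsf{M}^{\R}_{1}$ and $\imaginary\mathsf{M}^{\R}_{2}$ that appear in~\eqref{eq:mau-on-mcR}, while the Schwartz--Bruhat condition arises from the polynomial decay imposed by the compact-resolvent requirement together with the decay in the $\Z$-variable forced by the descent. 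A direct manipulation, starting from~\eqref{eq:rightActionOnE0} and from the formula~\eqref{eq:inner-product-on-mcH} for the inner product on $\mathcal{H}_{-b}$, should then reproduce the formulas~\eqref{eq:right action-on-mcR} and~\eqref{eq:inner-product-on-mcR}. Crucially, the $b$-scaling of the Fourier transform is exactly what cancels the $b$-dependence appearing in both $D_\mathcal{H}$ and the bimodule structure of $\module_b$; this is what produces the final claim that $\widehat{\Delta}_\theta$ is independent of the choice of $b \in \Z^\times$.

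Finally I would invoke Kucerovsky's criterion \cite[Theorem~13]{Kuc:unbddKKprod}, following the template of Theorem~\ref{thm:j-N-d-N=TheProduct}. One must check the connection condition, boundedness of the graded commutator $[d_\mathcal{R}, T_x]$ on a dense subalgebra of $\module_b$, and the positivity condition. Lemma~\ref{lem:CreationIsEnough} reduces the commutator estimate to the creation side alone. For $x \in C_c^{\infty}(\mathcal{A}_\theta) \odot C_c^{\infty}(\mathcal{Z}_b)$, one verifies that the operator $T_x$ on $\mathcal{R}$ amounts to a finite shift in the $\Z$-coordinate, a finite rotation in the $\T$-coordinate, and multiplication by a Schwartz function in the $\R^{2}$-coordinates, together with polynomially bounded phase factors; commutators with the coordinate multiplications $\mathsf{M}^{\R}_{i}$ are therefore adjointable. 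The connection and positivity conditions should be automatic from the vanishing of the operator on $\module_b$.

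The main obstacle will be the bookkeeping in the identification of the balanced tensor product with $\mathcal{R}$: tracking the phases and the $\Z$- and $\T$-shifts through the rescaled Fourier transform and confirming that all explicit occurrences of $b$ cancel so that the resulting formulas match~\eqref{eq:right action-on-mcR}--\eqref{eq:inner-product-on-mcR} on the nose. Once this identification is nailed down, both the module-structure computations and the Kucerovsky estimates should follow by the same style of calculation already used in Theorem~\ref{thm:N-d-N=Cycle} and in the proof of Theorem~\ref{thm:j-N-d-N=TheProduct}.
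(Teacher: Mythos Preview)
Your overall architecture is right, and matches the paper's: identify the balanced tensor product $\mathcal{E}_b := \module_b \otimes_{A_\theta^{\otimes 2}} (A_\theta \otimes \mathcal{H}_{-b})$ with a completion of $\Schw_2$, exhibit a unitary to $\mathcal{R}$, and verify an unbounded product criterion. But there is a genuine gap in the operator identification.

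You assert that ``because the first factor carries the zero operator, no connection-type modification of $1\otimes D_\mathcal{H}$ is needed,'' and that a rescaled Fourier transform in the $\mathcal{H}_{-b}$-variable $s$ alone turns $D_\mathcal{H}$ into $d_\mathcal{R}$. Neither is correct. A Fourier transform in $s$ sends $b\mathsf{M}$ to a \emph{derivative} and $\tfrac{1}{2\pi}\partial_s$ to a multiplication, both still living over the \emph{second} real variable; you cannot produce $\mathsf{M}_1^\R$ this way. What actually happens (and this is the heart of the paper's argument) is that the correct operator on $\mathcal{E}_b$ is not $1\otimes D_\mathcal{H}$ but
\[
D_{\mathcal{E},\pm} \;=\; D_{\module,\pm}\otimes 1 \;+\; 1\otimes(1\otimes D_{\mathcal{H},\pm}),
\qquad
D_{\module,\pm} = b(\mathsf{M}^\R+\mathsf{M}^\Z)\mp\tfrac{1}{2\pi}\partial_r,
\]
with the nonzero term $D_{\module,\pm}$ acting on the $\module_b$-variables $(k,r)$. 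This extra term is precisely the connection-type contribution needed for $D_\mathcal{E}$ to be well-defined on the balanced tensor product (it compensates for the failure of $D_\mathcal{H}$ to commute with the left action), and in the product verification (Lemma~\ref{lem:D-item:LeMe-creation}) it is what makes $D_\mathcal{E} T_\Phi - T_\Phi(1\otimes D_\mathcal{H}) = T_{D_\module \Phi}$ a creation operator. Correspondingly, the unitary $\Xi$ carrying $d_\mathcal{R}$ to $D_\mathcal{E}$ is not merely a Fourier transform: it is the composite $\Gamma\circ\chi$ where $\chi$ is Fourier in the second real variable and $\Gamma$ is the change of variables $(k,[x],r,s)\mapsto (k,[x-k\theta],b(r+s+k),s)$. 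The $b$-cancellation and the appearance of $\mathsf{M}_1^\R$ come from $\Gamma$, which mixes the two real variables and the integer variable; see Lemma~\ref{lem:d-mcPb-computed}, which gives $\Xi\circ\mathsf{M}_1^\R\circ\Xi^{-1}=b(\mathsf{M}_1^\R+\mathsf{M}^\Z+\mathsf{M}_2^\R)$ and $\Xi\circ\mathsf{M}_2^\R\circ\Xi^{-1}=\tfrac{i}{2\pi}(\partial_2-\partial_1)$.

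A minor point: the paper invokes the Lesch--Mesland criterion (Theorem~\ref{thm:LeMe:sums-reg-sa-ops-Thm7.4}) rather than Kucerovsky's. Either would work here, but the Lesch--Mesland formulation is better adapted to the situation where the first factor carries the zero operator and one wants the ``weakly anticommuting pair'' condition to be automatic.
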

	
	To prove this, we will make use of the following:
	\begin{theorem}[special case of {\cite[Theorem 7.4]{LeMe:sums-reg-sa-ops}}]\label{thm:LeMe:sums-reg-sa-ops-Thm7.4}
		Let $\mathcal{E}_{b}:= \module_{b} \otimes_{ A_{\theta}^{\otimes 2} } (A_{\theta} \otimes \mathcal{H}_{-b})$ for $b>0$, and suppose we have
		\begin{enumerate}[label=\textup{(\arabic*)}]
			\item\label{item:LeMe-s.a.+reg}
			an odd, self-adjoint, regular operator $D_{\mathcal{E}}\colon \dom(D_{\mathcal{E}}) \to \mathcal{E}_{b}$ so that
			\item\label{item:LeMe-WAP}
			$(0,D_{\mathcal{E}})$ is a weakly anticommuting pair, and
			\item\label{item:LeMe-core}
			a dense $\mathfrak{A}\odot\mathfrak{A}$-submodule $\mathcal{X}\subseteq \module_{b}$ for which the algebraic tensor product \mbox{$\mathcal{X}\odot_{\mathfrak{A}\odot \mathfrak{A}} \dom( 1_{A_{\theta}}\otimes D_{\mathcal{H}})$} is a core for $D_{\mathcal{E}}$ such that
			\item\label{item:LeMe-creation}
			for all $\Phi\in \mathcal{X}$, both operators $\eta\mapsto D_{\mathcal{E},\pm} (\Phi\otimes \eta) - \Phi \otimes (1_{A_{\theta}}\otimes D_{\mathcal{H},\pm})(\eta)$ with domain $\dom(1_{A_{\theta}}\otimes D_{\mathcal{H},\pm})$ extend to adjointable operators $A_{\theta}\otimes \mathcal{H}_{ -b }^{\pm} \to\mathcal{E}_{b}^{\pm}$. 
		\end{enumerate}
		Then $(\mathcal{E}_{b},D_{\mathcal{E}})$ is a Kasparov cycle and represents $\Dudelta_{\theta}$.
	\end{theorem}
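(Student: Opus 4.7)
The plan is to recognize the final statement as the specialization of Lesch--Mesland's unbounded Kasparov product theorem (\cite[Theorem~7.4]{LeMe:sums-reg-sa-ops}) to the two-factor internal product at hand, and then identify the resulting product class with $\Dudelta_\theta$. There are two things to establish: \textbf{(i)} that $(\mathcal{E}_b, D_{\mathcal{E}})$ represents the internal Kasparov product of the cycles $(\module_b, 0)$ and $(A_{\theta}\otimes \mathcal{H}_{-b},\, 1_{A_\theta}\otimes D_{\mathcal{H}})$, and \textbf{(ii)} that this product class equals $\Dudelta_\theta$.

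I would first dispose of \textbf{(ii)}, which follows from the theorem opening this section: since $\module_b$ is finitely generated projective, $(\module_b,0)$ is a valid unbounded cycle representing $[\module_b]$, and by Definition~\ref{definition:btwist} the cycle $(A_{\theta}\otimes \mathcal{H}_{-b},\,1_{A_\theta}\otimes D_{\mathcal{H}})$ represents $1_{A_\theta}\otimes \twist_{-b}$. Their Kasparov product over $A_\theta^{\otimes 2}$ is therefore $(1_{A_\theta}\otimes \twist_{-b})_{*}[\module_b] = \Dudelta_\theta$, and the underlying Hilbert module is exactly $\mathcal{E}_b$.

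For \textbf{(i)}, the task is to align the four listed hypotheses with the generic conditions of LeMe's theorem. Hypothesis \textup{(1)} ensures the bounded transform $F_{\mathcal{E}} := D_{\mathcal{E}}(1+D_{\mathcal{E}}^{2})^{-1/2}$ gives a well-defined Kasparov cycle; hypothesis \textup{(2)}, that $(0,D_{\mathcal{E}})$ is a weakly anticommuting pair, degenerates to a $D_{\mathcal{E}}$-only regularity statement since the left-factor operator vanishes, and serves as LeMe's semiboundedness/positivity requirement; hypothesis \textup{(3)} specifies a dense algebraic core $\mathcal{X}\odot_{\mathfrak{A}\odot\mathfrak{A}}\dom(1_{A_\theta}\otimes D_{\mathcal{H}})$ on which the connection computations are legitimate; hypothesis \textup{(4)}, the adjointability of the creation operators $\eta \mapsto D_{\mathcal{E},\pm}(\Phi\otimes\eta) - \Phi\otimes(1_{A_\theta}\otimes D_{\mathcal{H},\pm})(\eta)$, is precisely the Kucerovsky-style connection axiom witnessing $D_{\mathcal{E}}$ as a $(1_{A_\theta}\otimes D_{\mathcal{H}})$-connection for the zero operator on $\module_b$. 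Once these correspondences are made explicit, LeMe Theorem~7.4 delivers \textbf{(i)} directly.

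The main obstacle will be the careful unpacking of LeMe's precise formulation: verifying that the weakly anticommuting pair condition truly simplifies to a pure regularity statement in the trivial-left-operator case, and that the algebraic core specified in \textup{(3)} meets LeMe's technical requirements for cores of the product operator (including closability of the creation operators and compatibility with the balancing ideal). These are primarily bookkeeping and do not require new ideas; once completed, the conclusion is a direct invocation of the external theorem, with the independence of $\Dudelta_\theta$ from $b$ following because any $b\in \Z^\times$ produces the same class $\Dudelta_\theta$.
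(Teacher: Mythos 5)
Your proposal is correct and follows the same route as the paper: the theorem is stated there precisely as a direct specialization of Lesch--Mesland's Theorem~7.4 to the product of the f.g.p.\ cycle $(\module_b,0)$ with $(A_\theta\otimes\mathcal{H}_{-b},\,1_{A_\theta}\otimes D_{\mathcal{H}})$, with the identification of the product class coming from $\Dudelta_\theta=(1_{A_\theta}\otimes\twist_{-b})_*[\module_b]$, and the paper likewise observes that the weak-anticommutation condition is automatic when the left operator is zero. No gaps.
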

	
	Note that Item~\ref{item:LeMe-WAP} is actually true no matter what self-adjoint regular operator $D_{\mathcal{E}}$ is chosen.

	The remainder of this section is structured as follows: First, we find a description of $\mathcal{E}_{b}^{\pm}$ as a completion, called $\mathcal{P}_{b}^{\pm}$, of $C_{c}(\mathbb{Z}\times\mathbb{T}\times\mathbb{R}^{2})$. We will then prove that $\mathcal{E}_{b}^{\pm}$ contains $\Schw_{2}$, Schwartz--Bruhat functions on $\mathbb{Z}\times\mathbb{T}\times\mathbb{R}^{2}$, and explicitly describe the module structure of this subspace. Using a unitary operator, we simplify $\mathcal{E}_{b}$ to the module $\mathcal{R}$ from Theorem~\ref{thm:cycle-rep-Dudelta}. On this easier module, we study the two unbounded operators $d_{\mathcal{R},\pm}\colon \mathcal{R}^{\pm}\to\mathcal{R}^{\mp}$ to then induce them to unbounded operators $D_{\mathcal{E},\pm}\colon \mathcal{E}_{b}^{\pm} \to \mathcal{E}_{b}^{\mp}$. Finally, we will show that the off-diagonal operator $D_{\mathcal{E}}$, built in the usual way out of $D_{\mathcal{E},\pm}$, makes $\mathcal{E}_{b}$ a representative of $\Dudelta_{\theta}$. This will prove Theorem~\ref{thm:cycle-rep-Dudelta}.

		\begin{proposition}[The balancing]\label{lem:mcE-simplified-bumpy-version}
			The module $\mathcal{E}_{b}^{\pm}=\module_{b} \otimes_{A_{\theta}^{\otimes 2}} (A_{\theta} \otimes \mathcal{H}_{ -b }^{\pm})$ underlying $\Dudelta_{\theta}$ has a copy of $P^{\infty}:=C_{c}^{\infty} (\mathbb{Z}\times\mathbb{T}\times\mathbb{R})\odot C_{c}^{\infty} (\mathbb{R})$ as a dense subspace via the following map:
			\begin{equation*}
				\begin{tikzcd}[column sep = small]
					\iota_{0}\colon\quad 
					P^{\infty} \ar[r]
					& \module_{b} \otimes_{A_{\theta}^{\otimes 2}} (A_{\theta} \otimes \mathcal{H}_{ -b }^{\pm})
					\\[-20pt]
					\Phi
					\odot \psi
					\ar[r, mapsto]
					&
					\Phi
					\otimes
					(1_{A_{\theta}} \otimes 
					\varepsilon_{0} \otimes z^{0} \otimes \psi)	
				\end{tikzcd}	
			\end{equation*}
		\end{proposition}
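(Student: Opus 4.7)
The plan is to verify (i) well-definedness and (ii) density of the image of $\iota_0$. Well-definedness is immediate: $\varepsilon_0\otimes z^0\otimes\psi$ denotes the function $(n,[x],r)\mapsto\delta_{n,0}\,\psi(r)$ in $C_c(\mathbb{Z}\times\mathbb{T}\times\mathbb{R})\subseteq \mathcal{H}_{-b}^{\pm}$, and the expression $\Phi\otimes(1_{A_\theta}\otimes\varepsilon_0\otimes z^0\otimes\psi)$ is bilinear in $\Phi\in C_c^\infty(\mathcal{Z}_g)\subseteq\module_b$ and $\psi\in C_c^\infty(\mathbb{R})$, so $\iota_0$ extends linearly to all of $P^\infty$.

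The core of the proof is a two-step application of the balancing identity
\begin{equation*}
\Phi._{\module_b}(a\otimes c)\otimes(f\otimes\eta) \;=\; \Phi\otimes\bigl(af\otimes c._{\mathcal{H}_{-b}}\eta\bigr)
\end{equation*}
defining the internal tensor product over $A_\theta^{\otimes 2}$. Since simple tensors are dense in $\mathcal{E}_b^{\pm}$ and the subspaces $C_c^\infty(\mathcal{Z}_g)$, $\mathfrak{A}$, $C_c(\mathbb{Z}\times\mathbb{T}\times\mathbb{R})$ are dense in $\module_b$, $A_\theta$, $\mathcal{H}_{-b}^{\pm}$ respectively, it suffices to approximate elementary tensors $\Phi\otimes(a\otimes\eta)$ with $\Phi\in C_c^\infty(\mathcal{Z}_g)$, $a\in\mathfrak{A}$, and $\eta\in C_c(\mathbb{Z}\times\mathbb{T}\times\mathbb{R})$. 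First, absorb $a$ into the left factor: $\Phi\otimes(a\otimes\eta)=(\Phi._{\module_b}(a\otimes 1))\otimes(1\otimes\eta)$; the explicit formula \eqref{eq:rightActionOnE0} shows $\Phi._{\module_b}(a\otimes 1)$ remains in $C_c^\infty(\mathcal{Z}_g)$. Second, for any $c\in\mathfrak{A}$ and $\eta_0:=\varepsilon_0\otimes z^0\otimes\psi$,
\begin{equation*}
\bigl(\Phi._{\module_b}(1\otimes c)\bigr)\otimes(1\otimes\eta_0) \;=\; \Phi\otimes\bigl(1\otimes c._{\mathcal{H}_{-b}}\eta_0\bigr),
\end{equation*}
so the image of $\iota_0$ contains every tensor $\Phi\otimes(1\otimes v)$ with $v$ in the left-$\mathfrak{A}$-submodule $V\subseteq \mathcal{H}_{-b}^{\pm}$ generated by the $\eta_0$'s.

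It remains to show that $V$ is dense in $\mathcal{H}_{-b}^{\pm}$. A direct calculation from \eqref{eq:left action-on-mcH}, using that $V^l U^k$ viewed as a function on $\mathcal{A}_\theta$ is $([y],j)\mapsto\delta_{j,k}\,\mathsf{e}^{2\pi i l y}$, yields
\begin{equation*}
(V^l U^m._{\mathcal{H}_{-b}}\eta_0)(n,[x],r) \;=\; \delta_{n,m}\,\mathsf{e}^{2\pi i l(x-rb)}\,\psi(r-m).
\end{equation*}
For fixed $m\in\mathbb{Z}$, varying $l\in\mathbb{Z}$ and $\psi\in C_c^\infty(\mathbb{R})$ and taking finite linear combinations produces all expressions $\sum_l \mathsf{e}^{2\pi i l x}\,\tilde\psi_l(r)$ with $\tilde\psi_l\in C_c^\infty(\mathbb{R})$, because the map $\psi\mapsto\mathsf{e}^{-2\pi i l b r}\psi(r-m)$ is a bijection of $C_c^\infty(\mathbb{R})$ for each fixed $l,m$. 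Standard smooth Fourier truncation shows that such sums approximate any $F\in C_c^\infty(\mathbb{T}\times\mathbb{R})$ in the inductive-limit topology; summing contributions from the finitely many relevant $m$'s gives density of $V$ in $C_c(\mathbb{Z}\times\mathbb{T}\times\mathbb{R})$ in the same topology, which transfers to norm-density in $\mathcal{H}_{-b}^{\pm}$ by Lemma~\ref{lem:sup-conv-implies-mcHb-convergence}.

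The main technical obstacle I anticipate is the Fourier-analytic density of $V$: one must check that the multiplicative $r$-twist $\mathsf{e}^{-2\pi i l b r}$ built into each basic function does not obstruct approximation of arbitrary compactly supported smooth functions on $\mathbb{T}\times\mathbb{R}$. The bijectivity observation above resolves this cleanly, but care is needed with topologies—the approximation must be carried out in a topology fine enough (inductive-limit / uniform-on-compact-supports) to descend to the $\mathcal{H}_{-b}^{\pm}$-norm, which is exactly what Lemma~\ref{lem:sup-conv-implies-mcHb-convergence} provides.
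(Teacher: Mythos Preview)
Your proof is correct and follows essentially the same strategy as the paper: exploit the balancing over $A_\theta^{\otimes 2}$ to reduce arbitrary tensors to ones of the form $\Phi\otimes(1\otimes\varepsilon_0\otimes z^0\otimes\psi)$. The key computation of $V^lU^m._{\mathcal{H}_{-b}}(\varepsilon_0\otimes z^0\otimes\psi)$ is identical to the paper's.

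Two minor points. First, there is a small slip in your ``absorb $a$'' step: for a general $a\in\mathfrak{A}$ (infinite Schwartz sum), the element $\Phi._{\module_b}(a\otimes 1)$ need not remain compactly supported in the $\mathbb{R}$- and $\mathbb{Z}$-variables, since the action by $U^{k_1}$ shifts those supports by $k_1$. The fix is trivial---restrict to Laurent polynomials $a\in C_c^\infty(\mathcal{A}_\theta)$, which are dense in $A_\theta$ and for which the claim holds---or simply start with $a=1$ from the outset, as the paper does. Second, the paper runs the balancing in the \emph{reverse} direction: rather than computing the $\mathfrak{A}$-orbit $V$ of the $\eta_0$'s and invoking Fourier approximation to see $V$ is dense, it starts from an arbitrary $\Phi\otimes(1\otimes\varepsilon_k\otimes z^l\otimes\psi)$ and rewrites it \emph{exactly} as $\bigl(\Phi._{\module_b}(1\otimes V^lU^k)\bigr)\otimes(1\otimes\varepsilon_0\otimes z^0\otimes\psi')$, landing directly in $\iota_0(P^\infty)$. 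This sidesteps your Fourier-truncation argument entirely, replacing it with the observation that the span of $\{\varepsilon_k\otimes z^l\otimes\psi\}$ is already dense in $\mathcal{H}_{-b}^\pm$ by Corollary~\ref{cor:smooth-dense-in-mcHb}. Your Fourier argument is of course correct (and amounts to a proof of that corollary), but the paper's route is a shade more direct.
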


		\begin{proof}
			Any elementary tensor in $ \module_{b} \otimes_{A_{\theta}^{\otimes 2}} (A_{\theta} \otimes \mathcal{H}_{ -b }^{\pm})$ can be written as $\Phi\otimes (1_{A_{\theta}}\otimes \Psi)$ for $\Phi\in \module_{b}$ and $\Psi\in\mathcal{H}_{ -b }^{\pm}$ due to the balancing.
			Moreover, if $k,l\in\mathbb{Z}$, the balancing further gives the following identity in $ \module_{b} \otimes_{A_{\theta}^{\otimes 2}} (A_{\theta} \otimes \mathcal{H}_{ -b }^{\pm})$: 
			\begin{equation}\label{eq:balancing-in-Dudelta}
				\bigl(\Phi ._{ \module_{b}} (1\otimes V^{l} U^{k})\bigr) \otimes (1_{A_{\theta}} \otimes \Psi)
				=
				\Phi \otimes \bigl(1_{A_{\theta}} \otimes ( V^{l} U^{k} ._{\mathcal{H}_{ -b }} \Psi)\bigr).
			\end{equation}

			If we take $\Psi=\varepsilon_{p}\otimes z^{q} \otimes \psi \in C_{c}^{\infty}(\mathbb{Z}\times\mathbb{T}\times\mathbb{R}) \subseteq \mathcal{H}_{ -b }^{\pm}$, then the left action of $V^{l} U^{k}$ on $\mathcal{H}_{-b}$ (explicitly constructed in Equation~\ref{eq:left action-on-mcH}) yields:
			\[
				\bigl(	V^{l} U^{k} ._{\mathcal{H}_{ -b }} \Psi \bigr) (n,[x],r)
				=
				\e{l(x-rb)} \varepsilon_{p} (n-k) \e{q(x-k\theta)} \psi ( r-k ).
			\]
			If we write
			$
				(\kappa_k\psi) (r) := \psi (r-k)
			,$
			then this means
			\[
				V^{l} U^{k} ._{\mathcal{H}_{ -b }} (\varepsilon_{p}\otimes z^{q} \otimes \psi)
				=
				\lambda^{-qk} \varepsilon_{p+k} \otimes z^{l+q} \otimes [\me{lb\,\cdot}\kappa_{k}\psi].
			\]
			In other words, we have the following equality in the even/odd part $ \module_{b} \otimes_{A_{\theta}^{\otimes 2}}	(1_{A_{\theta}}\otimes \mathcal{H}_{ -b }^{\pm})$ of the module $\mathcal{E}_{b}$ of $\Dudelta_{\theta}$ for all $l,k\in\mathbb{Z}$:
			\begin{align*}
				&\bigl(\Phi ._{ \module_{b}} (1\otimes V^{l} U^{k})\bigr) \otimes (1_{A_{\theta}} \otimes \varepsilon_{p}\otimes z^{q} \otimes \psi)
				=
				\Phi \otimes \bigl(1_{A_{\theta}} \otimes \lambda^{-qk} \varepsilon_{p+k} \otimes z^{l+q} \otimes [\me{lb\,\cdot}\kappa_{k}\psi]\bigr)
				.
			\end{align*}
			If we choose $p=q=0$ (which is equivalent to choosing $k,l$), then we have no degree of freedom left. Moreover, replacing $\psi$ by $\e{lb\,\cdot}\kappa_{-k}\psi$ (so that $\me{lb\,\cdot}\kappa_{k}\psi$ becomes $\psi$), we can rephrase the balancing to:
			\begin{align*}
				\Phi \otimes \bigl(1_{A_{\theta}} \otimes \left(\varepsilon_{k} \otimes z^{l} \otimes \psi\right)\bigr)
				=
				&\bigl(\Phi ._{ \module_{b}} (1\otimes V^{l} U^{k})\bigr) \otimes \left(1_{A_{\theta}} \otimes \left(\varepsilon_{0}\otimes z^{0} \otimes \left[\e{lb\,\cdot}\kappa_{-k}\psi\right]\right)\right)
				.
			\end{align*}
			More generally, an element of the form
			\[
				\sum_{k,l} \Phi_{k,l} \otimes \bigl(1_{A_{\theta}} \otimes \varepsilon_{k} \otimes z^{l} \otimes \psi_{k,l}\bigr)
				\quad \in\quad \module_{b} \otimes_{A_{\theta}^{\otimes 2}}	(1_{A_{\theta}}\otimes \mathcal{H}_{ -b }^{\pm}) = \mathcal{E}_{b}
			\]
			equals
			\begin{equation}\label{eq:balancing-yielding-zeroes}
				\sum_{k,l} \bigl((\Phi_{k,l}) ._{ \module_{b}} (1\otimes V^{l} U^{k})\bigr) \otimes \left(1_{A_{\theta}} \otimes \varepsilon_{0}\otimes z^{0} \otimes \left[\e{lb\,\cdot}\kappa_{-k}(\psi_{k,l})\right]\right).
			\end{equation}
			Lastly, recall that by Corollary~\ref{cor:smooth-dense-in-mcHb}, $C_{c}^{\infty}(\mathbb{Z}\times\mathbb{T}\times\mathbb{R})$ is dense in $\mathcal{H}_{-b}^{\pm}$ and $C_{c}^{\infty}(\mathbb{Z}\times\mathbb{T}\times\mathbb{R})$ is dense in $\module_{b}$. Since $(\Phi_{k,l}) ._{ \module_{b}} (1\otimes V^{l} U^{k})$ is in $C_{c}^{\infty}(\mathbb{Z}\times\mathbb{T}\times\mathbb{R})$ if $\Phi_{k,l}$ is, we conclude that indeed, the map $P^{\infty}=C_{c}^{\infty}(\mathbb{Z}\times\mathbb{T}\times\mathbb{R})\odot C_{c}^{\infty}(\mathbb{R})\to \mathcal{E}_{b}$ determined by
			\begin{equation*}
				\begin{tikzcd}[column sep = small, row sep = tiny]
					\Phi 
					\odot \psi
					\ar[r, mapsto]
					&
					\Phi
					\otimes
					(1_{A_{\theta}} \otimes 
					\varepsilon_{0} \otimes z^{0} \otimes \psi)		
				\end{tikzcd}	
			\end{equation*}
			has dense range.
		\end{proof}

		\begin{lemma}\label{lem:P-pre-H-module-structure}
			The space $P^{\infty}=C_{c}^{\infty} (\mathbb{Z}\times\mathbb{T}\times\mathbb{R})\odot C_{c}^{\infty} (\mathbb{R})$ inherits the following structure of a pre-Hilbert right-module from $\mathcal{E}_{b}^{\pm}$ via $\iota_{0}$ \textup(the map in Lemma~\ref{lem:mcE-simplified-bumpy-version}\textup{):} the pre-inner product with values in $C_{c}^{\infty}(\A)$ is given
			for $F_{i}\in P^{\infty}$ by
				\begin{align}
					&\inner{F_{1}}{F_{2}}^{ \mathcal{P} }(l_{1},[v],l_{2},[w])
					\notag
								\\		
					\begin{split}\label{eq:inner-of-P}
						&\quad=
						\sum_{k_{1}, k_{2}\in\mathbb{Z}}
						\int_{\mathbb{R}}
						\overline{F_{1}}
						\left(k_{1}	,[v],\tfrac{k_{2}+k_{1}\theta - v + w }{ b } - r, r\right)
						\\
						&
						\quad\hphantom{\abs{b}	{\sum_{k_{1}, k_{2}\in\mathbb{Z}}}
						\int_{\mathbb{R}}}
						F_{2}
						\left(k_{1}	+l_{2} - l_{1},[v-l_{1}\theta],\tfrac{k_{2}+k_{1}\theta - v + w}{ b } -r + l_{1},r-l_{2} \right) 			
						\,\mathrm{ d }\, r.
					\end{split}
				\end{align}
			The right action of an element $\xi\in \mathfrak{A}\odot\mathfrak{A}$ on $F\in P^{\infty}$ is given by:
			\begin{align}
		 		\begin{split}\label{eq:action-on-P}
		 		\left( F ._{\mathcal{P}} \xi \right) (k,[v],r,s)
		 		=&
		 		\sum_{k_{1},k_{2}}
				 F \left(k-k_{2}+k_{1}, [v+k_{1}\theta], r-k_{1}, s+k_{2}\right)
		 		\\&\hphantom{\sum_{k_{1},k_{2}}}
		 		\cdot
				 \xi \left(k_{1}, [v+k_{1}\theta], k_{2}, [v+b (r+s)+(k_{2}-k)\theta]\right)
				 .
				 \end{split}
		 	\end{align}
		\end{lemma}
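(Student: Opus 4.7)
The proof is a direct but index-heavy calculation that transports the bimodule structures of $\module_b$ and $\mathcal{H}_{-b}$ through the balanced tensor product defining $\mathcal{E}_b$, using the balancing relations~\eqref{eq:balancing-in-Dudelta} and~\eqref{eq:balancing-yielding-zeroes} established in Proposition~\ref{lem:mcE-simplified-bumpy-version}.

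For the right action~\eqref{eq:action-on-P}, my plan is to fix an elementary tensor $F = \Phi \odot \psi \in P^\infty$ and $\xi = V^{l_1}U^{k_1} \otimes V^{l_2}U^{k_2}$, and to set $\Psi = \varepsilon_0 \otimes z^0 \otimes \psi \in \mathcal{H}_{-b}^{\pm}$, so that $\iota_0(F) \cdot \xi = \Phi \otimes \bigl(V^{l_1}U^{k_1} \otimes (\Psi ._{\mathcal{H}_{-b}} V^{l_2}U^{k_2})\bigr)$. The right-action formula~\eqref{eq:right-action-on-mcH} shows that $\Psi ._{\mathcal{H}_{-b}} V^{l_2}U^{k_2}$ is a single elementary tensor $c \cdot \varepsilon_{k_2}\otimes z^{l_2}\otimes \tilde{\psi}$ for an explicit phase $c$ and a shifted--modulated $\tilde{\psi}$. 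Invoking~\eqref{eq:balancing-yielding-zeroes} then pushes both $V^{l_1}U^{k_1}$ and $\varepsilon_{k_2}\otimes z^{l_2}$ into the first tensor slot as right multiplication on $\Phi$ by $V^{l_1}U^{k_1}\otimes V^{l_2}U^{k_2}$, which is computed via~\eqref{eq:rightActionOnE0-general}. Collecting the phases and extending by linearity from elementary tensors to arbitrary $\xi \in \mathfrak{A}\odot\mathfrak{A}$ produces the double sum over $k_1,k_2$ in~\eqref{eq:action-on-P}.

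For the inner product~\eqref{eq:inner-of-P}, the plan is to apply the standard identity
\[
\inner{\iota_0(F_1)}{\iota_0(F_2)}^{\mathcal{E}_b}
=
\inner{1\otimes\Psi_1}{\inner{F_1}{F_2}^{\module_b}\cdot(1\otimes\Psi_2)}^{A_\theta\otimes\mathcal{H}_{-b}}
\]
with $\Psi_i = \varepsilon_0\otimes z^0\otimes\psi_i$. Here $\inner{F_1}{F_2}^{\module_b}$ was written out explicitly in the subsection preceding Theorem~\ref{thm:N-d-N=Cycle} by applying \cite[Theorem~2.8]{MRW:Grpd} to $\mathsf{Z}_b$. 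Expanding the left $(A_\theta\otimes A_\theta)$-action on $A_\theta\otimes\mathcal{H}_{-b}$ via~\eqref{eq:left action-on-mcH} and evaluating the $(A_\theta\otimes A_\theta)$-valued inner product on $A_\theta\otimes\mathcal{H}_{-b}$, which factors as the right $A_\theta$-inner product on $A_\theta$ itself tensored with~\eqref{eq:inner-product-on-mcH}, collapses the expression to the iterated sum over $k_1,k_2\in\mathbb{Z}$ of a single $\mathbb{R}$-integral. A change of variable in the $r$-integration---precisely the division by $b$ that produces the $\tfrac{k_{2}+k_{1}\theta-v+w}{b}$ argument appearing in~\eqref{eq:inner-of-P}---absorbs the cross terms and yields the claimed formula.

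The chief obstacle is bookkeeping: each of $\module_b$, $\mathcal{H}_{-b}$, and the outer $A_\theta$-slot contributes its own phases of $\lambda=e^{2\pi i\theta}$ and its own argument shifts, and the two uses of the balancing redistribute these among the three tensor factors. The most delicate step I anticipate is reconciling the phase $e^{2\pi i l_2(x-rb)}$ coming from the right $A_\theta$-action on $\mathcal{H}_{-b}$ with the $\mathbb{T}$- and $\mathbb{R}$-shifts produced by the left $\mathbb{Z}$-action on $\mathcal{H}_{-b}$, so that after the change of variable the $r$-integrand in~\eqref{eq:inner-of-P} takes the precise form stated.
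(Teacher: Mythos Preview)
Your proposal is correct and is exactly the ``straightforward'' computation the paper declines to write out: transport the right action and inner product through $\iota_0$ using the balancing relations from Proposition~\ref{lem:mcE-simplified-bumpy-version}, the explicit $\module_b$-formulas~\eqref{eq:rightActionOnE0-general}, and the $\mathcal{H}_{-b}$-formulas~\eqref{eq:right-action-on-mcH}--\eqref{eq:inner-product-on-mcH}. There is no alternative route here---the lemma is purely a matter of bookkeeping, and your outline tracks the correct ingredients in the correct order.
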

		
		The proof is straightforward.
		
		\begin{remark}\label{rmk:def:mcPb}
			If we let $\mathcal{P}_{b}^{\pm}$ be the completion of $P^{\infty}$ with respect to the above inner product, then $\iota_{0}$ extends, by construction, to a unitary $\mathcal{P}_{b}^{\pm}\cong\mathcal{E}_{b}^{\pm}$.
		\end{remark}
		The next goal is to prove the that $\mathcal{E}_{b}$ contains functions of Schwartz decay.
		
		\begin{proposition}\label{prop:incl-Schwartz-into-mcEb} 
			The injective linear map $\iota_{0}\colon P^{\infty}=C_{c}^{\infty} (\mathbb{Z}\times\mathbb{T}\times\mathbb{R})\odot C_{c}^{\infty} (\mathbb{R}) \to \mathcal{E}_{b}^{\pm}$ from Lemma~\ref{lem:mcE-simplified-bumpy-version} extends to an injective linear map $\incl \colon \Schw_{2} \to \mathcal{E}_{b}^{\pm}$. Moreover, 
			the image of $\Schw_{2}$ is a right-$\mathfrak{A}\odot\mathfrak{A}$ pre-Hilbert submodule of $\mathcal{E}_{b}^{\pm}$. The module structure on $\Schw_{2}$ induced by $\incl$ is given by the same formulas as on $P^{\infty}$.
		\end{proposition}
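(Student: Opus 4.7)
The plan is to extend $\iota_{0}$ from $P^{\infty}$ to $\Schw_{2}$ by continuity, using that both the inner-product formula~\eqref{eq:inner-of-P} and the right-action formula~\eqref{eq:action-on-P} make literal sense on the larger space $\Schw_{2}$ and produce outputs with values in $\mathfrak{A}\subseteq A_{\theta}\otimes A_{\theta}$.

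First I would verify the necessary convergence estimate. For $F_{1},F_{2}\in\Schw_{2}$, the series-integral on the right-hand side of~\eqref{eq:inner-of-P} should converge absolutely, together with all its $[v],[w]$-derivatives, to a Schwartz-decaying smooth function of $(l_{1},[v],l_{2},[w])$. Each integrand factor is bounded by $\normSn{F_{i}}{N}{0}$ for $N$ large; choosing $N$ large enough absorbs the sums over $k_{1},k_{2}$ and the integral in $r$, while the built-in decay of $F_{1}(k_{1},\cdots)$ in $k_{1}$ and of $F_{2}(k_{1}+l_{2}-l_{1},\cdots)$ in $k_{1}+l_{2}-l_{1}$ jointly forces decay in $(l_{1},l_{2})$. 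Differentiation in $[v],[w]$ passes through the sum and integral by smoothness of the $F_{i}$ on the compact factor $\T$. Polarization then produces a sesquilinear map $\Schw_{2}\times\Schw_{2}\to\mathfrak{A}$ that extends the pre-inner product on $P^{\infty}$ and is continuous in the Schwartz--Bruhat topology.

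Next I would run a standard density argument. Fix compactly supported cutoffs $\chi_{n}\in C_{c}^{\infty}(\Z\times\R^{2})$ converging monotonically to $1$ with uniformly bounded partial derivatives, and set $F^{(n)}:=\chi_{n}\cdot F\in P^{\infty}$ for $F\in\Schw_{2}$. By the estimate of Step~1, $\inner{F^{(n)}-F^{(m)}}{F^{(n)}-F^{(m)}}^{\mathcal{P}}\to 0$ in the $C^{*}$-norm of $A_{\theta}\otimes A_{\theta}$, because on $\mathfrak{A}$ this norm is dominated by a weighted $\ell^{1}$-norm of Fourier coefficients, which is itself controlled by finitely many seminorms of the form $\normSn{\,\cdot\,}{N}{0}$. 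Hence $(\iota_{0}(F^{(n)}))$ is Cauchy in $\mathcal{E}_{b}^{\pm}$, and we set $\iota(F)$ to be its limit; the usual interleaving argument makes this independent of the cutoff sequence and linear in $F$, and formula~\eqref{eq:inner-of-P} persists on $\Schw_{2}$ by continuity. Formula~\eqref{eq:action-on-P} similarly preserves $\Schw_{2}$ when $\xi\in\mathfrak{A}\odot\mathfrak{A}$, since its right-hand side is a Schwartz-type convolution that is continuous in the Schwartz seminorms, so $\iota(F._{\mathcal{P}}\xi)=\iota(F)._{\mathcal{P}}\xi$ by continuous extension from $P^{\infty}$.

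Injectivity is then immediate from positivity: if $\iota(F)=0$ then $\inner{F}{F}^{\mathcal{P}}=0$ in $A_{\theta}\otimes A_{\theta}$, and evaluating the faithful trace $\tau\otimes\tau$ on this element recovers, up to the positive factor $b$, the quantity $\sum_{k_{1}\in\Z}\int_{\T\times\R^{2}}\abs{F(k_{1},[v],r,s)}^{2}\,\mathrm{d}[v]\,\mathrm{d}r\,\mathrm{d}s$, which forces $F\equiv 0$ by continuity. The main obstacle is Step~2, specifically the estimate linking Schwartz--Bruhat seminorms to the $C^{*}$-norm on $A_{\theta}\otimes A_{\theta}$; this is the only non-mechanical ingredient, and the standard device is to dominate the $C^{*}$-norm on $\mathfrak{A}$ by a weighted $\ell^{1}$-norm of Fourier coefficients using a summability factor $(1+\abs{l_{1}}+\abs{l_{2}})^{-N}$ for $N>2$.
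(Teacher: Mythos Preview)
Your strategy matches the paper's: bound the $C^*$-norm of $\langle F_1,F_2\rangle$ by Schwartz--Bruhat seminorms, extend $\iota_0$ by a Cauchy-sequence argument, and then verify injectivity by a lower bound on the inner product. One correction to your decay sketch: decay of $F_1$ in its $\Z$-argument $k_1$ together with decay of $F_2$ in $k_1+l_2-l_1$ only yields decay in the difference $l_2-l_1$, not in $l_1$ and $l_2$ separately. The separate decay comes instead from the shifted $\R$-arguments of $F_2$ in formula~\eqref{eq:inner-of-P} (its third slot carries $+l_1$ and its fourth carries $-l_2$), paired against the unshifted corresponding arguments of $F_1$; splitting the exponent $N$ lets you use part of the $\R$-decay for absorbing the $k_2$-sum and $r$-integral and the rest for producing $(l_1,l_2)$-decay. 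Your ``weighted $\ell^1$ of Fourier coefficients'' is, in effect, the groupoid $I$-norm $\sup_{[v],[w]}\sum_{l_1,l_2}|\langle F_1,F_2\rangle(l_1,[v],l_2,[w])|$, which is exactly the intermediate norm the paper uses; smoothness in $[v],[w]$ is not needed for this, so you can drop that part of Step~1. For injectivity the paper takes a slightly different route: it bounds $\|\langle F,F\rangle\|_{C^*}$ from below by the sup-norm and evaluates at $l_1=l_2=0$ to obtain $\sup_{k,[v],s}\int_\R |F(k,[v],s-r,r)|^2\,dr$. Your trace argument is a clean alternative and gives the same conclusion.
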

		
		\begin{corollary}\label{cor:mcPb-contains-Schw2}
			The completion $\mathcal{P}_{b}^{\pm}$ of $P^{\infty}$ has $\Schw_{2}$ as a dense subspace.%
		\end{corollary}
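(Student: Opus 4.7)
The plan is to reduce this corollary directly to Proposition~\ref{prop:incl-Schwartz-into-mcEb} together with the construction of $\mathcal{P}_{b}^{\pm}$ as a completion. By Remark~\ref{rmk:def:mcPb}, the original map $\iota_{0}\colon P^{\infty}\to\mathcal{E}_{b}^{\pm}$ extends to a unitary isomorphism $U\colon \mathcal{P}_{b}^{\pm}\isomto\mathcal{E}_{b}^{\pm}$ of right-Hilbert $\mathfrak{A}\odot\mathfrak{A}$-modules. The proposition provides an injective linear map $\incl\colon \Schw_{2}\to\mathcal{E}_{b}^{\pm}$ extending $\iota_{0}$, and which uses the same formulas~\eqref{eq:inner-of-P} and~\eqref{eq:action-on-P} for the module structure; in particular $\incl$ restricts to an isometry with respect to the pre-Hilbert module structures.

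Pulling back along $U^{-1}$, the composite $U^{-1}\!\circ\incl\colon \Schw_{2}\to\mathcal{P}_{b}^{\pm}$ is an injective map whose restriction to the subspace $P^{\infty}\subseteq\Schw_{2}$ is the tautological inclusion $P^{\infty}\hookrightarrow\mathcal{P}_{b}^{\pm}$. Since $\mathcal{P}_{b}^{\pm}$ was defined to be the completion of $P^{\infty}$, the subspace $P^{\infty}$ is dense by construction. Because $P^{\infty}\subseteq U^{-1}(\incl(\Schw_{2}))\subseteq \mathcal{P}_{b}^{\pm}$, the image of $\Schw_{2}$ contains a dense subspace and is therefore itself dense. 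Identifying $\Schw_{2}$ with this dense image finishes the proof.

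All the substantive work in this argument is hidden inside Proposition~\ref{prop:incl-Schwartz-into-mcEb}, whose verification is the step I expect to be the only real obstacle: one must check that the defining formulas~\eqref{eq:inner-of-P} and~\eqref{eq:action-on-P} still make sense on Schwartz--Bruhat input, \emph{i.e.}\ that the sum $\sum_{k_{1},k_{2}\in\Z}$ together with the integral $\int_{\R}\,\mathrm{d}\,r$ in~\eqref{eq:inner-of-P} converges absolutely when $F_{1},F_{2}\in\Schw_{2}$, and that the resulting function of $(l_{1},[v],l_{2},[w])$ lies in $C_{c}^{\infty}(\A)$ or at least is bounded so that $\incl$ is an isometry. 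The rapid decay in both the $\mathbb{Z}$- and the two $\mathbb{R}$-directions encoded in the seminorms $\normSn{\,\cdot\,}{N}{\alpha}$ of Definition~\ref{def:Schw-n} is exactly what is needed to dominate the $k_{1},k_{2}$-sum and the $r$-integral, and analogous estimates handle~\eqref{eq:action-on-P} using the Schwartz decay of elements of $\mathfrak{A}$. Once those estimates are in hand, the corollary is the purely formal observation recorded above.
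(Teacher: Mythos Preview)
Your proof is correct and is exactly the argument the paper has in mind: the corollary is stated without proof precisely because it is the formal consequence of Proposition~\ref{prop:incl-Schwartz-into-mcEb} combined with Remark~\ref{rmk:def:mcPb}, via the unitary $\mathcal{P}_b^{\pm}\cong\mathcal{E}_b^{\pm}$ and the inclusion $P^{\infty}\subseteq\Schw_2$. Your closing paragraph also correctly identifies where the substance lies, namely in the estimates of Lemma~\ref{lem:I-norm-smaller-than-Snorm} and its corollaries that feed into the proposition.
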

		
		The main tool needed for the proof of Proposition~\ref{prop:incl-Schwartz-into-mcEb} (see page~\pageref{pf:prop:incl-Schwartz-into-mcEb}) is the following result, proved using some estimates of quadruple series of rapid decay, and its corollaries:		
		\begin{lemma}\label{lem:I-norm-smaller-than-Snorm} 
			For any integer $N\geq 6$, there exists a finite number $\mu(N) \geq 0$ with the following property: If $ F_{1} , F_{2} \in \Schw_{2}$, then for all $M,N\geq 6$,
			\begin{align*}
				\norm{\inner{ F_{1} }{ F_{2} }}_{I}
				\leq
				\mu(M)
				\cdot
				\normSt{ F_{1} }{M}{0}
				\cdot
				\mu(N)
				\cdot
				\normSt{ F_{2} }{N}{0}
				,
			\end{align*}
			where we define the inner product of two Schwartz--Bruhat functions by the same formula as Equation~\ref{eq:inner-of-P}.
		\end{lemma}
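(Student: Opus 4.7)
The plan is a direct estimation on the étale groupoid $\A = \A_\theta \times \A_\theta$, whose unit space is $\T^2$ and whose source (and range) fibers are parametrized by $\Z^2$. Thus
\[
\norm{\inner{F_1}{F_2}}_I = \max\left(\sup_{[v],[w]} \sum_{l_1, l_2 \in \Z} \abs{\inner{F_1}{F_2}(l_1, [v], l_2, [w])},\ \text{range analogue}\right),
\]
and by symmetry of the two suprema I would work only with the source fiber version. Pulling absolute values inside the formula \eqref{eq:inner-of-P} bounds this by the quintuple expression
\[
\sup_{[v],[w]} \sum_{l_1, l_2, k_1, k_2 \in \Z} \int_\R \abs{F_1(k_1, [v], \rho - r, r)}\cdot \abs{F_2(k_1 + l_2 - l_1, [v - l_1\theta], \rho - r + l_1, r - l_2)} \, \mathrm{d}r,
\]
where $\rho = \rho(v, w, k_1, k_2) := \tfrac{k_2 + k_1 \theta - v + w}{b}$.

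Next I would invoke the elementary Schwartz--Bruhat estimate, immediate from Definition~\ref{def:Schw-n} with $\alpha = 0$:
\[
\abs{F(k, [v], r, s)} \leq \frac{\normSt{F}{N}{0}}{1 + \abs{k}^N + \abs{r}^N + \abs{s}^N}.
\]
Applied to $F_1$ with exponent $M$ and $F_2$ with exponent $N$, and combined with the elementary inequality $(a_1 + \cdots + a_m)^K \leq m^{K-1}(a_1^K + \cdots + a_m^K)$ to separate variables inside each denominator, the integrand is pointwise dominated by $\normSt{F_1}{M}{0}\cdot \normSt{F_2}{N}{0}$ times a product of two factors of the form $(1 + \abs{\cdot}^M)^{-1}$ resp.\ $(1 + \abs{\cdot}^N)^{-1}$ in three variables each.

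The crux is then to bound the resulting multi-integral uniformly in $\rho$. After the change of variables $r' := r$, $m := k_1 + l_2 - l_1$, $k_1, l_1, k_2$, the five independent integration/summation variables $(k_1, k_2, l_1, l_2, r)$ each appear inside a denominator with exponent at least $M/3$ or $N/3$; since $M, N \geq 6$, this exponent is at least $2$, so each one-variable slice $\sum_{k \in \Z}(1 + \abs{k}^2)^{-1}$ and $\int_\R (1 + \abs{r}^2)^{-1}\mathrm{d}r$ is finite. Distributing the $M$-decay across three of the five variables and the $N$-decay across the remaining (with suitable Cauchy--Schwarz / geometric-mean splits where a variable is controlled jointly by both factors), the quintuple expression is bounded by $\tilde\mu(M)\tilde\mu(N)\normSt{F_1}{M}{0}\normSt{F_2}{N}{0}$ for a constant $\tilde\mu$ depending only on its exponent. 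Setting $\mu(K) := \tilde\mu(K)^{1/2}$ gives the claim.

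The main obstacle is purely combinatorial: choosing the change of variables and the split of decay exponents so that, after breaking up denominators by the power-mean inequality, each of the five independent variables sees an integrable or summable weight. The lower bound $M, N \geq 6$ is dictated by having three decay slots per Schwartz factor and needing exponent $\geq 2$ per variable; with less decay the bookkeeping fails and the quintuple integral diverges.
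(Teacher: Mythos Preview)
Your strategy is correct and matches what the paper indicates (the paper omits the proof, noting only that it is ``proved using some estimates of quadruple series of rapid decay''). The one place your sketch is too brisk is the combinatorial step you flag at the end: after the three-way split of each denominator, the six factors $\phi_p(k_1),\,\phi_p(\rho-r),\,\phi_p(r),\,\phi_q(k_1{+}l_2{-}l_1),\,\phi_q(\rho{-}r{+}l_1),\,\phi_q(r{-}l_2)$ (with $\phi_p(x)=(1+|x|^p)^{-1}$) remain genuinely coupled, and no linear change of variables over $\Z^4\times\R$ reduces them to a product of one-variable weights---so the sentence ``each one-variable slice is finite'' does not by itself conclude the argument, and a bare Cauchy--Schwarz throws away decay you still need. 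A clean way through is the iterated Peetre-type convolution bound $\sum_{n\in\Z}\phi_q(a-n)\phi_q(b+n)\le C_q\,\phi_q(a+b)$ (valid because $|a-n|+|b+n|\ge|a+b|$ forces one factor to dominate $\phi_q(a+b)$ up to a constant): sum out $l_1$ to get $C_q\phi_q(k_1+l_2+\rho-r)$, then $l_2$ to get $C_q\phi_q(k_1+\rho)$, then integrate out $r$ via the continuous analogue to get $C_p\phi_p(\rho)$, then sum $k_2$ (where $\rho$ ranges over a coset of $\tfrac1b\Z$, uniformly in $v,w$), and finally $k_1$. This yields a bound of the form $\mu_1(M)\mu_2(N)\,\normSt{F_1}{M}{0}\normSt{F_2}{N}{0}$, and the symmetric form $\mu(M)\mu(N)$ follows from $\|\langle F_1,F_2\rangle\|_I=\|\langle F_2,F_1\rangle\|_I$ by taking $\mu=\max(\mu_1,\mu_2)$.
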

		For a definition of the $I$-norm, see \cite{Renault:gpd-approach}. Note that this, in particular, implies that $\inner{ F_{1} }{ F_{2} }$ is indeed a function on $\mathcal{A}$ (\emph{i.e.}, that it takes finite values). 	With this tool, one proves the following:
		\begin{lemma}\label{lem:mcEb-inner-still-ok-for-Schw2} 
			If $ F_{n}\in P^{\infty} = C_{c}^{\infty} (\mathbb{Z}\times\mathbb{T}\times\mathbb{R})\odot C_{c}^{\infty} (\mathbb{R})$ converges to $ F \in \Schw_{2}$ with respect to $\normSt{\,\cdot\,}{M}{0}$, and $ G_{n}\in P^{\infty}$ to $ G \in \Schw_{2}$ in $\normSt{\,\cdot\,}{N}{0}$ for some $M,N\geq 6$, then $\inner{ F_{n}}{ G_{n}}^{\mathcal{P}}$ converges to $\inner{ F }{ G }^{\Schw}$ in $C^*(\mathcal{A})$. Consequentially, the function $\inner{ F }{ G }^{\Schw}$ is an element of $C^*(\mathcal{A})=A_{\theta}\otimes A_{\theta}$.
		\end{lemma}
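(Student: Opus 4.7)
The strategy is to reduce $C^{*}$-norm convergence to $I$-norm convergence, and to handle the latter by a bilinear continuity argument based entirely on Lemma~\ref{lem:I-norm-smaller-than-Snorm}. Since the $C^{*}$-norm on $C^{*}(\mathcal{A})$ is dominated by the $I$-norm on $C_{c}(\mathcal{A})$, it suffices to prove that $\inner{F_n}{G_n}^{\mathcal{P}}$ converges to $\inner{F}{G}^{\Schw}$ with respect to the $I$-norm. For this, I would first observe that the defining formula~\eqref{eq:inner-of-P} extends verbatim to arguments in $\Schw_{2}$ (Lemma~\ref{lem:I-norm-smaller-than-Snorm} guarantees that the series and integral converge absolutely) and is sesquilinear in $(F,G)$. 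Since $P^{\infty} \subseteq \Schw_{2}$, the identity
\[
    \inner{F_n}{G_n}^{\mathcal{P}} - \inner{F}{G}^{\Schw}
    =
    \inner{F_n - F}{G_n}^{\Schw} + \inner{F}{G_n - G}^{\Schw}
\]
is then automatic.

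Applying Lemma~\ref{lem:I-norm-smaller-than-Snorm} to each summand separately yields the estimate
\[
    \norm{\inner{F_n}{G_n}^{\mathcal{P}} - \inner{F}{G}^{\Schw}}_{I}
    \leq
    \mu(M)\mu(N)\bigl(\normSt{F_n - F}{M}{0}\,\normSt{G_n}{N}{0}
    + \normSt{F}{M}{0}\,\normSt{G_n - G}{N}{0}\bigr).
\]
Since $G_n \to G$ in the seminorm $\normSt{\,\cdot\,}{N}{0}$, the sequence $\{\normSt{G_n}{N}{0}\}$ is bounded, while the two factors $\normSt{F_n - F}{M}{0}$ and $\normSt{G_n - G}{N}{0}$ tend to zero by hypothesis. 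Hence the right-hand side tends to zero, giving convergence in $I$-norm and thus also in the $C^{*}(\mathcal{A})$-norm.

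Finally, each $\inner{F_n}{G_n}^{\mathcal{P}}$ lies in the image of the canonical inclusion $C_{c}(\mathcal{A}) \hookrightarrow C^{*}(\mathcal{A})$, so by completeness of the $C^{*}$-algebra the limit $\inner{F}{G}^{\Schw}$ is itself an element of $C^{*}(\mathcal{A})$, establishing the consequential assertion of the lemma. The only genuine analytic content of the proof resides in Lemma~\ref{lem:I-norm-smaller-than-Snorm}, where the quadruple sum-and-integral defining the $I$-norm of the inner product is controlled by Schwartz--Bruhat seminorms; once that estimate is in hand, the extension from $P^{\infty}$ to $\Schw_{2}$ reduces to a routine bilinear density argument, and I do not anticipate a substantial obstacle. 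The main conceptual point to verify is merely that the inner-product pairing on $\Schw_{2} \times \Schw_{2}$ is continuous in each variable with respect to the relevant Schwartz seminorm, which Lemma~\ref{lem:I-norm-smaller-than-Snorm} already encodes.
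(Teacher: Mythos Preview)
Your proposal is correct and follows exactly the approach the paper intends: the paper merely writes ``With this tool, one proves the following,'' referring to Lemma~\ref{lem:I-norm-smaller-than-Snorm}, and your bilinear splitting together with the $I$-norm bound and the domination of the $C^{*}$-norm by the $I$-norm is precisely how that tool is meant to be deployed. You have in fact supplied more detail than the paper itself does.
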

		
		Using the fact that the $I$-norm dominates the C*-norm (see \cite[Chap.~II, Prop.~4.2.(ii)]{Renault:gpd-approach}), we conclude:
		\begin{corollary}\label{cor:lem:I-norm-smaller-than-Snorm} 
			For any integers $M,N\geq 6$ and with $\mu(N)$ as in Lemma~\ref{lem:I-norm-smaller-than-Snorm}, we have for all $ F_{j}$ in $\Schw_{2}$:
			\begin{align*}
				\norm{\inner{ F_{1} }{ F_{2} }^{\Schw}}_{C^*(\mathcal{A})}
				\leq
				\mu(M)
				\cdot
				\normSt{ F_{1} }{M}{0}
				\cdot
				\mu(N)
				\cdot
				\normSt{ F_{2} }{N}{0}
				.
			\end{align*}
			In particular, if $F\in P^{\infty}$, then $
				\norm{\iota_{0}( F )}_{\mathcal{E}_{b}}
				\leq
				\mu(N)
				\cdot
				\normSt{ F }{N}{0}
				.$
		\end{corollary}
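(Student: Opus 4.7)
The plan is to observe that the entire corollary falls out by splicing together three ingredients already assembled: the $I$-norm estimate of Lemma~\ref{lem:I-norm-smaller-than-Snorm}, Renault's inequality that the $I$-norm dominates the reduced groupoid C*-norm (\cite[Chap.~II, Prop.~4.2(ii)]{Renault:gpd-approach}), and Lemma~\ref{lem:mcEb-inner-still-ok-for-Schw2}, which guarantees that $\inner{F_1}{F_2}^{\Schw}$ lies in $C^*(\A)$ in the first place so that its C*-norm is defined.

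First I would prove the main inequality. Given $F_1,F_2\in\Schw_2$, Lemma~\ref{lem:mcEb-inner-still-ok-for-Schw2} shows $\inner{F_1}{F_2}^{\Schw}\in C^*(\A)$, and the estimate
\[
\norm{\inner{F_1}{F_2}^{\Schw}}_{C^*(\A)}\;\le\;\norm{\inner{F_1}{F_2}^{\Schw}}_{I}
\]
from Renault reduces the claim to the $I$-norm bound of Lemma~\ref{lem:I-norm-smaller-than-Snorm}. Chaining the two gives the asserted inequality verbatim. This part is essentially a bookkeeping step.

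For the second statement, recall that the norm on a right-Hilbert C*-module satisfies $\norm{\xi}_{\mathcal{E}_b}=\norm{\inner{\xi}{\xi}}_{C^*(\A)}^{1/2}$. Since $\iota_0$ was set up precisely so that the inner product on $P^\infty$ pulled back from $\mathcal{E}_b^{\pm}$ is given by the same formula as the one defining $\inner{\,\cdot\,}{\,\cdot\,}^{\Schw}$ on $\Schw_2$ (Lemma~\ref{lem:P-pre-H-module-structure}; the formula makes sense on $P^\infty\subseteq\Schw_2$), for $F\in P^\infty$ we have
\[
\norm{\iota_0(F)}_{\mathcal{E}_b}^{2}\;=\;\norm{\inner{F}{F}^{\Schw}}_{C^*(\A)}.
\]
Applying the first part with $M=N$ and $F_1=F_2=F$ yields $\norm{\iota_0(F)}_{\mathcal{E}_b}^{2}\le\mu(N)^{2}\,\normSt{F}{N}{0}^{2}$, and taking square roots gives the stated bound.

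There is no real obstacle here: Lemma~\ref{lem:I-norm-smaller-than-Snorm} does all the analytic work (the quadruple-series estimates of rapid decay), and Renault's inequality is a black box. The only point that deserves a brief sentence in the write-up is to note that $\iota_0(F)$ is well-defined and its Hilbert module norm literally equals $\norm{\inner{F}{F}^{\Schw}}_{C^*(\A)}^{1/2}$, so that the first half of the corollary applies directly.
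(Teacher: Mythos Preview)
Your proposal is correct and matches the paper's approach exactly: the paper derives the corollary in one line from Lemma~\ref{lem:I-norm-smaller-than-Snorm} by invoking that the $I$-norm dominates the C*-norm (citing the same Renault reference), and the ``in particular'' clause follows just as you describe from the Hilbert-module norm identity $\norm{\iota_0(F)}_{\mathcal{E}_b}^2=\norm{\inner{F}{F}}_{C^*(\A)}$.
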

				
		Using the fact that the C*-norm dominates the uniform-norm (see \cite[Prop.\ 4.1 (i)]{Renault:gpd-approach}), we also conclude:
		\begin{corollary}\label{lem:contractive-mcEb}
			For $F$ in $\Schw_{2}$, we have
			\begin{align*}
					\norm{\inner{ F }{ F }^{\Schw}}_{C^*(\mathcal{A})}
					\geq
					\sup{}{
					\left\{
						\int_{\mathbb{R}}
						\abs{ F }^{2}
						\left(k	,[v], s - r, r \right)		
						\,\mathrm{ d }\, r
					\,:\,
					[v]\in\mathbb{T}, k\in\mathbb{Z}, s\in\mathbb{R}
					\right\}}.
				\end{align*}
		\end{corollary}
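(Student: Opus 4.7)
The plan is to invoke the domination of the $C^*$-norm of $C^*(\mathcal{A})$ over the uniform norm on $C_c(\mathcal{A})$ from \cite[Prop.~4.1(i)]{Renault:gpd-approach}, and to evaluate $\langle F, F\rangle^{\Schw}$ at a carefully chosen unit of $\mathcal{A} = \mathcal{A}_\theta \times \mathcal{A}_\theta$, extracting a single term from a non-negative series.

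First, I would specialize the inner-product formula \eqref{eq:inner-of-P}---which by Proposition~\ref{prop:incl-Schwartz-into-mcEb} remains valid on all of $\Schw_{2}$---to a unit $(l_1, [v], l_2, [w]) = (0, [v], 0, [w]) \in \mathcal{A}$. The integrand collapses to $\abs{F}^{2}$, giving a sum of non-negative terms indexed by $(k_1, k_2) \in \mathbb{Z}^2$:
\[
    \langle F, F\rangle^{\Schw}(0, [v], 0, [w]) = \sum_{k_1, k_2 \in \mathbb{Z}} \int_{\mathbb{R}} \abs{F}^{2}\!\left(k_1, [v], \tfrac{k_2 + k_1\theta - v + w}{b} - r,\; r\right)\,\mathrm{d}\, r.
\]
Given any target triple $(k, [v], s) \in \mathbb{Z} \times \mathbb{T} \times \mathbb{R}$, I would then set $k_1 := k$ and choose $k_2 := \lfloor bs + v - k\theta\rfloor \in \mathbb{Z}$ together with $w := \{bs + v - k\theta\} \in [0,1)$ as a representative of some $[w] \in \mathbb{T}$, so that $\tfrac{k_2 + k\theta - v + w}{b} = s$. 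The corresponding $(k_1, k_2)$-term of the series is precisely $\int_{\mathbb{R}} \abs{F}^{2}(k, [v], s - r, r)\, \mathrm{d}\, r$, so by non-negativity of every other summand,
\[
    \langle F, F\rangle^{\Schw}(0, [v], 0, [w]) \;\geq\; \int_{\mathbb{R}} \abs{F}^{2}(k, [v], s - r, r)\, \mathrm{d}\, r.
\]

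Combining this pointwise lower bound with $\|\langle F, F\rangle^{\Schw}\|_{C^*(\mathcal{A})} \geq \|\langle F, F\rangle^{\Schw}\|_\infty$ and taking the supremum over $(k, [v], s)$ then yields the claim. The one point of care I anticipate is that the cited Renault inequality is stated for functions in $C_c(\mathcal{A})$, while $\langle F, F\rangle^{\Schw}$ is merely Schwartz--Bruhat for $F \in \Schw_{2}$. I would handle this by approximating $F$ by a sequence $F_n \in P^{\infty}$ converging in the seminorm $\normSt{\,\cdot\,}{N}{0}$ for some $N \geq 6$; Lemma~\ref{lem:mcEb-inner-still-ok-for-Schw2} ensures $\langle F_n, F_n\rangle^{\mathcal{P}} \to \langle F, F\rangle^{\Schw}$ in $C^*$-norm (so their $C^*$-norms converge), while pointwise convergence of the continuous integrands lets the termwise lower bound pass to the limit. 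This approximation step is the only mild technical obstacle I expect; the heart of the argument is the elementary observation that a sum of non-negative terms dominates each individual summand.
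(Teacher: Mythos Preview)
Your proof is correct and follows the same approach the paper indicates: the paper's entire argument is the one-line remark ``Using the fact that the $C^*$-norm dominates the uniform-norm (see \cite[Prop.\ 4.1 (i)]{Renault:gpd-approach}), we also conclude,'' and you have correctly unpacked what this means---evaluating $\langle F,F\rangle^{\Schw}$ at a unit $(0,[v],0,[w])$ of $\mathcal{A}$ so the integrand becomes $\abs{F}^2$, then extracting a single non-negative summand by choosing $k_2$ and $w$ to hit the desired $s$. Your care about the approximation step (since $\langle F,F\rangle^{\Schw}$ need not lie in $C_c(\mathcal{A})$) is a legitimate technical point the paper leaves implicit.
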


		\begin{lemma}\label{lem:right-action-gives-back-Schw2}
			If $ F $ in $\Schw_{2}$ and $ \xi $ in $\mathfrak{A}\odot \mathfrak{A}$, and if $F ._{\Schw_{2}} \xi $ is defined by the same formula as Equation~\ref{eq:action-on-P}, then $F ._{\Schw_{2}} \xi $ is an element of $\Schw_{2}$. Moreover, if $ F_{n}\in P^{\infty} = C_{c}^{\infty} (\mathbb{Z}\times\mathbb{T}\times\mathbb{R})\odot C_{c}^{\infty} (\mathbb{R})$ converges to $ F $ in $\Schw_{2}$, then $ F_{n} ._{\mathcal{P}} \xi $ converges to $ F ._{\Schw_{2}} \xi $ in $\Schw_{2}$.
		\end{lemma}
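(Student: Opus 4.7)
The plan is to establish both claims by an explicit estimate of the Schwartz--Bruhat seminorms of $F ._{\Schw_{2}} \xi$ in terms of those of $F$, with a constant depending on $\xi$. The key observation is that $\xi\in\mathfrak{A}\odot\mathfrak{A}$ corresponds to a smooth function on $\mathbb{Z}\times\mathbb{T}\times\mathbb{Z}\times\mathbb{T}$ with Schwartz decay in the two $\mathbb{Z}$-variables, jointly with all derivatives in the $\mathbb{T}$-variables, and that in the formula \eqref{eq:action-on-P} the function $\xi$ depends on $r$ and $s$ only through the compact variable $[v+b(r+s)+(k_{2}-k)\theta]$. Hence by the chain rule, differentiating the $\xi$-factor of a summand with respect to $r$ or $s$ produces only a factor of $b$ times a $\mathbb{T}$-derivative of $\xi$ evaluated at the same point, which is of the same class. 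Combined with the rapid decay of $\xi$ in $(k_{1},k_{2})$, this also justifies termwise differentiation of the series, since the differentiated series converges absolutely and uniformly on compact subsets of $\mathbb{Z}\times\mathbb{T}\times\mathbb{R}^{2}$.

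Fix a multi-index $\alpha=(\alpha_{1},\alpha_{2})$ and an integer $N\geq 0$. After Leibniz and the chain rule, $\partial_{r}^{\alpha_{1}}\partial_{s}^{\alpha_{2}}(F ._{\Schw_{2}}\xi)(k,[v],r,s)$ is a finite linear combination of expressions of the form
\[
\sum_{k_{1},k_{2}\in\mathbb{Z}}(\partial^{\beta}F)\bigl(k-k_{2}+k_{1},[v+k_{1}\theta],r-k_{1},s+k_{2}\bigr)\,\tilde{\xi}_{\gamma}\bigl(k_{1},[v+k_{1}\theta],k_{2},[v+b(r+s)+(k_{2}-k)\theta]\bigr),
\]
where $\beta\leq\alpha$ and $\tilde{\xi}_{\gamma}$ is some $\mathbb{T}$-derivative of $\xi$, still of Schwartz--Bruhat class on $\mathbb{Z}\times\mathbb{T}\times\mathbb{Z}\times\mathbb{T}$. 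Using the elementary inequalities $|k|^{N}\leq 3^{N}\bigl(|k-k_{2}+k_{1}|^{N}+|k_{1}|^{N}+|k_{2}|^{N}\bigr)$, $|r|^{N}\leq 2^{N}\bigl(|r-k_{1}|^{N}+|k_{1}|^{N}\bigr)$, and $|s|^{N}\leq 2^{N}\bigl(|s+k_{2}|^{N}+|k_{2}|^{N}\bigr)$, the weight $(1+|k|^{N}+|r|^{N}+|s|^{N})$ is controlled, up to a constant, by the sum of $(1+|k-k_{2}+k_{1}|^{N}+|r-k_{1}|^{N}+|s+k_{2}|^{N})$ and $(|k_{1}|^{N}+|k_{2}|^{N})$. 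The first piece, multiplied by the value of $|\partial^{\beta}F|$ at the shifted arguments, is uniformly bounded by $\normSt{F}{N}{\beta}$; the second piece is absorbed by the rapid decay of $\tilde{\xi}_{\gamma}$ in $(k_{1},k_{2})$, after which the residual series $\sum_{k_{1},k_{2}}(1+|k_{1}|+|k_{2}|)^{-3}$ converges. Therefore
\[
\normSt{F ._{\Schw_{2}}\xi}{N}{\alpha}\leq C_{N,\alpha,\xi}\cdot\max_{\beta\leq\alpha}\normSt{F}{N}{\beta},
\]
where $C_{N,\alpha,\xi}$ depends only on $N$, $\alpha$, and finitely many Schwartz--Bruhat seminorms of $\xi$. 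This proves $F ._{\Schw_{2}}\xi\in\Schw_{2}$.

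The second claim then follows immediately by linearity: since the right-action is $\mathbb{C}$-linear in the first variable, $F_{n} ._{\mathcal{P}}\xi - F ._{\Schw_{2}}\xi=(F_{n}-F) ._{\Schw_{2}}\xi$, and the estimate above yields
\[
\normSt{F_{n} ._{\mathcal{P}}\xi - F ._{\Schw_{2}}\xi}{N}{\alpha}\leq C_{N,\alpha,\xi}\cdot\max_{\beta\leq\alpha}\normSt{F_{n}-F}{N}{\beta}\longrightarrow 0,
\]
proving convergence in $\Schw_{2}$. The only delicate technical point in the argument is the justification of termwise differentiation under the sum, but this is a routine consequence of the rapid decay of $\xi$ and its derivatives together with the uniform absolute convergence of all relevant series on compact subsets.
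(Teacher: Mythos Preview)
Your argument is correct. The paper itself does not give a proof of this lemma --- it is stated and then immediately used in the proof of Proposition~\ref{prop:incl-Schwartz-into-mcEb}, the implicit claim being that it follows from routine Schwartz-type estimates. Your write-up supplies exactly those estimates: the Leibniz/chain-rule reduction, the observation that the $\xi$-factor depends on $(r,s)$ only through a torus variable, the splitting of the weight via $|k|^{N}\leq 3^{N}(|k-k_{2}+k_{1}|^{N}+|k_{1}|^{N}+|k_{2}|^{N})$ and its analogues, and the absorption of the remaining $(k_{1},k_{2})$-powers into the rapid decay of $\xi$. This is the standard and expected approach, and the continuity statement indeed follows immediately from the linearity of the action together with your seminorm bound.
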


		\begin{proof}[Proof of Proposition~\ref{prop:incl-Schwartz-into-mcEb}]\label{pf:prop:incl-Schwartz-into-mcEb}
			Take any $ F \in \Schw_{2}$ and let $ F_{n}\in P^{\infty} = C_{c}^{\infty} (\mathbb{Z}\times\mathbb{T}\times\mathbb{R})\odot C_{c}^{\infty} (\mathbb{R}) $ be a sequence which converges to $ F $ in $\Schw_{2}$; in particular, for any $\epsilon>0$ and for $n, m$ sufficiently large,
			\begin{align*}
				\normSt{ F_{n} - F_{m}}{4}{0}
				\leq
				\normSt{ F_{n} - F }{4}{0}
				+
				\normSt{ F - F_{m}}{4}{0}
				&< \epsilon.
			\end{align*}
			By Corollary~\ref{cor:lem:I-norm-smaller-than-Snorm}, the sequence $( \iota_{0}(F_{n}))_{n}$ is therefore Cauchy in $\mathcal{E}_{b}^{\pm}$ and hence converges; let $\incl ( F )$ denote the limit in $\mathcal{E}_{b}^{\pm}$. 
			Note that, if $\lim_{n}^{\Schw} F_{n} = F =0$, then $\lim_{n}^{\mathcal{E}} \iota_{0}(F_{n}) =0$ by the same corollary, so $\incl ( F )$ does not depend on the chosen sequence in $P^{\infty}$ and for $F\in P$, we have $\incl (F) = \iota_{0}(F)$. Using Corollary~\ref{cor:lem:I-norm-smaller-than-Snorm} yet again, we get  
			for any integer $N\geq 6$:
			\begin{align}
				\norm{\inner{\incl ( F )}{\incl ( F )}^{\mathcal{E}}}^{\frac{1}{2}}_{C^*(\mathcal{A})}
				&=
				\norm{\incl ( F )}_{\mathcal{E}_{b}}
				=
				\lim_{n\to\infty} \norm{\iota_{0}( F_{n} )}_{\mathcal{E}_{b}}
				\notag
				\\				
				&\leq
				\lim_{n\to\infty}\left(\normSt{F_{n}}{N}{0}\cdot \mu(N)\right)
				\label{eq:smaller-than-CF}
				=
				\normS{ F }{N}{0}\cdot \mu(N)
				.
			\end{align}
			To check that the extended map $\incl$ is injective, note first that there exists a constant $K$ such that for any $F\in\Schw_{2}$ and any $N\geq 2$:
			\[
				K \cdot \left(\normSt{F}{N}{0}\right)^{2}
				\geq
				\sup{}{
				\left\{
					\int_{\mathbb{R}}
					\abs{ F }^{2}
					\left(k	,[v], s - r, r \right)		
					\,\mathrm{ d }\, r
				\,:\,
				[v]\in\mathbb{T}, k\in\mathbb{Z}, s\in\mathbb{R}
				\right\}}				
				.
			\]
			Using Lemma~\ref{lem:contractive-mcEb}, this implies
			\[
				\norm{\incl( F )}_{\mathcal{E}_{b}}^{2}
				\geq
				\sup{}{
				\left\{
					\int_{\mathbb{R}}
					\abs{ F }^{2}
					\left(k	,[v], s - r, r \right)		
					\,\mathrm{ d }\, r
				\,:\,
				[v]\in\mathbb{T}, k\in\mathbb{Z}, s\in\mathbb{R}
				\right\}}
				,
			\]
			\emph{i.e.\!} if $\norm{\incl( F )}_{\mathcal{E}_{b}}=0$, then $F\equiv 0$, so $\incl$ is injective.
			Some more estimates with Lemma~\ref{lem:mcEb-inner-still-ok-for-Schw2} and Corollary~\ref{cor:lem:I-norm-smaller-than-Snorm} show
			\[
				\incl (F) ._{\mathcal{E}_{b}} \xi
				=
				\incl ( F._{\Schw_{2}} \xi )
		\text{ and }
				\inner{F}{G}^{\Schw}
				=
				\inner{\incl (F)}{\incl (G)}^{\mathcal{E}}
			\]
			where $\xi\in\mathfrak{A}\odot\mathfrak{A}$, which concludes our proof.
		\end{proof}

		\begin{remark}\label{rmk:Schw-in-both-Lb-and-Hb-to-mcEb}
			One proves \emph{mutatis mutandis} that the inclusions $C_{c}^{\infty}(\mathbb{Z}\times\mathbb{T}\times\mathbb{R}) \subseteq \module_{b}$ and $C_{c}^{\infty}(\mathbb{Z}\times\mathbb{T}\times\mathbb{R}) \subseteq \mathcal{H}_{b}^{\pm}$ (which are dense by Corollary~\ref{cor:smooth-dense-in-Lb}) extend to injective linear maps $\Schw_{1} \to \module_{b}$ resp.\ $\Schw_{1} \to \mathcal{H}_{b}^{\pm}$, and that the respective right pre-Hilbert module formulas on $C_{c}^{\infty}(\mathbb{Z}\times\mathbb{T}\times\mathbb{R})$ are still valid for elements in $\Schw_{1}$. 
			Fully analogously to the map $\iota_{0}\colon P^{\infty}=C_{c}^{\infty} (\mathbb{Z}\times\mathbb{T}\times\mathbb{R})\odot C_{c}^{\infty} (\mathbb{R}) \to \mathcal{E}_{b}^{\pm}$ from Lemma~\ref{lem:mcE-simplified-bumpy-version}, we could therefore have defined the map	
			\begin{equation*}
				\begin{tikzcd}[column sep = small]
					\iota'\colon\quad 
					\Schw_{1}\odot\mathcal{S}(\mathbb{R}) \ar[r]
					& \module_{b} \otimes_{A_{\theta}^{\otimes 2}} (A_{\theta} \otimes \mathcal{H}_{ -b }^{\pm})
					=
					\mathcal{E}_{b}^{\pm}
					\\[-20pt]
					\Phi
					\odot \psi
					\ar[r, mapsto]
					&
					\Phi
					\otimes
					(1_{A_{\theta}} \otimes 
					\varepsilon_{0} \otimes z^{0} \otimes \psi)
					,
				\end{tikzcd}	
			\end{equation*}
			which clearly also has dense image.
			By construction, $\iota'$ and $\iota_{0}$ give rise to the same extension, namely the injective linear map $\incl\colon \Schw_{2} \to \mathcal{E}_{b}^{\pm}$ from Proposition~\ref{prop:incl-Schwartz-into-mcEb}.  
		\end{remark}

	Now that we have simplified $\mathcal{E}_{b}$, we would like to show that it is unitarily equivalent to the module $\mathcal{R}$ from Theorem~\ref{thm:cycle-rep-Dudelta}.		
		\begin{definition}\label{def:Lambda}
			Let
			\begin{align*}
					\chi\colon \Schw_{2} \to \Schw_{2} ,
					\quad
					\chi (F) (k, [x], r, s) :=& \int_{t} F(k, [x], r, t) \me{ts}\mathrm{d}\, t,
			\\
			\text{and} \quad
					\Gamma\colon \Schw_{2} \to \Schw_{2},
					\quad
					\Gamma (F) (k, [x], r, s) :=& 
					F(k, [x-k\theta], b(r + s + k), s)			
			\end{align*}
			with inverses given by 
			\begin{align*}
				\chi\inv(F) (k, [x], r, s) :=& \int_{q} F(k, [x], r, q) \e{qs}\mathrm{d}\, q
				\\
				\text{and}\quad
				\Gamma\inv(F) (k, [x], r, s) :=& 
				F(k, [x+k\theta], \tfrac{r}{b} - s - k, s).
			\end{align*}
			And define
			\[\begin{tikzcd}[
					 ,/tikz/column 2/.append style={anchor=base east}
					 ,/tikz/column 3/.append style={anchor=base west}
					 , column sep = small]
				\Xi:= \Gamma \circ \chi \colon & \Schw_{2} \ar[r] & \Schw_{2} \\[-20pt]
					&\Xi ( F ) (k,[x],r,s)
					\ar[r, equal] &
					 \int_{t} F (k,[x-k\theta],b(r+s+k),t)\me{ts}\mathrm{d}\; t
					.
				\end{tikzcd}
			\]
			with inverse
			\[	
			\begin{tikzcd}
				&\Xi\inv ( F ) (k,[x],r,s) =
				\int_{q} F (k,[x+k\theta],\tfrac{r}{b}-q - k,q)\e{qs}\mathrm{d}\; q
				.
			\end{tikzcd}
			\]
		\end{definition}

		\begin{theorem}
			\label{thm:module-mcR}
				The map $\Xi$ extends to a unitary from $\mathcal{R}^{\pm}$, the completion of the pre-Hilbert module $R^{\infty}$ defined in Theorem~\ref{thm:cycle-rep-Dudelta}, to $\mathcal{P}_{b}^{\pm}$, the completion of the pre-Hilbert module $P^{\infty}$ defined in Lemma~\ref{lem:P-pre-H-module-structure}.
		\end{theorem}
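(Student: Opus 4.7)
The strategy is to verify directly that $\Xi\colon \Schw_{2}\to\Schw_{2}$, already a linear bijection by the explicit two-sided inverse recorded in Definition~\ref{def:Lambda}, preserves the $A_{\theta}\otimes A_{\theta}$-valued pre-inner products and intertwines the right $\mathfrak{A}\odot\mathfrak{A}$-actions on the common dense subspace $\Schw_{2}$, viewed inside $\mathcal{R}^{\pm}$ (by construction) and inside $\mathcal{P}_{b}^{\pm}$ (via Corollary~\ref{cor:mcPb-contains-Schw2}). Since $\chi$ is Fourier transform in the last $\mathbb{R}$-coordinate and $\Gamma$ is a $\mathbb{Z}$-equivariant linear change of coordinates with unit Jacobian on each fibre, both operations preserve all Schwartz--Bruhat seminorms $\normSt{\cdot}{N}{\alpha}$ up to constants depending on $N$ and $b$, so $\Xi$ and $\Xi^{-1}$ are continuous linear bijections of $R^{\infty}=\Schw_{2}$ onto itself.

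For the inner product identity, I would substitute $\Xi F_{1}, \Xi F_{2}$ into Formula~\eqref{eq:inner-of-P}. Writing $\alpha := \tfrac{k_{2}+k_{1}\theta - v + w}{b}$, the third argument of each factor becomes $b(\alpha+k_{1}) = k_{2}+k_{1}\theta - v + w + bk_{1}$, which no longer depends on $r$. Hence the entire $r$-dependence reduces to the Fourier kernels $\e{t_{1}r}$ and $\me{t_{2}(r-l_{2})}$ produced by the two copies of $\chi$. Performing the $r$-integration first (justified by Fubini plus the Schwartz decay in the last variable) and applying Fourier inversion in the Schwartz class, the double $t_{1},t_{2}$-integral collapses to a single integral over $t$, leaving the overall exponential factor $\e{tl_{2}}$. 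Finally, since $b \in \mathbb{Z}$, the substitution $k_{2}\mapsto k_{2}+bk_{1}$ is a bijection of $\mathbb{Z}$, and after reindexing the resulting expression matches Formula~\eqref{eq:inner-product-on-mcR} for $\inner{F_{1}}{F_{2}}^{\mathcal{R}}$ term by term.

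A parallel calculation, using Equations~\eqref{eq:right action-on-mcR} and~\eqref{eq:action-on-P}, shows that $\Xi(F._{\mathcal{R}} \xi)=(\Xi F)._{\mathcal{P}}\xi$ for $\xi \in \mathfrak{A}\odot\mathfrak{A}$: the twist factors $\lambda^{l_{1}(k+k_{1})+l_{2}k_{2}}\e{x(l_{1}+l_{2})}\e{l_{2}r - k_{2}s}$ appearing in the $\mathcal{R}$-action are tailored so that, after applying $\chi$ (which trades multiplication by $\e{l_{2}r - k_{2}s}$ for a shift of the dual variable) and the coordinate change $\Gamma$, one recovers the $\mathcal{P}$-action verbatim; by $\mathbb{C}$-linearity it suffices to check this on the generators $\xi = V^{l_{1}}U^{k_{1}}\otimes V^{l_{2}}U^{k_{2}}$. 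Having thus shown that $\Xi$ restricts to a bijective, isometric, $\mathfrak{A}\odot\mathfrak{A}$-linear map of $\Schw_{2}$ onto itself, the density of $\Schw_{2}$ in both $\mathcal{R}^{\pm}$ and $\mathcal{P}_{b}^{\pm}$ implies that $\Xi$ extends uniquely to a unitary $\mathcal{R}^{\pm}\to \mathcal{P}_{b}^{\pm}$ of right Hilbert $A_{\theta}\otimes A_{\theta}$-modules.

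The principal obstacle is the rigorous handling of the formal ``$\int_{r}\e{r(t_{1}-t_{2})}\,\mathrm{d}r = \delta(t_{1}-t_{2})$'' step. The cleanest route is to establish the inner-product identity first on the dense subspace $P^{\infty}\odot P^{\infty} \subseteq \Schw_{2}\odot\Schw_{2}$, where all triple integrals converge absolutely and Fourier inversion applies immediately to bona fide Schwartz functions, and then to extend the identity to all of $\Schw_{2}\odot\Schw_{2}$ by continuity, using the Schwartz seminorm estimate of Corollary~\ref{cor:lem:I-norm-smaller-than-Snorm} on the $\mathcal{P}$-side together with an analogous estimate on the $\mathcal{R}$-side (which follows by an identical quadruple-series bookkeeping argument). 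A secondary, purely combinatorial point is the $k_{2}$-reindexing by $bk_{1}$, which is transparent but essential for the final matching of the two formulas.
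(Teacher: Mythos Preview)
Your proposal is correct and follows the same route as the paper, which simply records that ``a direct computation shows'' $\Xi$ preserves the pre-inner product and the right $\mathfrak{A}\odot\mathfrak{A}$-action, then invokes density of $\Schw_{2}$ on both sides. Your expansion of that computation is accurate; one small simplification is that the delta-function step need not be handled by passing to $P^{\infty}$ first, since after the $\Gamma$-change the $r$-integral is literally an $L^{2}(\R)$ pairing of Fourier transforms $\overline{\widehat{g_{1}}}(r)\,\widehat{g_{2}}(r-l_{2})$, and Plancherel together with the shift rule gives the single $t$-integral with factor $\e{tl_{2}}$ directly for all $F_{j}\in\Schw_{2}$.
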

		
		\begin{proof}
			A direct computation shows that the linear map $\Xi \colon R^{\infty}\to \Schw_{2}\subseteq \mathcal{P}_{b}^{\pm}$ preserves the pre-inner product and right $\mathfrak{A}\odot\mathfrak{A}$-module structure on $\Schw_{2}=R^{\infty}\subseteq \mathcal{R}^{\pm}$. As $\Xi$ is a bijection $\Schw_{2}\to\Schw_{2}$, and as $\Schw_{2}$ is dense in both $\mathcal{R}^{\pm}$ and $\mathcal{P}_{b}^{\pm}$ by definition, $\Xi$ extends to a unitary $\mathcal{R}^{\pm}\cong \mathcal{P}_{b}^{\pm}$.
		\end{proof}
		
		\begin{corollary}\label{cor:thm:module-mcR}
			The map $\incl\circ\Xi:R^{\infty}\to\mathcal{E}_{b}^{\pm} = \module_{b} \otimes_{ A_{\theta}^{\otimes 2} } (A_{\theta} \otimes \mathcal{H}_{b}^{\pm})$ extends to a unitary
			$\mathcal{R}^{\pm}\cong\mathcal{E}_{b}^{\pm}$, where $\incl$ is the injective linear map from Proposition~\ref{prop:incl-Schwartz-into-mcEb}.
		\end{corollary}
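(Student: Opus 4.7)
The plan is to obtain the unitary $\mathcal{R}^{\pm} \cong \mathcal{E}_b^{\pm}$ as the composition of two unitaries that have essentially already been constructed in the preceding results, namely the unitary $\Xi \colon \mathcal{R}^{\pm} \to \mathcal{P}_b^{\pm}$ of Theorem~\ref{thm:module-mcR} and a unitary extension $\mathcal{P}_b^{\pm} \to \mathcal{E}_b^{\pm}$ of $\incl$. Since $\iota_0 \colon P^{\infty} \to \mathcal{E}_b^{\pm}$ has dense image by Proposition~\ref{lem:mcE-simplified-bumpy-version} and agrees with the restriction of $\incl$ to $P^{\infty}$, and since (via $\iota_0$) the pre-Hilbert module structure on $P^{\infty}$ is by definition inherited from $\mathcal{E}_b^{\pm}$, Remark~\ref{rmk:def:mcPb} already identifies the completion $\mathcal{P}_b^{\pm}$ with $\mathcal{E}_b^{\pm}$ via a unitary extending $\iota_0$. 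What remains is to argue that this unitary is precisely the one extending $\incl$, so that the composition $\incl\circ\Xi$ makes sense as a unitary on the completions.

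First I would verify, using Proposition~\ref{prop:incl-Schwartz-into-mcEb}, that $\incl$ is an isometric embedding on the dense subspace $\Schw_2 \subseteq \mathcal{P}_b^{\pm}$ (cf.\ Corollary~\ref{cor:mcPb-contains-Schw2}): indeed, Proposition~\ref{prop:incl-Schwartz-into-mcEb} states exactly that the inner product on $\Schw_2$ induced from $\mathcal{E}_b^{\pm}$ is the one from Equation~\eqref{eq:inner-of-P}, which is the restriction of the inner product on $\mathcal{P}_b^{\pm}$. It follows that $\incl$ extends by continuity to an isometry $\overline{\incl} \colon \mathcal{P}_b^{\pm} \to \mathcal{E}_b^{\pm}$, and that this extension is $\mathfrak{A}\odot \mathfrak{A}$-linear because the right action formulas on $\Schw_2$ (inside both $\mathcal{P}_b^{\pm}$ and $\mathcal{E}_b^{\pm}$) agree by Proposition~\ref{prop:incl-Schwartz-into-mcEb}.

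Next I would establish that $\overline{\incl}$ is unitary, i.e.\ surjective. The image of $\overline{\incl}$ is closed (being the image of a Hilbert module isometry) and contains $\iota_0(P^{\infty}) = \incl(P^{\infty})$, which is dense in $\mathcal{E}_b^{\pm}$ by Proposition~\ref{lem:mcE-simplified-bumpy-version}. Hence the image is all of $\mathcal{E}_b^{\pm}$, so $\overline{\incl}$ is a unitary of right Hilbert $A_\theta \otimes A_\theta$-modules. Composing with the unitary of Theorem~\ref{thm:module-mcR} yields the desired unitary $\overline{\incl}\circ \Xi \colon \mathcal{R}^{\pm} \to \mathcal{E}_b^{\pm}$, whose restriction to $R^{\infty}$ is $\incl\circ \Xi$ by construction.

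There is no real obstacle here: the theorem is a formal consequence of the fact that both $\Xi$ and $\incl$ are isometric on dense subspaces and have dense images in their respective targets. The only subtle point to check carefully is that $\incl$ is genuinely the \emph{same} map on the nose as the unitary extension of $\iota_0$, which is precisely the content of Remark~\ref{rmk:Schw-in-both-Lb-and-Hb-to-mcEb} combined with the inner-product preserving property established in the proof of Proposition~\ref{prop:incl-Schwartz-into-mcEb}.
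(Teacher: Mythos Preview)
Your proposal is correct and follows exactly the route the paper intends: the corollary is stated without its own proof in the paper, as it is immediate from composing the unitary $\Xi\colon \mathcal{R}^{\pm}\to\mathcal{P}_b^{\pm}$ of Theorem~\ref{thm:module-mcR} with the unitary $\mathcal{P}_b^{\pm}\cong\mathcal{E}_b^{\pm}$ of Remark~\ref{rmk:def:mcPb}, and your careful verification that the latter agrees with the extension of $\incl$ is precisely the content of Proposition~\ref{prop:incl-Schwartz-into-mcEb}.
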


		We now turn to the operator.

		\begin{lemma}\label{prop:mcR-sa-and-reg}
			The closure of the operator $d_{\mathcal{R}}$ from Equation~\eqref{eq:mau-on-mcR} 
			is self-adjoint and regular.
		\end{lemma}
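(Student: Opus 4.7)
The plan is to apply the standard Hilbert-module criterion for essential self-adjointness and regularity: a symmetric closable operator $T$ on a Hilbert C*-module $E$ with dense domain $D_{0}$ has self-adjoint and regular closure if and only if both $(T+\imaginary)(D_{0})$ and $(T-\imaginary)(D_{0})$ are dense in $E$. Since $\Schw_{2}^{\oplus 2}$ is dense in $\mathcal{R}=\mathcal{R}^{+}\oplus\mathcal{R}^{-}$ (by Corollary~\ref{cor:mcPb-contains-Schw2} transported along the unitary of Theorem~\ref{thm:module-mcR}), the task reduces to (a) verifying that $d_{\mathcal{R}}$ is symmetric on $\Schw_{2}^{\oplus 2}$, and (b) producing, for each $(G^{+},G^{-})\in\Schw_{2}^{\oplus 2}$, a preimage $(F^{+},F^{-})\in\Schw_{2}^{\oplus 2}$ under $d_{\mathcal{R}}\pm \imaginary$.

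For (a), the key observation is that $d_{\mathcal{R}}$ acts only on the two $\R$-variables, there merely by multiplication by the smooth function $r\mp \imaginary s$, while the inner-product formula~\eqref{eq:inner-product-on-mcR} pairs $\overline{F_{1}}$ and $F_{2}$ at a \emph{common} argument in those variables. Hence the formal adjoint of multiplication by $r\pm \imaginary s$ in the left entry is multiplication by the complex conjugate $r\mp \imaginary s$ in the right entry, which matches the off-diagonal shape $d_{\mathcal{R}}=\mat{0 & \mathsf{M}^{\R}_{1}+\imaginary\mathsf{M}^{\R}_{2} \\ \mathsf{M}^{\R}_{1}-\imaginary\mathsf{M}^{\R}_{2} & 0}$; symmetry follows at once.

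For (b), the formal $2\times 2$ matrix of multipliers representing $d_{\mathcal{R}}\pm \imaginary$ has determinant $-(1+r^{2}+s^{2})$, which never vanishes on $\R^{2}$. Inverting it explicitly yields the preimage formula
\[
    F^{\pm} \;=\; \frac{\mp \imaginary \, G^{\pm} + (r\pm \imaginary s)\, G^{\mp}}{1+r^{2}+s^{2}}.
\]
To conclude, one checks that $F^{\pm}\in\Schw_{2}$ whenever $G^{\pm}\in\Schw_{2}$: multiplication by the affine function $r\pm \imaginary s$ preserves $\Schw_{2}$ (it lowers the polynomial-decay exponent by one, but $F\in\Schw_{2}$ has decay of every order), and pointwise multiplication by $(1+r^{2}+s^{2})^{-1}$ preserves $\Schw_{2}$ because this function and all of its $\R$-partial derivatives are bounded on $\R^{2}$. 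This yields surjectivity of $d_{\mathcal{R}}\pm \imaginary\colon \Schw_{2}^{\oplus 2}\to \Schw_{2}^{\oplus 2}$, hence density of the range in $\mathcal{R}$, and the lemma follows. The only delicate step is the Schwartz--Bruhat bookkeeping for the inverse formula, but since it reduces to preservation of $\Schw_{2}$ under multiplication by a smooth function of polynomial growth and by a smooth bounded function with bounded derivatives, I anticipate no serious obstacle.
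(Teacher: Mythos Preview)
Your proposal is correct and follows essentially the same route as the paper: verify symmetry of $d_{\mathcal{R}}$ from the fact that the inner product~\eqref{eq:inner-product-on-mcR} evaluates $\overline{F_{1}}$ and $F_{2}$ at the same $\R^{2}$-argument, then explicitly invert the $2\times 2$ multiplier matrix to produce Schwartz--Bruhat preimages and conclude that $d_{\mathcal{R}}\pm\imaginary$ has dense range. One small simplification: you need not invoke Corollary~\ref{cor:mcPb-contains-Schw2} and Theorem~\ref{thm:module-mcR} for the density of $\Schw_{2}$ in $\mathcal{R}^{\pm}$, since $\mathcal{R}^{\pm}$ is \emph{defined} in Theorem~\ref{thm:cycle-rep-Dudelta} as the completion of $R^{\infty}=\Schw_{2}$.
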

		
		\begin{proof}
			Because $\mathsf{M}^{\mathbb{R}}_{1}$ and $\mathsf{M}^{\mathbb{R}}_{2}$ are obviously symmetric in view of the inner product defined on $\mathcal{R}^{\pm}$ (see Equation~\ref{eq:inner-product-on-mcR}), so is $d_{\mathcal{R}}$. Since the domain of $d_{\mathcal{R}}$ is the dense set $\Schw_{2}$, it thus suffices to check that $d_{\mathcal{R}} \pm \imaginary$ has dense range.
			For any given $\psi_{1},\psi_{2}\in \Schw_{2}$, define
			\begin{align*}
				\phi_{1} (k,[x],r,s)
				:=&
				\frac{(r+\imaginary s)\cdot\psi_{2}(k,[x],r,s) \mp \imaginary \psi_{1}(k,[x],r,s)}{1 + s^{2} + r^{2} },
				\text{ and}
				\\{}\\
				\phi_{2} (k,[x],r,s)
				:=&
				\frac{(r-\imaginary s)\cdot\psi_{1}(k,[x],r,s) \mp \imaginary \psi_{2}(k,[x],r,s)}{1 + s^{2} + r^{2} }.
			\end{align*}
			These functions lie in the domain of our operator $d_{\mathcal{R}}$ and satisfy $(d_{\mathcal{R}}\pm \imaginary) (\phi_{1}\oplus \phi_{2}) = \psi_{1}\oplus \psi_{2}$, so the range of $d_{\mathcal{R}} \pm \imaginary$ contains $\Schw_{2}^{\oplus 2}$ and is hence dense.
		\end{proof}
			
		\begin{corollary}[using Theorem~\ref{thm:module-mcR}]\label{cor:d-mcPb-sa-reg}
			On $\mathcal{P}_{b}$, the closure of the operator 
			\begin{alignat}{3}\label{def:mau-on-mcPb}
				d_{\mathcal{P}} := \mat{ 0 & d_{\mathcal{P},-} \\ d_{\mathcal{P},+} & 0}
				\text{ where }
				d_{\mathcal{P},\pm} := &\,
				\Xi \circ d_{\mathcal{R},\pm} \circ \Xi\inv
				\text{ with }
				\dom (d_{\mathcal{P},\pm}) :=
				\Schw_{2} 
				\subseteq
				\mathcal{P}_{b}^{\pm},
			\end{alignat}
			is self-adjoint and regular.
		\end{corollary}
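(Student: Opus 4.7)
The plan is to transport the conclusion of Lemma~\ref{prop:mcR-sa-and-reg} across the unitary $\Xi$ from Theorem~\ref{thm:module-mcR}. The definition \eqref{def:mau-on-mcPb} of $d_{\mathcal{P},\pm}$ is exactly the conjugate of $d_{\mathcal{R},\pm}$ by $\Xi$, with matching domain $\Xi(\Schw_{2}) = \Schw_{2}$, so the assembled off-diagonal operator $d_{\mathcal{P}}$ on $\mathcal{P}_{b} = \mathcal{P}_{b}^{+}\oplus \mathcal{P}_{b}^{-}$ is the unitary conjugate of $d_{\mathcal{R}}$ on $\mathcal{R} = \mathcal{R}^{+}\oplus \mathcal{R}^{-}$ via $\Xi\oplus\Xi$.

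First I would record the elementary fact that, for any densely defined operator $T$ on a Hilbert $C^{*}$-module $\mathcal{N}$ and any unitary $U\colon \mathcal{N}\to \mathcal{N}'$ of Hilbert $C^{*}$-modules, the operator $UTU^{*}$ on $\mathcal{N}'$ with domain $U(\dom T)$ is symmetric (resp.\ self-adjoint, resp.\ regular) if and only if $T$ is. Symmetry is immediate from the unitarity of $U$; regularity and self-adjointness follow because $U$ maps the graph of $T$ isometrically to the graph of $UTU^{*}$, and in particular the ranges of $UTU^{*}\pm \imaginary$ are the images under $U$ of the ranges of $T\pm \imaginary$, so density of one is equivalent to density of the other. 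The same observation applied to the closures shows that closures correspond to closures under unitary conjugation.

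Applying this to $T = d_{\mathcal{R},\pm}$ (regarded as an operator $\mathcal{R}^{\pm}\to \mathcal{R}^{\mp}$ and correspondingly between the graded sums), Lemma~\ref{prop:mcR-sa-and-reg} gives that the closure of $d_{\mathcal{R}}$ is self-adjoint and regular on $\mathcal{R}$, and therefore the closure of $d_{\mathcal{P}}$ is self-adjoint and regular on $\mathcal{P}_{b}$. I do not foresee any real obstacle: the whole content of the statement is the transport of a property across a unitary equivalence, once the unitary $\Xi$ has been constructed in Theorem~\ref{thm:module-mcR} and the self-adjoint/regular status of $d_{\mathcal{R}}$ has been established in Lemma~\ref{prop:mcR-sa-and-reg}. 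The only thing to be a touch careful about is to check that $\Xi$ restricts to a bijection $\Schw_{2}\to \Schw_{2}$ on each of the graded summands, so that the two notions of domain coincide, but this is built into Definition~\ref{def:Lambda}.
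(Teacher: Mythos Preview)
Your proposal is correct and matches the paper's approach: the corollary is stated without an explicit proof in the paper, being an immediate consequence of transporting Lemma~\ref{prop:mcR-sa-and-reg} across the unitary $\Xi$ of Theorem~\ref{thm:module-mcR}, with the only remark being that $d_{\mathcal{P},\pm}$ is well defined because $d_{\mathcal{R},\pm}$ preserves $\Schw_{2}$.
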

		Note that the definition of $d_{\mathcal{P},\pm}$ indeed makes sense since $d_{\mathcal{R},\pm}$ maps its domain $\Schw_{2}$ back into itself.
		A direct computation shows:
		\begin{lemma}\label{lem:d-mcPb-computed}
			We have
			\[
				\Xi \circ \mathsf{M}^{\mathbb{R}}_{1} \circ \Xi\inv
				=
				b \left(\mathsf{M}^{\mathbb{R}}_{1} +\mathsf{M}^{\mathbb{Z}}+ \mathsf{M}^{\mathbb{R}}_{2}\right)
				\quad	\text{ and } \quad
				\Xi \circ \mathsf{M}^{\mathbb{R}}_{2} \circ \Xi\inv
				=
				\frac{\imaginary}{2\pi}\left(\partial_{2} - \partial_{1} \right)
				,
			\]
			where $\Xi\colon \Schw_{2}\to\Schw_{2}$ is the map defined in Definition~\ref{def:Lambda}. In particular,
			\[
				d_{\mathcal{P},\pm}
				=
				\left[b (\mathsf{M}^{\mathbb{R}}_{1} + \mathsf{M}^{\mathbb{Z}}) \mp \frac{1}{2\pi}\,\partial_{1} \right]
				+			
				\left[ b\, \mathsf{M}^{\mathbb{R}}_{2} \pm \frac{1}{2\pi}\,\partial_{2} \right]
				.
			\]
		\end{lemma}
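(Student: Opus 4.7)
The plan is to compute the two conjugations separately, exploiting the fact that $\Xi = \Gamma \circ \chi$ decomposes into a Fourier transform $\chi$ in the last $\R$-variable and a smooth, fibrewise change of coordinates $\Gamma$. Each piece is easy to handle on the Schwartz--Bruhat space $\Schw_{2}$ (which is a core for the closure), so I would work there throughout.

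First, I would handle the first identity. Since $\chi$ only Fourier-transforms in the $s$-variable and acts as the identity on the other coordinates, it commutes with multiplication by $r$, so $\chi \circ \mathsf{M}^{\R}_1 \circ \chi\inv = \mathsf{M}^{\R}_1$. Then I would compute $\Gamma \circ \mathsf{M}^{\R}_1 \circ \Gamma\inv$ directly from the formulas in Definition~\ref{def:Lambda}: applying $\mathsf{M}^{\R}_1$ to $\Gamma\inv F$ multiplies by $r$, then $\Gamma$ substitutes $r \mapsto b(r+s+k)$, producing multiplication by $b(r+s+k)$. This is exactly $b(\mathsf{M}^{\R}_1 + \mathsf{M}^{\mathbb{Z}} + \mathsf{M}^{\R}_2)$, giving the first identity.

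For the second identity, I would differentiate $\chi(F)(k,[x],r,s) = \int_t F(k,[x],r,t)\me{ts}\,\mathrm{d}t$ under the integral sign to observe $\partial_s\, \chi(F) = -2\pi\imaginary\, \chi(\mathsf{M}^{\R}_2 F)$, hence $\chi \circ \mathsf{M}^{\R}_2 \circ \chi\inv = \tfrac{\imaginary}{2\pi}\partial_2$. Then I would conjugate by $\Gamma$ using the chain rule: writing $\Gamma\inv F(k,[x],r,s) = F(k,[x+k\theta],\tfrac{r}{b}-s-k,s)$, the only coordinates depending on $s$ are the third slot (with coefficient $-1$) and the fourth (coefficient $+1$), so $\partial_s \circ \Gamma\inv = \Gamma\inv \circ(\partial_2 - \partial_1)$. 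This yields $\Gamma \circ \partial_2 \circ \Gamma\inv = \partial_2 - \partial_1$, and combining with the Fourier step gives the second identity.

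Finally, to obtain the formula for $d_{\mathcal{P},\pm}$, I would substitute the two identities into the definition $d_{\mathcal{R},\pm} = \mathsf{M}^{\R}_1 \mp \imaginary\, \mathsf{M}^{\R}_2$ and use $d_{\mathcal{P},\pm} = \Xi \circ d_{\mathcal{R},\pm}\circ \Xi\inv$ on $\Schw_{2}$:
\[
	d_{\mathcal{P},\pm}
	=
	b(\mathsf{M}^{\R}_1 + \mathsf{M}^{\mathbb{Z}} + \mathsf{M}^{\R}_2) \mp \imaginary\cdot\tfrac{\imaginary}{2\pi}(\partial_2 - \partial_1)
	=
	b(\mathsf{M}^{\R}_1 + \mathsf{M}^{\mathbb{Z}}) + b\, \mathsf{M}^{\R}_2 \pm \tfrac{1}{2\pi}(\partial_2 - \partial_1),
\]
which rearranges to the displayed splitting. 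There is no real obstacle here; the only subtlety is keeping track of the Fourier sign convention $\me{ts}$ and applying the chain rule correctly to $\Gamma\inv$, both of which are routine.
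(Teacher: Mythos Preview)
Your proposal is correct and is precisely the ``direct computation'' the paper alludes to (the paper gives no proof beyond that phrase). Splitting $\Xi=\Gamma\circ\chi$ and handling the Fourier step and the coordinate change separately is the natural way to carry it out, and your sign and chain-rule bookkeeping is accurate.
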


		We should remark that we have written $d_{\mathcal{P},\pm}$ in such a way because the $\mathbb{Z}$- and the first $\mathbb{R}$-component both arose from the copy of $\module_{b}$ inside of $\mathcal{E}_{b}^{\pm}$, while the second $\mathbb{R}$-component arose from the copy of $\mathcal{H}_{-b}^{\pm}$; \emph{cf.\!} the map $\iota_{0}$ in Lemma~\ref{lem:mcE-simplified-bumpy-version} with extension $\incl$ constructed in Proposition~\ref{prop:incl-Schwartz-into-mcEb}.
		
	As $\incl$ is an injective map and as $d_{\mathcal{P},\pm}$ maps its domain $\Schw_{2}$ back into itself, it makes sense to define the following operator on $\mathcal{E}_{b}$:
	\begin{alignat}{3}\label{def:mau-on-mcEb}
		D_{\mathcal{E}} := \mat{ 0 & D_{\mathcal{E},-} \\ D_{\mathcal{E},+} & 0}
		\text{ where }
		D_{\mathcal{E},\pm} := &
		\incl \circ d_{\mathcal{P},\pm} \circ \incl\inv
		\text{ with }
		\dom (D_{\mathcal{E},\pm}) :=
		\mathrm{ran}(\incl)
		\subseteq
		\mathcal{E}_{b}^{\pm}.
	\end{alignat}
	Note that $D_{\mathcal{E}}$ is densely defined according to Lemma~\ref{lem:mcE-simplified-bumpy-version}. Moreover, its closure is self-adjoint and regular\label{cor:mcP-sa-and-reg} because the closure of $d_{\mathcal{P}}$ is by Corollary~\ref{cor:d-mcPb-sa-reg}.
	
	Recall that we chose
	\[
				D_{\mathcal{H},\pm}
				=
				b \, 
				\mathsf{M}^{\mathbb{R}}
				\pm 
				\frac{1}{2\pi}
				\frac{\partial\;}{\partial r}
	\]
	in Equation~\eqref{def:DmcH}. Its domain can be chosen to be $
		\dom (D_{\mathcal{H},\pm})
		:=
		\Schw_{1}
		\subseteq
		\mathcal{H}_{-b}^{\pm}$
	thanks to Remark~\ref{rmk:Schw-in-both-Lb-and-Hb-to-mcEb}.
	\begin{lemma}
		Let
		\begin{alignat*}{3}
			D_{\module,\pm}
			:=&
				b  \left(\mathsf{M}^{\mathbb{R}} + \mathsf{M}^{\mathbb{Z}}\right) \mp \frac{1}{2\pi}\frac{\partial\;}{\partial r}
			&&&\text{ with }&
			\dom (D_{\module,\pm})
			:=
			\Schw_{1}
			\subseteq
			\module_{b}.
		\end{alignat*}
		On the image under $\incl$ of the subspace $\Schw_{1}\odot \mathcal{S}(\mathbb{R})$ of $\Schw_{2}$, we have
		\[
			D_{\mathcal{E},\pm}
			=
			D_{\module,\pm}
			\otimes_{A_{\theta}^{\otimes 2}}
			\left(1_{A_{\theta}\otimes\mathcal{H}_{-b}^{\pm}}\right)
			+
			1_{\module_{b}}
			\otimes_{A_{\theta}^{\otimes 2}}
			\left(
			1_{A_{\theta}}
			\otimes
			D_{\mathcal{H},\pm}
			\right)
			.
		\]
	\end{lemma}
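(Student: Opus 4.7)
The plan is to verify the identity by evaluating both sides on the image under $\incl$ of an elementary tensor $\Phi\odot\psi\in \Schw_{1}\odot\mathcal{S}(\R)$, and then invoking the explicit formula for $d_{\mathcal{P},\pm}$ furnished by Lemma~\ref{lem:d-mcPb-computed}. By Remark~\ref{rmk:Schw-in-both-Lb-and-Hb-to-mcEb}, the map $\iota'$ sends $\Phi\odot\psi$ to the elementary tensor $\Phi\otimes(1_{A_{\theta}}\otimes\varepsilon_{0}\otimes z^{0}\otimes\psi)\in\mathcal{E}_{b}^{\pm}$, and this agrees with $\incl(\Phi\odot\psi)$, where $\Phi\odot\psi$ is regarded as a Schwartz--Bruhat function on $\Z\times\T\times\R^{2}$ via $(k,[x],r,s)\mapsto \Phi(k,[x],r)\psi(s)$.

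Next I would compute the right--hand side directly on such elementary tensors. Since $D_{\module,\pm}$ is a self-adjoint regular operator with core $\Schw_{1}$ and since $1_{A_{\theta}}\otimes D_{\mathcal{H},\pm}$ is defined on $A_{\theta}\odot\Schw_{1}$, the external-sum operator $D_{\module,\pm}\otimes 1 + 1\otimes(1_{A_\theta}\otimes D_{\mathcal{H},\pm})$ acts on the elementary tensor by the Leibniz-like rule
\begin{align*}
&\bigl[D_{\module,\pm}\otimes 1+ 1\otimes(1_{A_{\theta}}\otimes D_{\mathcal{H},\pm})\bigr]\bigl(\Phi\otimes(1\otimes\varepsilon_{0}\otimes z^{0}\otimes\psi)\bigr)\\
&\qquad= \bigl[b(\mathsf{M}^{\R}+\mathsf{M}^{\Z})\mp \tfrac{1}{2\pi}\partial_{r}\bigr](\Phi)\otimes(1\otimes\varepsilon_{0}\otimes z^{0}\otimes\psi) + \Phi\otimes\bigl(1\otimes\varepsilon_{0}\otimes z^{0}\otimes[b\,\mathsf{M}^{\R}\pm\tfrac{1}{2\pi}\partial_{r}]\psi\bigr).
\end{align*}
Transporting this back via $\incl$ to $\Schw_{2}$, each summand becomes a pointwise operation on the function $(k,[x],r,s)\mapsto\Phi(k,[x],r)\psi(s)$: the first term contributes $[b(\mathsf{M}^{\R}_{1}+\mathsf{M}^{\Z})\mp\tfrac{1}{2\pi}\partial_{1}]$, acting on the first $\R$-variable of $\Phi$ and on the $\Z$-variable, while the second contributes $[b\,\mathsf{M}^{\R}_{2}\pm\tfrac{1}{2\pi}\partial_{2}]$, acting on the second $\R$-variable through $\psi$.

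Comparing with Lemma~\ref{lem:d-mcPb-computed}, the sum of these two contributions is exactly $d_{\mathcal{P},\pm}$ applied to $\Phi\odot\psi$. By the definition of $D_{\mathcal{E},\pm}$ in \eqref{def:mau-on-mcEb}, we have $D_{\mathcal{E},\pm}\circ\incl=\incl\circ d_{\mathcal{P},\pm}$ on $\Schw_{2}$, and in particular on the dense subspace $\Schw_{1}\odot\mathcal{S}(\R)$, which yields the claimed equality. The only real point that requires care is the compatibility of the external tensor product with the $A_{\theta}^{\otimes 2}$-balancing: one checks that both $D_{\module,\pm}$ and $1_{A_\theta}\otimes D_{\mathcal{H},\pm}$ have coefficients in the smooth subalgebra $\mathfrak{A}$ in the relevant directions, so the Leibniz identity above respects the balancing relation \eqref{eq:balancing-in-Dudelta}. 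This is the main technical obstacle, but it is routine since we restricted to the Schwartz--Bruhat subspace on which all the formal manipulations are justified.
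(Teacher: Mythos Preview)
Your proposal is correct and follows essentially the same approach as the paper: both arguments identify $\incl$ on elementary tensors $\Phi\odot\psi\in\Schw_{1}\odot\mathcal{S}(\R)$ via the formula $\Phi\otimes(1_{A_{\theta}}\otimes\varepsilon_{0}\otimes z^{0}\otimes\psi)$, then use the explicit expression for $d_{\mathcal{P},\pm}$ from Lemma~\ref{lem:d-mcPb-computed} together with the definition $D_{\mathcal{E},\pm}=\incl\circ d_{\mathcal{P},\pm}\circ\incl^{-1}$ to read off the decomposition. Your version is slightly more explicit in computing the right-hand side first and then matching; the paper's proof simply asserts that the match is ``obvious'' once the formulas are written down, and defers the balancing check to the paragraph immediately following the proof.
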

	
	In the above, we have written $\otimes_{A_{\theta}^{\otimes 2}}$ (instead of the more customary $\otimes$) to emphasize that $\mathcal{E}_{b}^{\pm} = \module_{b} \otimes_{ A_{\theta}^{\otimes 2} } (A_{\theta}\otimes\mathcal{H}_{-b}^{\pm})$ is the \emph{balanced} tensor product (so it is not obvious \emph{a priori} that the above operator is well-defined).
	
	\begin{proof}
		Recall that $\incl$ is the extension of the map 
		\begin{equation*}
			\begin{tikzcd}[column sep = small, row sep = tiny]
				\iota_{0}\colon\quad 
				C_{c}^{\infty} (\mathbb{Z}\times\mathbb{T}\times\mathbb{R})\odot C_{c}^{\infty} (\mathbb{R}) \ar[r]
				& \module_{b} \otimes_{A_{\theta}^{\otimes 2}} (A_{\theta} \otimes \mathcal{H}_{ -b }^{\pm})
				=
				\mathcal{E}_{b}^{\pm} 
			\end{tikzcd}	
		\end{equation*}
		from Lemma~\ref{lem:mcE-simplified-bumpy-version} to all of $\Schw_{2}$. In particular, on the subspace $\Schw_{1}\odot \mathcal{S}(\mathbb{R})$, $\incl$ is given by the exact same formula as $\iota_{0}$, namely
		\[
			\incl(\Phi\odot \psi)
			=
			\Phi
			\otimes
			(1_{A_{\theta}} \otimes 
			\varepsilon_{0} \otimes z^{0} \otimes \psi)	
			.
		\]
		It is then obvious that $D_{\mathcal{E},\pm}$, defined as $\incl \circ d_{\mathcal{P},\pm} \circ \incl\inv$ with
		\[
			d_{\mathcal{P},\pm}
			=
			\left[b (\mathsf{M}^{\mathbb{R}}_{1} + \mathsf{M}^{\mathbb{Z}}) \mp \frac{1}{2\pi}\,\partial_{1} \right]
			+			
			\left[ b\, \mathsf{M}^{\mathbb{R}}_{2} \pm \frac{1}{2\pi}\,\partial_{2} \right]
		\]
		computed in Lemma~\ref{lem:d-mcPb-computed}, is indeed as claimed.
	\end{proof}
	
	We point out that $D_{\mathcal{E},\pm}$ is indeed well-defined, despite the balancing. To be more precise, one can show for all $a\in \mathfrak{A}$ and $\Phi \in \Schw_{2}\subseteq \module_{b}$ that
	\[
		D_{\module, \pm} \big(\Phi ._{ \module_{b}} (1\otimes a)\bigr)
		=
			\bigl[D_{\module, \pm}\Phi\bigr] ._{ \module_{b}} (1\otimes a)
			+
			b\cdot
			\Phi._{ \module_{b}} \left[1\otimes \left(\mathsf{M}^{\mathbb{Z}}\mp \frac{1}{2\pi} \frac{\partial\;}{\partial \Theta}\right) (a)\right]
	\]
	and for $\Psi\in \Schw_{2}\subseteq \mathcal{H}_{-b}^{\pm}$ that
	\[
		D_{\mathcal{H}, \pm} \big(a._{\mathcal{H}_{-b}} \Psi\bigr)
		=
		a._{\mathcal{H}_{-b}} \bigl[D_{\mathcal{H}, \pm} \big(\Psi\bigr)\bigr]
		+ b\cdot
		\left( \mathsf{M}^{\mathbb{Z}}\mp \frac{1}{2\pi} \frac{\partial\;}{\partial \Theta}\right) (a)._{\mathcal{H}_{-b}} \Psi.
	\]
	This implies that, for all $a', c\in \mathfrak{A}$:
	\begin{equation*}
	\left[D_{\module,\pm}\bigl(\Phi ._{ \module_{b}} (a'\otimes a)\bigr)\right] \otimes (c \otimes \Psi)
		+
		\bigl(\Phi ._{ \module_{b}} (a'\otimes a)\bigr) \otimes (c \otimes \left[D_{\mathcal{H},\pm} \Psi\right])
	\end{equation*}
	equals 
	\begin{equation*}
		\left[ D_{\module,\pm}\Phi\right] \otimes \bigl(a'c \otimes ( a ._{\mathcal{H}_{ -b }} \Psi)\bigr)
		+
		\Phi \otimes \bigl(a'c \otimes  D_{\mathcal{H},\pm}\left[ a ._{\mathcal{H}_{ -b }} \Psi\right]\bigr)
		,
	\end{equation*}
	so that $D_{\mathcal{E}}$ is indeed well-defined.
	
	\begin{lemma}
		The operator $d_{\mathcal{P},\pm}$ leaves the subspace $\Schw_{1}\odot \mathcal{S}(\mathbb{R})$ of $\Schw_{2}$ invariant. Moreover, $\Schw_{1}\odot \mathcal{S}(\mathbb{R})$ is a core for $d_{\mathcal{P},\pm}$.
	\end{lemma}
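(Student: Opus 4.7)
The invariance statement is a direct computation. By Lemma~\ref{lem:d-mcPb-computed}, the operator $d_{\mathcal{P},\pm}$ is a sum of the five operators $\mathsf{M}^{\mathbb{R}}_{1}$, $\mathsf{M}^{\mathbb{Z}}$, $\partial_{1}$ (acting only in the $(k,[x],r)$-variables) and $\mathsf{M}^{\mathbb{R}}_{2}$, $\partial_{2}$ (acting only in the $s$-variable). Each of the first three preserves $\Schw_{1}$ by Definition~\ref{def:Schw-n}, and each of the last two preserves $\mathcal{S}(\mathbb{R})$. Hence on a pure tensor $\Phi\odot\psi \in \Schw_{1}\odot\mathcal{S}(\mathbb{R})$, the operator produces a finite sum of pure tensors of the same form, and by linearity this extends to all of $\Schw_{1}\odot\mathcal{S}(\mathbb{R})$.

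For the core property, I will show that $\Schw_{1}\odot\mathcal{S}(\mathbb{R})$ is dense in $\Schw_{2}$ in the Fréchet topology generated by the seminorms $\normSt{\,\cdot\,}{N}{\alpha}$, and then observe that this Fréchet topology dominates the graph norm of $d_{\mathcal{P},\pm}$. The density I will obtain via Hermite expansion in the $s$-variable: fix the Hermite orthonormal basis $\{h_{n}\}_{n\ge 0}$ of $L^{2}(\mathbb{R})$, and for $F\in\Schw_{2}$ set
\[
    F^{(n)}(k,[x],r) := \int_{\mathbb{R}} F(k,[x],r,s)\, h_{n}(s)\,\mathrm{d}s,
    \qquad
    F_{N} := \sum_{n=0}^{N} F^{(n)}\otimes h_{n}.
\]
Because $F(k,[x],r,\cdot)\in\mathcal{S}(\mathbb{R})$ and its seminorms depend on $(k,[x],r)$ in a Schwartz--Bruhat fashion (uniformly in the polynomial weights), each $F^{(n)}$ lies in $\Schw_{1}$, so that $F_{N}\in\Schw_{1}\odot\mathcal{S}(\mathbb{R})$. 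The classical fact that $\{h_{n}\}$ is an unconditional Schauder basis of $\mathcal{S}(\mathbb{R})$, applied pointwise in $(k,[x],r)$ with the seminorm estimates made uniform by the Schwartz--Bruhat decay of $F$, yields $F_{N}\to F$ in every seminorm $\normSt{\,\cdot\,}{N}{\alpha}$.

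To conclude, I observe that by Corollary~\ref{cor:lem:I-norm-smaller-than-Snorm} the norm on $\mathcal{P}_{b}^{\pm}$ is dominated by a single seminorm $\normSt{\,\cdot\,}{M}{0}$ (with $M\ge 6$), so $F_{N}\to F$ in $\mathcal{P}_{b}^{\pm}$. Since $d_{\mathcal{P},\pm}$ preserves $\Schw_{2}$ (as it is a polynomial of degree $\le 1$ in each of $\mathsf{M}^{\mathbb{R}}_{i}$, $\mathsf{M}^{\mathbb{Z}}$, $\partial_{i}$, all of which act boundedly on the Fréchet space $\Schw_{2}$), the sequence $d_{\mathcal{P},\pm}F_{N}$ also lies in $\Schw_{1}\odot\mathcal{S}(\mathbb{R})$ and converges to $d_{\mathcal{P},\pm}F$ in $\mathcal{P}_{b}^{\pm}$ by the same Schwartz-seminorm estimate, applied to $d_{\mathcal{P},\pm}(F_{N}-F)$. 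Thus $F_{N}\to F$ in the graph norm of $d_{\mathcal{P},\pm}$, proving that $\Schw_{1}\odot\mathcal{S}(\mathbb{R})$ is a core.

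The main technical obstacle is the uniform convergence of the Hermite expansion in the Schwartz topology of $\Schw_{2}$; this reduces, after absorbing the polynomial weights in $(k,r)$ into the Schwartz seminorms of the $s$-slice, to the well-known statement that Hermite expansion converges in $\mathcal{S}(\mathbb{R})$, which is why no substantially new argument is needed beyond invoking it carefully.
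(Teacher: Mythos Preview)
Your proof is correct and follows essentially the same approach as the paper, which states only that the core property ``requires only an application of Corollary~\ref{cor:lem:I-norm-smaller-than-Snorm}'' and that ``any subspace of $\Schw_{2}$ which is dense with respect to the family of seminorms on $\Schw_{2}$ is a core for $d_{\mathcal{P},\pm}$.'' You supply the missing ingredient the paper leaves implicit, namely an explicit proof via Hermite expansion that $\Schw_{1}\odot\mathcal{S}(\mathbb{R})$ is seminorm-dense in $\Schw_{2}$, and you make explicit the continuity of $d_{\mathcal{P},\pm}$ on the Fr\'echet space $\Schw_{2}$ needed to pass from seminorm convergence to graph-norm convergence.
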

	
	The invariance is obvious, and the proof regarding the core requires only an application of Corollary~\ref{cor:lem:I-norm-smaller-than-Snorm} (in fact, one proves that any subspace of $\Schw_{2}=\dom(d_{\mathcal{P},\pm})$ which is dense with respect to the family of seminorms on $\Schw_{2}$, is a core for $d_{\mathcal{P},\pm}$). Since $D_{\mathcal{E},\pm} := 
			\incl \circ d_{\mathcal{P},\pm} \circ \incl\inv$ (see Equation~\eqref{def:mau-on-mcEb}), a consequence is that Item~\ref{item:LeMe-core} holds for $(\mathcal{E}_{b}, D_{\mathcal{E}})$:
	\begin{corollary}\label{lem:D-item:LeMe-core}
		The dense $\mathfrak{A}\odot\mathfrak{A}$-submodule $\dom (D_{\module}) =  \Schw_{1}\subseteq \module_{b}$ makes \mbox{$\Schw_{1}\odot_{\mathfrak{A}\odot \mathfrak{A}} \dom( 1_{A_{\theta}}\otimes D_{\mathcal{H}})$} a core for $D_{\mathcal{E}}$.
	\end{corollary}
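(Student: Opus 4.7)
The strategy is to transport the core property established for $d_{\mathcal{P},\pm}$ in the preceding lemma across the isometric embedding $\incl\colon \Schw_2 \hookrightarrow \mathcal{E}_b^\pm$ of Proposition~\ref{prop:incl-Schwartz-into-mcEb}. By definition (Equation~\eqref{def:mau-on-mcEb}), $D_{\mathcal{E},\pm} = \incl \circ d_{\mathcal{P},\pm} \circ \incl^{-1}$, so $\incl$ intertwines the two operators together with their graph norms. Consequently any core for $d_{\mathcal{P},\pm}$ transports under $\incl$ to a core for $D_{\mathcal{E},\pm}$; in particular, $\incl(\Schw_1 \odot \mathcal{S}(\mathbb{R}))$ is one. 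It therefore suffices to establish the chain of inclusions
\[
\incl(\Schw_1 \odot \mathcal{S}(\mathbb{R})) \,\subseteq\, \Schw_1 \odot_{\mathfrak{A}\odot\mathfrak{A}} \dom(1_{A_\theta} \otimes D_\mathcal{H}) \,\subseteq\, \dom(\overline{D_\mathcal{E}}),
\]
for then the middle space automatically inherits the core property.

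\textbf{First inclusion.} Using the description of $\incl$ on the subspace $\Schw_1 \odot \mathcal{S}(\mathbb{R})$ supplied by Remark~\ref{rmk:Schw-in-both-Lb-and-Hb-to-mcEb}, we have
\[
\incl(\Phi \odot \psi) \;=\; \Phi \otimes_{A_\theta^{\otimes 2}} (1_{A_\theta} \otimes \varepsilon_0 \otimes z^0 \otimes \psi).
\]
Since $\psi\in\mathcal{S}(\mathbb{R})$, the function $\varepsilon_0 \otimes z^0 \otimes \psi$ lies in $\Schw_1 = \dom(D_{\mathcal{H}})$; thus the second factor is an elementary tensor in $A_\theta \odot \dom(D_{\mathcal{H}}) \subseteq \dom(1_{A_\theta} \otimes D_\mathcal{H})$, while $\Phi \in \Schw_1$. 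This makes $\incl(\Phi\odot\psi)$ an elementary tensor of the required form.

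\textbf{Second inclusion.} By linearity reduce to an elementary tensor $\Phi \otimes \eta$ with $\Phi \in \Schw_1$ and $\eta \in \dom(1_{A_\theta} \otimes D_\mathcal{H})$. Approximate $\eta$ by a sequence $\eta_n \in \mathfrak{A} \odot \mathcal{S}(\mathbb{R})$ convergent in the graph norm of $1_{A_\theta} \otimes D_\mathcal{H}$; writing $\eta_n = \sum_k a_k^{(n)} \otimes \psi_k^{(n)}$ and using the balancing over $A_\theta^{\otimes 2}$, one has
\[
\Phi \otimes \eta_n \;=\; \sum_k \bigl( \Phi \cdot_{\module_b} (1 \otimes a_k^{(n)}) \bigr) \otimes (1_{A_\theta} \otimes \psi_k^{(n)}),
\]
which lies in $\incl(\Schw_1 \odot \mathcal{S}(\mathbb{R}))$ because the right action by elements of $\mathfrak{A}$ preserves $\Schw_1$ (via Equation~\eqref{eq:rightActionOnE0-general}). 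The Leibniz-type formula $D_{\mathcal{E},\pm} = D_{\module,\pm}\otimes 1 + 1\otimes(1_{A_\theta}\otimes D_{\mathcal{H},\pm})$ displayed just before the corollary then shows that $D_{\mathcal{E}}(\Phi \otimes \eta_n)$ converges, so $\Phi\otimes\eta$ lies in $\dom(\overline{D_{\mathcal{E}}})$.

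\textbf{Main obstacle.} The only delicate point is verifying that the approximation $\Phi \otimes \eta_n \to \Phi \otimes \eta$ is Cauchy in the graph norm; this requires simultaneous control of the module norm and of the derivative term, which is provided by the explicit Leibniz identity recorded for $D_{\mathcal{E}}$ together with the continuity of the right action of $\mathfrak{A}$ on $\Schw_1$. Once this is in hand, both inclusions combine with the transported core property to yield the corollary.
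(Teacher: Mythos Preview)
Your overall strategy---transport the core $\Schw_1\odot\mathcal{S}(\mathbb{R})$ for $d_{\mathcal{P}}$ through the isometry $\incl$ and then sandwich the target space between the transported core and $\dom(\overline{D_{\mathcal{E}}})$---is exactly the paper's (one-sentence) argument, spelled out. The first inclusion is correct.

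The second inclusion, however, does not go through as written. Two problems:
\begin{itemize}
\item The balancing is on the wrong tensor leg. For $\eta_n=\sum_k a_k^{(n)}\otimes\psi_k^{(n)}\in A_\theta\odot\mathcal{H}_{-b}$ one has $\Phi\otimes(a\otimes\psi)=\bigl(\Phi\cdot_{\module_b}(a\otimes 1)\bigr)\otimes(1_{A_\theta}\otimes\psi)$, not $(1\otimes a)$; see the first sentence of the proof of Proposition~\ref{lem:mcE-simplified-bumpy-version}. The action $(1\otimes a)$ moves $a$ into the $\mathcal{H}_{-b}$ factor, which is the wrong direction.
\item More seriously, the space $\mathfrak{A}\odot\mathcal{S}(\mathbb{R})$ you use for approximation (with $\mathcal{S}(\mathbb{R})\hookrightarrow\mathcal{H}_{-b}$ via $\psi\mapsto\varepsilon_0\otimes z^0\otimes\psi$) is \emph{not} a core for $1_{A_\theta}\otimes D_{\mathcal{H}}$: the image of $\mathcal{S}(\mathbb{R})$ is not even dense in $\mathcal{H}_{-b}$, since it only hits the $(k,l)=(0,0)$ summand. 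One must instead take $\eta_n\in\mathfrak{A}\odot\Schw_1$. But then the terms $\psi_k^{(n)}$ are general Schwartz--Bruhat functions, and to push $\Phi\otimes\eta_n$ into $\incl(\Schw_1\odot\mathcal{S}(\mathbb{R}))$---the only place where the Leibniz identity is established---one has to invoke the full balancing computation of Equation~\eqref{eq:balancing-yielding-zeroes}, and check that the resulting $\module_b$-factors $\Phi\cdot_{\module_b}(a\otimes V^lU^k)$ remain in $\Schw_1$ with enough control on the seminorms to make $D_{\mathcal{E}}(\Phi\otimes\eta_n)$ converge.
\end{itemize}
Your ``Main obstacle'' paragraph correctly locates where the difficulty lies, but the ingredients you cite (the Leibniz identity plus continuity of the $\mathfrak{A}$-action) are not by themselves enough without the density repair above.
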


	Item~\ref{item:LeMe-creation} holds as well for $(\mathcal{E}_{b}, D_{\mathcal{E}})$:
	\begin{lemma}\label{lem:D-item:LeMe-creation}
		For all $\Phi\in \dom (D_{\module})=\Schw_{1}\subseteq \module_{b}$, both operators $\eta\mapsto D_{\mathcal{E},\pm} (\Phi\otimes \eta) - \Phi \otimes (1_{A_{\theta}}\otimes D_{\mathcal{H},\pm})(\eta)$ with domain $\dom(1_{A_{\theta}}\otimes D_{\mathcal{H},\pm})$ extend to adjointable operators $A_{\theta}\otimes \mathcal{H}_{ -b }^{\pm} \to\mathcal{E}_{b}^{\pm}$.
	\end{lemma}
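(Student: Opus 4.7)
The plan is to reduce the statement to the standard fact that left creation operators by elements of a Hilbert module are adjointable. By the lemma immediately preceding this one, we know that on the image $\iota(\Schw_{1}\odot \mathcal{S}(\R))$, the operator $D_{\mathcal{E},\pm}$ decomposes as
\[
D_{\mathcal{E},\pm} \;=\; D_{\module,\pm}\otimes_{A_{\theta}^{\otimes 2}} 1_{A_{\theta}\otimes \mathcal{H}_{-b}^{\pm}} \;+\; 1_{\module_{b}}\otimes_{A_{\theta}^{\otimes 2}} (1_{A_{\theta}}\otimes D_{\mathcal{H},\pm}).
\]
The balancing compatibility check carried out after that lemma shows that both summands on the right-hand side behave correctly under the balancing over $A_{\theta}^{\otimes 2}$, so the identity propagates to the entire core $\Schw_{1}\odot_{\mathfrak{A}\odot \mathfrak{A}} \dom(1_{A_{\theta}}\otimes D_{\mathcal{H},\pm})$ furnished by Corollary~\ref{lem:D-item:LeMe-core}. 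In particular, on this core, the operator in question becomes simply the creation operator
\[
\eta \;\longmapsto\; (D_{\module,\pm}\Phi)\otimes \eta.
\]

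Next, I would verify that $D_{\module,\pm}$ preserves $\Schw_{1}$: the operators $\mathsf{M}^{\R}$, $\mathsf{M}^{\Z}$, and $\partial_{r}$ individually send Schwartz--Bruhat functions on $\Z\times \T\times \R$ to Schwartz--Bruhat functions, since multiplying by a polynomial in $(r,k)$ does not destroy the required rapid decay in both $\R$ and $\Z$ variables, and differentiation in $r$ preserves smoothness in that variable. Consequently $D_{\module,\pm}\Phi$ is an element of $\Schw_{1}\subseteq \module_{b}$ for every $\Phi\in \Schw_{1}$. Now invoke the general fact: for any $\zeta \in \module_{b}$, the left creation map
\[
L_{\zeta}\colon A_{\theta}\otimes \mathcal{H}_{-b}^{\pm} \;\longrightarrow\; \module_{b}\otimes_{A_{\theta}^{\otimes 2}} (A_{\theta}\otimes \mathcal{H}_{-b}^{\pm}) = \mathcal{E}_{b}^{\pm}, \qquad \eta\longmapsto \zeta\otimes \eta,
\]
is adjointable with $\norm{L_{\zeta}} \leq \norm{\zeta}_{\module_{b}}$; its adjoint is determined on simple tensors by $L_{\zeta}^{*}(\zeta'\otimes \eta') = \inner{\zeta}{\zeta'}_{A_{\theta}^{\otimes 2}}\cdot \eta'$. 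Taking $\zeta := D_{\module,\pm}\Phi$ yields an adjointable operator $L_{D_{\module,\pm}\Phi}\colon A_{\theta}\otimes \mathcal{H}_{-b}^{\pm}\to \mathcal{E}_{b}^{\pm}$ defined on all of $A_{\theta}\otimes \mathcal{H}_{-b}^{\pm}$.

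Putting the two steps together: for every $\eta \in \dom(1_{A_{\theta}}\otimes D_{\mathcal{H},\pm})$ and $\Phi \in \Schw_{1}$, the first paragraph yields
\[
D_{\mathcal{E},\pm}(\Phi\otimes \eta) \;-\; \Phi \otimes (1_{A_{\theta}}\otimes D_{\mathcal{H},\pm})(\eta) \;=\; (D_{\module,\pm}\Phi) \otimes \eta \;=\; L_{D_{\module,\pm}\Phi}(\eta),
\]
so the left-hand operator is the restriction to $\dom(1_{A_{\theta}}\otimes D_{\mathcal{H},\pm})$ of the adjointable operator $L_{D_{\module,\pm}\Phi}$, which supplies the required extension. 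The mild obstacle in this argument is justifying that the splitting of $D_{\mathcal{E},\pm}$ holds on the entire core rather than merely on $\iota(\Schw_{1}\odot \mathcal{S}(\R))$; this hinges on the balancing identities (noted after the preceding lemma) that match the failure of $D_{\module,\pm}$ to commute with the right $\mathfrak{A}$-action on $\module_{b}$ with the failure of $D_{\mathcal{H},\pm}$ to commute with the left $\mathfrak{A}$-action on $\mathcal{H}_{-b}^{\pm}$, so that the sum $D_{\module,\pm}\otimes 1 + 1\otimes (1\otimes D_{\mathcal{H},\pm})$ descends unambiguously to the balanced tensor product.
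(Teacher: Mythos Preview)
Your proposal is correct and follows essentially the same approach as the paper: both arguments identify the difference $D_{\mathcal{E},\pm}(\Phi\otimes\eta) - \Phi\otimes(1_{A_\theta}\otimes D_{\mathcal{H},\pm})(\eta)$ with the creation operator $T_{D_{\module,\pm}\Phi}$ (your $L_{D_{\module,\pm}\Phi}$), which is automatically adjointable. The paper's proof is simply a terser version of yours, computing on elementary tensors $\eta=a\otimes\Psi$ and then passing to general $\eta$; your additional remarks about $D_{\module,\pm}$ preserving $\Schw_1$ and about the balancing compatibility are justified details that the paper either takes for granted or addresses in the paragraph preceding the lemma.
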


	\begin{proof}
		For $\eta = a \otimes \Psi$ for $a\in \mathfrak{A}$ and $\Psi\in \Schw_{1} \subseteq \mathcal{H}_{-b}^{\pm}$ and $\Phi\in \Schw_{1} \subseteq \module_{b}$:
		\begin{align*}
			&D_{\mathcal{E},\pm} (\Phi\otimes_{A_{\theta}^{\otimes 2}} \eta) - \Phi \otimes_{A_{\theta}^{\otimes 2}} (1_{A_{\theta}}\otimes D_{\mathcal{H},\pm}) (\eta)
			\\&\quad=
			(D_{\module,\pm}\Phi)\otimes_{A_{\theta}^{\otimes 2}} (a \otimes \Psi)
			+
			\Phi\otimes_{A_{\theta}^{\otimes 2}} (a \otimes  D_{\mathcal{H},\pm} (\Psi)) - \Phi \otimes_{A_{\theta}^{\otimes 2}} (1_{A_{\theta}}\otimes D_{\mathcal{H},\pm}) (\eta)
			\\&\quad=
			(D_{\module,\pm}\Phi)\otimes_{A_{\theta}^{\otimes 2}} \eta
			=
			T_{D_{\module,\pm}\Phi}(\eta).
		\end{align*}		
		We conclude for general $\eta$ that $D_{\mathcal{E},\pm} (\Phi\otimes_{A_{\theta}^{\otimes 2}} \eta) - \Phi \otimes_{A_{\theta}^{\otimes 2}} (1_{A_{\theta}}\otimes D_{\mathcal{H},\pm}) (\eta) = T_{D_{\module,\pm}\Phi}(\eta)$, so
		we have shown that the operator in question is a creation operator, which is clearly adjointable.
	\end{proof}

	\begin{proposition}\label{prop:cycle-rep-Dudelta-uglier}
		The pair $(\mathcal{E}_{b},D_{\mathcal{E}})$ is a Kasparov cycle and represents $\Dudelta_{\theta}$.
	\end{proposition}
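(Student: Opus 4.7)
The plan is to verify the four hypotheses of Theorem~\ref{thm:LeMe:sums-reg-sa-ops-Thm7.4}, which have been set up piece by piece throughout this section; once they are assembled, the theorem immediately gives the conclusion. All the preparatory work has been done, so the proof should reduce to a short bookkeeping argument that cites the relevant intermediate results.

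First I would observe that $D_{\mathcal{E}}$ is odd by construction, since it is off-diagonal with respect to the even/odd grading on $\mathcal{E}_b = \mathcal{E}_b^+ \oplus \mathcal{E}_b^-$ inherited from the grading on $\mathcal{H}_{-b}$. The self-adjointness and regularity of the closure of $D_{\mathcal{E}}$ were recorded on page~\pageref{cor:mcP-sa-and-reg}: they follow because $D_{\mathcal{E},\pm} = \incl \circ d_{\mathcal{P},\pm} \circ \incl^{-1}$ by Equation~\eqref{def:mau-on-mcEb}, the map $\incl \circ \Xi$ is a unitary $\mathcal{R}^\pm \cong \mathcal{E}_b^\pm$ by Corollary~\ref{cor:thm:module-mcR}, and the closure of $d_{\mathcal{R}}$ is self-adjoint and regular by Lemma~\ref{prop:mcR-sa-and-reg}; unitary conjugation preserves both properties. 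This settles Item~\ref{item:LeMe-s.a.+reg}.

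Item~\ref{item:LeMe-WAP}, that $(0, D_{\mathcal{E}})$ is a weakly anticommuting pair, is automatic for any self-adjoint regular $D_{\mathcal{E}}$ paired with the zero operator, as noted in the remark following the statement of Theorem~\ref{thm:LeMe:sums-reg-sa-ops-Thm7.4}. For Item~\ref{item:LeMe-core}, I would take $\mathcal{X} := \Schw_1 = \dom(D_{\module}) \subseteq \module_b$, which is dense in $\module_b$ by (the analogue of) Corollary~\ref{cor:smooth-dense-in-Lb} and Remark~\ref{rmk:Schw-in-both-Lb-and-Hb-to-mcEb}; Corollary~\ref{lem:D-item:LeMe-core} then asserts precisely that $\Schw_1 \odot_{\mathfrak{A}\odot\mathfrak{A}} \dom(1_{A_\theta} \otimes D_{\mathcal{H}})$ is a core for $D_{\mathcal{E}}$.

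Finally, Item~\ref{item:LeMe-creation} is exactly the content of Lemma~\ref{lem:D-item:LeMe-creation}: the commutator-like defect $\eta \mapsto D_{\mathcal{E},\pm}(\Phi \otimes \eta) - \Phi \otimes (1_{A_\theta} \otimes D_{\mathcal{H},\pm})(\eta)$ equals the creation operator $T_{D_{\module,\pm}\Phi}$, which is adjointable because $D_{\module,\pm}\Phi \in \module_b$. With all four items verified, Theorem~\ref{thm:LeMe:sums-reg-sa-ops-Thm7.4} yields that $(\mathcal{E}_b, D_{\mathcal{E}})$ is a Kasparov cycle representing $\widehat{\Delta}_\theta$. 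I do not anticipate any genuine obstacle here, since every technical ingredient has already been constructed and unitarily transported along $\incl \circ \Xi$; the only subtlety worth double-checking is that the core $\Schw_1 \odot \mathcal{S}(\mathbb{R})$ chosen for $D_{\mathcal{E}}$ is genuinely a tensor-product core in the sense required by Lesch--Mesland, which follows from the density of $\Schw_1 \odot \mathcal{S}(\mathbb{R})$ in $\Schw_2$ with respect to the Schwartz--Bruhat seminorms and the norm estimate of Corollary~\ref{cor:lem:I-norm-smaller-than-Snorm}.
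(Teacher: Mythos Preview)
Your proposal is correct and follows essentially the same approach as the paper: both proofs are short bookkeeping arguments that verify the four hypotheses of Theorem~\ref{thm:LeMe:sums-reg-sa-ops-Thm7.4} by citing the same intermediate results (self-adjointness/regularity via the unitary equivalence to $d_{\mathcal{P}}$ resp.\ $d_{\mathcal{R}}$, the automatic weak anticommutation with $0$, the core statement of Corollary~\ref{lem:D-item:LeMe-core}, and the creation-operator computation of Lemma~\ref{lem:D-item:LeMe-creation}). Your version is slightly more explicit in tracing the unitary all the way back to $d_{\mathcal{R}}$ and in noting oddness, but there is no substantive difference.
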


	\begin{proof}
		Recall that $\Dudelta_{\theta}$ was defined as $\left[ \module_{b} \right] \otimes_{A_{\theta}^{\otimes 2}} (1_{A_{\theta}}\otimes x_{ -b })$. We have checked that the items in Theorem~\ref{thm:LeMe:sums-reg-sa-ops-Thm7.4} are all satisfied:
		
		As explained on page \pageref{cor:mcP-sa-and-reg}, the closure of $D_{\mathcal{E}}$ is self-adjoint and regular (\emph{i.e.\!} Item~\ref{item:LeMe-s.a.+reg} holds) because $D_{\mathcal{E}}$ is unitarily equivalent to the operator $d_{\mathcal{P}}$, whose closure is self-adjoint and regular by Corollary~\ref{cor:d-mcPb-sa-reg}.
		We explained that $(0, D_{\mathcal{E}})$ is a weakly anticommuting pair (\emph{i.e.\!} Item~\ref{item:LeMe-WAP} holds), and in Lemma~\ref{lem:D-item:LeMe-core}, we have proved that for the dense submodule $\mathcal{X}:=\Schw_{1}$ of $\module_{b}$, the algebraic tensor product $\Schw_{1}\odot \dom (1_{A_{\theta}}\otimes D_{\mathcal{H}})$ is a core for $D_{\mathcal{E}}$ (\emph{i.e.\!} Item~\ref{item:LeMe-core} holds). Lastly, in Lemma~\ref{lem:D-item:LeMe-creation} we have shown that, for $\Phi\in \mathcal{X}$, the operator $D_{\mathcal{E}} T_{\Phi} - T_{\Phi} (1_{A_{\theta}}\otimes D_{\mathcal{H}})$ has adjointable extension, \emph{i.e.\!} Item~\ref{item:LeMe-creation} holds as well.
	\end{proof}
	
	\begin{proof}[Proof of Theorem~\ref{thm:cycle-rep-Dudelta}]
		We have shown in Corollary~\ref{cor:thm:module-mcR} that $\mathcal{R}$ is unitarily equivalent to $\mathcal{E}_{b}$, and we have defined $D_{\mathcal{E}}$ exactly so that the unitary equivalence turns it into $d_{\mathcal{R}}$. The claim now follows from Proposition~\ref{prop:cycle-rep-Dudelta-uglier}.
	\end{proof}

\



\begin{thebibliography}{9}


\bibitem{Brenken} B.\ A.\ Brenken. \emph{Representations and automorphisms of the irrational rotation algebra}. In: \emph{Pacific J.\ Math}. 111.2 (1984), pp.\ 257--282.

\bibitem{Connes:NCG} A.\ Connes. \emph{Noncommutative geometry.} Academic Press, Inc., San Diego, CA, 1994, pp.\ xiv+661.

\bibitem{Connes:Gravity} A.\ Connes. \emph{Gravity coupled with matter and the foundation of non-commutative geometry.} Comm.\ Math.\ Phys.\ 182 (1996), no.\ 1, pp.\ 155--176. 

\bibitem{CS} A.\ Connes, G.\ Skandalis. \emph{The longitudinal index theorem for foliations.} Publ.\ Res.\ Inst.\ Math.\ Sci.\ 20 (1984), no.\ 6, pp.\ 1139--1183.



\bibitem{EEK} S.\ Echterhoff, H.\ Emerson,  H.-J.\ Kim. \emph{ KK-theoretic duality for proper twisted actions}. Math.\ Ann.\  340, no.\ 4 (2008), pp.\ 839-873.



\bibitem{Em:NCG} H.\ Emerson. \emph{An introduction to C*-algebras and Noncommutative Geometry.}


\bibitem{Em:Lefschetz} H.\ Emerson. \emph{Lefschetz numbers for C*-algebras}. \emph{Canad.\ Math.\ Bull.} 54.1 (2011), pp.\ 82--99.


\bibitem{Em:bdActions} H.\ Emerson. \emph{Noncommutative Poincar\'e duality for boundary actions of hyperbolic groups}. \emph{J.\ Reine Angew.\ Math.} 564 (2003), pp.\ 1--33.

\bibitem{Em:ClassFibre} H.\ Emerson. \emph{The class of a fibre in Noncommutative Geometry.} Feb.\ 18, 2018. arXiv: \href{https://arxiv.org/abs/1802.06465}{1802.06465 [math.KT]}.

\bibitem{GVF:NCG} J.\ M.\ Gracia-Bond{\'\i}a, J.\ C.\ V{\'a}rilly, and H.\ Figueroa. \emph{Elements of noncommutative geometry.}
Birkh{\"a}user Advanced Texts: Basler Lehrb{\"u}cher. Birkh{\"a}user Boston, Inc., Boston, MA, 2001. 

\bibitem{KaPut:dual-SFT} J.\ Kaminker and I.\ F.\ Putnam. \emph{K-theoretic duality of shifts of finite type}. \emph{Comm.\ Math.\ Phys.} 187.3 (1997), pp. 509--522.

 \bibitem{KaPut:dual-SFT2} J.\ Kaminker, I.\ F.\ Putnam, and M.\ F.\ Whittaker. \emph{K-theoretic duality for hyperbolic
dynamical systems}. \emph{J.\ Reine Angew.\ Math.} 730 (2017), pp.\ 263--299.

\bibitem{Kas} G.\ G.\ Kasparov. \emph{Equivariant KK-theory and the Novikov Conjecture}, Invent.\ Math.\ 91 (1988), no.\ 1, pp.\ 147--201. 

\bibitem{Kuc:unbddKKprod} D.\ Kucerovsky. \emph{The KK-product of unbounded modules}. \emph{K-Theory} 11.1 (1997), pp.\ 17--34.

\bibitem{LeMe:sums-reg-sa-ops} M.\ Lesch, B.\ Mesland. \emph{Sums of regular self-adjoint operators in {H}ilbert-{C*}-modules}. Journal of Mathematical Analysis and Applications (2019), pp.\ 947--980.


\bibitem{LR:Euler} W.\ L{\"u}ck and J.\ Rosenberg. \emph{Equivariant Euler characteristics and K-homology Euler classes for proper cocompact G-manifolds}. \emph{Geom.\ Topol.} 7 (2003), pp.\ 569--613.

\bibitem{Mes:Unbdd} B.\ Mesland. \emph{Unbounded bivariant K-theory and correspondences in noncommutative
geometry}. \emph{J.\ Reine Angew.\ Math.} 691 (2014), pp.\ 101--172.


\bibitem{Meyer-Nest:BC} R.\ Meyer and R.\ Nest. \emph{The Baum-Connes conjecture via localisation of categories}. \emph{Topology} 45.2 (2006), pp.\ 209--259.


\bibitem{MRW:Grpd} P.\ S.\ Muhly, J.\ N.\ Renault, and D.\ P.\ Williams. \emph{Equivalence and isomorphism for groupoid C*-algebras}. \emph{J.\ Operator Theory} 17.1 (1987), pp.\ 3--22.


\bibitem{W} G.\ Niblo, R.\ Plymen, and N.\ Wright. \emph{Poincar{\'e} duality and Langlands duality for extended affine Weyl groups}. Ann.\ K-Theory 3 (2018), no.\ 3, pp.\ 491--522.



\bibitem{Renault:gpd-approach} J.\ Renault. \emph{A groupoid approach to C*-algebras}.
Springer, Berlin, 1980.


\bibitem{Roe:Elliptic} J.\ Roe. \emph{Elliptic operators, topology and asymptotic methods}. Second.\ Vol.\ 395. Pitman Research Notes in Mathematics Series. Longman, Harlow, 1998, pp.\ ii+209.



\bibitem{Wat:toralAut} Y.\ Watatani. \emph{Toral automorphisms on irrational rotation algebras}. In: \emph{Math.\ Japon}. 26.4 (1981), pp. 479--484.


\end{thebibliography}
\end{document}